\documentclass[a4paper,12pt]{article}
%
%
\setlength{\evensidemargin}{.5cm} 
\setlength{\oddsidemargin}{.5cm}
 \setlength{\topmargin}{-1.5cm}
\setlength{\textheight}{25cm} 
\setlength{\textwidth}{16.2cm}
\parindent=10pt
\baselineskip=1.5em
%
%
%
\usepackage[dvips]{graphicx}
\usepackage{epsf}
\usepackage{graphicx}
\usepackage{epsfig}
\usepackage{amsmath}
\usepackage[psamsfonts]{amssymb}
\usepackage[psamsfonts]{eucal}
\usepackage{amsthm}
\usepackage{textcomp}
\usepackage{mathptmx}
\usepackage[scaled]{helvet}
\usepackage[colorlinks=true]{hyperref}
\hypersetup{urlcolor=blue, citecolor=red}

\swapnumbers
%

\usepackage{diagrams}
\swapnumbers
\theoremstyle{plain}

\newtheorem{theo}[subsubsection]{Theorem}
\newtheorem{lemme}[subsubsection]{Lemma}
\newtheorem{coro}[subsubsection]{Corollary}
\newtheorem{prop}[subsubsection]{Proposition}
\theoremstyle{definition}
\newtheorem{defi}[subsubsection]{Definition}
\newtheorem{defis}[subsubsection]{Definitions}
\newtheorem{rmk}[subsubsection]{Remark}
\newtheorem{rmks}[subsubsection]{Remarks}
\newtheorem{ex}[subsubsection]{Example}
\newtheorem{exs}[subsubsection]{Examples}

\newcommand{\field}[1]{\mathbb{#1}}
\newcommand{\CC}{\field{C}}

\newcommand{\RR}{\field{R}}
\newcommand{\ZZ}{\field{Z}}
\def\id{\mathop{\rm id}\nolimits}
\def\Ad{\mathop{\rm Ad}\nolimits}
\def\ad{\mathop{\rm ad}\nolimits}
\def\orth{\mathop{\rm orth}\nolimits}

\def\d{\mathrm{d}}
\def\mathi{\mathrm{i}}
\def\GL{\mathrm{GL}}
\def\gl{\mathfrak{gl}}
\def\Aff{\mathrm{Aff}}
\def\aff{\mathfrak{aff}}
\def\Isom{\mathop{\rm Isom}\nolimits}
\def\SO{\mathop{\rm SO}\nolimits}
\def\so{\mathop{\mathfrak so}\nolimits}
\def\vect#1{\overrightarrow{\mathstrut#1}}
%
\setcounter{tocdepth}{2}
\begin{document}

\title
{Symmetries of Hamiltonian systems
\\on symplectic and Poisson manifolds}
\author{Charles-Michel Marle\\
Institut de Math{\'e}matiques de Jussieu\\
Universit{\'e} Pierre et Marie Curie\\
Paris, France
}

\maketitle
\tableofcontents
\section{Introduction}\label{sec:1}

This text presents some basic notions in symplectic geometry, Poisson geometry, Hamiltonian systems, Lie algebras and Lie groups actions on
symplectic or Poisson manifolds, momentum maps and their use for the reduction of Hamiltonian systems. It should be accessible to readers with a general knowledge of basic notions in differential geometry.
Full proofs of many results are provided.
\par\smallskip

\subsection{Contents of the paper} 
Symplectic and Poisson manifolds are defined in Sections 
\ref{SymplecticManifolds} and \ref{PoissonManifolds}, where their
basic properties are given, often with detailed proofs. Darboux theorem and the main results about the local structure of Poisson manifolds, however, are given without proof. Actions of a Lie group or of a Lie algebra on a smooth manifold and, when this manifold is endowed with a symplectic or a Poisson structure, symplectic, Poisson and Hamiltonian actions are introduced in Section 
\ref{SymplecticPoissonHamiltonian}.  
For Hamiltonian actions of a Lie group on a connected symplectic manifold, 
the equivariance of the momentum map with respect to an affine action of 
the group on the dual of its Lie algebra is proven, and the notion of 
symplectic cocycle is introduced. We prove (\ref{LieAlgebraCocycleIntegration})
that given a Lie algebra symplectic cocycle, there exists on the associated
connected and simply connected Lie group a unique corresponding Lie group
symplectic cocycle. The Hamiltonian 
actions of a Lie group on its cotangent bundle obtained by lifting the 
actions of the group on itself by translations on the left and on the right 
are fully discussed in Subsection \ref{ActionsOnCotangentBundle}. 
We prove that there exists a two-parameter family of deformations of these actions into a pair of mutually symplectically orthogonal Hamiltonian actions whose momentum maps are equivariant with respect to an affine action involving
any given Lie group symplectic cocycle (\ref{GeneralizedActionsOfGonTstarG}).   
The use of first integrals and, more generally, of momentum maps for the resolution of Hamiltonian dynamical systems, is discussed in 
Section~\ref{Reduction}.
For a system whose Hamiltonian is invariant under a Hamiltonian Lie algebra action, the Marsden-Weinstein reduction procedure can be used: through the use of Noether's theorem, this procedure leads to a reduced symplectic manifold 
on which a reduced Hamiltonian system can be solved in a first step. Another way of using the symmetries of the system rests on the use of the Euler-Poincar\'e equation. This equation can be written for classical Lagrangian mechanical systems when there exists a locally transitive Lie algebra action on their configuration space, or for the corresponding Hamiltonian systems when the
Lagrangian is hyper-regular. However, the Euler-Poincar\'e equation does not always lead to a reduction of the system: such a reduction occurs mainly when
the Hamiltonian can be expressed as the momentum map composed with a smooth function defined on the dual of the Lie algebra; the Euler-Poincar\'e equation is then equivalent to the Hamilton equation written on the dual of the Lie algebra. Finally in Section \ref{ExamplesDynamicalSystems} three classical examples  
are considered: the spherical pendulum, the motion of a rigid body around a fixed point and the Kepler problem. For each example the 
Euler-Poincar\'e equation is derived (for the Kepler problem a transitive Lie algebra action is obtained by adding the Lie algebra of the group of positive homotheties to the Lie algebra of the group of rotations around the attractive centre), the first integrals linked to symmetries are given. In this Section, the classical concepts of vector calculus on an Euclidean three-dimensional vector space (scalar, vector and mixed products) are used and their interpretation in terms of concepts such as the adjoint or coadjoint action 
of the group of rotations are explained.

\subsection{Further reading}
Of course this text is just an introduction. Several important parts of the theory of Hamiltonian systems are not discussed here, for example completely
integrable systems (although the three examples presented belong to that class
of systems), action-angle coordinates, monodromy, singular reduction, the Kolmogorov-Arnold-Moser theorem, symplectic methods in Hydrodynamics, $\ldots$ To extend his knowledge of the subject, the reader can consult  the books by 
Ralph Abraham and Jerry Marsden \cite{AbrahamMarsden78},
Vladimir Arnold \cite{arnold2}, 
Vladimir Arnold and Boris Khesin \cite{arnoldkhesin},
Patrick Iglesias-Zemmour \cite{Iglesias2000},
Camille Laurent-Gengoux, Anne Pichereau and Pol Vanhaecke
\cite{LaurentPichereauVanhaecke2013},
Yvette Kosmann-Schwarzbach (editor) \cite{Kosmann2013} on 
both the scientific and historical aspects of the
development of modern Poisson geometry, 
Izu Vaisman \cite{Vaisman94}.

\subsection{Notations}\label{Notations}
Our notations are those which today are generally used in differential geometry. For example, the tangent and the cotangent bundles to 
a smooth $n$-dimensional manifold $M$
are denoted, respectively,  by $TM$ and by $T^*M$, and their canonical
projections on $M$ by $\tau_M:TM\to M$ and by $\pi_M:T^*M\to M$. The space of
differential forms of degree $p$, \emph{i.e.} the space of smooth sections of
$\bigwedge^p(T^*M)$, the $p$-th exterior power of the cotangent bundle, is 
denoted by $\Omega^p(M)$. Similarly the space of multivectors of degree $p$, 
\emph{i.e.} the space of smooth sections of $\bigwedge^p(TM)$, the $p$-th
exterior power of the tangent bundle, is denoted by $A^p(M)$. By convention
$\Omega^p(M)=A^p(M)=0$ for $p<0$ or $p>n$, and 
$\Omega^0(M)=A^0(M)=C^\infty(M,\RR)$.  
The exterior algebras of differential forms and of multivectors are, respectively, $\Omega(M)=\oplus_{p=0}^n\Omega^p(M)$ and
$A(M)=\oplus_{p=0}^nA^p(M)$. Their main properties
are briefly recalled in Section 
\ref{Schouten-Nijenhuis}.
\par\smallskip

When $f:M\to N$ is a smooth map between two smooth manifolds $M$
and $N$, the natural lift of $f$ to the tangent bundles is denoted by
$Tf:TM\to TN$. The same notation $Tf:\bigwedge^pTM\to \bigwedge^pTN$ 
is used to denote its natural prolongation to the $p$-th exterior power of $TM$. The pull-back by $f$ of a smooth differential form 
$\alpha\in \Omega(N)$ is denoted by $f^*\alpha$. 
\par\smallskip

When $f:M\to N$ is a smooth diffeomorphism, the push-forward $f_*X$ of a a 
smooth vector field $X\in A^1(M)$ is the vector field $f_*X\in A^1(N)$ defined by
 $$f_*X(y)=Tf\Bigl(X\bigl(f^{-1}(y)\bigr)\Bigr)\,,\quad y\in N\,.$$ 
Similarly, the pull-back of a smooth vector field $Y\in A^1(N)$
is the vector field $f^*Y\in A^1(M)$ defined by
 $$f^*Y(x)=Tf^{-1}\Bigl(Y\bigl(f(x)\bigr)\Bigr)\,,\quad x\in M\,.$$ 
The same notation is used for the push-forward of any smooth tensor field on $M$ and the pull-back of any smooth tensor field on $N$.

\section{Symplectic manifolds}\label{SymplecticManifolds}

\subsection{Definition and elementary properties}

\begin{defi} A \emph{symplectic form} on a smooth manifold $M$ is a bilinear
skew-symmetric differential form $\omega$ on that manifold which satisfies the following two properties:

\begin{itemize}

\item{} the form $\omega$ is closed; it means that its exterior differential $\d\omega$ vanishes: $\d\omega=0$;
 
\item{} the rank of $\omega$ is everywhere equal to the dimension of $M$; it means that for each point $x\in M$ and each vector $v\in T_xM$, $v\neq 0$, there exists another vector $w\in T_xM$ such that $\omega(x)(v,w)\neq 0$.

\end{itemize}
Equipped with the symplectic form $\omega$, the manifold $M$ is called a \emph{symplectic manifold} and denoted $(M,\omega)$. One says also that $\omega$ determines a
\emph{symplectic structure} on the manifold $M$.
\end{defi}

\subsubsection{Elementary properties of symplectic manifolds}
Let $(M,\omega)$ be a symplectic manifold.

\par\smallskip\noindent
{\bf 1.\quad} For each $x\in M$ and each $v\in T_xM$ we denote by 
$\mathi(v)\omega:T_xM\to\RR$ the map $w\mapsto\omega(x)(v,w)$; it is a linear form on
the vector space $T_xM$, in other words an element of the cotangent space $T^*_xM$.
Saying that the rank of $\omega$ is everywhere equal to the dimension of $M$ amounts to say that the map $v\mapsto \mathi(v)\omega$ is an isomorphism of the tangent bundle $TM$ onto the cotangent bundle $T^*M$.

\par\smallskip\noindent
{\bf 2.\quad} Let $V$ be a finite-dimensional vector space, and $\eta:V\times V\to\RR$ be a skew-symmetric bilinear form. As above,  $v\mapsto \mathi(v)\eta$ is a linear map defined on $V$, with values in its dual space $V^*$. The \emph{rank} of $\eta$ is the dimension of the image of that map.
An easy result in linear algebra is that the rank of a skew-symmetric bilinear form is always an even integer. When $(M,\omega)$ is a symplectic manifold, for each $x\in M$ that result can be applied to the bilinear form
$\omega(x):T_xM\times T_xM\to \RR$, and we see that the dimension of
$M$ must be an even integer $2n$.

\par\smallskip\noindent
{\bf 3.\quad} The \emph{Darboux theorem}, due to the 
French mathematician
Gaston Darboux (1842--1917), states that in 
a $2n$-dimensional symplectic manifold $(M,\omega)$ any point 
has a neighbourhood
on which there exists local coordinates $(x^1,\ldots,\allowbreak x^{2n})$ 
in which the $(2n)\times(2n)$-matrix $(\omega_{i\,j})$ ($1\leq i,\ j\leq 2n$) 
of components of $\omega$ is a constant, skew-symmetric invertible matrix. 
We recall that
 $$\omega_{i\,j}=
 \omega\left(\frac{\partial}{\partial x^i},\frac{\partial}{\partial x^j}
  \right)\,.
 $$
These local coordinates can even be chosen in such a way that
 \begin{equation*}
   \omega_{i j}=\begin{cases}
                      1& \text{if $i-j=n$},\\
                     -1& \text{if $i-j=-n$},\\
                      0& \text{if $\vert i-j\vert\neq n$},
                \end{cases}
                \quad\quad 1\leq i,\ j\leq 2n\,.
 \end{equation*}
Local coordinates which satisfy this property are called \emph{Darboux local coordinates}.

\par\smallskip\noindent
{\bf 4.\quad} On the $2n$-dimensional symplectic manifold $(M,\omega)$, the $2n$-form
$\omega^n$ (the $n$-th exterior power of $\omega$) is a \emph{volume form} (it means that it is everywhere $\neq 0$). Therefore
a symplectic manifold always is \emph{orientable}.

\subsection{Examples of symplectic manifolds}

\subsubsection{Surfaces}
A smooth orientable surface embedded in an Euclidean 3-dimensional affine space, endowed with the area form determined by the Euclidean metric, is a symplectic manifold.
\par\smallskip

More generally, any $2$-dimensional orientable manifold, equipped with a nowhere vanishing area form, is a symplectic manifold.

\subsubsection{Symplectic vector spaces}
A  \emph{symplectic vector space} is a finite-dimensional real vector space $E$ equipped with a skew-symmetric bilinear form 
$\omega: E\times E\to \RR$ of rank equal to the dimension of $E$; 
therefore $\dim E$ is an even integer $2n$. Considered as a constant differential 
two-form on $E$, $\eta$ is symplectic, which allows us to consider 
$(E,\eta)$ as a symplectic manifold.
\par\smallskip

The canonical example of a symplectic vector space is the following. 
Let $V$ be a real 
$n$-dimensional vector space and let $V^*$ be its dual space. There exists
on the direct sum $V\oplus V^*$ a natural skew-symmetric
bilinear form 
 $$\eta\bigl((x_1,\zeta_1),(x_2,\zeta_2)\bigr)= \langle\zeta_1,x_2\rangle-\langle\zeta_2,x_1\rangle\,.$$
The rank of $\eta$ being $2n$, $(V\oplus V^*,\eta)$ is a symplectic vector space.
\par\smallskip

Conversely, any $2n$-dimensional symplectic 
vector space $(E,\omega)$ can be identified with the 
direct sum of any of its $n$-dimensional vector subspaces $V$ such that the symplectic form $\omega$ vanishes identically on $V\times V$, with its dual space $V^*$. In this identification, the symplectic form $\omega$ on $E$ becomes
identified with the above-defined symplectic form $\eta$ on $V\oplus V^*$.   

\subsubsection{Cotangent bundles}
Let $N$ be a smooth 
$n$-dimensional manifold. With the notations 
of \ref{Notations} for the canonical projections of
tangent or cotangent bundles onto their base manifold and for prolongation 
to vectors of a smooth map, we recall that the diagram
\newarrow{Mapsto}|--->
 $$
 \begin{diagram}
  T(T^*N)           &\rTo^{T\pi_N}  &TN\\
  \dTo^{\tau_{T^*N}}&               &\dTo_{\tau_N}\\
  T^*N             &\rTo^{\pi_N}   &N\\
 \end{diagram}
 $$ 
is commutative. For each $w\in T(T^*N)$, we can therefore write
 $$\eta_{N}(w)=\bigl\langle \tau_{T^*N}(w), T\pi_N(w)\bigr\rangle\,.
 $$
This formula defines a differential 1-form $\eta_{N}$ on the manifold
$T^*N$, called the \emph{ Liouville 1-form}. Its exterior differential $d\eta_N$ is a symplectic form, called the 
\emph{canonical symplectic form} on the cotangent bundle $T^*N$.
\par\smallskip

Let  $(x^1,\ldots,x^n)$ be a system of local coordinates on $N$, 
$(x^1,\ldots,x^n,\allowbreak
p_1,\ldots,p_n)$ be the corresponding system of local coordinates on 
$T^*N$. The local expressions of the Liouville form $\eta_N$ and of 
its exterior differential $d\eta_N$ are
 $$\eta_N=\sum_{i=1}^n p_i\,\d x^i\,,\quad 
   \d\eta_N=\sum_{i=1}^n \d p_i\wedge \d x^i\,.
 $$
We see that $(x^1,\ldots,x^n,\allowbreak
p_1,\ldots,p_n)$ is a system of Darboux local coordinates. Therefore any symplectic manifold is locally isomorphic to a cotangent bundle.

\subsubsection{The complex plane}
The complex plane $\CC$ is naturally endowed with a Hermitian form 
 $$\eta(z_1,z_2)=z_1\overline{z_2}\,,\quad z_1\ \hbox{and }z_2\in\CC\,,
 $$
where $\overline{z_2}$ is the conjugate of the complex number $z_2$. 
Let us write $z_1=x_1+iy_1$, $z_2=x_2+iy_2$, where $x_1,\ y_1,\ x_2,\ y_2$
are real, and separate the real and imaginary parts of $\eta(z_1,z_2)$. We get
 $$\eta(z_1,z_2)=(x_1x_2+y_1y_2) + i(y_1x_2-y_2x_1)\,.$$
The complex plane $\CC$ has an underlying structure of real, 
2-dimensional vector space, which can be identified with $\RR^2$, each complex number  
$z=x+iy\in\CC$  being identified with $(x,y)\in\RR^2$. The real and imaginary
parts of the Hermitian form $\eta$ on $\CC$ are, respectively, the Euclidean scalar product $g$ and the symplectic form $\omega$ on $\RR^2$
such that
 \begin{align*}
  \eta(z_1,z_2)&=(x_1x_2+y_1y_2) + i(y_1x_2-y_2x_1)\\
               &=g\bigl((x_1,y_1),(x_2,y_2)\bigr)+i\omega\bigl((x_1,y_1),(x_2,y_2))\,.
  \end{align*}

\subsubsection{K\"ahler manifolds}
More generally, a $n$-dimensional
\emph{K\"ahler manifold} (\emph{i.e.} a complex manifold of
complex dimension $n$ endowed with a Hermitian form whose imaginary part is a closed two-form), when 
considered as  a
real $2n$-dimensional manifold, 
is automatically endowed with a Riemannian metric 
and a symplectic form given, respectively, 
by the real and the imaginary parts of the Hermitian form.
\par\smallskip

Conversely, it is not always possible to endow a symplectic manifold
with a complex structure and a Hermitian form of which the given
symplectic form is the imaginary part. However, it is always possible to define, on a symplectic manifold, an \emph{almost complex
structure} and an \emph{almost complex 2-form} with which the
properties of the symplectic manifold become similar to those of a
K\"ahler manifold (but with change of chart functions which are not
holomorphic functions).
This possibility was used by \emph{Mikha{\"{\i}}l Gromov} \cite{Gromov85} 
in his theory of \emph{pseudo-ho\-lo\-mor\-phic curves}.

\subsection{Remarkable submanifolds of a symplectic manifold}

\begin{defis} Let $(V,\omega)$ be a symplectic vector space, and $W$ be a vector subspace of $V$. The \emph{symplectic orthogonal} of $W$ is the vector subspace
 $$\orth W=\{\,v\in V\,;\,\omega(v,w)=0\ \text{for all }w\in W\,\}\,.$$
The vector subspace $W$ is said to be

\begin{itemize}
\item{} \emph{isotropic} if $W\subset\orth W$,

\item{} \emph{coisotropic} if $W\supset\orth w$,

\item{} \emph{Lagrangian} if $W=\orth W$,

\item{} \emph{symplectic} if $W\oplus\orth W=V$.
\end{itemize}
\end{defis}

\subsubsection{Properties of symplectic orthogonality} 
The properties stated
below are  easily consequences of the above definitions                                        

\par\smallskip
\noindent
{\bf 1.\quad} For any vector subspace $W$ of the symplectic vector space $(W,\omega)$,
we have $\orth(\orth W)=W$.

\par\smallskip\noindent
{\bf 2.\quad} Let $\dim V=2n$. For any vector subspace $W$ of $V$, we have $\dim(\orth W)=\dim V-\dim W
=2n-\dim W$. 
Therefore, if $W$ is isotropic, $\dim W\leq n$; if $W$ is coisotropic,
$\dim W\geq n$; and if $W$ is Lagrangian, $\dim W=n$.

\par\smallskip\noindent
{\bf 3.\quad} Let $W$ be an isotropic vector subspace of $V$. The restriction to 
$W\times W$ of the symplectic form 
$\omega$ vanishes identically. Conversely, if $W$ is a vector subspace such that the restriction of $\omega$ to $W\times W$ vanishes identically, $W$ is isotropic. 

\par\smallskip\noindent
{\bf 4.\quad} A Lagrangian vector subspace of $V$ is an isotropic subspace whose dimension is the highest possible, equal to half the dimension of $V$.

\par\smallskip\noindent
{\bf 5.\quad} Let $W$ be a symplectic vector subspace of $V$. Since 
$W\cap\orth W=\{0\}$, the rank of the restriction to $W\times W$ of the form $\omega$ is equal to $\dim W$; therefore $\dim W$ is even and, equipped with the restriction of 
$\omega$, $W$ is a symplectic vector space. Conversely if, when equipped with the restriction of $\omega$, a vector subspace $W$ of $V$ is a symplectic vector space, 
we have $W\oplus\orth W=V$, and $W$ is a symplectic vector subspace of $V$ in the sense of the above definition.

\par\smallskip\noindent
{\bf 6.\quad} A vector subspace $W$ of $V$ is symplectic if and only if $\orth W$ is symplectic.

\begin{defis} Let $(M,\omega)$ be a symplectic manifold. For each $x\in M$, 
$\bigl(T_xM,\omega(x)\bigr)$ is a symplectic vector space.
A submanifold $N$ of $M$ is said to be

\begin{itemize}

\item{} \emph{isotropic} if for each $x\in N$, $T_xN$ is an isotropic vector subspace of the symplectic vector space
$\bigl(T_xM,\omega(x)\bigr)$,

\item{} \emph{coisotropic} if for each $x\in N$, $T_xN$ is a coisotropic vector subspace of $\bigl(T_xM,\omega(x)\bigr)$, 

\item{} \emph{Lagrangian} if for each $x\in N$, $T_xN$ is a Lagrangian vector subspace of $\bigl(T_xM,\omega(x)\bigr)$,

\item{} \emph{symplectic} if for each $x\in N$, $T_xN$ is a symplectic vector subspace of $\bigl(T_xM,\omega(x)\bigr)$.
\end{itemize}
\end{defis}

\subsection{Hamiltonian vector fields on a symplectic manifold}

Let $(M,\omega)$ be a symplectic manifold. We have seen that the map which associates to each vector $v\in TM$ the covector $\mathi(v)\omega$ is an isomorphism from $TM$ onto $T^*M$. So, for any given differential one-form 
$\alpha$, there exists a unique vector field $X$ such that $\mathi(X)\omega=\alpha$. We are therefore allowed to state the following definitions.

\begin{defis}
Let $(M,\omega)$ be a symplectic manifold and $f:M\to\RR$ be a smooth function.
The vector field $X_f$ which satisfies
 $$\mathi(X_f)\omega=-\d f$$
is called the \emph{Hamiltonian vector field} associated to $f$. The function $f$ is called a \emph{Hamiltonian} for the Hamiltonian vector field $X_f$.
\par\smallskip
A vector field $X$ on $M$ such that the one-form $\mathi(X)\omega$ is closed,
 $$\d\mathi(X)\omega=0\,,$$
is said to be \emph{locally Hamiltonian.} 
\end{defis}

\begin{rmks}
The function $f$ is not the unique Hamiltonian of the Hamiltonian vector field 
$X_f$: any function $g$ such that $\mathi(X_f)\omega=-dg$ is another Hamiltonian for $X_f$. Given a Hamiltonian $f$ of $X_f$, a function $g$ is another Hamiltonian for $X_f$ if and only if $\d(f-g)=0$, or in other words if and only if $f-g$ keeps a constant value on each connected component of $M$
\par\smallskip

Of course, a Hamiltonian vector field is locally Hamiltonian. The converse is not true when the cohomology space $H^1(M,\RR)$ is not trivial. 
\end{rmks}

\begin{prop}\label{LocallyHamiltonian} 
On a symplectic manifold $(M,\omega)$, a vector field $X$ is locally
Hamiltonian if and only if the Lie derivative 
${\mathcal L}(X)\omega$ of the symplectic form
$\omega$ with respect to $X$ vanishes:
 $${\mathcal L}(X)\omega=0\,.$$

The bracket $[X,Y]$ of two locally Hamiltonian vector fields $X$ and $Y$ is Hamiltonian, and has as a Hamiltonian the function $\omega(X,Y)$.
\end{prop}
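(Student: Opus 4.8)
The plan is to reduce everything to two standard identities of Cartan calculus: the homotopy formula ${\mathcal L}(X)=\mathi(X)\circ\d+\d\circ\mathi(X)$, valid for any vector field $X$, and the commutation relation $\mathi\bigl([X,Y]\bigr)={\mathcal L}(X)\circ\mathi(Y)-\mathi(Y)\circ{\mathcal L}(X)$ between Lie derivatives and interior products.

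For the first assertion I would apply the homotopy formula to the symplectic form $\omega$. Since $\omega$ is closed, the term $\mathi(X)\d\omega$ vanishes and one is left with ${\mathcal L}(X)\omega=\d\bigl(\mathi(X)\omega\bigr)$. Hence ${\mathcal L}(X)\omega=0$ holds if and only if the one-form $\mathi(X)\omega$ is closed, which is precisely the definition of $X$ being locally Hamiltonian.

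For the second assertion, assume $X$ and $Y$ are locally Hamiltonian. By the first part ${\mathcal L}(X)\omega=0$, so the commutation relation applied to $\omega$ gives $\mathi\bigl([X,Y]\bigr)\omega={\mathcal L}(X)\bigl(\mathi(Y)\omega\bigr)$. Next I would apply the homotopy formula again, this time to the one-form $\beta=\mathi(Y)\omega$: since $Y$ is locally Hamiltonian, $\d\beta=0$, so ${\mathcal L}(X)\beta=\d\bigl(\mathi(X)\beta\bigr)=\d\bigl(\mathi(X)\mathi(Y)\omega\bigr)$. Finally, unwinding the definition of the interior product, $\mathi(X)\mathi(Y)\omega=\omega(Y,X)=-\omega(X,Y)$, whence $\mathi\bigl([X,Y]\bigr)\omega=-\d\bigl(\omega(X,Y)\bigr)$. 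By the defining relation $\mathi(X_f)\omega=-\d f$ of a Hamiltonian vector field, and since the vector field with a prescribed value of $\mathi(\cdot)\omega$ is unique, this says exactly that $[X,Y]=X_{\omega(X,Y)}$; in particular $[X,Y]$ is Hamiltonian and admits $\omega(X,Y)$ as a Hamiltonian.

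There is no serious obstacle here; the computation is short once the two Cartan identities are in hand. The one place to be careful is the bookkeeping of signs and of the order of the arguments in $\mathi(X)\mathi(Y)\omega$, so that the result lands on the normalization $\mathi(X_f)\omega=-\d f$ used in the definition above and not on its opposite. If one wishes to remain self-contained, the commutation relation $[{\mathcal L}(X),\mathi(Y)]=\mathi\bigl([X,Y]\bigr)$ on one-forms can be checked directly by evaluating both sides on an arbitrary vector field $Z$ and using the Leibniz rule for ${\mathcal L}(X)$ together with $[X,Y]\cdot f=X\cdot(Y\cdot f)-Y\cdot(X\cdot f)$.
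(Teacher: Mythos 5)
Your proof is correct and follows essentially the same route as the paper: the homotopy formula applied to the closed form $\omega$ for the first assertion, then the commutator identity $\mathi\bigl([X,Y]\bigr)={\mathcal L}(X)\mathi(Y)-\mathi(Y){\mathcal L}(X)$ together with a second application of the homotopy formula to $\mathi(Y)\omega$ for the second. Your sign bookkeeping, landing on $\mathi\bigl([X,Y]\bigr)\omega=-\d\bigl(\omega(X,Y)\bigr)$, matches the paper's convention $\mathi(X_f)\omega=-\d f$ exactly.
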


\begin{proof}
The well known formula which relates the the exterior differential $\d$,
the interior product $\mathi(X)$ and the Lie derivative 
${\mathcal L}(X)$ with respect to the vector field $X$
 $${\mathcal L}(X)=\mathi(X)\d+\d\mathi(X)$$
proves that when $X$ is a vector field on a symplectic manifold $(M,\omega)$
 $${\mathcal L}(X)\omega = \d\,\mathi(X)\omega\,,$$
since $\d\,\omega=0$. Therefore $\mathi(X)\omega$ is closed if and 
only if ${\mathcal L}(X)\omega=0$.
\par\smallskip

Let $X$ and $Y$ be two locally Hamiltonian vector fields. We have
 \begin{equation*}
 \begin{split}
 \mathi\bigl([X,Y]\bigr)\omega&={\mathcal L}(X)\mathi(Y)\omega-\mathi(Y){\mathcal L}(X)\omega\\
                           &={\mathcal L}(X)\mathi(Y)\omega\\
                  &=\bigl(\mathi(X)\d +\d\,\mathi(X)\bigr)\mathi(Y)\omega\\
                  &=\d\,\mathi(X)\mathi(Y)\omega\\
                  &=-d\bigl(\omega(X,Y)\bigr)\,,
 \end{split}
 \end{equation*}      
which proves that $\omega(X,Y)$ is a Hamiltonian for $[X,Y]$.
\end{proof}

\subsubsection{Expression in a system of Darboux local coordinates}
Let $(x^1,\ldots,x^{2n})$ be a system of Darboux local coordinates. The symplectic form $\omega$ can be locally writen as
 $$\omega=\sum_{i=1}^n\d\,x^{n+i}\wedge\d\,x^i\,,$$
so we see that the Hamiltonian vector field $X_f$ associated to a smooth 
function $f$ can be locally written as
 $${X}_f=\sum_{i=1}^n\frac{\partial f}{\partial x^{n+i}}\,\frac{\partial}{\partial x^i}
                  -\frac{\partial f}{\partial x^{i}}\,\frac{\partial}{\partial x^{n+i}}\,.
 $$
A smooth curve $\varphi$  drawn in $M$ parametrized by the real variable $t$ is said to be a \emph{solution} of the 
differential equation determined by ${X}_f$, or an \emph{integral curve} of $X_f$,
if it satisfies the equation, called the 
\emph{Hamilton equation} for the Hamiltonian $f$,
 $$\frac{\d\varphi(t)}{\d t}=X_f\bigl(\varphi(t)\bigr)\,.$$
Its local expression in the considered system of Darboux local coordinates is 
\begin{equation*}
\left\{
\begin {aligned}
 \frac{\d x^{i}}{\d t}&=\frac{\partial f}{\partial x^{n+i}}\,,\\
 \frac{\d x^{n+i}}{\d t}&=-\frac{\partial f}{\partial x^{i}}\,,\\
 \end{aligned}
 \qquad(1\leq i\leq n)\,.
\right.
\end{equation*}

\begin{defi}\label{CanonicalLiftDiffeo}
Let $\Phi:N\to N$ be a diffeomorphism of a smooth manifold $N$ onto itself.
The \emph{canonical lift} of $\Phi$ to the cotangent bundle is the transpose
of the vector bundles isomorphism $T(\Phi^{-1})=(T\Phi)^{-1}:TN\to TN$. In other words, denoting by $\widehat \Phi$ the canonical lift of $\Phi$ to the cotangent bundle, we have for all $x\in N$, $\xi\in T^*_xN$ and 
$v\in T_{\Phi(x)}N$,
 $$\bigl\langle\widehat\Phi(\xi),v\bigr\rangle
   =\bigl\langle\xi, (T\Phi)^{-1}(v)\bigr\rangle\,.
 $$ 
\end{defi}

\begin{rmk}
With the notations of Definition \ref{CanonicalLiftDiffeo}, we have
$\pi_N\circ\widehat \Phi=\Phi\circ \pi_{N}$.
\end{rmk}

\subsubsection{The flow of a vector field} Let $X$ be a smooth vector field on a smooth manifold $M$. We recall that the \emph{reduced flow}
of $X$ is the map $\Phi$, defined on an open subset $\Omega$ of 
$\RR\times M$ and taking its values in $M$, such that for each $x\in M$
the parametrized curve $t\mapsto \varphi(t)=\Phi(t,x)$ is the maximal integral curve of the differential equation
 $$\frac{\d\varphi(t)}{\d t}=X\bigl(\varphi(t))$$
 which satisfies
$\varphi(0)=x$. 
For each $t\in \RR$, the set $D_t=\{x\in M;(t,x)\in\Omega\}$ 
is an open subset of $M$ and when $D_t$ is not empty the map 
$x\mapsto \Phi_t(x)=\Phi(t,x)$ is a diffeomorphism of $D_t$ onto
$D_{-t}$. 

\begin{defis}\label{CanonicalLiftDef}
Let $N$ be a smooth manifold, $TN$ and $T^*N$ be its tangent and cotangent bundles, $\tau_N:TN\to N$ and $\pi_N:T^*N\to N$ be their canonical projections. Let $X$ be a smooth vector field on $N$ and 
$\{\Phi^X_t\,;t\in\RR\}$ be its reduced flow. 

\par\smallskip\noindent
{\bf 1.\quad} The \emph{canonical lift} of $X$ to the tangent bundle $TN$ 
is the unique vector field $\overline X$ on $TM$ whose reduced flow 
$\{\Phi^{\overline X}_t\,;t\in \RR\}$ is the prolongation to vectors of the reduced flow of $X$. In other words, for each $t\in\RR$,
 $$\Phi^{\overline X}_t=T\Phi^X_t\,,$$
therefore, for each $v\in TN$,
 $$\overline X(v)=\frac{\d}{\d t}\bigl(T\Phi^X_t(v)\bigr)\bigm|_{t=0}\,.
 $$

\par\smallskip\noindent
{\bf 2.\quad}  The \emph{canonical lift} of $X$ to the cotangent bundle 
$T^*N$  is the unique vector field $\widehat X$ on $T^*M$ whose reduced flow
$\{\Phi^{\widehat X}_t\,;t\in \RR\}$ is the lift to the cotangent bundle
of the reduced flow $\{\Phi^X_t\,;t\in\RR\}$ of $X$. In other words, for each $t\in\RR$,
 $$\Phi^{\widehat X}_t=\widehat{\Phi^X_t}\,,$$ 
therefore, for each $\xi\in T^*N$,
$$\widehat X(\xi)=\frac{\d}{\d t}\bigl(\widehat{\Phi^{X}_t}(\xi)\bigr)\bigm|_{t=0}\,.
 $$
\end{defis}

\begin{rmk}
Let $X$ be a smooth vector field defined on a smooth manifold $N$. Its canonical lift
$\overline X$ to the tangent bundle $TN$ (\ref{CanonicalLiftDef}) is related to the
prolongation to vectors $TX:TN\to T(TN)$ by the formula
 $$\overline X=\kappa_N\circ TX\,,$$ 
where $\kappa_N:T(TN)\to T(TN)$ is the canonical involution of the tangent bundle to $TN$ (see \cite{Tulczyjew89}).   
\end{rmk}

\begin{prop}\label{PropCanonicalLifts}
Let $\Phi:N\to N$ be a diffeomorphism of a smooth manifold $N$ onto itself
and $\widehat \Phi:T^*N\to T^*N$ the canonical lift of $\Phi$ to the cotangent bundle. Let $\eta_N$ be the Liouville form on $T^*N$. We have
 $$\widehat\Phi^*\eta_N=\eta_N\,.$$

Let $X$ be a smooth vector field on $N$, and
$\widehat X$ be the canonical lift of $X$ to the cotangent bundle. We have
 $${\mathcal L}(\widehat X)(\eta_N)=0\,.$$
\end{prop}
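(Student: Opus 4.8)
The plan is to establish the first identity $\widehat\Phi^*\eta_N=\eta_N$ directly from the intrinsic formula $\eta_N(w)=\bigl\langle\tau_{T^*N}(w),T\pi_N(w)\bigr\rangle$ that defines the Liouville form, and then to deduce the second identity by applying the first to the reduced flow of $X$ and differentiating at $t=0$.

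First I would fix $\xi\in T^*N$ and a tangent vector $w\in T_\xi(T^*N)$, and unwind the pullback: by definition $(\widehat\Phi^*\eta_N)(w)=\eta_N\bigl(T\widehat\Phi(w)\bigr)$, so I must evaluate the two ``legs'' of the commutative diagram on $T\widehat\Phi(w)$. For the base point, $\tau_{T^*N}\bigl(T\widehat\Phi(w)\bigr)=\widehat\Phi\bigl(\tau_{T^*N}(w)\bigr)=\widehat\Phi(\xi)$, since $\tau_{T^*N}\circ T\widehat\Phi=\widehat\Phi\circ\tau_{T^*N}$ (naturality of the tangent projection). For the other leg, using $\pi_N\circ\widehat\Phi=\Phi\circ\pi_N$ (the Remark following Definition \ref{CanonicalLiftDiffeo}) together with functoriality of $T$, I get $T\pi_N\bigl(T\widehat\Phi(w)\bigr)=T(\pi_N\circ\widehat\Phi)(w)=T(\Phi\circ\pi_N)(w)=T\Phi\bigl(T\pi_N(w)\bigr)$. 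Hence
$$(\widehat\Phi^*\eta_N)(w)=\bigl\langle\widehat\Phi(\xi),\,T\Phi\bigl(T\pi_N(w)\bigr)\bigr\rangle.$$
Now I apply the defining property of the canonical lift $\widehat\Phi$ from Definition \ref{CanonicalLiftDiffeo}, namely $\bigl\langle\widehat\Phi(\xi),v\bigr\rangle=\bigl\langle\xi,(T\Phi)^{-1}(v)\bigr\rangle$, taking $v=T\Phi\bigl(T\pi_N(w)\bigr)$; the factor $(T\Phi)^{-1}$ cancels $T\Phi$ and leaves $\bigl\langle\xi,T\pi_N(w)\bigr\rangle=\eta_N(w)$. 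Since $\xi$ and $w$ were arbitrary, $\widehat\Phi^*\eta_N=\eta_N$.

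For the second identity I would let $\{\Phi^X_t\}$ be the reduced flow of $X$; by Definition \ref{CanonicalLiftDef} its lift to the cotangent bundle $\{\widehat{\Phi^X_t}\}=\{\Phi^{\widehat X}_t\}$ is precisely the reduced flow of $\widehat X$. Applying the first part to each $\Phi^X_t$ (on the open set where it is defined) yields $\bigl(\Phi^{\widehat X}_t\bigr)^*\eta_N=\eta_N$ for all small $t$, and differentiating this identity at $t=0$ gives ${\mathcal L}(\widehat X)(\eta_N)=0$ by the very definition of the Lie derivative along a (local) flow.

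I expect the only delicate point to be the careful bookkeeping in the first part: correctly identifying the two components of $T\widehat\Phi(w)$ via the commutative diagram and invoking $\pi_N\circ\widehat\Phi=\Phi\circ\pi_N$ at the right moment, after which the cancellation of $T\Phi$ with $(T\Phi)^{-1}$ is automatic. The second part is then a routine passage from flow invariance to vanishing Lie derivative, the only caveat being that one works with the reduced (local) flow, which is harmless since the Lie derivative is a pointwise, local operation.
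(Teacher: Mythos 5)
Your proposal is correct and follows essentially the same route as the paper: unwinding $\widehat\Phi^*\eta_N$ via the relations $\tau_{T^*N}\circ T\widehat\Phi=\widehat\Phi\circ\tau_{T^*N}$ and $\pi_N\circ\widehat\Phi=\Phi\circ\pi_N$, then cancelling $T\Phi$ against $(T\Phi)^{-1}$ through the defining property of the canonical lift, and finally deducing ${\mathcal L}(\widehat X)\eta_N=0$ by differentiating the flow invariance at $t=0$. No gaps.
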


\begin{proof}
Let $\xi\in T^*N$ and $v\in T_\xi(T^*N)$. We have
 $$\widehat\Phi^*\eta_N(v)=\eta_N\bigl(T\widehat\Phi(v)\bigr)
   =\bigl\langle \tau_{T^*N}\circ T\widehat\Phi(v),T\pi_N\circ T\widehat \Phi(v)\bigr\rangle\,.
 $$
But $\tau_{T^*N}\circ T\widehat\Phi=\widehat\Phi\circ \tau_{T^*N}$ and
$T\pi_N\circ T\widehat\Phi=T(\pi_N\circ\widehat\Phi)=T(\Phi\circ \pi_N)$.
Therefore
 $$\widehat\Phi^*\eta_N(v)=
    \bigl\langle \widehat\Phi\circ \tau_{T^*N}(v),
     T(\Phi\circ \pi_N)(v)\bigr\rangle
     =\bigl\langle\tau_{T^*N}(v),T\pi_N(v)\bigr\rangle=\eta_N(v)
 $$
since $\widehat\Phi=(T\Phi^{-1})^T$.
Now let $X$ be a smooth vector field on $N$, $\{\Phi^X_t\,;t\in\RR\}$
be its reduced flow, and $\widehat X$ be the canonical lift of $X$ to the cotangent bundle. We know that the reduced flow of $\widehat X$ is
$\{\widehat{\Phi^X_t}\,;t\in\RR\}$, so we can write
 $${\mathcal L}(\widehat X)\eta_N=\frac{\d}{\d t}
      \bigl({\widehat{\Phi^X_t}}^*\eta_N\bigr)\bigm|_{t=0}\,.
 $$
Since $\widehat{\Phi^X_t}^*\eta_N=\eta_N$ does not depend on $t$,
${\mathcal L}(\widehat X)\eta_N=0$.
\end{proof}

The following Proposition, which presents an important example of 
Hamiltonian vector field on a cotangent bundle, will be used when we will consider Hamiltonian actions of a Lie group on its cotangent bundle.

\begin{prop}\label{CanonicalLiftHam}
Let $N$ be a smooth manifold, $T^*N$ be its cotangent bundle, $\eta_N$ be the Liouville form and $\d\eta_N$ be the canonical symplectic form on $T^*N$.
Let $X$ be a smooth vector field on $N$ and $f_X:T^*N\to \RR$ be
the smooth function defined by
 $$f_X(\xi)=\Bigl\langle\xi,X\bigl(\pi_N(\xi)\bigr)\Bigr\rangle\,,
    \quad\xi\in T^*N\,.$$
On the symplectic manifold $(T^*N,\d\eta_N)$, the vector field $\widehat X$,
canonical lift to $T^*N$ of the vector field $X$ on $N$ in the sense defined above (\ref{CanonicalLiftDef}), is a Hamiltonian field which has the function $f_X$ as a Hamiltonian. In other words
 $$\mathi(\widehat X)\d \eta_N=-\d f_X\,.$$
Moreover,
 $$f_X=\mathi(\widehat X)\eta_N\,.$$
\end{prop}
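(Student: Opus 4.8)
The plan is to prove the second identity $f_X = \mathi(\widehat X)\eta_N$ first, and then deduce the Hamiltonian relation from it together with Proposition \ref{PropCanonicalLifts}. For the first identity, I would fix $\xi \in T^*N$ and compute $\mathi(\widehat X)\eta_N$ at $\xi$ by pairing the Liouville form with the vector $\widehat X(\xi) \in T_\xi(T^*N)$. By the definition of the Liouville form, $\eta_N\bigl(\widehat X(\xi)\bigr) = \bigl\langle \tau_{T^*N}\bigl(\widehat X(\xi)\bigr), T\pi_N\bigl(\widehat X(\xi)\bigr)\bigr\rangle$. Here $\tau_{T^*N}\bigl(\widehat X(\xi)\bigr) = \xi$ since $\widehat X(\xi)$ is a tangent vector to $T^*N$ at the point $\xi$. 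The remaining task is to identify $T\pi_N\bigl(\widehat X(\xi)\bigr)$ with $X\bigl(\pi_N(\xi)\bigr) \in T_{\pi_N(\xi)}N$; this follows from the remark that $\pi_N \circ \widehat{\Phi^X_t} = \Phi^X_t \circ \pi_N$, so projecting the flow line $t \mapsto \widehat{\Phi^X_t}(\xi)$ down to $N$ gives the integral curve $t \mapsto \Phi^X_t\bigl(\pi_N(\xi)\bigr)$ of $X$, whose velocity at $t=0$ is $X\bigl(\pi_N(\xi)\bigr)$. Putting these together yields $\eta_N\bigl(\widehat X(\xi)\bigr) = \bigl\langle \xi, X(\pi_N(\xi))\bigr\rangle = f_X(\xi)$.

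Next I would establish the Hamiltonian relation using Cartan's formula ${\mathcal L}(\widehat X) = \mathi(\widehat X)\d + \d\,\mathi(\widehat X)$ applied to $\eta_N$. This gives
\begin{equation*}
{\mathcal L}(\widehat X)\eta_N = \mathi(\widehat X)\,\d\eta_N + \d\bigl(\mathi(\widehat X)\eta_N\bigr)\,.
\end{equation*}
By Proposition \ref{PropCanonicalLifts} the left-hand side vanishes, and by the first part of this proposition $\mathi(\widehat X)\eta_N = f_X$, so
\begin{equation*}
0 = \mathi(\widehat X)\,\d\eta_N + \d f_X\,,
\end{equation*}
that is, $\mathi(\widehat X)\,\d\eta_N = -\d f_X$. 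Since $\d\eta_N$ is the canonical symplectic form on $T^*N$, this says precisely that $\widehat X$ is Hamiltonian with Hamiltonian $f_X$.

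I would present the second identity before the first in the written proof only if it streamlines the exposition; logically the order above (identity $f_X = \mathi(\widehat X)\eta_N$ first, then the Hamiltonian relation) is the natural one, since the Cartan-formula argument consumes the $\eta_N$-identity as input. The one genuine subtlety — the step I expect to require the most care — is the identification $T\pi_N\bigl(\widehat X(\xi)\bigr) = X\bigl(\pi_N(\xi)\bigr)$: one must be careful that $\widehat X(\xi) = \frac{\d}{\d t}\widehat{\Phi^X_t}(\xi)\big|_{t=0}$, apply $T\pi_N$, commute it past the derivative, and use the intertwining $\pi_N \circ \widehat{\Phi^X_t} = \Phi^X_t \circ \pi_N$ from the Remark following Definition \ref{CanonicalLiftDiffeo}. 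Everything else is a direct unwinding of definitions and a single application of Cartan's formula together with the already-proven Proposition \ref{PropCanonicalLifts}.
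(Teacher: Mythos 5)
Your proposal is correct and follows essentially the same route as the paper's proof: both rest on Cartan's formula together with ${\mathcal L}(\widehat X)\eta_N=0$ from Proposition \ref{PropCanonicalLifts}, and on unwinding the definition of $\eta_N$ to identify $\mathi(\widehat X)\eta_N$ with $f_X$ via $T\pi_N\bigl(\widehat X(\xi)\bigr)=X\bigl(\pi_N(\xi)\bigr)$. The only difference is the order of the two steps (the paper derives the Hamiltonian relation first, with $\mathi(\widehat X)\eta_N$ as the Hamiltonian, and only then identifies it with $f_X$), and your justification of the projection step via the intertwining of the flows is a welcome elaboration of a point the paper states without comment.
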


\begin{proof}
We have seen (Proposition \ref{PropCanonicalLifts}) that 
${\mathcal L}(\widehat X)\eta_N=0$. Therefore
 $$\mathi(\widehat X)\d \eta_N={\mathcal L}(\widehat X)\eta_N-
   \d\mathi(\widehat X)\eta_N=-\d\mathi(\widehat X)\eta_N\,,$$
which proves that $\widehat X$ is Hamiltonian and admits
$\mathi(\widehat X)\eta_N$ as Hamiltonian. For each $\xi\in T^*N$
 $$\mathi(\widehat X)\eta_N(\xi)=\eta_N(\widehat X)(\xi)
   =\Bigl\langle\xi,T\pi_N\bigl(\widehat X(\xi)\bigr)\Bigr\rangle
   =\Bigl\langle\xi,X\bigl(\pi_N(\xi)\bigr)\Bigr\rangle=f_X(\xi)\,.\qedhere
 $$
\end{proof}

\subsection{The Poisson bracket}

\begin{defi} The \emph{Poisson bracket} of an ordered pair
$(f,g)$ of smooth functions defined on the symplectic manifold $(M,\omega)$ is 
the smooth function $\{f,g\}$ defined by the equivalent formulae
 $$\{f,g\}=\mathi({ X}_f)\,dg=-\mathi({X}_g)\,df=\omega({ X}_f,{ X}_g)\,,$$
where $X_f$ and $X_g$ are the Hamiltonian vector fields on $M$ with, respectively,
the functions $f$ and $g$ as Hamiltonian.
\end{defi}

\begin{lemme}\label{LemmaPoissonBracket}
Let $(M,\omega)$ be a symplectic manifold, let $f$ and $g$ be two smooth functions on $M$ and let $X_f$ and $X_g$ be the associated Hamiltonian vector fields.
The bracket $[X_f,X_g]$ is a Hamiltonian vector field which admits $\{f,g\}$ as Hamiltonian.
\end{lemme}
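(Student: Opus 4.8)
The plan is to reduce this lemma to the last assertion of Proposition~\ref{LocallyHamiltonian}, which already tells us that the bracket of two locally Hamiltonian vector fields is Hamiltonian with $\omega(X,Y)$ as a Hamiltonian. Since a Hamiltonian vector field $X_f$ satisfies $\mathi(X_f)\omega=-\d f$, the one-form $\mathi(X_f)\omega$ is exact, hence closed, so $X_f$ is in particular locally Hamiltonian; the same holds for $X_g$. Therefore Proposition~\ref{LocallyHamiltonian} applies directly and yields that $[X_f,X_g]$ is a Hamiltonian vector field admitting $\omega(X_f,X_g)$ as a Hamiltonian.

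The only remaining point is to identify that Hamiltonian with $\{f,g\}$. But this is immediate from the definition of the Poisson bracket just given: $\{f,g\}=\omega(X_f,X_g)$. So the chain of equalities is $\mathi([X_f,X_g])\omega = -\d\bigl(\omega(X_f,X_g)\bigr) = -\d\{f,g\}$, where the first equality is the computation carried out in the proof of Proposition~\ref{LocallyHamiltonian} and the second is the definition of $\{f,g\}$. This shows $[X_f,X_g]=X_{\{f,g\}}$.

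First I would note explicitly that Hamiltonian implies locally Hamiltonian (a one-line remark, already made in the Remarks following the definition of Hamiltonian vector fields). Then I would invoke Proposition~\ref{LocallyHamiltonian} for the pair $X_f$, $X_g$. Then I would substitute $\omega(X_f,X_g)=\{f,g\}$ from the definition of the Poisson bracket. There is essentially no obstacle here: the real content — the interior-product/Lie-derivative manipulation showing $\mathi([X,Y])\omega=-\d(\omega(X,Y))$ — was already done in Proposition~\ref{LocallyHamiltonian}, so this lemma is a short corollary rather than a fresh computation. If one wanted a self-contained proof instead, one would simply reproduce that same five-line chain of identities ($\mathi([X_f,X_g])\omega = \mathcal{L}(X_f)\mathi(X_g)\omega - \mathi(X_g)\mathcal{L}(X_f)\omega$, drop the second term since $\mathcal{L}(X_f)\omega=0$, expand $\mathcal{L}(X_f)=\mathi(X_f)\d+\d\mathi(X_f)$, use $\d\omega=0$ and $\mathi(X_g)\omega=-\d g$ to get $\d\mathi(X_f)\mathi(X_g)\omega=-\d\bigl(\omega(X_f,X_g)\bigr)$), then read off the answer. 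I expect the cleanest exposition is the former: cite Proposition~\ref{LocallyHamiltonian} and the definition of $\{f,g\}$, and conclude.
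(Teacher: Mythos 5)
Your proof is correct and follows exactly the route the paper takes: the paper's own proof is the one-line statement that the lemma is an immediate consequence of Proposition~\ref{LocallyHamiltonian}, combined (implicitly) with the identity $\omega(X_f,X_g)=\{f,g\}$ from the definition of the Poisson bracket. Your version simply spells out the two observations the paper leaves implicit, namely that Hamiltonian implies locally Hamiltonian and that the Hamiltonian $\omega(X_f,X_g)$ produced by that proposition is precisely $\{f,g\}$.
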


\begin{proof} This result is an immediate consequence of Proposition 
\ref{LocallyHamiltonian}.
\end{proof}

\begin{prop}
Let $(M,\omega)$ be a symplectic manifold. The Poisson bracket is a bilinear
composition law on the space $C^\infty(M,\RR)$ of smooth functions on $M$, which satisfies the following properties.

\noindent
{\rm 1.} It is skew-symmetric: $\{g,f\}=-\{f,g\}$.

\noindent
{\rm 2.} It satisfies the Leibniz identity with respect to the ordinary product 
of functions:
 $$\{f,gh\}=\{f,g\}h+g\{f,h\}\,.$$
{\rm 3.} It satisfies the Jacobi identity, which is a kind of Leibniz identity 
with respect to the Poisson bracket itself:
 $$\bigl\{f,\{g,h\}\bigr\}=\bigl\{\{f,g\},h\bigr\}+\bigl\{g,\{f,h\}\bigr\}\,,$$
which can also be written, when the skew-symmetry of the Poisson bracket 
is taken into account,
 $$\bigl\{\{f,g\},h\bigr\}+\bigl\{\{g,h\},f\bigr\}+\bigl\{\{h,f\},g\bigr\}=0\,.$$
\end{prop}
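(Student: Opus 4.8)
The plan is to verify each of the three properties in turn, drawing as much as possible on the definition of the Poisson bracket via the three equivalent formulae $\{f,g\}=\mathi(X_f)\,\d g=-\mathi(X_g)\,\d f=\omega(X_f,X_g)$ and on Lemma~\ref{LemmaPoissonBracket}. Bilinearity is immediate: the map $f\mapsto X_f$ is $\RR$-linear because $\mathi(\cdot)\omega$ is a vector bundle isomorphism and $f\mapsto-\d f$ is linear, so $(f,g)\mapsto\omega(X_f,X_g)$ is bilinear. Skew-symmetry (property~1) is equally immediate from the last formula, since $\omega$ is skew-symmetric: $\{g,f\}=\omega(X_g,X_f)=-\omega(X_f,X_g)=-\{f,g\}$.

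For the Leibniz identity (property~2), I would use the formula $\{f,gh\}=\mathi(X_f)\,\d(gh)$ together with the Leibniz rule for the exterior differential, $\d(gh)=g\,\d h+h\,\d g$. Since $\mathi(X_f)$ is a pointwise-linear (indeed $C^\infty(M,\RR)$-linear) operation on one-forms, contracting gives $\mathi(X_f)\,\d(gh)=g\,\mathi(X_f)\,\d h+h\,\mathi(X_f)\,\d g=g\{f,h\}+h\{f,g\}$, which is the desired identity.

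The main obstacle is the Jacobi identity (property~3); everything above is routine, but this requires a genuine argument. The clean approach is to invoke Lemma~\ref{LemmaPoissonBracket}: for any smooth functions $g,h$ on $M$ one has $X_{\{g,h\}}=[X_g,X_h]$. Then, using $\{f,\{g,h\}\}=\mathi(X_f)\,\d\{g,h\}=-\mathi\bigl(X_{\{g,h\}}\bigr)\,\d f=-\mathi\bigl([X_g,X_h]\bigr)\,\d f$ and expanding the commutator acting on the function $f$ — that is, $[X_g,X_h]\cdot f=X_g\cdot(X_h\cdot f)-X_h\cdot(X_g\cdot f)$, where $X\cdot f=\mathi(X)\,\d f$ — one rewrites each term as a nested Poisson bracket. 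Specifically, $X_g\cdot(X_h\cdot f)=X_g\cdot\{h,f\}=\{g,\{h,f\}\}$ and similarly $X_h\cdot(X_g\cdot f)=\{h,\{g,f\}\}$, so that $\{f,\{g,h\}\}=\{g,\{h,f\}\}-\{h,\{g,f\}\}$; rearranging with skew-symmetry yields exactly the cyclic form $\{\{f,g\},h\}+\{\{g,h\},f\}+\{\{h,f\},g\}=0$, and the Leibniz-style form follows by moving one term across and applying skew-symmetry again. I expect the only delicate point to be bookkeeping the signs consistently between the two stated forms of the identity; the conceptual content is entirely carried by the fact — already established — that $f\mapsto X_f$ intertwines the Poisson bracket with the Lie bracket of vector fields, which is where the closedness of $\omega$ (hence the Jacobi identity itself) ultimately enters.
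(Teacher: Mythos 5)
Your proposal is correct and takes essentially the same route as the paper: properties 1 and 2 by direct computation from the defining formulae, and the Jacobi identity by reducing it, via Lemma \ref{LemmaPoissonBracket} ($X_{\{g,h\}}=[X_g,X_h]$), to the fact that the Lie bracket of vector fields acts on functions as the commutator of the corresponding derivations. One sign slip to fix: from $\{f,\{g,h\}\}=-\mathi\bigl([X_g,X_h]\bigr)\,\d f$ and $[X_g,X_h]\cdot f=\{g,\{h,f\}\}-\{h,\{g,f\}\}$ the conclusion must carry the leading minus sign, giving $\{f,\{g,h\}\}=\{h,\{g,f\}\}-\{g,\{h,f\}\}$ rather than its negative as you wrote; the corrected identity is the one equivalent to the cyclic form of the Jacobi identity (your version would force $\{f,\{g,h\}\}=0$ identically when combined with the correct one).
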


\begin{proof} The proofs of Properties (i) and (ii) are very easy and left to the reader. Let us proove Property (iii).

We have
 $$\bigl\{\{f,g\},h\bigr\}=\omega(X_{\{f,g\}}, X_h)
                          =-\mathi(X_{\{f,g\}})\mathi(X_h)\omega  
                          =\mathi(X_{\{f,g\}})\d h\,.
 $$
By Lemma \ref{LemmaPoissonBracket}, $X_{\{f,g\}}=[X_f,X_g]$ so we have
  $$\bigl\{\{f,g\},h\bigr\}=\mathi\bigl([X_f,X_g]\bigr)\d h
     ={\mathcal L}\bigl([X_f,X_g]\bigr)h\,.
 $$
 We also have

$$\bigl\{\{g,h\},f\bigr\}=-{\mathcal L}(X_f)\circ{\mathcal L}(X_g)h\,,\quad
   \bigl\{\{h,f\},g\bigr\}={\mathcal L}(X_g)\circ{\mathcal L}(X_f)h\,.$$
Taking the sum of these three terms, and taking into account the identity
 $${\mathcal L}\bigl([X_f,X_g]\bigr)={\mathcal L}(X_f)\circ {\mathcal L}(X_g)
                                      -{\mathcal L}(X_g)\circ {\mathcal L}(X_f)\,,
 $$
we see that the Jacobi identity is satisfied.
\end{proof}

\begin{rmks}\hfill
\par\smallskip\noindent
{\bf 1.\quad} In a system of Darboux local coordinates $(x^1,\ldots,x^{2n})$, the Poisson bracket can be written
 $$\{f,g\}=\sum_{i=1}^n\left(
           \frac{\partial f}{\partial x^{n+i}}\, \frac{\partial g}
           {\partial x^i}-
           \frac{\partial f}{\partial x^{i}}\, \frac{\partial g}
           {\partial x^{n+i}}\right)\,.
 $$
\noindent
{\bf 2.\quad} 
Let $H$ be a smooth function on the symplectic manifold $(M,\omega)$,
and $X_H$ be the associaled Hamiltonian vector field. By using the Poisson bracket, one can write in a very concise way the Hamilton 
equation for $X_H$. Let $t\mapsto \varphi(t)$ be any integral curve of $X_H$.
Then for any smooth function $f:M\to\RR$  
 $$\frac{\d f\bigl(\varphi(t)\bigr)}{\d t}=\{H,f\}\bigl(\varphi(t)\bigr)\,.$$
By succesively taking for $f$ the coordinate functions 
$x^1,\ldots,x^{2n}$ of a system of Darboux local coordinates, we recover the equations 
\begin{equation*}
\left\{
\begin {split}
 \frac{\d x^{i}}{\d t}&=\frac{\partial H}{\partial x^{n+i}}\,,\\
 \frac{\d x^{n+i}}{\d t}&=-\frac{\partial H}{\partial x^{i}}\,,\\
 \end{split}
 \qquad(1\leq i\leq n)\,.
\right.
 \end{equation*}
\end{rmks}

\section{Poisson manifolds}\label{PoissonManifolds}

\subsection{The inception of Poisson manifolds}
Around the middle of the XX-th century, several scientists
felt the need of a frame in which Hamiltonian differential equations 
could be considered,  more general than that of symplectic manifolds.
Paul Dirac for example proposed such a frame in his famous 1950 paper
\emph{Generalized Hamiltonian dynamics} \cite{Dirac50, Dirac64}. 
\par\smallskip

In many applications in which, starting from a symplectic manifold, 
another manifold is built by a combination of processes (products, quotients, restriction to a submanifold, $\ldots$), there exists on that
manifold a structure, more general than a symplectic structure, with which a vector field can be associated to each smooth function, and the bracket of two smooth functions can be defined. It was also known that on a (odd-dimensional) contact manifold one can define the bracket of
two smooth functions.
\par\smallskip

Several generalizatons of symplectic manifolds were defined and
investigated by Andr\'e Lichnerowicz during the years 1975--1980. He gave several names to these generalizations:  
\emph{canonical, Poisson, 
Jacobi} and \emph{locally conformally symplectic} manifolds \cite{Lichnerowicz77, Lichnerowicz79}.
\par\smallskip

In 1976 Alexander Kirillov published a paper entitled \emph{Local Lie algebras} \cite{Kirillov76} in which he determined all the possible structures 
on a manifold allowing the definition of a bracket with which 
the space of smooth functions becomes a local Lie algebra.
\emph{Local} means that the value taken by the bracket of two smooth functions at each point only depends of the values taken by these functions on an arbitrarily small neighbourhood of that point.
The only such structures are those called by Lichnerowicz
\emph{Poisson structures}, \emph{Jacobi structures} and 
\emph{locally conformally symplectic structures}. 
\par\smallskip

In what follows we will mainly consider Poisson manifolds.

\subsection{Definition and structure of Poisson manifolds}

\begin{defi}
A \emph{Poisson structure} on a smooth manifold $M$ 
is the structure determined by a bilinear, skew-symmetric 
composition law on the space of smooth functions, called the
\emph{Poisson bracket} and denoted by 
 $(f,g)\mapsto\{f,g\}$, satisfying the Leibniz identity
 $$\{f,gh\}=\{f,g\}h+g\{f,h\}$$
and the Jacobi identity
 $$\bigl\{\{f,g\},h\bigr\}+\bigl\{\{g,h\},f\bigr\}+
 \bigl\{\{h,f\},g\bigr\}=0\,.$$
A manifold endowed with a Poisson structure is called a \emph{Poisson manifold}.
\end{defi}

\begin{prop}
On a Poisson manifold $M$, there exists a unique smooth bivector field $\Lambda$,
called the \emph{Poisson bivector field} of $M$,
such that for any pair $(f,g)$ of smooth functions defined on $M$, the Poisson bracket $\{f,g\}$ is given by the formula
$$\{f,g\}=\Lambda(\d f,\d g)\,.$$ 
\end{prop}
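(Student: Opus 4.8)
The plan is to exploit the Leibniz identity, which says that for each fixed $f$ the map $g \mapsto \{f,g\}$ is a derivation of $C^\infty(M,\RR)$, and by skew-symmetry the same holds in the first argument. First I would recall the standard fact that a derivation of $C^\infty(M,\RR)$ is given by a smooth vector field, so that $g \mapsto \{f,g\}$ corresponds to a vector field $X_f$ (the Hamiltonian vector field of $f$). Then, for a fixed point $x \in M$, the value $\{f,g\}(x)$ depends only on $\d f(x)$ and $\d g(x)$: indeed, a derivation at $x$ annihilates constants and annihilates products of functions vanishing at $x$, so $\{f,g\}(x)$ vanishes whenever $\d g(x)=0$, and symmetrically whenever $\d f(x)=0$. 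Hence there is a well-defined skew-symmetric bilinear map on $T^*_xM \times T^*_xM$ sending $(\d f(x), \d g(x))$ to $\{f,g\}(x)$, which gives a bivector $\Lambda(x) \in \bigwedge^2 T_xM$ with $\{f,g\}(x) = \Lambda(x)(\d f(x), \d g(x))$.

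Next I would check that $\Lambda$ so defined is a smooth section of $\bigwedge^2 TM$. This is a local matter: in local coordinates $(x^1,\dots,x^n)$ the coordinate functions are smooth, and $\Lambda^{ij} := \{x^i, x^j\}$ are smooth functions; one verifies, using the Leibniz rule and a Taylor expansion (Hadamard's lemma) of arbitrary smooth $f,g$ around each point, that $\{f,g\} = \sum_{i<j} \Lambda^{ij}\bigl(\partial f/\partial x^i\, \partial g/\partial x^j - \partial f/\partial x^j\, \partial g/\partial x^i\bigr)$, so $\Lambda = \sum_{i<j} \Lambda^{ij}\, \partial/\partial x^i \wedge \partial/\partial x^j$ is smooth and globally well-defined since the formula $\{f,g\}=\Lambda(\d f,\d g)$ is coordinate-independent. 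Uniqueness is immediate: if $\Lambda$ and $\Lambda'$ both represent the bracket, then $\Lambda(\d f,\d g)=\Lambda'(\d f,\d g)$ for all $f,g$, and since at each point every covector is of the form $\d f(x)$, the bivectors $\Lambda(x)$ and $\Lambda'(x)$ agree on all pairs of covectors, hence coincide.

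The only genuinely delicate point is the "pointwise dependence only on differentials" step, i.e. that a derivation $g \mapsto \{f,g\}$ evaluated at $x$ kills any $g$ with $\d g(x)=0$. This follows from Hadamard's lemma: writing $g = g(x) + \sum_i (x^i - x^i_0)\,h_i$ locally with $h_i$ smooth and $h_i(x)=\partial g/\partial x^i(x)$ up to the vanishing of the linear part, one sees that when $\d g(x)=0$ the function $g$ minus its constant value is a sum of products of two functions each vanishing at $x$, and a point derivation annihilates such products as well as constants. One must be slightly careful that this is purely local while $\{\cdot,\cdot\}$ is a priori a global object — but the locality hypothesis is exactly what guarantees $\{f,g\}$ near $x$ depends only on $f,g$ near $x$, so the argument goes through; alternatively one can cite that Poisson brackets are local operators (bidifferential of order one in each argument with no zeroth-order term) as a known consequence of Kirillov's theorem mentioned above. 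I expect this locality/pointwise reduction to be the main obstacle to spell out carefully; the rest is routine linear algebra and bookkeeping.
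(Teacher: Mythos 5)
Your proof is correct and follows exactly the route the paper indicates: the paper's own ``proof'' merely asserts that existence, uniqueness and skew-symmetry of $\Lambda$ are easy consequences of the Leibniz identity and the skew-symmetry of the bracket, and your argument (derivation in each slot, pointwise dependence on differentials via Hadamard's lemma, locality, smoothness of the components $\{x^i,x^j\}$) is the standard way of filling in those details.
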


\begin{proof}
The existence, uniqueness and skew-symmetry of $\Lambda$ are easy consequences 
of the the Leibniz identity and of the skew-symmetry
of the Poisson bracket. It does not depend on the Jacobi identity.
\end{proof}

\begin{rmk}
The Poisson bivector field $\Lambda$ determines the Poisson structure of $M$, since it allows the calculation of the Poisson bracket of any pair of smooth functions. For this reason a Poisson manifold $M$ is often denoted by $(M,\Lambda)$. 
\end{rmk}

\begin{defi}
Let $(M,\Lambda)$ be a Poisson manifold. We denote by $\Lambda^\sharp:T^*M\to TM$
the vector bundle homomorphism such that, for each $x\in M$ and each 
$\alpha\in T^*_xM$, $\Lambda^\sharp(\alpha)$ is the unique element in $T_xM$ 
such that, for any $\beta\in T^*_xM$,
 $$\bigl\langle\beta,\Lambda^\sharp(\alpha)\bigr\rangle=\Lambda(\alpha,\beta)\,.$$
The subset $C=\Lambda^\sharp(T^*M)$ of the tangent bundle $TM$ is called the
\emph{characteristic field} of the Poisson manifold $(M,\Lambda)$
\end{defi}

The following theorem, due to Alan Weinstein \cite{Weinstein83}, proves that,
loosely speaking, a Poisson manifold is the disjoint union of symplectic manifolds, arranged in such a way that  the union is endowed with a differentiable structure.

\begin{theo} Let $(M,\Lambda)$ be a Poisson manifold. Its characteristic field $C$
is a completely integrable generalized distribution on $M$. It means that $M$
is the disjoint union of immersed connected submanifolds, called the 
\emph{symplectic leaves}
of $(M,\Lambda)$, with the following properties: a leaf $S$ is such that, for each
$x\in S$, $T_xS=T_xM\cap C$; moreover, $S$ is maximal in the sense that any immersed connected submanifold $S'$ containing $S$ and such that for each $x\in S'$,
$T_xS'=T_xM\cap C$, is equal to $S$.
\par\smallskip

Moreover, the Poisson structure of $M$ determines, on each leaf $S$, a symplectic form $\omega_S$, such that the restriction to $S$ of the Poisson bracket of two smooth functions defined on $M$ only depends on the restrictions of these functions to $S$, and can be calculated as  the Poisson bracket of these restrictions, using the symplectic form $\omega_S$. 
\end{theo}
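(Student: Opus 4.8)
The plan is to establish this (Weinstein splitting / symplectic foliation) theorem in three stages: first prove that the characteristic field $C$ is integrable, then identify the leaves and the symplectic form on each leaf, and finally verify the compatibility of the ambient Poisson bracket with this leafwise symplectic structure. The central object throughout is the Hamiltonian vector field $X_f = \Lambda^\sharp(\d f)$ associated to a smooth function $f$, and the crucial identity is that $[X_f, X_g] = X_{\{f,g\}}$, which follows from the Jacobi identity for the Poisson bracket exactly as in Lemma \ref{LemmaPoissonBracket}; I would record this identity first. One also needs that $\Lambda^\sharp$ is compatible with bracket in the sense $[\Lambda^\sharp(\d f),\Lambda^\sharp(\d g)]=\Lambda^\sharp(\d\{f,g\})$, so the image distribution $C$ is spanned pointwise by Hamiltonian vector fields and is stable under their brackets.

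Next I would prove integrability. The distribution $C = \Lambda^\sharp(T^*M)$ is a \emph{generalized} (singular) distribution — its rank may jump — so the ordinary Frobenius theorem does not apply directly; one invokes the Stefan–Sussmann integrability theorem for singular distributions, whose hypotheses are: (a) $C$ is spanned locally by smooth vector fields, and (b) $C$ is invariant under the flows of those spanning vector fields. Point (a) is clear since around any point $C$ is spanned by finitely many $X_{x^i}$ for coordinate functions, or more robustly by all Hamiltonian vector fields. For point (b) I would show that the flow $\Phi_t$ of a Hamiltonian vector field $X_h$ preserves $\Lambda$ — because $\mathcal L(X_h)\Lambda$ corresponds to the derivation $g\mapsto\{h,\{\cdot,g\}\}-\{\{h,\cdot\},g\}$ which vanishes by Jacobi — hence $\Phi_{t*}\bigl(\Lambda^\sharp(\alpha)\bigr)=\Lambda^\sharp\bigl((\Phi_{-t})^*\alpha\bigr)$, so $\Phi_{t*}C=C$. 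Stefan–Sussmann then yields the partition of $M$ into maximal integral leaves $S$ with $T_xS = C_x = T_xM\cap C$; maximality is part of the theorem's output. (If one prefers to be self-contained rather than cite Stefan–Sussmann, one can instead prove Weinstein's local splitting theorem — near $x$ with $\operatorname{rank}\Lambda_x = 2k$ there are coordinates in which $\Lambda = \sum_{i=1}^k \partial/\partial p_i\wedge\partial/\partial q_i + \frac12\sum \varphi_{jl}(z)\,\partial/\partial z_j\wedge\partial/\partial z_l$ with $\varphi_{jl}(0)=0$ — and read off the foliation locally; but the excerpt already announces that the structure results for Poisson manifolds are quoted without proof, so citing the integrability theorem is consistent with the paper's style.)

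Then I would construct $\omega_S$. On a fixed leaf $S$, for $x\in S$ the map $\Lambda^\sharp_x$ restricts to a surjection $T^*_xM \to C_x = T_xS$ whose kernel is the annihilator $(T_xS)^\circ$, so it descends to an isomorphism $T^*_xM/(T_xS)^\circ \xrightarrow{\sim} T_xS$; but $T^*_xM/(T_xS)^\circ$ is canonically $T^*_xS$, so we get an isomorphism $T^*_xS\to T_xS$, equivalently a nondegenerate $2$-form $\omega_S(x)$ on $T_xS$ defined by $\omega_S(x)\bigl(\Lambda^\sharp_x\alpha,\Lambda^\sharp_x\beta\bigr) = \Lambda(\alpha,\beta) = \langle\beta,\Lambda^\sharp_x\alpha\rangle$; this is well defined precisely because $\Lambda(\alpha,\beta)=0$ whenever $\beta\in(T_xS)^\circ$. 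Smoothness of $\omega_S$ along $S$ follows from smoothness of $\Lambda$. Closedness $\d\omega_S=0$ I would get from the Jacobi identity: locally on $S$ any tangent vectors are Hamiltonian, $\omega_S(X_f,X_g) = \{f,g\}|_S$, and expanding $\d\omega_S(X_f,X_g,X_h)$ via the intrinsic formula together with $[X_f,X_g]=X_{\{f,g\}}$ reduces the vanishing to the cyclic Jacobi identity on the restrictions.

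Finally, the compatibility statement: for smooth $F,G$ on $M$ and $x\in S$, I would observe $\{F,G\}(x) = \Lambda(\d F,\d G)(x) = \langle \d G(x),\Lambda^\sharp(\d F(x))\rangle$, and since $\Lambda^\sharp(\d F(x))\in T_xS$ the right side depends only on $\d(F|_S)(x)$ and $\d(G|_S)(x)$; unwinding the definition of $\omega_S$ shows it equals the Poisson bracket of $F|_S,G|_S$ computed from $\omega_S$ at $x$. Hence the restriction of $\{F,G\}$ to $S$ depends only on $F|_S,G|_S$, as claimed. \textbf{The main obstacle} is the integrability step: handling the singular (rank-jumping) distribution $C$ honestly, which forces either an appeal to the Stefan–Sussmann theorem or a proof of Weinstein's local normal form — everything else (the definition of $\omega_S$, its nondegeneracy and closedness, the bracket compatibility) is essentially linear algebra fibrewise plus the Jacobi identity, done exactly in the spirit of Proposition \ref{LocallyHamiltonian} and Lemma \ref{LemmaPoissonBracket}.
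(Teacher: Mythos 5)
The paper does not actually prove this theorem: it states the result and refers the reader to Weinstein's paper and to Libermann--Marle for a proof, so there is no internal argument to compare yours against. Your outline is the standard and correct route. The decomposition into (i) integrability of the singular distribution $C=\Lambda^\sharp(T^*M)$ via Stefan--Sussmann, using that $C$ is locally spanned by Hamiltonian vector fields and that their (local) flows preserve $\Lambda$ because ${\mathcal L}(X_h)\Lambda=0$ is a restatement of the Jacobi identity, (ii) the fibrewise construction of $\omega_S$ from the isomorphism $T^*_xM/(T_xS)^\circ\cong T^*_xS\to T_xS$ induced by $\Lambda^\sharp_x$ (your identification $\ker\Lambda^\sharp_x=(T_xS)^\circ$ is correct by skew-symmetry), and (iii) closedness and bracket compatibility reduced to the Jacobi identity via $[X_f,X_g]=X_{\{f,g\}}$, is exactly how the cited references proceed. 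Two points deserve one more line each in a full write-up: smoothness of $\omega_S$ along a leaf, which you should justify by choosing finitely many one-forms $\alpha_i$ on $M$ whose images $\Lambda^\sharp\alpha_i$ frame $TS$ locally, so that $\omega_S(\Lambda^\sharp\alpha_i,\Lambda^\sharp\alpha_j)=\Lambda(\alpha_i,\alpha_j)$ is visibly smooth; and the fact that in the intrinsic formula for $\d\omega_S$ you may evaluate on Hamiltonian vector fields because these span $T_xS$ at every point of the leaf. Neither is a gap, only a detail to record; the proposal as it stands is sound.
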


The reader may look at \cite{Weinstein83} or at \cite{LibermannMarle} for a 
proof of this theorem.

\subsubsection{The Schouten-Nijenhuis bracket}\label{Schouten-Nijenhuis} 
Let $M$ be a smooth $n$-dimensional manifold. We recall that 
the exterior algebra  $\Omega(M)$ of differential forms on $M$ 
is endowed with an associative composition law, the 
\emph{exterior product}, which associates to a pair $(\eta,\zeta)$, 
with $\eta\in\Omega^p(M)$ and 
$\zeta\in \Omega^q(M)$ the form 
$\eta\wedge\zeta\in\Omega^{p+q}(M)$, with the following properties.

\par\smallskip\noindent
{\rm 1.} When $p=0$, $\eta\in\Omega^0(M)\equiv C^\infty(M,\RR)$; the exterior
product $\eta\wedge\zeta$
is the usual product $\eta\zeta$ of the differential form $\zeta$ of degree $q$
by the function $\eta$.

\par\smallskip\noindent
{\rm 2.} The exterior product satisfies
 $$\zeta\wedge\eta=(-1)^{pq}\eta\wedge\zeta\,.
 $$
\noindent
{\rm 3.} When $p\geq 1$ and $q\geq 1$, $\eta\wedge\zeta$ evaluated on the $p+q$
vector fields $v_i\in A^1(M)$ ($1\leq i\leq p+q)$ is expressed as 
 \begin{equation*}\eta\wedge\zeta(v_1,\ldots,v_{p+q})
 =\sum_{\sigma\in{\mathcal S}_{(p,q)}}\varepsilon(\sigma)
 \eta(v_{\sigma(1)},\ldots,v_{\sigma(p)})\zeta(v_{\sigma(p+1)},\ldots,
 v_{\sigma(p+q)})\,.
 \end{equation*}
We have denoted by ${\mathcal S}_{(p,q)}$ the set of permutations
$\sigma$ of $\{\,1,\ldots,p+q\,\}$ which satisfy
 \begin{equation*}\sigma(1)<\sigma(2)<\cdots<\sigma(p)\quad\hbox{and}\quad
 \sigma(p+1)<\sigma(p+2)<\cdots<\sigma(p+q)\,,\end{equation*}
and set
 \begin{equation*}\epsilon(\sigma)=\begin{cases}
                    1&\text{if $\sigma$ is even},\\
                    -1&\text{if $\sigma$ is odd}.
                    \end{cases}
 \end{equation*}
\par\smallskip

The exterior algebra $\Omega(M)$ is endowed with a linear map
$\d:\Omega(M)\to\Omega(M)$ called the \emph{exterior differential}, with the following properties.
 
\par\smallskip\noindent
{\rm 1.} The exterior differential $\d$ is a graded map of degree 1, 
which means that $\d\bigl(\Omega^p(M)\bigr)\subset\Omega^{p+1}(M)$.

\par\smallskip\noindent
{\rm 2.} It is a \emph{derivation} of the exterior algebra $\Omega(M)$, 
which means that when $\eta \in\Omega^p(M)$ and $\zeta\in\Omega^q(M)$,
 $$\d(\eta\wedge\zeta)=(\d\eta)\wedge\zeta +(-1)^p\eta\wedge\d\zeta\,.$$
\noindent
{\rm 3.} It satisfies 
 $$\d\circ \d=0\,.$$
\par\smallskip

Similarly, the exterior algebra $A(M)$ of smooth multivector 
fields on $M$ is endowed with an associative composition law, 
the \emph{exterior product}, which associates to a pair $(P,Q)$, 
with $P\in A^p(M)$ and 
$Q\in A^q(M)$, the multivector field $P\wedge Q\in A^{p+q}(M)$.
It is defined by the formulae given above for the exterior product of differential
forms, the only change being the exchange of the
roles of $\Omega^p(M)$ and $A^p(M)$. Its properties are essentially 
the same as those of the exterior product of differential forms.
\par\smallskip

There is a natural pairing of elements of same degree in $A(M)$ and in $\Omega(M)$. 
It is first defined for decomposable elements: let 
$\eta=\eta_1\wedge\cdots\wedge \eta_p\in \Omega^p(M)$ and
$P=X_1\wedge\cdots\wedge X_p\in A^p(M)$. We set
 $$\langle\eta,P\rangle=\det\bigl(\langle\eta_i,X_j\rangle\bigr)\,.$$
Then this pairing can be uniquely extended to $\Omega^p(M)\times A^p(M)$ by bilinearity.
\par\smallskip

With any $P\in A^p(M)$ we can associate a graded endomorphism $\mathi(P)$ of the
exterior algebra of differential forms $\Omega(M)$, of degree $-p$, which means that when $\eta\in\Omega^q(M)$, $\mathi(P)\eta\in\Omega^{q-p}(M)$. This endomorphism, which extends to multivector fields the interior product of forms with a vector
field, is determined by the formula, in which $P\in A^p(M)$, $\eta\in \Omega^q(M)$
and $R\in A^{q-p}(M)$,
 $$\bigl\langle\mathi(P)\eta,R\bigr\rangle
   =(-1)^{(p-1)p/2}\langle\eta,P\wedge Q\rangle\,.
 $$
Besides the exterior product, there exists on the graded vector space $A(M)$ of multivector fields another bilinear composition law, which naturally extends to multivector fields the Lie bracket of vector fields. It
associates to $P\in A^p(M)$ and $Q\in A^q(M)$ an element denoted 
$[P,Q]\in A^{p+q-1}(M)$, called the \emph{Schouten-Nijenhuis bracket} 
of $P$ and $Q$.
The Schouten-Nijenhuis bracket $[P,Q]$ is defined by the following formula, which gives the expression of the corresponding graded endomorphism of $\Omega(M)$,
 $$\mathi\bigl([P,Q]\bigr)=\Bigl[\bigl[\mathi(P),\d\bigr],\mathi(Q)\Bigr]\,.
 $$
The brackets in the right hand side of this formula are the 
\emph{graded commutators} of graded endomorphisms of $\Omega(M)$. Let us recall that
if $E_1$ and $E_2$ are graded endomorphisms of $\Omega(M)$ of degrees $e_1$ and $e_2$ respectively, their graded commutator is
 $$[E_1,E_2]=E_1\circ E_2-(-1)^{e_1e_2}E_2\circ E_1\,.$$
The following properties of the Schouten-Nijenhuis bracket can be deduced 
from the above formulae. 

\par\smallskip\noindent
{\rm 1.} For $f$ and $g\in A^0(M)=C^\infty(M,\RR)$, $[f,g]=0$.

\par\smallskip\noindent
{\rm 2.} For a vector field $V\in A^1(M)$, $q\in\ZZ$ and $Q\in A^q(M)$,
the Schouten-Nijenhuis bracket $[V,Q]$ is the Lie derivative ${\mathcal L}(V)(Q)$.

\par\smallskip\noindent
{\rm 3.} For two vector fields $V$ and $W \in A^1(M,E)$, the Schouten-Nijenhuis bracket $[V,W]$ is the usual Lie bracket of these vector fields. 

\par\smallskip\noindent
{\rm 4.} For all $p$ and $q\in\ZZ$, $P\in A^p(M)$, $Q\in A^q(M)$,
 $$[P,Q]=-(-1)^{(p-1)(q-1)}[Q,P]\,.
 $$

\noindent
{\rm 5.} Let $p\in\ZZ$, $P\in A^p(M)$. The map $Q\mapsto [P,Q]$
is a derivation of degree $p-1$ of the graded exterior algebra
$A(M)$. In other words, for $q_1$ and $q_2\in\ZZ$, $Q_1\in
A^{q_1}(M)$ and $Q_2\in A^{q_2}(M)$,
 $$[P,Q_1\wedge Q_2]=[P,Q_1]\wedge
 Q_2+(-1)^{(p-1)q_1}Q_1\wedge[P,Q_2]\,.
 $$

\noindent
{\rm 6.} Let $p$, $q$ and $r\in\ZZ$, $P\in A^p(M)$, 
$Q\in
A^q(M)$ and $R\in A^r(M)$. The Schouten-Nijenhuis bracket
satisfies the \emph{graded Jacobi identity}
 \begin{align*}
 (-1)^{(p-1)(r-1)}\bigl[[P,Q],R\bigr]
 &+(-1)^{(q-1)(p-1)}\bigl[[Q,R],P\bigr]\\
 &+(-1)^{(r-1)(q-1)}\bigl[[R,P],Q\bigr]\\
 &=0\,.
 \end{align*}
For more information about the Schouten-Nijenhuis bracket, the reader may look at 
\cite{Koszul85} or \cite{Marle2008}.

\begin{prop}
Let $\Lambda$ be a smooth bivector field on a smooth manifold $M$. Then $\Lambda$
is a Poisson bivector field (and $(M,\Lambda)$ is a Poisson manifold) if and only if
$[\Lambda,\Lambda]=0$. 
\end{prop}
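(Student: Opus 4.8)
The plan is to isolate what actually needs proof. For any smooth bivector field $\Lambda$, the bracket $\{f,g\}=\Lambda(\d f,\d g)$ is automatically bilinear and skew-symmetric, and it satisfies the Leibniz identity because $\d(gh)=(\d g)h+g\,\d h$ and $\Lambda$ is a tensor; none of this uses $[\Lambda,\Lambda]=0$. Hence $(M,\Lambda)$ is a Poisson manifold if and only if this bracket satisfies the Jacobi identity, i.e. if and only if the \emph{Jacobiator}
$$J(f,g,h)=\bigl\{\{f,g\},h\bigr\}+\bigl\{\{g,h\},f\bigr\}+\bigl\{\{h,f\},g\bigr\}$$
vanishes identically. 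So the whole statement reduces to relating $J$ to the trivector field $[\Lambda,\Lambda]\in A^3(M)$.

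First I would express the Poisson bracket through the Schouten--Nijenhuis bracket. Using properties 4 and 5 of that bracket one computes, for $f\in C^\infty(M,\RR)$, that $[\Lambda,f]$ is a vector field, namely (up to the sign fixed by the conventions adopted here) $[\Lambda,f]=-\Lambda^\sharp(\d f)=-X_f$; then property 2 gives $[[\Lambda,f],g]=[-X_f,g]=-{\mathcal L}(X_f)g=-\langle\d g,X_f\rangle=-\{f,g\}$. Next I would iterate the graded Jacobi identity (property 6). Applying it to $P=Q=\Lambda$ and $R=f$, and merging the two equal ``mixed'' terms by means of property 4, expresses $[[\Lambda,\Lambda],f]$ as a numerical multiple of $[\Lambda,[\Lambda,f]]$. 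Since $[[\Lambda,\Lambda],f]$ is itself a bivector field, one can contract it successively with $\d g$ and $\d h$ by the derivation property 5 of $[\Lambda,\cdot]$ (applied once more, together with property 4 to commute inner vector fields past $\Lambda$, and the identity $[[\Lambda,f],g]=-\{f,g\}$ above). After the sign bookkeeping, the three cyclic contributions assemble into an identity of the form
$$[\Lambda,\Lambda](\d f,\d g,\d h)=c\,J(f,g,h)\,,$$
with $c$ a nonzero numerical constant ($c=\pm2$ with the present conventions). The main obstacle is precisely this computation: keeping the repeated $(-1)^{(p-1)(q-1)}$ signs straight, and checking that the contributions produced by the three applications of the graded Jacobi identity combine exactly into $J$ and with a nonzero coefficient. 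The bookkeeping is cleanest if one reduces everything to the single identity $[[\Lambda,f],g]=-\{f,g\}$ and to the ordinary Jacobi identity for Lie brackets of vector fields (which is what property 6 specialises to in the lowest degrees).

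Finally I would conclude from the displayed identity. If $[\Lambda,\Lambda]=0$, then $J(f,g,h)=0$ for all smooth $f,g,h$, the Jacobi identity holds, and $\Lambda$ is a Poisson bivector field. Conversely, if $\Lambda$ is a Poisson bivector field, then $J\equiv0$, hence $[\Lambda,\Lambda](\d f,\d g,\d h)=0$ for all smooth functions $f,g,h$ on $M$; since at each point $x\in M$ every covector in $T^*_xM$ is the differential at $x$ of some smooth function, and $[\Lambda,\Lambda](x)$ is a skew-symmetric trilinear form on $T^*_xM$, this forces $[\Lambda,\Lambda](x)=0$ for every $x$, i.e. $[\Lambda,\Lambda]=0$.
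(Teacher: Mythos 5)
Your proposal is correct and follows essentially the same route as the paper: reduce the statement to the Jacobi identity for $\{f,g\}=\Lambda(\d f,\d g)$, express the bracket through iterated Schouten--Nijenhuis brackets with $\Lambda$, apply the graded Jacobi identity to show the Jacobiator equals a nonzero multiple of $[\Lambda,\Lambda](\d f,\d g,\d h)$, and conclude (the paper's constant is $2$ with its sign conventions). Your closing remark that differentials span each cotangent space, so the vanishing of all contractions forces $[\Lambda,\Lambda]=0$, makes explicit a point the paper leaves implicit.
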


\begin{proof}
We define the vector bundle homomorphism $\Lambda^\sharp:T^*M\to TM$ by setting, for
all $x\in M$, $\alpha$ and $\beta\in T^*_xM$,
 $$\bigl\langle\beta,\Lambda^\sharp(\alpha)\bigr\rangle=\Lambda(\alpha,\beta)\,.$$
For any pair $(f,g)$ of smooth functions we set
 $$X_f=\Lambda^\sharp(df)\,,\quad \{f,g\}=\mathi(X_f)(\d g)=\Lambda(\d f,\d g)\,.$$
This bracket is a bilinear skew-symmetric composition law on $C^\infty(M,\RR)$ which satisfies the Leibniz identity. Therefore $\Lambda$ is a Poisson bivector field if and only if the above defined bracket of functions satisfies the Jacobi identity. 
\par\smallskip
Let $f$, $g$ and $h$ be three smooth functions on $M$. We
easily see that $X_f$ and $\{f,g\}$ can be expressed in terms of the Schouten-Nijenhuis bracket. Indeed we have
 $$X_f=-[\Lambda,f]=-[f,\Lambda]\,,\quad\{f,g\}=\bigl[[\Lambda,f],g\bigr]\,.$$
Therefore
 $$\bigl\{\{f,g\},h\bigr\}=\Bigg[\Bigl[\Lambda,\bigl[[\Lambda,f],g\bigr]\Bigr],h\Bigg]\,.
 $$
By using the graded Jacobi identity satisfied by Schouten-Nijenhuis bracket, we see that
 $$\Bigl[\Lambda,\bigl[[\Lambda,f],g\bigr]\Bigr]
    =-\bigl[[g,\Lambda],[f,\Lambda]\bigr]
      +2\Bigl[\bigl[ [\Lambda,\Lambda],f\bigr],g\Bigr]\,.
 $$
Using the equalities $X_f=-[\Lambda,f]=-[f,\Lambda]$ and
$X_g=-[\Lambda,g]=-[g,\Lambda]$ we obtain
 \begin{equation*}
  \begin{split}
    \bigl\{\{f,g\},h\bigr\}&=\bigl[[X_f,X_g],h\bigr]
    +2\Biggl[\Bigl[\bigl[ [\Lambda,\Lambda],f\bigr],g\Bigr],h\Biggr]\\
    &={\mathcal L}\bigl([X_f,X_g]\bigr)h
      +2\Biggl[\Bigl[\bigl[ [\Lambda,\Lambda],f\bigr],g\Bigr],h\Biggr]\,.   
\end{split}
\end{equation*}
On the other hand, we have
 $$\bigl\{\{g,h\},f\bigr\}=-{\mathcal L}(X_f)\circ{\mathcal L}(X_g)h\,,\quad
   \bigl\{\{h,f\},g\bigr\}={\mathcal L}(X_g)\circ{\mathcal L}(X_f)h\,.$$
Taking into account the equality
 $${\mathcal L}\bigl([X_f,X_g]\bigr)={\mathcal L}(X_f)\circ {\mathcal L}(X_g)
                                      -{\mathcal L}(X_g)\circ {\mathcal L}(X_f)
 $$
we obtain
 $$\bigl\{\{f,g\},h\bigr\}+\bigl\{\{g,h\},f\bigr\}+\bigl\{\{h,f\},g\bigr\}
   =2\Biggl[\Bigl[\bigl[ [\Lambda,\Lambda],f\bigr],g\Bigr],h\Biggr]\,.$$
By using the formula which defines the Schouten-Nijenhuis bracket, we check that for any
$P\in A^3(M)$
 $$\Bigl[\bigl[[P,f],g\bigr],h\Bigr]=P(df,dg,dh)\,.$$
Therefore
$$\bigl\{\{f,g\},h\bigr\}+\bigl\{\{g,h\},f\bigr\}+\bigl\{\{h,f\},g\bigr\}
  =2[\Lambda,\Lambda](df,dg,dh)\,,$$
so $\Lambda$ is a Poisson bivector field if and only if $[\Lambda,\Lambda]=0$. 
\end{proof}

\subsection{Some properties of Poisson manifolds}

\begin{defis} Let $(M,\Lambda)$ be a Poisson manifold.

\par\smallskip\noindent
{\bf 1.\quad} The \emph{Hamiltonian vector field} associated to a smooth function 
$f\in C^\infty(M,\RR)$ is the vector field $X_f$ on $M$ defined by
 $$X_f=\Lambda^\sharp(\d f)\,.$$
The function $f$ is called a \emph{Hamiltonian} for the Hamiltonian vector field
$X_f$.

\noindent 
{\bf 2.\quad} A \emph{Poisson vector field} is a vector field $X$ which satisfies
 $${\mathcal L}(X)\Lambda=0\,.$$
\end{defis}

\begin{ex} On a symplectic manifold $(M,\omega)$ we have defined the 
Poisson bracket of smooth functions. That bracket endows $M$ with a Poisson structure, said to be \emph{associated} to its symplectic structure.
The Poisson bivector field $\Lambda$ is related to the symplectic form $\omega$ by
 $$\Lambda(df,dg)=\omega(X_f,X_g)\,,\quad f\ \hbox{and }g\in C^\infty(M,\RR)\,.$$
The map $\Lambda^\sharp:T^*M\to TM$ such that, for any $x\in M$, $\alpha$ and $\beta\in T^*_xM$,
 $$\bigl\langle \beta,\Lambda^\sharp(\alpha)=\Lambda(\alpha,\beta)$$
is therefore the inverse of the map $\omega^\flat:TM\to T^*M$ such that, for any $x\in M$, $v$
and $w\in T_xM$,
 $$\bigl\langle\omega^\flat(v),w\bigr\rangle=-\bigl\langle i(v)\omega ,w\bigr\rangle
    =\omega(w,v)\,.$$ 
Hamiltonian vector fields for the symplectic structure of $M$ coincide with Hamiltonian vector fields for its Poisson structure.
The Poisson vector fields on the symplectic manifold $(M,\omega)$ are the locally Hamiltonian vector fields. However, on a general Poisson manifold, Poisson vector fields are more general than locally Hamiltonian vector fields: even restricted to an arbitrary small neighbourhood of a point, a Poisson vector field may not be
Hamiltonian.
\end{ex} 

\begin{rmks}\label{Casimirs}\hfill

\par\smallskip\noindent
{\bf 1.\quad} Another way in which the Hamiltonian vector field $X_f$ associated to a smooth function $f$ can be defined is by saying that, for any other smooth function $g$ on the Poisson manifold $(M,\Lambda)$,
 $$\mathi(X_f)(\d g)=\{f,g\}\,.$$

\noindent
{\bf 2.\quad} A smooth function $g$ defined on the Poisson manifold $(M,\Lambda)$ is said to be a  \emph{Casimir} if for any other smooth function $h$, we have
$\{g,h\}=0$. In other words, a Casimir is a smooth function $g$ whose associated
Hamiltonian vector field is $X_g=0$. On a general Poisson manifold, there may exist
Casimirs other than the locally constant functions.  

\par\smallskip\noindent
{\bf 3.\quad} A smooth vector field $X$ on the Poisson manifold $(M,\Lambda)$ is a Poisson vector field if and only if, for any pair $(f,g)$ of smooth functions,
 $${\mathcal L}(X)\bigl(\{f,g\}\bigr)=\bigl\{{\mathcal L}(X)f,g\bigr\}
    +\bigl\{f,{\mathcal L}(X)g\bigr\}\,.
 $$
Indeed we have
 \begin{equation*}
  \begin{split}
    {\mathcal L}(X)\bigl(\{f,g\}\bigr)&={\mathcal L}(X)\bigl(\Lambda(\d f,\d g)\bigr)\\
                                      &=\bigl({\mathcal L}(X(\Lambda)\bigr)(\d f,\d g)
                                    +\Lambda\bigl({\mathcal L}(X)(\d f), \d g\bigr)
                                    +\Lambda\bigl(\d f,{\mathcal L}(X)(\d g)\bigr)\\
       &=\bigl({\mathcal L}(X)(\Lambda)\bigr)(\d f,\d g)
         +\bigl\{{\mathcal L}(X)f,g\bigr\}
    +\bigl\{f,{\mathcal L}(X)g\bigr\}\,.
\end{split}
\end{equation*}

\noindent
{\bf 3.\quad} Any Hamiltonian vector field $X_f$ is a Poisson vector field. Indeed, if $f$ is a Hamiltonian for $X_f$, $g$ and $h$ two other smooth functions,we have according to the Jacobi identity
 \begin{equation*}
 \begin{split}
 {\mathcal L}(X_f)\bigl(\{g,h\}\bigr)&=\bigl\{f,\{g,h\}\bigr\}
   =\bigl\{\{f,g\},h\bigr\}+\bigl\{g,\{f,h\}\bigr\}\\
   &=\bigl\{{\mathcal L}(X_f)g,h\bigr\}+\bigl\{g,{\mathcal L}(X_f)h\bigr\}\,.
 \end{split}
\end{equation*}   
\noindent
{\bf4.\quad} Since the characteristic field of the Poisson manifold $(M,\Lambda)$
is generated by the Hamiltonian vector fields, any Hamiltonian vector field is everywhere tangent to the symplectic foliation. A Poisson vector field may not be tangent to that foliation.
\end{rmks}

\begin{prop}\label{FirstIntegrals}
Let $(M,\Lambda)$ be a Poisson manifold, $H\in C^\infty(M,\RR)$ be a smooth function
and $X_H=\Lambda^\sharp(\d H)\in A^1(M)$ be the associated Hamiltonian vector 
field. A smooth function $g\in C^\infty(M,\RR)$ keeps a constant value on each integral curve of $X_H$ if and only if $\{H,g\}=0$ identically. 
Such a function $g$ is said to be a
\emph{first integral} of $X_H$.
\par\smallskip

A specially important first integral of $X_H$, called the 
\emph{energy first integral}, is the Hamiltonian $H$.
\end{prop}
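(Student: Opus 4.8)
The plan is to translate the phrase ``$g$ keeps a constant value on each integral curve of $X_H$'' into the statement that the derivative of $g$ along every integral curve vanishes, and then recognize this derivative as a Poisson bracket. Concretely, let $t\mapsto\varphi(t)$ be an arbitrary integral curve of $X_H$, so that $\frac{\d\varphi(t)}{\d t}=X_H\bigl(\varphi(t)\bigr)$. Differentiating $t\mapsto g\bigl(\varphi(t)\bigr)$ and using the chain rule gives
\begin{equation*}
\frac{\d g\bigl(\varphi(t)\bigr)}{\d t}
=\Bigl\langle\d g\bigl(\varphi(t)\bigr),X_H\bigl(\varphi(t)\bigr)\Bigr\rangle
=\bigl(\mathi(X_H)\,\d g\bigr)\bigl(\varphi(t)\bigr)
=\{H,g\}\bigl(\varphi(t)\bigr)\,,
\end{equation*}
where the last equality is Remark \ref{Casimirs}.1 (equivalently the definition of the Hamiltonian vector field together with the Poisson bracket formula $\{H,g\}=\Lambda(\d H,\d g)=\mathi(X_H)\d g$).

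From this identity the equivalence follows in both directions. If $\{H,g\}=0$ identically on $M$, then $\frac{\d g(\varphi(t))}{\d t}=0$ for every integral curve $\varphi$ and every $t$ in its domain, so $g$ is constant along each integral curve. Conversely, suppose $g$ is constant along every integral curve of $X_H$. Fix an arbitrary point $x\in M$ and let $\varphi$ be the maximal integral curve with $\varphi(0)=x$; then $\frac{\d g(\varphi(t))}{\d t}\bigm|_{t=0}=0$, and by the displayed identity this says $\{H,g\}(x)=0$. Since $x$ was arbitrary, $\{H,g\}=0$ identically. The final assertion about the energy first integral is then immediate: $\{H,H\}=0$ by the skew-symmetry of the Poisson bracket, so $H$ is a first integral of $X_H$.

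There is essentially no obstacle here; the only point requiring a word of care is that ``constant along each integral curve'' must be read as constant along \emph{maximal} (or at least connected) integral curves, so that vanishing of the $t$-derivative at every point of every such curve is genuinely equivalent to pointwise vanishing of $\{H,g\}$ on all of $M$ — this is why it suffices to evaluate at $t=0$ on the integral curve through an arbitrary point. Everything else is the chain rule and the already-established dictionary between $X_H$, $\d g$ and $\{H,g\}$.
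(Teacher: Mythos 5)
Your proof is correct and follows essentially the same route as the paper's: differentiate $g\circ\varphi$ along an integral curve, identify the derivative with $\{H,g\}\circ\varphi$ via $\mathi(X_H)\d g=\{H,g\}$, use connectedness of the parameter interval plus the existence of an integral curve through each point to get the equivalence, and conclude with $\{H,H\}=0$ by skew-symmetry. Nothing to add.
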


\begin{proof}
Let $\varphi:I\to M$ be an integral curve of $X_H$, defined on an open interval 
$I$ of $\RR$. For each $t\in I$
 $$\frac{\d\varphi(t)}{\d t}=X_H\bigl(\varphi(t)\bigr)\,.$$
The function $g\circ\varphi$ is smooth and satisfies
 $$\frac{\d(g\circ\varphi)(t)}{\d t}=\mathi(X_H)(\d g)
                 \bigl(\varphi(t)\bigr)=\{H,g\}\bigl(\varphi(t)\bigr)\,.
 $$
Since $I$ is connected, $g\circ\varphi$ keeps a constant value if and only if,
for each $t\in I$,
$\displaystyle\frac{\d(g\circ\varphi)(t)}{\d t}=0$, and the above equality 
proves that such is the case if and only if $\{H,g\}\bigl(\varphi(t)\bigr)=0$.
The indicated result follows from the fact that for any point $x\in M$, there exists an integral curve $\varphi:I\to M$ of $X_H$ and an element $t$ in $I$
such that $\varphi(t)=x$.
\par\smallskip

The skew-symmetry of the Poisson bracket implies $\{H,H\}=0$, therefore the Hamiltonian $H$ is a first integral of $X_H$. 
\end{proof} 

\begin{rmk}
Some Hamiltonian mechanical systems encountered in Mechanics, 
defined on a Poisson manifold $(M,\Lambda)$, have as Hamiltonian
a smooth function $H$ defined on $\RR\times M$ rather than on the
manifold $M$. Such a function $H$ is said to be a \emph{time-dependent
Hamiltonian}. The associated Hamiltonian vector field $X_H$ is no more
an ordinary vector field on $M$, \emph{i.e.} a smooth map $M\to TM$
wich associates to each $x\in M$ an element in $T_xM$, but rather
a \emph{time-dependent vector field}, \emph{i.e.} a smooth map
$X_H:\RR\times M\to TM$ such that, for each $t\in\RR$ and each $x\in M$
$X_H(t,x)\in T_xM$. For each fixed value of $t\in \RR$, the map
$x\mapsto X_H(t,x)$ is the Hamiltonian vector field on $M$ whose 
Hamiltonian is the function $H_t:M\to\RR$, defined by
 $$H_t(x)=H(t,x)\,,\quad x\in M\,.$$
Therefore 
 $$X_H(t,x)=\Lambda^\sharp\bigl(\d H_t\bigr)(x)\,,\quad x\in M\,,\ t\in\RR\,.
 $$  
A smooth parametrized curve $\varphi:I\to M$, defined on an open interval 
$I$ of $\RR$, is an integral curve of the time-dependent vector field $X_H$ if
for each $t\in I$ it satisfies the non-autonomous differential equation
 $$\frac{\d \varphi(t)}{\d t}=X_H\bigl(t,\varphi(t)\bigr)\,.$$
The time-dependent Hamiltonian $H:\RR\times M\to\RR$ is no more a first integral
of $X_H$ since, for each integral curve $\varphi:I\to M$ of $X_H$ and each 
$t\in I$,
 $$\frac{\d(H\circ\varphi)(t)}{\d t}
   =\frac{\partial H\bigl(t,\varphi(t)\bigr)}{\partial t}\,.
 $$
\end{rmk}

\begin{prop}\label{PoissonMaps} 
Let $(M_1,\Lambda_1)$ and $(M_2,\Lambda_2)$ be two Poisson manifolds and let
$\varphi:M_1\to M_2$ be a smooth map. The following properties are equivalent.

\begin{enumerate}
\item{} For any pair $(f,g)$ of smooth functions defined on $M_2$
 $$\{\varphi^*f,\varphi^*g\}_{M_1}=\varphi^*\{f,g\}_{M_2}\,.$$

\item{} For any smooth function $f\in C^\infty(M_2,\RR)$ the Hamiltonian vector fields
$\Lambda_2^\sharp(df)$ on $M_2$ and $\Lambda_1^\sharp\bigl(d(f\circ\varphi)\bigr)$ on 
$M_1$ are $\varphi$-compatible, which means that for each $x\in M_1$
 $$T_x\varphi\Bigl(\Lambda_1^\sharp\bigl(d(f\circ\varphi)(x)\bigr)\Bigr)
   =\Lambda_2^\sharp\Bigl(df\bigl(\varphi(x)\bigr)\Bigr)\,.
 $$

\item{} The bivector fields $\Lambda_1$ on $M_1$ and $\Lambda_2$ on $M_2$ are 
$\varphi$-compatible, which means that for each $x\in M_1$
 $$T_x\varphi\bigl(\Lambda_1(x)\bigr)
   =\Lambda_2\bigl(\varphi(x)\bigr)\,.
 $$
\end{enumerate}

A map $\varphi:M_1\to M_2$ which satisfies these equivalent properties is called a
\emph{Poisson map}.
\end{prop}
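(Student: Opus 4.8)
The plan is to reduce all three properties to a single pointwise identity of bivectors along $M_1$, using only the defining formula $\{f,g\}=\Lambda(\d f,\d g)$ for a Poisson bracket, the chain rule, and the compatibility between the natural prolongation of the tangent map $T_x\varphi$ to bivectors and its transpose ${}^tT_x\varphi:T^*_{\varphi(x)}M_2\to T^*_xM_1$. I would first record three elementary facts: (a) for every $f\in C^\infty(M_2,\RR)$ and every $x\in M_1$, $\d(f\circ\varphi)(x)={}^tT_x\varphi\bigl(\d f(\varphi(x))\bigr)$ (chain rule, equivalently $\d(\varphi^*f)=\varphi^*\d f$); (b) for $P\in\bigwedge^2 T_xM_1$ and $\alpha,\beta\in T^*_{\varphi(x)}M_2$, $\bigl(T_x\varphi(P)\bigr)(\alpha,\beta)=P\bigl({}^tT_x\varphi(\alpha),{}^tT_x\varphi(\beta)\bigr)$, verified on decomposable $P$; (c) for each $y\in M_2$ the covectors $\d f(y)$, as $f$ runs over $C^\infty(M_2,\RR)$, span all of $T^*_yM_2$ (extend local coordinate functions by a bump function).

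Then I would establish $(1)\Leftrightarrow(3)$. By (a) and the defining formula for the two brackets, $\{\varphi^*f,\varphi^*g\}_{M_1}(x)=\Lambda_1(x)\bigl({}^tT_x\varphi(\d f(\varphi(x))),{}^tT_x\varphi(\d g(\varphi(x)))\bigr)$, which by (b) equals $\bigl(T_x\varphi(\Lambda_1(x))\bigr)\bigl(\d f(\varphi(x)),\d g(\varphi(x))\bigr)$, whereas $\varphi^*\{f,g\}_{M_2}(x)=\Lambda_2(\varphi(x))\bigl(\d f(\varphi(x)),\d g(\varphi(x))\bigr)$. Hence (1) holds if and only if, for every $x\in M_1$, the two bivectors $T_x\varphi(\Lambda_1(x))$ and $\Lambda_2(\varphi(x))$ of $\bigwedge^2 T_{\varphi(x)}M_2$ take equal values on all pairs of covectors of the form $\bigl(\d f(\varphi(x)),\d g(\varphi(x))\bigr)$; by (c) this is equivalent to $T_x\varphi(\Lambda_1(x))=\Lambda_2(\varphi(x))$ for all $x$, i.e. to (3).

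Next I would establish $(2)\Leftrightarrow(3)$. Pairing (b) with an arbitrary covector and using $\langle\beta,\Lambda^\sharp(\alpha)\rangle=\Lambda(\alpha,\beta)$ yields $\bigl(T_x\varphi(\Lambda_1(x))\bigr)^\sharp=T_x\varphi\circ\Lambda_1^\sharp\circ{}^tT_x\varphi$ as maps $T^*_{\varphi(x)}M_2\to T_{\varphi(x)}M_2$. Together with (a), the left-hand side of the compatibility condition in (2) equals $T_x\varphi\bigl(\Lambda_1^\sharp(\d(f\circ\varphi)(x))\bigr)=\bigl(T_x\varphi(\Lambda_1(x))\bigr)^\sharp\bigl(\d f(\varphi(x))\bigr)$, so (2) holds if and only if $\bigl(T_x\varphi(\Lambda_1(x))\bigr)^\sharp$ and $\Lambda_2^\sharp$ agree on every covector $\d f(\varphi(x))$; by (c) this means $\bigl(T_x\varphi(\Lambda_1(x))\bigr)^\sharp=\Lambda_2^\sharp$ at $\varphi(x)$, and since a bivector is determined by its associated $\sharp$ map, this is again (3). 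Combining the two equivalences proves the proposition.

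I do not anticipate a genuine obstacle here: the argument is bookkeeping with transposes and exterior prolongations, and in particular the Jacobi identity plays no role — all three conditions live entirely at the level of the bivector fields, not of the Lie structure of the brackets. The only things I would be careful to state cleanly are the prolongation--transpose compatibility (b) and the spanning fact (c), since together they are precisely what allows passing from ``equal on all exact covectors'' to ``equal as tensors''.
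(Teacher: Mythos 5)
Your proof is correct and follows essentially the same route as the paper: both reduce all three properties to the pointwise identity $T_x\varphi\bigl(\Lambda_1(x)\bigr)=\Lambda_2\bigl(\varphi(x)\bigr)$ via the chain rule, the transpose--prolongation compatibility, and the definition $\{f,g\}=\Lambda(\d f,\d g)$. The only difference is organizational (you prove $(1)\Leftrightarrow(3)$ and $(2)\Leftrightarrow(3)$ where the paper chains $(1)\Leftrightarrow(2)\Leftrightarrow(3)$), and you make explicit the spanning of $T^*_yM_2$ by exact differentials, which the paper's terse proof leaves implicit.
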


\begin{proof}
Let $f$ and $g$ be two smooth functions defined on $M_2$. For each $x\in M_1$,
we have
 \begin{equation*}
   \begin{split}
    \{\varphi^*f,\varphi^*g\}_{M_1}(x)&=\{f\circ\varphi,g\circ\varphi\}(x)
                                       =\Lambda_1(x)\bigl(\d(f\circ\varphi)(x),
                                                          \d(g\circ\varphi)(x)
                                                     \bigr)\\
     &=\Bigl\langle\d(g\circ\varphi)(x),
                  \Lambda_1^\sharp\bigl(\d(f\circ\varphi(x))\bigr)
      \Bigr\rangle\,.   
   \end{split}
 \end{equation*}
We have also
 \begin{equation*}
   \begin{split}
    \varphi^*\{f,g\}_{M_2}(x)&=\{f,g\}_{M_2}\bigl(\varphi(x)\bigr)\\
             &=\Bigl\langle\d g\bigl(\varphi(x)\bigr),
                           \Lambda_2^*\bigl(\d f\bigl(\varphi(x)\bigr)\bigr) 
               \Bigr\rangle\,.
   \end{split}
 \end{equation*}
These formulae show that Properties 1 and 2 are equivalent. 
\par\smallskip
We recall that $T_x\varphi\bigl(\Lambda_1(x)\bigr)$ is, by its very definition, the
bivector at $\varphi(x)\in M_2$ such that, for any pair $(f,g)$ of smooth functions on $M_2$
 $$T_x\varphi\bigl(\Lambda_1(x)\bigr)\Bigl(\d f\bigl(\varphi(x)\bigr),
                                           \d g\bigl(\varphi(x)\bigr)\Bigr)
     =\Lambda_1\bigl(\d(f\circ\varphi)(x),\d (g\circ\varphi)(x)\bigr)\,.
 $$
The above equalities therefore prove that Properties 2 and 3 are equivalent.
\end{proof}

Poisson manifolds often appear as quotients of symplectic manifolds, as indicated
by the following Proposition, due to Paulette Libermann \cite{Libermann83b}. 

\begin{prop}\label{QuotientPoisson} Let $(M,\omega)$ be a symplectic manifold and
let $\varphi:M\to P$ be a surjective submersion of $M$ onto a smooth manifold 
$P$ whose fibres are connected
(it means that for each $y\in P$, $\varphi^{-1}(y)$ is connected). The following properties are equivalent.

\begin{enumerate}

\item{} On the manifold $M$, the distribution $\orth(\ker T\varphi)$ is integrable.

\item{} For any pair $(f,g)$ of smooth functions defined on $P$, the 
Poisson bracket $\{f\circ\varphi,g\circ\varphi\}$ is constant on each fibre
$\varphi^{-1}(y)$ of the submersion $\varphi$ (with $y\in P$). 
\end{enumerate}

When these two equivalent properties are satisfied, there exists on $P$ a unique Poisson structure for which $\varphi:M\to P$ is a Poisson map (the manifold $M$ being endowed with the Poisson structure associated to its symplectic structure).
\end{prop}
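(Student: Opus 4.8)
The plan is to reduce both equivalences to one elementary observation about the distribution $D:=\orth(\ker T\varphi)$, then to invoke Lemma~\ref{LemmaPoissonBracket} together with the Frobenius theorem, and finally to descend the Poisson bracket of $M$ to $P$.

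First I would set up the distribution. Since $\varphi$ is a submersion, $V:=\ker T\varphi$ is a smooth vector subbundle of $TM$ of constant rank $\dim M-\dim P$; as $\omega$ is nondegenerate, $D:=\orth V$ is a smooth subbundle of rank $\dim P$, and the bundle isomorphism $v\mapsto\mathi(v)\omega$ of $TM$ onto $T^*M$ carries $D$ isomorphically onto the annihilator $V^{\circ}\subset T^*M$. From this I would extract the key observation: for $h\in C^\infty(M,\RR)$, the Hamiltonian vector field $X_h$ is a section of $D$ if and only if $\d h$ annihilates $V$ everywhere, that is, $\d h$ vanishes on every vector tangent to a fibre of $\varphi$, that is, $h$ is locally constant along the fibres, that is --- the fibres being connected --- $h$ is constant on each fibre. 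In particular, choosing local coordinates $(y^1,\dots,y^k)$ on $P$, the differentials $\d(y^i\circ\varphi)$ span $V^{\circ}$ pointwise, hence the Hamiltonian fields $X_{y^i\circ\varphi}$ form a local frame of $D$.

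Next I would prove $(2)\Rightarrow(1)$. By Lemma~\ref{LemmaPoissonBracket} one has $[X_{y^i\circ\varphi},X_{y^j\circ\varphi}]=X_{\{y^i\circ\varphi,\,y^j\circ\varphi\}}$; hypothesis $(2)$ says the Hamiltonian $\{y^i\circ\varphi,y^j\circ\varphi\}$ is constant on the fibres, so by the key observation this bracket is a section of $D$. Since the $X_{y^i\circ\varphi}$ span $D$ locally and the Lie bracket satisfies $[X,\,hY]=h[X,Y]+(Xh)Y$, the distribution $D$ is involutive, hence integrable by Frobenius. For $(1)\Rightarrow(2)$: integrability gives involutivity; for $f,g\in C^\infty(P,\RR)$ the fields $X_{f\circ\varphi}$ and $X_{g\circ\varphi}$ are sections of $D$, so $X_{\{f\circ\varphi,g\circ\varphi\}}=[X_{f\circ\varphi},X_{g\circ\varphi}]$ is a section of $D$, and the key observation then forces $\{f\circ\varphi,g\circ\varphi\}$ to be constant on each fibre, which is $(2)$.

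Finally, assuming $(2)$, for $f,g\in C^\infty(P,\RR)$ the fibre-constant function $\{f\circ\varphi,g\circ\varphi\}$ descends, uniquely, to a smooth function $\{f,g\}_P$ on $P$ with $\{f,g\}_P\circ\varphi=\{f\circ\varphi,g\circ\varphi\}$ (a standard fact, $\varphi$ being a surjective submersion with connected fibres). Because $k\mapsto k\circ\varphi$ is injective on $C^\infty(P,\RR)$ and $(fg)\circ\varphi=(f\circ\varphi)(g\circ\varphi)$, bilinearity, skew-symmetry, the Leibniz identity and the Jacobi identity transfer verbatim from $\{\,\cdot\,,\cdot\,\}$ on $M$ to $\{\,\cdot\,,\cdot\,\}_P$ on $P$; thus $\{\,\cdot\,,\cdot\,\}_P$ is a Poisson bracket, and $\varphi^*\{f,g\}_P=\{\varphi^*f,\varphi^*g\}$ makes $\varphi$ a Poisson map by the first characterization in Proposition~\ref{PoissonMaps}. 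Uniqueness is immediate: any Poisson structure on $P$ for which $\varphi$ is a Poisson map must satisfy this same identity, which determines its bracket since $\varphi$ is surjective.

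The only genuinely delicate step is the key observation of the second paragraph --- identifying $\orth(\ker T\varphi)$ as a smooth subbundle whose sections are precisely the Hamiltonian fields of functions constant along the fibres; the connectedness of the fibres is exactly what is needed there, since otherwise one only gets functions locally constant on fibres and property $(2)$ could genuinely fail. Everything else is the Frobenius theorem together with routine bookkeeping about pullbacks through a submersion.
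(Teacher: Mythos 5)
Your proof is correct and follows essentially the same route as the paper's: identify the sections of $\orth(\ker T\varphi)$ with the Hamiltonian vector fields of functions constant on the fibres, use $[X_{f\circ\varphi},X_{g\circ\varphi}]=X_{\{f\circ\varphi,g\circ\varphi\}}$ to translate involutivity of that distribution into Property 2, and then descend the bracket through the isomorphism between $C^\infty(P,\RR)$ and the fibre-constant functions on $M$. You merely make explicit what the paper leaves as ``easily follows'' (the local frame $X_{y^i\circ\varphi}$ and the appeal to the Frobenius theorem), which is a welcome addition rather than a deviation.
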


\begin{proof} On the manifold $M$, $\ker T\varphi$ is a an integrable
distribution of rank $\dim M-\dim P$ whose integral submanifolds are the fibres of the submersion $\varphi$. Its symplectic orthog(onal $\orth(\ker T\varphi)$ is therefore a distribution of rank $\dim P$. Let $f$ and $g$ be two smooth functions defined on $M_2$. On $M_1$, the Hamiltonian vector fields $X_{f\circ\varphi}$
and $X_{g\circ\varphi}$ take their values in $\orth(\ker T\varphi)$. We have
 $$[X_{f\circ\varphi},X_{g\circ\varphi}]=X_{\{f\circ\varphi,g\circ\varphi\}}\,.$$   
Therefore $[X_{f\circ\varphi},X_{g\circ\varphi}]$ takes its values in
$\orth(\ker T\varphi)$ if and only if the function $\{f\circ\varphi,g\circ\varphi\}$
is constant on each fibre $\varphi^{-1}(y)$ of the submersion $\varphi$. The equivalence of Properties 1 and 2 easily follows.
\par\smallskip

Let us now assume that the equivalent properties 1 and 2 are satisfied. 
Since the map $\varphi:M\to P$ is a submersion with connected fibres, the map which associates to each function $f\in C^\infty(M_2,\RR)$ the function 
$f\circ\varphi$ is an isomorphism of $C^\infty(M_2,\RR)$ onto the subspace of
$C^\infty(M_1,\RR)$ made by smooth functions which are constant on each fibre of
$\varphi$. The existence and unicity of a Poisson structure on $M_2$ for which
$\varphi$ is a Poisson map follows.
\end{proof}

\begin{rmk}\label{PoissonPair}
Poisson manifolds obtained as quotients of symplectic manifolds often come by pairs.
Let us assume indeed that $(M,\omega)$ is a symplectic manifold and
that the above Proposition can be applied to a smooth surjective submersion with connected fibres $\varphi:M\to P$, and defines a Poisson structure
on $P$ for which $\varphi$ is a Poisson map. Since $\orth(\ker T\varphi)$ is integrable, it defines a foliation of $M$, which is said to be \emph{simple} when
the set of leaves $Q$ of that foliation has a smooth manifold structure such that the map $\psi:M\to Q$, which associates to each point in $M$ the leaf through this point, is a submersion. Then the maps $\varphi:M\to P$ and $\psi:M\to Q$ play similar parts, so there exists on $Q$ a unique Poisson structure for which $\psi$ is a Poisson map. Alan Weinstein \cite{Weinstein83} has determined the links which exist between the local structures of the two Poisson manifolds $P$ and $Q$
at corresponding points (that means, at points which are the images of the same point in $M$ by the maps $\varphi$ and $\psi$).
\end{rmk}

Several kinds of remarkable submanifolds of a Poisson manifold can be defined
\cite{Weinstein83}. The most important are the \emph{coisotropic} submanifolds, defined below.

\begin{defi}\label{coisotropicPoisson} 
A submanifold $N$ of a Poisson manifold $(M,\Lambda)$ is said to be
\emph{coisotropic} if for any point $x\in N$ and any pair 
$(f,g)$ of smooth functions defined on a neighbourhood $U$ of $x$ in $M$ whose restrictions to $U\cap N$ are constants, the Poisson bracket $\{f,g\}$ vanishes on $U\cap N$. 
\end{defi}

\subsection{Examples of Poisson manifolds}

\subsubsection{Symplectic manifolds} 
We have seen above that any symplectic manifold is a Poisson manifold.

\subsubsection{Dual spaces of finite-dimensional Lie algebras}
\label{CanonicalPoissonStructure} 
Let $\mathcal G$ be a finite-dimensional Lie algebra, and ${\mathcal G}^*$ its dual space. The Lie algebra $\mathcal G$ can be considered as the dual of ${\mathcal G}^*$, that means as the space of linear functions on ${\mathcal G}^*$, and the bracket of the Lie algebra $\mathcal G$ is a composition law on this space of linear functions. This composition law can be extended to the space $C^\infty({\mathcal G}^*,\RR)$
by setting
 $$\{f,g\}(x)=\Bigl\langle x,\bigl[\d f(x),\d g(x)\bigr]\Bigr\rangle\,,\quad 
          f\ \text{and}\ g\in C^\infty({\mathcal G}^*,\RR)\,,\quad 
          x\in{\mathcal G}^*\,.
 $$
This bracket on $C^\infty({\mathcal G}^*,\RR)$ defines a Poisson structure 
on  ${\mathcal G}^*$, called its \emph{canonical Poisson structure}.
It implicitly appears in the works of Sophus Lie, and was rediscovered by Alexander Kirillov \cite{Kirillov74}, Bertram Kostant
\cite{Kostant70} and Jean-Marie Souriau \cite{Souriau69}. Its existence can be seen as an application of Proposition \ref{QuotientPoisson}. Let indeed $G$ be the connected and simply connected Lie group whose Lie algebra is $\mathcal G$. We know that the cotangent bundle $T^*G$ has a canonical symplectic structure. One can check easily that for this symplectic structure, the Poisson bracket of two smooth functions defined on $T^*G$ and invariant with respect to the lift to $T^*G$ of the action of $G$ on itself by left translations, is too invariant with respect to that action. Application of Proposition 
\ref{QuotientPoisson}, the submersion
$\varphi:T^*G\to{\mathcal G}^*$ being the left translation which, for each $g\in G$, maps $T^*_gG$ onto $T^*_eG\equiv{\mathcal G}^*$, yields the above defined
Poisson structure on ${\mathcal G}^*$. If instead of translations on the left, we use translation on the right, we obtain on ${\mathcal G}^*$ the opposite Poisson structure. This illustrates Remark \ref{PoissonPair}, since, as we will see later,
each one of the tangent spaces at a point $\xi\in T^*G$ to the orbits of that point by the lifts to $T^*G$ of the actions of $G$ on itself by translations on the left and on the right, is the symplectic orthogonal of the other.
\par\smallskip

The symplectic leaves of ${\mathcal G}^*$ equipped with the above defined Poisson structure are the coadjoint orbits. 

\subsubsection{Symplectic cocycles}\label{SymplecticCocycle} 
A \emph{symplectic cocycle} of the Lie algebra
$\mathcal G$ is a skew-symmetric bilinear map 
$\widetilde\Theta:{\mathcal G}\times{\mathcal G}\to\RR$ which satisfies
 $$\widetilde\Theta\bigl([X,Y],Z\bigr)+\widetilde\Theta\bigl([Y,Z],X\bigr)
                            +\widetilde\Theta\bigl([Z,X],Y\bigr)=0\,.
 $$
The above defined canonical Poisson structure on ${\mathcal G}^*$ can be modified by means of a symplectic cocycle $\widetilde\Theta$ by defining the new bracket
(see for example \cite{LibermannMarle})
 $$\{f,g\}_{\widetilde\Theta}(x)=\Bigl\langle x,\bigl[\d f(x),\d g(x)\bigr]
                                  \Bigr\rangle
                                  -\widetilde\Theta\bigl(\d f(x),\d g(x))\,,
 $$
where $f$ and $g\in C^\infty({\mathcal G}^*,\RR)$, $x\in{\mathcal G}^*$.
This Poisson structure is called the \emph{modified canonical Poisson structure} by means of the symplectic cocycle $\widetilde\Theta$. We will see in Section 
\ref{ActionsOnCotangentBundle}
that the symplectic leaves of ${\mathcal G}^*$ equipped with this Poisson structure are
the orbits of an affine action whose linear part is the coadjoint action, with an additional term determined by $\widetilde\Theta$.

\section{Symplectic, Poisson and Hamiltonian actions}
\label{SymplecticPoissonHamiltonian}

\subsection{Actions on a smooth manifold}
Let us first recall some definitions and facts about actions of a Lie algebra 
or of a Lie group on a smooth manifold.

\begin{defi}
An \emph{action on the left} (resp. an \emph{action on the right}) 
of a Lie group $G$ on a smooth manifold $M$ is a smooth map 
$\Phi:G\times M\to M$ (respectively, $\Psi:M\times G\to M$) such that, for any $x\in M$,
$g_1$ and $g_2\in G$, $e\in G$ being the 
neutral element,

\begin{itemize}

\item{} for an action on the left
 $$\Phi\bigl(g_1,\Phi(g_2,x)\bigr)=\Phi(g_1g_2,x)\,,\quad \Phi(e,x)=x\,,$$

\item{} for an action on the right
  $$\Psi\bigl(\Psi(x, g_1),g_2\bigr)=\Psi(x, g_1g_2)\,,\quad \Psi(x,e)=x\,.$$
\end{itemize}
\end{defi}

\subsubsection{Consequences} 
Let $\Phi:G\times M\to M$ be an action on the left of the Lie group $G$ on the smooth manifold $M$. For each $g\in G$, we denote by 
$\Phi_g:M\to M$ the map
 $$\Phi_g(x)=\Phi(g,x)\,.$$
The map $g\mapsto\Phi_g$ is a groups homomorphism of $G$ into the group of smooth diffeomorphisms of $M$. In other words, for each $g\in G$, $\Phi_g$ is a diffeomorphism of $M$, and we have
 $$\Phi_{g}\circ\Phi_h=\Phi_{gh}\,,\quad (\Phi_g)^{-1}=\Phi_{g^{-1}}\,,\quad
    g\ \text{and}\ h\in G\,.$$ 
Similarly, let $\Psi:M\times G\to M$ be an action on the right of the Lie group $G$ on the smooth manifold $M$. For each $g\in G$, we denote by 
$\Psi_g:M\to M$ the map
 $$\Psi_g(x)=\Psi(x,g)\,.$$
The map $g\mapsto\Psi_g$ is a groups anti-homomorphism of $G$ into the group of smooth diffeomorphisms of $M$. In other words, for each $g\in G$, $\Psi_g$ is a diffeomorphism of $M$, and we have
 $$\Psi_{g}\circ\Psi_h=\Psi_{hg}\,,\quad (\Psi_g)^{-1}=\Psi_{g^{-1}}\,,\quad
    g\ \text{and}\ h\in G\,.$$ 

\begin{defi}
Let $\Phi:G\times M\to M$ be an action on the left (resp. let $\Psi:M\times G\to M$
be an action of the right) of the Lie group $G$ on the smooth manifold $M$. With each element $X\in {\mathcal G}\equiv T_eG$ (the tangent space to the Lie group $G$ at the neutral element) we associate the vector field $X_M$ on $M$ defined by 
 $$X_M(x)=\begin{cases}\displaystyle
           \frac{\d\Phi\bigl(\exp(sX),x\bigr)}{ds}\Bigm|_{s=0}&\text{if $\Phi$ 
                          is an action on the left,}\\
                       \displaystyle
           \frac{\d\Psi\bigl(x, \exp(sX)\bigr)}{ds}\Bigm|_{s=0}&\text{if $\Psi$ 
                          is an action on the right.}
          \end{cases}  
$$
The vector field $X_M$ is called the \emph{fundamental vector field} on $M$ associated to $X$.
\end{defi}

\begin{defi} An \emph{action} of a Lie algebra $\mathcal G$ on a smooth manifold
$M$ is a Lie algebras homomorphism $\varphi$ of $\mathcal G$ into the Lie algebra $A^1(M)$ of smooth vector fields on $M$ (with the Lie bracket of vector fields as composition law). In other words, it is a linear map $\varphi:{\mathcal G}\to A^1(M)$ such that
for each pair $(X,Y)\in{\mathcal G}\times{\mathcal G}$,
 $$\varphi\bigl([X,Y]\bigr)=\bigl[\varphi(X),\varphi(Y)\bigr]\,.$$
\end{defi} 

\begin{rmk} Let $G$ be a Lie group. There are two natural ways in which the 
tangent space $T_eG\equiv{\mathcal G}$ to the Lie group $G$
at the neutral element $e$ can be endowed with a Lie algebra structure.
\par\smallskip

In the first way, we associate with each element $X\in T_eG$ the \emph{left invariant} vector field $X^L$ on $G$ such that $X^L(e)=X$; its value at a point
$g\in G$ is $X^L(g)=TL_g(X)$, where $L_g:G\to G$ is the map $h\mapsto L_g(h)=gh$.
We observe that for any pair $(X,Y)$ of elements in $\mathcal G$ the Lie bracket
$[X^L, Y^L]$ of the vector fields $X^L$ and $Y^L$ on $G$ is left invariant, and we define the bracket $[X,Y]$ by setting $[X,Y]=[X^L,Y^L](e)$. This Lie algebra structure on ${\mathcal G}\equiv T_eG$ will be called the Lie algebra structure of
\emph{left invariant vector fields} on $G$.
\par\smallskip

In the second way, we choose the \emph{right invariant} vector fields on $G$
$X^R$ and $Y^R$, instead of the left invariant vector fields $X^L$ and $Y^L$. Since
$[X^R,Y^R](e)=-[X^L,Y^L](e)$, the Lie algebra structure on ${\mathcal G}\equiv T_eG$
obtained in this way, called the Lie algebra structure of \emph{right invariant vector fields}, is the \emph{opposite} of that of left invariant vector fields. 
We have therefore on $T_eG$ two opposite Lie algebras structures, both equally natural. Fortunately, the choice of one rather than the other as the Lie algebra 
$\mathcal G$ of $G$ does not matter because the map $X\mapsto -X$ is a Lie algebras isomorphism between these two structures.  
\end{rmk}

\begin{prop}\label{LeftRight} 
Let $\Phi:G\times M\to M$ be an action on the left
(resp. let $\Psi:M\times G\to M$ be an action on the right) of a Lie group $G$ on a smooth manifold $M$. We endow ${\mathcal G}\equiv T_eG$ with the Lie algebra structure of right invariant vector fields on $G$ (resp, with the Lie algebra structure of left invariant vector fields on $G$). The map $\varphi:{\mathcal G}\to A^1(M)$ 
(resp. $\psi:{\mathcal G}\to A^1(M)$) which associates to each element $X$ of the Lie algebra $\mathcal G$ of $G$ the corresponding fundamental vector field $X_M$, is an action of the Lie algebra 
$\mathcal G$ on the manifold $M$. This Lie algebra action is said to be \emph{associated} to the Lie group action $\Phi$ (resp. $\Psi$).
\end{prop}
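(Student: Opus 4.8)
The plan is to show that $\varphi: X \mapsto X_M$ (or $\psi: X \mapsto X_M$ in the right-action case) is linear and respects brackets. Linearity is immediate from the definition of the fundamental vector field, since $\exp(s(aX + bY))$ differentiates at $s = 0$ to give $aX + bY$, and the chain rule passes this through $\Phi$. So the only substantive point is the bracket identity $[X,Y]_M = [X_M, Y_M]$, with the bracket on the left being the one on $\mathcal{G} \equiv T_eG$ coming from \emph{right}-invariant vector fields (for a left action), respectively \emph{left}-invariant vector fields (for a right action).

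First I would treat the left-action case. The key idea is to relate the fundamental vector field $X_M$ to the right-invariant vector field $X^R$ on $G$ via the action map. Concretely, for fixed $x \in M$, consider the orbit map $\Phi^x: G \to M$, $g \mapsto \Phi(g,x)$. I claim $\Phi^x$ intertwines $X^R$ with $X_M$, i.e. $T_g\Phi^x\bigl(X^R(g)\bigr) = X_M\bigl(\Phi(g,x)\bigr)$ for all $g$. This is checked by writing $X^R(g) = \frac{d}{ds}\bigl(\exp(sX)\,g\bigr)\big|_{s=0}$ and using the homomorphism property $\Phi(\exp(sX)g, x) = \Phi\bigl(\exp(sX), \Phi(g,x)\bigr)$, then differentiating at $s=0$; the right-hand side is exactly $X_M$ evaluated at $\Phi(g,x)$. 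Once this intertwining is established, it follows that $X^R$ is $\Phi^x$-related to $X_M$ and $Y^R$ is $\Phi^x$-related to $Y_M$. Since the Lie bracket of vector fields is natural under $f$-relatedness — if $V_i$ is $f$-related to $W_i$ for $i = 1,2$, then $[V_1,V_2]$ is $f$-related to $[W_1,W_2]$ — we get that $[X^R, Y^R]$ is $\Phi^x$-related to $[X_M, Y_M]$. But $[X^R,Y^R]$ is itself right-invariant with value $[X,Y]$ at $e$ (this is the definition of the right-invariant-vector-field Lie algebra structure on $\mathcal{G}$), so by the intertwining applied to $[X,Y]$ we also have $[X^R,Y^R]$ is $\Phi^x$-related to $[X,Y]_M$. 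Evaluating both conclusions at an arbitrary point $\Phi(g,x)$ — and noting that every point of $M$ is of this form with $g = e$ — yields $[X_M, Y_M] = [X,Y]_M$.

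For the right-action case the argument is parallel, but now one uses the \emph{left}-invariant vector field $X^L$ on $G$ and the orbit map $\Psi^x: g \mapsto \Psi(x,g)$. The reason the variance flips is the anti-homomorphism property $\Psi(x, g_1 g_2) = \Psi(\Psi(x,g_1), g_2)$: differentiating $\Psi(x, g\exp(sX))$ at $s=0$ relates $X^L(g) = \frac{d}{ds}\bigl(g\exp(sX)\bigr)\big|_{s=0}$ to $X_M\bigl(\Psi(x,g)\bigr)$, so $X^L$ is $\Psi^x$-related to $X_M$. Then the same naturality-of-the-bracket argument, together with $[X^L,Y^L](e) = [X,Y]$ being the definition of the left-invariant Lie algebra structure, gives $[X_M,Y_M] = [X,Y]_M$.

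The main obstacle — really the only place care is needed — is getting the side conventions exactly right: verifying that a left action is compatible with \emph{right}-invariant vector fields (not left-invariant ones), which comes down to the order of composition in $\Phi(g_1g_2,x) = \Phi\bigl(g_1,\Phi(g_2,x)\bigr)$ forcing the "outer" group element to act last, matching how $X^R$ translates on the left inside the orbit map. One must also invoke the standard fact that $\Phi$-relatedness of vector fields is preserved under Lie bracket; this is elementary and can be cited without proof. Everything else is a routine differentiation at $s = 0$.
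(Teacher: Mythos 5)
Your proposal is correct and follows essentially the same route as the paper: both use the orbit map $\Phi^x$ to show that the right-invariant field $X^R$ is $\Phi^x$-related to $X_M$ (via $\Phi(\exp(sX)g,x)=\Phi(\exp(sX),\Phi(g,x))$), then invoke preservation of the Lie bracket under $f$-relatedness and evaluate at $g=e$. The paper is merely terser about the final bracket step, which you spell out explicitly.
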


\begin{proof}
Let us look at an action on the left $\Phi$. Let $x\in M$, and let 
$\Phi^x:G\to M$ be the map $g\mapsto\Phi^x(g)=\Phi(g,x)$.For any $X\in T_eG$ and $g\in G$, we have
 \begin{equation*}
   \begin{split}
      X_M\bigl(\Phi(g,x)\bigr)
            &=\frac{\d}{\d s}\,\Phi\bigl(\exp(sX),\Phi(g,x)\bigr)\bigm|_{s=0}
             =\frac{\d}{\d s}\,\Phi\bigl(\exp(sX)g,x\bigr)\bigm|_{s=0}\\
            &=\frac{\d}{\d s}\,\Phi\Bigl(R_g\bigl(\exp(sX)\bigr),x\Bigr)\Bigm|_{s=0}
             =T\Phi^x\circ TR_g(X)\,.
   \end{split}
  \end{equation*}
We see that for each $X\in T_xG$, the right invariant vector field $X^R$ on $G$ and the fundamental vector field $X_M$ on $M$ are compatible with respect to the map
$\Phi^x:G\to M$. Therefore for any pair $(X,Y)$ of elements in $T_eG$, we have
$[X,Y]_M=[X_M,Y_M]$. In other words the map $X\mapsto X_M$ is an action 
of the Lie algebra ${\mathcal G}=T_eG$ (equipped with the Lie algebra structure of right invariant vector fields on $G$) on the manifold $M$. 
\par\smallskip

For an action on the right $\Psi$, the proof is similar, ${\mathcal G}=T_eG$ 
being this time endowed with the Lie algebra structure of left invariant 
vector fields on $G$. 
\end{proof}

\begin{prop}\label{DirectImageFundamental} 
Let $\Phi:G\times M\to M$ be an action on the left
(resp. let $\Psi:M\times G\to M$ be an action on the right) of a Lie group $G$ on a smooth manifold $M$. Let $X_M$ be the fundamental vector field associated to an
element $X\in{\mathcal G}$. For any $g\in G$, the direct image $(\Phi_g)_*(X_M)$
(resp. $(\Psi_g)_*(X_M)$) of the vector field $X_M$ by the diffeomorphism $\Phi_g:M\to M$ (resp. $\Psi_g:M\to M$) is the fundamnetal vector field $(\Ad_gX)_M$ associated to $\Ad_gX$ (resp. the fundamental vector field $(\Ad_{g^{-1}}X)_M$ 
associated to $\Ad_{g^{-1}}X$).
\end{prop}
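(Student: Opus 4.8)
The plan is to compute the push-forward $(\Phi_g)_*(X_M)$ directly from the definitions, evaluate it at an arbitrary point $y\in M$, and recognize the result as a fundamental vector field. I would treat the left-action case first. Writing $x=\Phi_{g^{-1}}(y)$, so that $y=\Phi_g(x)$, the definition of push-forward gives
$$(\Phi_g)_*(X_M)(y)=T\Phi_g\bigl(X_M(x)\bigr)
  =T\Phi_g\left(\frac{\d}{\d s}\,\Phi\bigl(\exp(sX),x\bigr)\Bigm|_{s=0}\right)\,.$$
Since $T\Phi_g$ applied to the velocity of a curve is the velocity of the image curve, and using the action axiom $\Phi\bigl(g,\Phi(h,x)\bigr)=\Phi(gh,x)$ together with $x=\Phi(g^{-1},y)$, this equals $\dfrac{\d}{\d s}\,\Phi\bigl(g\exp(sX)g^{-1},y\bigr)\big|_{s=0}$.

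The essential algebraic input is then the standard identity $g\exp(sX)g^{-1}=\exp\bigl(s\,\Ad_gX\bigr)$, which is nothing but the definition of $\Ad_g$ as the differential at $e$ of the inner automorphism $h\mapsto ghg^{-1}$. Substituting it, the expression becomes $\dfrac{\d}{\d s}\,\Phi\bigl(\exp(s\,\Ad_gX),y\bigr)\big|_{s=0}=(\Ad_gX)_M(y)$, which is exactly the claimed formula. For a right action $\Psi$, I would argue identically: with $x=\Psi_{g^{-1}}(y)=\Psi(y,g^{-1})$, the right-action axiom $\Psi\bigl(\Psi(x,g_1),g_2\bigr)=\Psi(x,g_1g_2)$ shows that the curve $s\mapsto\Psi_g\bigl(\Psi(\Psi(y,g^{-1}),\exp(sX))\bigr)$ equals $s\mapsto\Psi\bigl(y,g^{-1}\exp(sX)g\bigr)$; since $g^{-1}\exp(sX)g=\exp\bigl(s\,\Ad_{g^{-1}}X\bigr)$, differentiating at $s=0$ yields $(\Ad_{g^{-1}}X)_M(y)$.

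There is no real obstacle here; the only points that need care are purely a matter of bookkeeping. First, one must remember that the base point of the push-forward forces $X_M$ to be evaluated at $\Phi_{g^{-1}}(y)$ rather than at $y$, which is where the inverse enters. Second, in the right-action case the order of the group multiplications must be tracked carefully so that the conjugation comes out as $\Ad_{g^{-1}}$ and not $\Ad_g$: this asymmetry between the two cases mirrors the one already seen in Proposition \ref{LeftRight}, where a left action corresponds to the Lie algebra structure of right-invariant vector fields and vice versa.
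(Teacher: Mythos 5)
Your proof is correct and follows essentially the same route as the paper's: evaluate the push-forward at a point, rewrite the derivative of $s\mapsto\Phi\bigl(g\exp(sX)g^{-1},\cdot\bigr)$ using the action axioms, and invoke $g\exp(sX)g^{-1}=\exp\bigl(s\,\Ad_gX\bigr)$. The only difference is that you write out the right-action case explicitly (and correctly), where the paper merely states it is similar.
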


\begin{proof} For each $x\in M$
 \begin{equation*}
  \begin{split}
  (\Phi_g)_*(X_M)(x)
    &=T\Phi_g\Bigl(X_M\bigl(\Phi(g^{-1},x)\bigr)\Bigr)\\
    &=T\Phi_g\left(\frac{\d}{\d s}\Phi\bigl(\exp(sX)g^{-1},x\bigr)
       \bigm|_{s=0} \right)\\
    &=\frac{\d}{ds}\Bigl(\Phi\bigl(g\exp(sX)g^{-1},x\bigr)\Bigr)\Bigm|_{s=0}
     =(\Ad_gX)_M(x)\,,
  \end{split}
  \end{equation*}
since $g\exp(sX)g^{-1}=\exp(\Ad_gX)$. The proof for the action on the right $\Psi$ is similar.
\end{proof}

\subsection{Linear and affine representations}
In this section, after recalling some results about linear and affine 
transformation groups, we discuss linear and affine representations 
of a Lie group or of a Lie algebra in a finite-dimensional vector space, 
which can be seen as special examples of actions. 

\subsubsection{Linear and affine transformation groups and their Lie algebras}
\label{AffineTransforms} 
Let $E$ be a finite-dimensional vector space. The set of
linear isomorphisms $l:E\to E$ will be denoted by $\GL(E)$. We recall that equipped with the composition of maps 
 $$(l_1,l_2)\mapsto l_1\circ l_2$$ 
as a composition law, $\GL(E)$ is a Lie group whose dimension is $(\dim E)^2$. Its Lie algebra, which will be denoted by
$\gl(E)$, is the set ${\mathcal L}(E,E)$ of linear maps $f:E\to E$, with the commutator
 $$(f_1,f_2)\mapsto [f_1,f_2]=f_1\circ f_2-f_2\circ f_1$$
as a composition law.
\par\smallskip

A map $a:E\to E$ is called an \emph{affine map} if it can be written as
 $$a(x)=l(x)+c\,,\quad x\in E\,,$$
the map $l:E\to E$ being linear, and $c\in E$ being a constant. The affine map $a$ is invertible if and only if its linear part $l$ is invertible, in other words if and only if $l\in \GL(E)$; when this condition is satisfied, its inverse is
 $$a^{-1}(y)=l^{-1}(y-c)\,,\quad y\in E\,.$$
By identifying the invertible affine map $a$ with the pair $(l,c)$, with $l\in\GL(E)$
and $c\in E$, the set of invertible affine maps of $E$ onto itself becomes identified with $\GL(E)\times E$. The composition law and the inverse map on this product (which
is called the \emph{semi-direct product of} $\GL(E)$ \emph{with} $E$) are
 $$(l_1,c_1),(l_2,c_2)\mapsto (\bigl(l_1\circ l_2, l_1(c_2)+c_1)\bigr)\,,\quad
    (l,c)^{-1}=\bigl(l^{-1},-l^{-1}(c)\bigr)\,.$$
The semi-direct product $\Aff(E)=\GL(E)\times E$ is a Lie group whose dimension is
$(\dim E)^2+\dim E$; its Lie algebra is the product $\aff(E)={\mathcal L}(E,E)\times E$, with the composition law
 $$\bigl((f_1,d_1),(f_2,d_2)\bigr)\mapsto\bigl[(f_1,d_1),(f_2,d_2)\bigr]
      =\bigl(f_1\circ f_2-f_2\circ f_1, f_1(d_2)-f_2(d_1)\bigr)\,.
 $$
The adjoint representation is given by the formula
 $$\Ad_{(l,c)}\bigl((f,d)\bigr)=\bigl(l\circ f\circ l^{-1}, 
   l(d)-l\circ f\circ l^{-1}(d)\bigr)\,.$$

\begin{rmk}\label{SignChange} 
The finite-dimensional vector space $E$ can be considered as a smooth manifold on which $E$ itself transitively acts by translations. That action determines
a natural trivialization of the tangent bundle $TE$, the tangent space $T_xE$ at each
point $x\in E$ being identified with $E$. An element $a\in\aff(E)$, in other words
an affine map $a:E\to E$, can therefore be considered as the vector field on
$E$ whose value, at each $x\in E$, is $a(x)\in T_xE\equiv E$. A question
naturally arises: how the bracket of two elements $a_1$ and $a_2\in \aff(E)$, for the Lie algebra structure of $\aff(E)$ defined in \ref{AffineTransforms}, compares with the bracket of these two elements when considered as vector fields on $E$? An easy calculation in local coordinates shows that the bracket $[a_1,a_2]$ defined in
\ref{AffineTransforms} is the \emph{opposite} of the bracket of these two elements 
when considered as vector fields on $E$. Remark \ref{explanation} below will explain
the reason of that change of sign.
\end{rmk}

\begin{defis}\label{DefinitionLinearRepresentation} 
Let $G$ be a Lie group, ${\mathcal G}$ a Lie algebra and $E$ a finite-dimensional vector space.

\par\smallskip\noindent
{\bf 1.\quad}
A \emph{linear representation} (respectively, an \emph{affine representation}) of the Lie group $G$ in the vector space $E$ is a Lie groups homomorphism
$R:G\to\GL(E)$ of $G$ in the Lie group $\GL(E)$ of linear transformations of $E$ (respectively, 
a Lie groups homomorphism $A:G\to\Aff(E)$ of $G$ in the Lie group
$\Aff(E)$ of affine transformations of $E$).

\par\smallskip\noindent
{\bf 2.\quad}
A \emph{linear representation} (respectively, an \emph{affine representation}) of the Lie algebra $\mathcal G$ in the vector space $E$ is a Lie algebras homomorphism 
$r:{\mathcal G}\to \gl(E)$ of the Lie algebra $\mathcal G$ in the Lie algebra
$\gl(E)$ of the group of linear transformations of $E$ (resp, a Lie algebras
homomorphism $a:{\mathcal G}\to\aff(E)$ of the Lie algebra $\mathcal G$ in 
the Lie algebra $\aff(E)$ of the group of affine transformations of $E$).
\end{defis}

\begin{exs}\label{AdjointAndCoadjointRepresentations} Let $G$ be a Lie group. The \emph{adjoint representation} of $G$ is the linear representation of $G$ in its Lie algebra $\mathcal G$ which associates, to each 
$g\in G$, the linear isomorphism $\Ad_g\in\GL(\mathcal G)$
 $$\Ad_g(X)=TL_g\circ TR_{g^{-1}} (X)\,,\quad (X\in{\mathcal G})\,.
 $$
The \emph{coadjoint representation} of $G$ is the contragredient of the adjoint representation. It associates to each $g\in G$ the linear isomorphism
$\Ad^*_{g^{-1}}\in \GL({\mathcal G}^*)$, which satisfies, for each 
$\zeta\in{\mathcal G}^*$ and $X\in{\mathcal G}$,
 $$\bigl\langle\Ad^*_{g^{-1}}(\zeta),X\bigr\rangle
   =\bigl\langle \zeta, \Ad_{g^{-1}}(X)\bigr\rangle\,.
 $$
The \emph{adjoint representation} of the Lie algebra $\mathcal G$ is the linear representation of $\mathcal G$ into itself which associates, to each 
$X\in{\mathcal G}$, the linear map $\ad_X\in\gl({\mathcal G})$
 $$\ad_X(Y)=[X,Y]\,,\quad (Y\in{\mathcal G})\,.
 $$
The \emph{coadjoint representation} of the Lie algebra $\mathcal G$ is the contragredient of the adjoint representation. It associates, to each $X\in{\mathcal G}$,
the linear map $\ad_{-X}^*\in\gl({\mathcal G}^*)$ which satisfies, for each 
$\zeta\in{\mathcal G}^*$ and $X\in{\mathcal G}$,
 $$\bigl\langle \ad^*_{-X}\zeta,Y\bigr\rangle=\bigl\langle\zeta, [-X,Y]\bigr\rangle\,.
 $$
The adjoint representation (respectively, the coadjoint representation) 
of $\mathcal G$ is the Lie algebra representation 
associated to the adjoint representation (respectively, the coadjoint
representation) of the Lie group $G$, in the sense recalled below in the proof of 
\ref{AssociatedLieAlgebraRepresentation}. 
\end{exs}

\begin{prop}\label{LinearPartA}
Let $G$ be a Lie group and $E$ a finite-dimensional vector space.
A map $A:G\to \Aff(E)$ always can be written as
 $$A(g)(x)=R(g)(x)+\theta(g)\,,\quad\hbox{with}\ g\in G\,,\ x\in E\,,$$
where the maps $R:G\to\GL(E)$ and $\theta:G\to E$ are determined by $A$.
The map $A$ is an affine representation of $G$ in $E$ if and only if the following two properties are satisfied:

\begin{itemize}
\item{} $R:G\to \GL(E)$ is a linear representation of $G$ in the vector space $E$,

\item{} the map $\theta:G\to E$ is a one-cocycle of $G$ with values in $E$, for the linear representation $R$; it means that $\theta$ is a smooth map which satisfies,
for all $g$ and $h\in G$, 
 $$\theta(gh)=R(g)\bigl(\theta(h)\bigr)+\theta(g)\,.
 $$ 
\end{itemize}

When these two properties are satisfied, the linear representation $R$ is called the
\emph{linear part} of the affine representation $A$, and $\theta$ is called the
\emph{one-cocycle} of $G$ associated to the affine representation $A$.
\end{prop}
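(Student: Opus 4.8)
The plan is to extract $R$ and $\theta$ from $A$ in the only way possible, and then turn the single functional equation ``$A$ is a group homomorphism'' into the two asserted conditions by evaluating at suitable points.

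First I would \emph{define} $R$ and $\theta$. For each $g\in G$, the affine map $A(g):E\to E$ has, by the discussion in \ref{AffineTransforms}, a unique decomposition as a linear part plus a constant vector; so set $\theta(g)=A(g)(0)\in E$ and $R(g)(x)=A(g)(x)-\theta(g)$. Then $R(g)$ is linear, $R(g)\in\GL(E)$ since $A(g)$ is invertible, and $A(g)(x)=R(g)(x)+\theta(g)$ by construction; this already proves the first sentence, and shows $R$ and $\theta$ are uniquely determined by $A$. Smoothness of $R$ and $\theta$ follows from smoothness of $A$ together with the smoothness of the projections $\Aff(E)=\GL(E)\times E\to\GL(E)$ and $\to E$ in the identification recalled in \ref{AffineTransforms}.

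Next I would carry out the computation that makes everything fall out. Using the semi-direct product composition law $(l_1,c_1)(l_2,c_2)=(l_1\circ l_2,\ l_1(c_2)+c_1)$ from \ref{AffineTransforms}, the identity $A(gh)=A(g)\circ A(h)$ reads, in pairs $(R(g),\theta(g))$,
\begin{equation*}
\bigl(R(gh),\theta(gh)\bigr)=\bigl(R(g)\circ R(h),\ R(g)(\theta(h))+\theta(g)\bigr)\,.
\end{equation*}
Equating first components gives $R(gh)=R(g)\circ R(h)$ for all $g,h$, and since $A(e)=\id_E$ forces $R(e)=\id_E$, the map $R:G\to\GL(E)$ is a Lie group homomorphism, i.e.\ a linear representation. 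Equating second components gives exactly the cocycle identity $\theta(gh)=R(g)(\theta(h))+\theta(g)$. Conversely, if $R$ is a linear representation and $\theta$ is a one-cocycle for $R$, the same pair of equalities shows that $g\mapsto(R(g),\theta(g))$ respects the composition law of $\Aff(E)$ and sends $e$ to $\id_E$ (taking $g=h=e$ in the cocycle identity gives $\theta(e)=0$), hence $A$ is an affine representation; smoothness of $A$ is clear from smoothness of $R$ and $\theta$. This completes the equivalence, and the names ``linear part'' and ``associated one-cocycle'' are then just terminology.

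There is essentially no hard part here: the only thing requiring a moment's care is the bookkeeping with the semi-direct product law (getting $A(gh)=A(g)\circ A(h)$ rather than the reverse, and matching it against $(l_1,c_1)(l_2,c_2)$ in the correct order), and noting that invertibility of each $A(g)$ is what legitimately places $R(g)$ in $\GL(E)$ rather than merely in $\gl(E)$. Everything else is a direct translation of one functional equation into its two components.
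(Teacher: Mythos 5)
Your proof is correct and follows essentially the same route as the paper: decompose $A(g)$ into its linear part $R(g)$ and translation part $\theta(g)$ via the identification $\Aff(E)=\GL(E)\times E$, then compare $A(gh)$ with $A(g)\circ A(h)$ using the semi-direct product composition law to read off the homomorphism property of $R$ and the cocycle identity for $\theta$. The paper states this comparison in one sentence; you have simply written out the bookkeeping explicitly, which is a faithful expansion rather than a different argument.
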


\begin{proof}
Since $\Aff(E)=\GL(E)\times E$, for each $g\in G$ and $x\in E$, we have
 $$A(g)(x)=R(g)(x)+\theta(g)\,,$$
where the maps $R:G\to \GL(E)$ and $\theta:G\to E$ are determined by $A$. 
By comparing $A(gh)$ and $A(g)\circ A(h)$, for $g $ and $h\in G$, using the
composition law of $\Aff(E)$ recalled in subsection \ref{AffineTransforms}, 
we easily check that
$A$ is an affine representation, which means that it is smooth and
satisfies, for all $g$ and $h\in G$, $A(gh)=A(g)\circ A(h)$, and $A(e)=\id_E$, 
if and only if the two above stated properties are satisfied.
\end{proof}

For linear and affine representations of a Lie algebra, we have the following infinitesimal analogue of Proposition \ref{LinearPartA}.

\begin{prop}\label{LinearParta}
Let $\mathcal G$ be a Lie algebra and $E$ a finite-dimensional vector space.
A linear map $a:{\mathcal G}\to \aff(E)$ always can be written as
 $$a(X)(x)=r(X)(x)+\Theta(X)\,,\quad\hbox{with}\ X\in {\mathcal G}\,,\ x\in E\,,$$
where the linear maps $r:{\mathcal G}\to\gl(E)$ and $\Theta:{\mathcal G}\to E$ are determined by $a$. The map $a$ is an affine representation of $G$ in $E$ if and only if the following two properties are satisfied:

\begin{itemize}
\item{} $r:{\mathcal G}\to \gl(E)$ is a linear representation of the Lie algebra
$\mathcal G$ in the vector space $E$,

\item{} the linear map $\Theta:{\mathcal G}\to E$ is a one-cocycle of $\mathcal G$ 
with values in $E$, for the linear representation $r$; it means that $\Theta$ 
satisfies, for all $X$ and $Y\in {\mathcal G}$, 
 $$\Theta\bigl([X,Y]\bigr)=r(X)\bigl(\Theta(Y)\bigr)-r(Y)\bigl(\Theta(X)\bigr)\,.
 $$ 
\end{itemize}

When these two properties are satisfied, the linear representation $r$ is called the
\emph{linear part} of the affine representation $a$, and $\Theta$ is called the
\emph{one-cocycle} of $\mathcal G$ associated to the affine representation $a$.
\end{prop}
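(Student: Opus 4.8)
The plan is to mirror the proof of Proposition \ref{LinearPartA} at the infinitesimal level, using the explicit description of the Lie algebra $\aff(E)={\mathcal L}(E,E)\times E$ and of its bracket given in \ref{AffineTransforms}. First I would observe that since $\aff(E)=\gl(E)\times E$ as a vector space, the linear map $a:{\mathcal G}\to\aff(E)$ can be written uniquely as $a(X)=\bigl(r(X),\Theta(X)\bigr)$ with $r:{\mathcal G}\to\gl(E)$ and $\Theta:{\mathcal G}\to E$ linear; identifying the element $\bigl(r(X),\Theta(X)\bigr)$ of $\aff(E)$ with the affine map $x\mapsto r(X)(x)+\Theta(X)$ gives the asserted form $a(X)(x)=r(X)(x)+\Theta(X)$. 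Uniqueness of $r$ and $\Theta$ is immediate: evaluating at $x=0$ recovers $\Theta(X)$, and the difference $a(X)(x)-\Theta(X)$ recovers $r(X)(x)$.

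Next I would write out what it means for $a$ to be a Lie algebra homomorphism, namely $a\bigl([X,Y]\bigr)=\bigl[a(X),a(Y)\bigr]$ for all $X,Y\in{\mathcal G}$, and compute both sides in the coordinates $(r,\Theta)$. The left-hand side is $\bigl(r([X,Y]),\Theta([X,Y])\bigr)$. For the right-hand side I would substitute $(f_1,d_1)=(r(X),\Theta(X))$ and $(f_2,d_2)=(r(Y),\Theta(Y))$ into the bracket formula of $\aff(E)$ recalled in \ref{AffineTransforms},
$$\bigl[(f_1,d_1),(f_2,d_2)\bigr]=\bigl(f_1\circ f_2-f_2\circ f_1,\ f_1(d_2)-f_2(d_1)\bigr)\,,$$
obtaining $\bigl(r(X)\circ r(Y)-r(Y)\circ r(X),\ r(X)(\Theta(Y))-r(Y)(\Theta(X))\bigr)$. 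Comparing the two components separately then shows that $a$ is a Lie algebra homomorphism if and only if $r([X,Y])=r(X)\circ r(Y)-r(Y)\circ r(X)$ for all $X,Y$ — i.e. $r$ is a linear representation of ${\mathcal G}$ in $E$ — and $\Theta([X,Y])=r(X)(\Theta(Y))-r(Y)(\Theta(X))$ for all $X,Y$ — i.e. $\Theta$ is a one-cocycle of ${\mathcal G}$ with values in $E$ relative to $r$. This is exactly the claimed equivalence.

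There is no real obstacle here; the statement is essentially a bookkeeping lemma and the only thing to be careful about is keeping the two components of the $\aff(E)$-bracket straight and matching the sign conventions already fixed in \ref{AffineTransforms} (in particular the linear part of the bracket being the ordinary commutator, so that $r$ is a genuine, not anti-, homomorphism). If one wishes, one can also remark — as a consistency check rather than part of the proof — that differentiating the group-level identities of Proposition \ref{LinearPartA} at the neutral element of $G$ recovers these infinitesimal relations, $r$ being the Lie algebra representation associated to $R$ and $\Theta$ the derivative at $e$ of the group one-cocycle $\theta$. I would keep the written proof short, essentially just displaying the computation of $\bigl[a(X),a(Y)\bigr]$ in the $(r,\Theta)$ coordinates and reading off the two conditions.
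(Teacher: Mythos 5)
Your proposal is correct and follows exactly the paper's own argument: decompose $a(X)=\bigl(r(X),\Theta(X)\bigr)$ via $\aff(E)=\gl(E)\times E$ and compare $a\bigl([X,Y]\bigr)$ with $\bigl[a(X),a(Y)\bigr]$ component by component using the bracket formula of \ref{AffineTransforms}. You merely write out explicitly the computation that the paper leaves as "we easily check", so there is nothing to add.
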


\begin{proof}
Since $\aff(E)=\gl(E)\times E={\mathcal L}(E,E)\times E$, for each $X\in{\mathcal G}$ and $x\in E$, we have
 $$a(X)(x)=r(X)(x)+\Theta(X)\,,$$
where the linear maps $r:{\mathcal G}\to \gl(E)={\mathcal L(E,E)}$ and 
$\Theta:{\mathcal G}\to E$ are determined by $a$. 
By comparing $a\bigl([X,Y]\bigr)$ and $\bigl[a(X),a(Y)\bigr]$, for $X$ and 
$Y\in{\mathcal G}$, using the expression of the bracket of $\aff(E)$ recalled
in subsection \ref{AffineTransforms}, we easily check that 
$A$ is an affine representation, which means that it is smooth and
satisfies, for all $X$ and $Y\in{\mathcal G}$, 
$a\bigl([X,Y]\bigr)=\bigl[a(X),a(Y)\bigr]$ 
if and only if the two above stated properties are satisfied.
\end{proof}

\begin{prop}\label{AssociatedLieAlgebraRepresentation}
Let $A:G\to \Aff(E)$ be an affine representation of a Lie group $G$ in a 
finite-dimensional vector space $E$, and $\mathcal G$ be the Lie algebra of $G$.
Let $R:G\to\GL(E)$ and $\theta:G\to E$ be, respectively, the linear part and 
the associated cocycle of the affine representation $A$.
Let $a:{\mathcal G}\to\aff(E)$ be the affine representation of the Lie algebra
$\mathcal G$ associated (in the sense recalled below in the proof) 
to the affine representation $A:G\to\Aff(E)$ of the Lie group 
$G$. The linear part of $a$ is the linear representation $r:{\mathcal G}\to\gl(E)$ associated to the linear representation $R:G\to\GL(E)$, and the associated cocycle
$\Theta:{\mathcal G}\to E$ is related to the one-cocycle $\theta:G\to E$ by
 $$\Theta(X)=T_e\theta\bigl(X(e)\bigr)\,,\quad(X\in{\mathcal G})\,.$$
\end{prop}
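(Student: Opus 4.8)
The plan is first to make explicit what ``the Lie algebra representation associated to a Lie group representation'' means, and then to reach the conclusion by one differentiation.

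Recall the convention. Regarding $\GL(E)$ as an open subset of the vector space ${\mathcal L}(E,E)$, so that $T_{\id_E}\GL(E)={\mathcal L}(E,E)=\gl(E)$, the linear representation $r:{\mathcal G}\to\gl(E)$ \emph{associated} to a linear representation $R:G\to\GL(E)$ is the tangent map $r=T_eR$; that is,
$$r(X)=\frac{\d}{\d s}R\bigl(\exp(sX)\bigr)\Bigm|_{s=0}\in\gl(E)\,,\qquad X\in{\mathcal G}\,.$$
Equivalently, $r(X)$ is the linear endomorphism $x\mapsto\frac{\d}{\d s}R(\exp sX)(x)\bigm|_{s=0}$ of $E$, i.e. the fundamental vector field of the action $R$ attached to $X$, read as an element of $\gl(E)$ through the vector-space identification of Remark \ref{SignChange}. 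The same recipe, applied to $A:G\to\Aff(E)=\GL(E)\times E$, produces $a=T_eA:{\mathcal G}\to\aff(E)$; here one recalls that $a$ is a Lie algebras homomorphism (by Proposition \ref{LeftRight} applied to $A$ seen as a left action on $E$, or simply because the tangent map at $e$ of a Lie group homomorphism always is one), so that Proposition \ref{LinearParta} legitimately applies to $a$ and furnishes its linear part and its one-cocycle.

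Next I would differentiate the decomposition $A(g)(x)=R(g)(x)+\theta(g)$ coming from Proposition \ref{LinearPartA}. Since $A(e)=\id_E$ forces $R(e)=\id_E$ and $\theta(e)=0$, the curve $s\mapsto\theta(\exp sX)$ starts at $0\in E$, so $T_e\theta\bigl(X(e)\bigr)=\frac{\d}{\d s}\theta(\exp sX)\bigm|_{s=0}$ is a well-defined element of $T_0E\equiv E$. Using that differentiation at $s=0$ and evaluation at a fixed $x\in E$ are linear operations, for every $X\in{\mathcal G}$ and $x\in E$ one gets
$$a(X)(x)=\frac{\d}{\d s}A\bigl(\exp(sX)\bigr)(x)\Bigm|_{s=0}
 =\frac{\d}{\d s}\Bigl(R(\exp sX)(x)\Bigr)\Bigm|_{s=0}
 +\frac{\d}{\d s}\theta(\exp sX)\Bigm|_{s=0}
 =r(X)(x)+T_e\theta\bigl(X(e)\bigr)\,.$$
Hence $a(X)$ is the affine endomorphism of $E$ with linear part $r(X)$ and constant term $T_e\theta(X(e))$. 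The decomposition of an affine endomorphism of $E$ into linear part plus translation part being unique, comparison with the expression $a(X)(x)=r'(X)(x)+\Theta(X)$ supplied by Proposition \ref{LinearParta} --- where $r'$ denotes the linear part of $a$ and $\Theta$ its one-cocycle --- shows at once that $r'=r$ and $\Theta(X)=T_e\theta(X(e))$, which is the claim.

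The computation itself is immediate; the genuine content is conceptual. The points to watch are that the decomposition used in the last step coincides with the abstract one of Proposition \ref{LinearParta}, and that one keeps track both of the left/right invariant-vector-field convention on ${\mathcal G}=T_eG$ and of the sign flip recorded in Remark \ref{SignChange} when an element of $\aff(E)$ is read as a vector field on $E$. That sign flip alters the bracket on $\aff(E)$ --- so it is relevant to the assertion that $a$ is a Lie algebras homomorphism --- but it does not alter the affine map $a(X)$ itself, hence neither its linear part $r(X)$ nor its translation part $\Theta(X)$, and so it leaves the displayed formula intact.
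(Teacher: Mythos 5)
Your proposal is correct and follows essentially the same route as the paper: both identify $a$ with $T_eA$ and obtain the result by differentiating the decomposition $A\bigl(\exp(tX)\bigr)(x)=R\bigl(\exp(tX)\bigr)(x)+\theta\bigl(\exp(tX)\bigr)$ at $t=0$. Your added remarks on the uniqueness of the affine decomposition and on the sign convention of Remark \ref{SignChange} are accurate but only make explicit what the paper leaves implicit.
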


\begin{proof} We recall that when we have a Lie groups homomorphism $A:G\to H$ of a Lie group $G$ into another Lie group $H$, the associated Lie algebras homomorphism
$a:{\mathcal G}\to {\mathcal H}$ of Lie algebras associates, to each $X\in{\mathcal G}$
(seen as the space of left-invariant vector fields on $G$) the left-invariant vector field $a(X)$ on $H$ whose value at the neutral element is $T_eA\bigl(X(e)\bigr)$. 
Let $X\in{\mathcal G}$. For each 
$t\in\RR$ and $x\in E$, we have
 $$A\bigl(\exp(tX)\bigr)(x)=R\bigl(\exp(tX)\bigr)(x)+\theta\bigl(\exp(tX)\bigr)\,.$$
By taking the derivative of both sides of this equality with respect to $t$, then setting $t=0$, we get
 $$a(X)(x)=r(X)(x)+T_e\theta(X)\,.$$
Therefore the affine representation $a$ has $r$ as linear part and $\Theta=T_e\theta$ 
as associated one-cocycle.
\end{proof}

\begin{rmk}\label{explanation}
Let $A:G\to \Aff(E)$ 
be an affine representation of a Lie group $G$ in a finite-dimensional 
vector space $E$. The map  $\widetilde A:G\times E\to E$,
 $$\widetilde A(g,x)=A(g)(x)\,,\quad g\in G\,,\ x\in E\,,
 $$
is an action on the left of $G$ on $E$. Proposition 
\ref{AssociatedLieAlgebraRepresentation} shows that $a:{\mathcal G}\to\aff(E)$ 
is a Lie algebras homomorphism, the Lie algebra structure of $\aff(E)$ being 
the structure  defined in \ref{AffineTransforms}. For each $X\in{\mathcal G}$, 
the element $a(X)\in\aff(E)$, when considered
as an affine vector field on $E$, is the fundamental
vector field associated to $X$, for the action on the left $\widetilde A$
of $G$ on $E$. We have seen (\ref{LeftRight}) that for an action on the 
left of $G$ on $E$, the map which associates to each $X\in {\mathcal G}$ the corresponding fundamental vector field on $E$ is a Lie algebras homomorphism 
of the Lie algebra of \emph{right invariant} vector fields on $G$ into the Lie algebra of smooth vector fields on $E$. This explains why, as was observed in \ref{SignChange},
the Lie algebra structure of $\aff(E)$ defined in \ref{AffineTransforms} is the opposite of the Lie algebra structure which exists on the space of
affine vector fields on $E$. Of course, this remark is also valid for a linear representation $R:G\to \GL(E)$, since $\GL(E)$ is a Lie subgroup of $\Aff(E)$.
\end{rmk}

\begin{defis}\label{coboundary}\hfill
\par\noindent
{\bf 1.\quad} Let $R:G\to \GL(E)$ be a linear representation of a Lie group $G$ in a 
finite-dimensional vector space $E$. A \emph{one-coboundary} of $G$ with values in 
$E$, for the linear representation $R$, is a map $\theta:G\to E$ 
which can be expressed as
 $$\theta(g)=R(g)(c)-c\,,\quad (g\in G)\,,
 $$
where $c$ is a fixed element in $E$. 
\par\noindent
{\bf 2.\quad} Ler $r:{\mathcal G}\to \gl(E)$ be a linear representation of a 
Lie algebra $\mathcal G$ in a finite-dimensional vector space $E$. A \emph{one-coboundary} of $\mathcal G$ with values in $E$, for the linear representation $r$,
is a linear map $\Theta:{\mathcal G}\to E$ which can be expressed as
 $$\Theta(X)=r(X)(c)\,,\quad (X\in{\mathcal G})\,,
 $$    
where $c$ is a fixed element in $E$.
\end{defis}

\begin{rmk} The reader will easily check the following properties. A one-coboundary 
of a Lie group $G$ with values in a finite-dimensional vector space 
$E$ for a linear representation $R:G\to \GL(E)$, 
automatically is a one-cocycle in the sense of \ref{LinearPartA}.
Similarly, a one-coboundary of a Lie algebra $\mathcal G$ with values in $E$ for the linear representation $r:{\mathcal G}\to\gl(E)$, automatically is a one-cocycle of 
$\mathcal G$ in the sense of \ref{LinearParta}. When a Lie group
one-cocycle $\theta:G\to E$ is in fact a one-coboundary, the associated Lie algebra
one-cocycle $\Theta=T_e\theta$ is a Lie algebra one-coboundary.
\end{rmk}

\begin{prop}\label{AffineEquivalence} 
Let $A:G\to\Aff(E)$ be an affine representation of a Lie group $G$ in a
finite-dimensional vector space $E$, $R:G\to\GL(E)$ be its linear part and 
$\theta:G\to E$ be the associated Lie group one-cocycle. 
The following properties are equivalent.

\begin{enumerate} 

\item{} There exists an element $c\in E$ such that, 
for all $g\in G$ and $x\in E$,
 $$ A(g)(x)=R(g)(x+c)-c\,.$$

\item{} The one-cocycle $\theta:G\to E$ is in fact a $1$-coboudary, whose expression is
 $$\theta(g)=R(g)(c)-c\,.$$ 
\end{enumerate}
\end{prop}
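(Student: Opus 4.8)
The plan is to argue directly from the canonical decomposition $A(g)(x)=R(g)(x)+\theta(g)$ of an affine map into its linear part and its translation part, as established in Proposition~\ref{LinearPartA}, together with the linearity of each $R(g)\in\GL(E)$. Both implications then reduce to expanding $R(g)(x+c)$ and matching the translation terms; there is really no obstacle here, since the content of the statement is little more than a reformulation of the cocycle/coboundary condition in terms of the affine action itself.

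First I would prove that (1) implies (2). Assume there is an element $c\in E$ with $A(g)(x)=R(g)(x+c)-c$ for all $g\in G$ and $x\in E$. Since $R(g)$ is linear, $R(g)(x+c)=R(g)(x)+R(g)(c)$, hence $A(g)(x)=R(g)(x)+\bigl(R(g)(c)-c\bigr)$. Comparing with the canonical decomposition $A(g)(x)=R(g)(x)+\theta(g)$ and using that the translation part of an affine map is uniquely determined by the map (equivalently, evaluating both expressions at $x=0$), we obtain $\theta(g)=R(g)(c)-c$ for every $g\in G$. This is precisely the assertion that $\theta$ is the one-coboundary associated with $c$ in the sense of \ref{coboundary}, i.e. property (2).

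Conversely, to see that (2) implies (1), assume $\theta(g)=R(g)(c)-c$ for some fixed $c\in E$. Then for all $g\in G$ and $x\in E$, again by linearity of $R(g)$, we have $A(g)(x)=R(g)(x)+\theta(g)=R(g)(x)+R(g)(c)-c=R(g)(x+c)-c$, which is property (1). This closes the equivalence.

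One may note that the whole proposition admits the geometric reading that conjugating the affine action $\widetilde A:(g,x)\mapsto A(g)(x)$ by the translation $x\mapsto x+c$ of $E$ turns it into the linear action $g\mapsto R(g)$; this is the conceptual reason why both directions are transparent, but the elementary computation above is entirely sufficient and I would present it in that form.
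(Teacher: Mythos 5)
Your proof is correct and follows essentially the same route as the paper: both arguments use the linearity of $R(g)$ to rewrite $R(g)(x+c)-c$ as $R(g)(x)+\bigl(R(g)(c)-c\bigr)$ and then compare translation parts with the canonical decomposition $A(g)(x)=R(g)(x)+\theta(g)$. Your version merely spells out the two implications separately where the paper states the equivalence in one line.
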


\begin{proof} Since for each $g\in G$ $R(g)$ is linear, Property 1 can be written
 $$A(g)(x)=R(g)(x)+\bigl(R(g)(c)-c\bigr)\,.$$
Therefore Property 1 is true if and only if $\theta(g)=R(g)(c)-c$, in other words if and only if Property 2
is true.
\end{proof}

The following Proposition is the infinitesimal analogue, for affine representations of a
Lie algebra, of Proposition \ref{AffineEquivalence}.

\begin{prop}\label{affineEquivalence} 
Let $a:{\mathcal G}\to\aff(E)$ be an affine representation of a Lie algebra 
$\mathcal G$ in a finite-dimensional vector space $E$, $r:{\mathcal G}\to\gl(E)$ 
be its linear part and $\Theta:{\mathcal G}\to E$ be the associated 
Lie algebra one-cocycle. The following properties are equivalent.

\begin{enumerate} 

\item{} There exists an element $c\in E$ such that, 
for all $X\in {\mathcal G}$ and $x\in E$,
 $$ a(X)(x)=r(X)(x+c)\,.$$

\item{} The one-cocycle $\Theta:{\mathcal G}\to E$ is in fact a $1$-coboudary, whose expression is
 $$\Theta(X)=r(X)(c)\,.$$ 
\end{enumerate}
\end{prop}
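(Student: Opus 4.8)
The plan is to mimic exactly the (very short) argument used for Proposition \ref{AffineEquivalence}, transporting it to the infinitesimal setting. The only structural input I need is the canonical decomposition of an affine representation of a Lie algebra provided by Proposition \ref{LinearParta}: for every $X\in\mathcal G$ and every $x\in E$ one has
$$a(X)(x)=r(X)(x)+\Theta(X)\,,$$
where $r$ is the linear part and $\Theta$ the associated one-cocycle. Everything else is linear algebra.

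First I would assume Property 1: there is a fixed $c\in E$ with $a(X)(x)=r(X)(x+c)$ for all $X\in\mathcal G$ and all $x\in E$. Since for each $X$ the map $r(X):E\to E$ is linear, I can expand $r(X)(x+c)=r(X)(x)+r(X)(c)$, so Property 1 is equivalent to the identity $a(X)(x)=r(X)(x)+r(X)(c)$ holding for all $X$ and $x$. Comparing this with the decomposition $a(X)(x)=r(X)(x)+\Theta(X)$ from Proposition \ref{LinearParta}, and cancelling the common term $r(X)(x)$, I get $\Theta(X)=r(X)(c)$ for all $X\in\mathcal G$, which is precisely Property 2 (and, by the Remark following Definition \ref{coboundary}, such a $\Theta$ is automatically a one-cocycle, consistent with the hypothesis).

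Conversely, I would assume Property 2, i.e. $\Theta(X)=r(X)(c)$ for some fixed $c\in E$ and all $X$. Substituting into the decomposition and using linearity of $r(X)$ once more,
$$a(X)(x)=r(X)(x)+\Theta(X)=r(X)(x)+r(X)(c)=r(X)(x+c)\,,$$
which is Property 1. This closes the equivalence.

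There is essentially no obstacle here: the statement is the linear-algebraic shadow of Proposition \ref{AffineEquivalence}, and the one thing to be a little careful about is that, unlike in the group case where the coboundary reads $\theta(g)=R(g)(c)-c$, the extra $-c$ disappears in the Lie-algebra case because the identity element of $\GL(E)$ (which contributed the $-c$) is replaced by the zero of $\gl(E)$; equivalently, differentiating $g\mapsto R(g)(c)-c$ at $e$ kills the constant term. I would simply note this parallel with Proposition \ref{AffineEquivalence} rather than redo the differentiation.
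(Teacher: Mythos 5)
Your argument is correct and coincides with the paper's own proof: both expand $r(X)(x+c)=r(X)(x)+r(X)(c)$ by linearity and compare with the decomposition $a(X)(x)=r(X)(x)+\Theta(X)$ from Proposition \ref{LinearParta} to identify $\Theta(X)$ with $r(X)(c)$. Your closing remark about why the $-c$ term of the group-level coboundary disappears upon differentiation is a nice observation but not needed for the proof.
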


\begin{proof} Since for each $X\in {\mathcal G}$ $r(X)$ is linear, Property (i) can be written
 $$a(X)(x)=r(X)(x)+r(X)(c)\,.$$
Therefore Property 1 is true if and only if $\Theta(X)=r(X)(c)$, in other words if and only if Property 2 is true.
\end{proof}

\begin{rmk} Let us say that an affine representation $A:G\to\Aff(E)$ of a Lie group 
$G$ in a finite-dimensional vector space $E$ is equivalent to its linear part 
$R:G\to\GL(E)$ if there exists a translation $T:E\to E$ such that, for all $g\in G$
and $x\in E$,
 $$A(g)(x)=T^{-1}\circ R(g)\circ T(x)\,.$$
Proposition \ref{AffineEquivalence} expresses the fact that the affine representation 
$A$ is equivalent to its linear part $R$ if and only if its associated Lie group cocycle $\theta$ is a one-coboundary. The reader will easily formulate a similar 
interpretation of Proposition \ref{affineEquivalence}.
\end{rmk} 

\begin{prop}\label{LieAlgebraCocycleIntegration}
Let $G$ be a connected and simply connected Lie group, $R:G\to\GL(E)$ be a linear
representation of $G$ in a finite-dimensional vector space $E$, and
$r:{\mathcal G}\to\gl(E)$ be the associated linear representation of its Lie algebra
$\mathcal G$. For any one-cocycle $\Theta:{\mathcal G}\to E$ of the Lie algebra 
$\mathcal G$ for the linear representation $r$, there exists a unique
one-cocycle $\theta:G\to E$ of the Lie group $G$ for the linear representation $R$
such that $\Theta=T_e\theta$, in other words, which has $\Theta$ as associated
Lie algebra one-cocycle. The  Lie group one-cocycle $\theta$ is a Lie group
one-coboundary if and only if the Lie algrebra one-cocycle $\Theta$ is a 
Lie algebra one-coboundary.
\end{prop}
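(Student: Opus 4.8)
The plan is to reduce the statement to the fundamental correspondence between Lie group homomorphisms out of a connected and simply connected Lie group and Lie algebra homomorphisms. First I would reformulate the cocycle conditions in terms of affine representations. By Proposition \ref{LinearPartA}, a smooth map $\theta:G\to E$ is a one-cocycle for $R$ if and only if $A(g)(x)=R(g)(x)+\theta(g)$ defines an affine representation $A:G\to\Aff(E)$ whose linear part is $R$; and by Proposition \ref{LinearParta}, a linear map $\Theta:{\mathcal G}\to E$ is a one-cocycle for $r$ if and only if $a(X)(x)=r(X)(x)+\Theta(X)$ defines an affine representation $a:{\mathcal G}\to\aff(E)$ whose linear part is $r$ (here $\aff(E)$ carries the Lie algebra structure of \ref{AffineTransforms}, which is the Lie algebra of $\Aff(E)$). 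Thus the data of $\Theta$ is precisely that of an affine representation $a$ of $\mathcal G$ whose linear part is the Lie algebra representation $r$ associated with $R$.

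Next I would invoke the integration theorem (Lie's second theorem): since $G$ is connected and simply connected, the Lie algebra homomorphism $a:{\mathcal G}\to\aff(E)$ integrates to a unique Lie group homomorphism $A:G\to\Aff(E)$ with $T_eA=a$. Write $A(g)(x)=R'(g)(x)+\theta(g)$, with $R':G\to\GL(E)$ a linear representation and $\theta:G\to E$ the associated one-cocycle (Proposition \ref{LinearPartA}). By Proposition \ref{AssociatedLieAlgebraRepresentation}, the linear part of $a$ is the Lie algebra representation associated with $R'$ and its associated cocycle is $T_e\theta$. Since by construction the linear part of $a$ is $r$, the linear representations $R'$ and $R$ of $G$ have the same differential at $e$; as $G$ is connected, this forces $R'=R$. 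Hence $\theta$ is a one-cocycle for $R$ with $T_e\theta=\Theta$, which proves existence. Uniqueness of $\theta$ is then immediate from uniqueness of $A$: any one-cocycle $\theta$ for $R$ with $T_e\theta=\Theta$ gives rise, via $A(g)(x)=R(g)(x)+\theta(g)$, to a Lie group homomorphism $G\to\Aff(E)$ whose differential is $a$, and such a homomorphism is unique.

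For the coboundary statement I would use the uniqueness just established. If $\Theta$ is a Lie algebra one-coboundary, say $\Theta(X)=r(X)(c)$ with $c\in E$ fixed, then $\theta_c(g)=R(g)(c)-c$ is a Lie group one-cocycle for $R$ (as recorded in the remark following Definition \ref{coboundary}) whose associated Lie algebra one-cocycle, obtained by differentiating at $e$, is exactly $X\mapsto r(X)(c)=\Theta(X)$; by uniqueness $\theta=\theta_c$, so $\theta$ is a one-coboundary. Conversely, if $\theta(g)=R(g)(c)-c$ is a one-coboundary, then differentiating at $e$ gives $\Theta(X)=T_e\theta(X)=r(X)(c)$, a Lie algebra one-coboundary.

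The only non-elementary ingredient is the integration of a Lie algebra homomorphism to a Lie group homomorphism on a connected and simply connected Lie group; everything else is bookkeeping with the already-established correspondence between affine representations and their linear parts and associated cocycles. The step requiring care is the verification that the linear part of the integrated homomorphism $A$ is genuinely $R$ rather than merely a linear representation sharing its differential — this is exactly where connectedness of $G$ enters.
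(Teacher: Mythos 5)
Your argument is correct, but it follows a genuinely different route from the paper's. You reduce everything to the integration theorem for Lie algebra homomorphisms (Lie's second theorem) applied to the target group $\Aff(E)$: the pair $(r,\Theta)$ is packaged as an affine representation $a:{\mathcal G}\to\aff(E)$ via Proposition \ref{LinearParta}, integrated to a unique homomorphism $A:G\to\Aff(E)$, and then unpacked via Propositions \ref{LinearPartA} and \ref{AssociatedLieAlgebraRepresentation}; the identification of the linear part of $A$ with $R$ by connectedness is indeed the step that needs saying, and you say it. The paper instead works by hand: it defines the $E$-valued one-form $\eta(g)=R(g)\circ\Theta\circ TL_{g^{-1}}$ on $G$, shows that the Lie algebra cocycle identity for $\Theta$ is exactly the closedness of $\eta$, uses simple connectedness to integrate $\eta$ along paths to obtain $\theta$, and verifies the group cocycle identity by splitting the path integral; the coboundary statement is then obtained by an explicit computation of the integral rather than by your uniqueness argument. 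Both proofs consume the hypothesis of simple connectedness at the same place (exactness of a closed form versus monodromy in the integration theorem), but they buy different things: your version is shorter and conceptually transparent, at the price of invoking Lie's second theorem as a black box, while the paper's version is self-contained, constructive, and yields the explicit integral formula $\theta(g)=\int_\gamma\eta$ that it then reuses to compute $\theta$ in the coboundary case. Your treatment of the coboundary equivalence via uniqueness is a nice simplification and is fully justified by the first part of your argument.
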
  

\begin{proof} 
If $\theta:G\to E$ is a Lie group one-cocycle such that $T_e\theta=\Theta$ we have,
for any $g\in G$ and $X\in{\mathcal G}$,
 $$\theta\bigl(g\exp(tX)\bigr)=\theta(g)+R(g)\Bigl(\theta\bigl(\exp(tX)\bigr)\Bigr)\,.$$
By taking the derivative of both sides of this equality with respect to $t$, then setting $t=0$, we see that 
 $$T_g\theta\bigl(TL_g(X)\bigr)=R(g)\bigl(\Theta(x)\bigr)\,,
 $$
which proves that if it exists, the Lie group one-cocycle $\theta$ such that 
$T_e\theta=\Theta$ is unique.
\par\smallskip

For each $g\in G$ let $\eta(g):T_gG\to E$ be the map
 $$\eta(g)(X)=R(g)\circ\Theta\circ TL_{g^{-1}}(X)\,,\quad X\in T_gG\,.$$
The map $\eta$ is an $E$-valued differential one-form on $G$. Let us calculate its
exterior differential $\d\eta$, which is an $E$-valued differential two-form on $G$
(if the reader does not feel at ease with $E$-valued differential forms on $G$, he can
consider separately the components of $\eta$ in a basis of $E$, which are ordinary real-valued one-forms). Let $X$ and $Y$ be two left-invariant vector fields on $G$. We have,
for each $g\in G$,	
 $$\d\eta(g)\bigl(X(g),Y(g)\bigr)=\mathcal L(X)\bigl(\langle\eta,Y\rangle(g)\bigr)
                                 -\mathcal L(Y)\bigl(\langle\eta,X\rangle(g)\bigr)
                                 -\bigl\langle\eta,[X,Y]\bigr\rangle(g)\,.
 $$      
But 
 $$\langle\eta,Y\rangle(g)=R(g)\circ\Theta(Y)\,,\quad
   \langle\eta,X\rangle(g)=R(g)\circ\Theta(X)\,,
 $$
therefore
 $$\mathcal L(X)\bigl(\langle\eta,Y\rangle(g)\bigr)=
   \frac{\d}{\d t}\Bigl(R\bigl(g\exp(tX)\bigr)\circ\Theta(Y)\Bigr)\Bigm|_{t=0}
   =R(g)\circ r(X)\circ\Theta(Y)\,.
 $$
Similarly
 $$\mathcal L(Y)\bigl(\langle\eta,X\rangle(g)\bigr)
   =R(g)\circ r(Y)\circ\Theta(X)\,,
 $$
and 
 $$\bigl\langle\eta,[X,Y]\bigr\rangle(g)=R(g)\circ\Theta\bigl([X,Y]\bigr)\,,
 $$
Since the condition which expresses that $\Theta$ is a
Lie algebra one-cocycle for the linear representation $r$ asserts that
 $$r(X)\circ\Theta(Y)-r(Y)\circ\Theta(X)-\Theta\bigl([X,Y]\bigr)=0\,,$$
we conclude that the one-form $\eta$ is closed, \emph{i.e.} satisfies $d\eta=0$.
Since $G$ is assumed to be simply connected, the one-form $\eta$ is exact, and since
$G$ is assumed to be connected, for any $g$ in $G$, there exists a smooth
parametrized curve $\gamma:[0,T]\to G$ such that $\gamma(0)=e$ and $\gamma(T)=g$.
Let us set
 $$\theta(g)=\int_0^T\eta\left(\frac{\d\gamma(t)}{\d t}\right)\,dt\,.$$
Since $\eta$ is exact, the right hand side of the above equality only depends on the end
points $\gamma(0)=e$ and $\gamma(T)=g$ of the parametrized curve $\gamma$, which allows us to define $\theta(g)$ by that equality. So defined, $\gamma:G\to E$ is a smooth map.
Its very definition shows that $T_e\theta=\Theta$. If $g$ and $h$ are two elements in 
$G$, let $\gamma:[0,T_2]\to G$ be a smooth parametrized curve such that
$0<T_1<T_2$, $\gamma(0)=e$, $\gamma(T_1)=g$ and $\gamma(t_2)=gh$. We have 
 $$\theta(gh)=\int_0^{T_2}\eta\left(\frac{\d\gamma(t)}{\d t}\right)\,dt
             =\int_0^{T_1}\eta\left(\frac{\d\gamma(t)}{\d t}\right)\,dt
             +\int_{T_1}^{T_2}\eta\left(\frac{\d\gamma(t)}{\d t}\right)\,dt\,.
 $$
Observe that
 $$\int_0^{T_2}\eta\left(\frac{\d\gamma(t)}{\d t}\right)\,dt=\theta(g)$$
and that
 $$\int_{T_1}^{T_2}\eta\left(\frac{\d\gamma(t)}{\d t}\right)\,dt=
   R(g)\circ\int_{T_1}^{T_2}\eta\left(\frac{\d\bigl(L_{g^{-1}}\circ\gamma(t)\bigr)}
   {dt}\right)\,dt=R(g)\bigl(\theta(h)\bigr)\,,
 $$
which proves that $\theta$ is a Lie group one-cocycle.
\par\smallskip

We already know that if $\theta$ is a Lie group one-coboundary, $\Theta=T_e\theta$ is a
Lie algebra coboundary. Conversely let us assume that $\Theta$ is a Lie algebra 
one-coboundary. We have, for each $X\in {\mathcal G}$,
 $$\Theta(X)=r(X)(c)\,,$$
where $c$ is a fixed element in $E$. Let $g\in G$ and let $\gamma:[0,T]\to G$ be a smooth parametrized curve in $G$ such that
$\gamma(0)=e$ and $\gamma(T)=g$. We have
 $$\theta(g)=\int_0^T\eta\left(\frac{\d\gamma(t)}{\d t}\right)\,dt
            =\int_0^tR\bigl(\gamma(t)\bigr)\circ 
              r\left(TL_{\bigl(\gamma(t)\bigr)^{-1}}\frac{\d\gamma(t)}{\d t}\right)(c)
              \,dt\,.
 $$
But by taking the derivative with respect to $t$ of the two sides of the equality
 $$R\bigl(g\exp(tX)\bigr)=R(g)\circ R\bigl(\exp(tX)\bigr)$$
and then setting $t=0$, we see that
 $$\frac{\d}{\d t}R\bigl(\gamma(t)\bigr)=R\bigl(\gamma(t)\bigr)\circ 
    r\left(TL_{\bigl(\gamma(t)\bigr)^{-1}}\frac{\d\gamma(t)}{\d t}\right)\,.
 $$
Therefore $\theta$ is a Lie group one-coboundary since we have
 $$\theta(g)=\int_{0}^T\frac{\d R\bigl(\gamma(t)\bigr)}{\d t}\,(c)\,dt
            =R(g)(c)-R(e)(c)=R(g)(c)-c\,.\qedhere 
 $$
\end{proof}

\subsection{Poisson, symplectic and Hamiltonian actions}

\begin{defis}\hfill 

\noindent
{\bf 1.\quad} An action $\varphi$ of a Lie algebra $\mathcal G$ on a Poisson manifold $(M,\Lambda)$ is called a \emph{Poisson action} if for any $X\in{\mathcal G}$ the corresponding vector field $\varphi(X)$ is a Poisson vector field. When the Poisson manifold is in fact a symplectic manifold $(M,\omega)$, Poisson vector fields on $M$ are locally Hamiltonian vector fields and a Poisson action is called 
a \emph{symplectic action}.

\par\smallskip\noindent
{\bf 2.\quad} An action $\Phi$ (either on the left or on the right) of a
Lie group $G$ on a Poisson manifold $(M,\Lambda)$ is called a \emph{Poisson action}
when for each $g\in G$, 
 $$(\Phi_g)_*\Lambda=\Lambda\,.$$
When the Poisson manifold $(M,\Lambda)$ is in fact a symplectic manifold 
$(M,\omega)$, a Poisson action is called a \emph{symplectic action}; the fibre bundles isomorphism $\Lambda^\sharp:T^*M\to TM$ being the inverse of
$\omega^\flat:TM\to T^*M$, we also can say that an action $\Phi$ of a Lie group $G$ on a symplectic manifold $(M,\omega)$ is called a \emph{symplectic action}  
when for each $g\in G$, 
 $$(\Phi_g)^*\omega=\omega\,.$$  
\end{defis} 

\begin{prop}
We assume that $G$ is a connected Lie group which acts by an action $\Phi$, 
either on the left or on the right, on a Poisson manifold $(M,\Lambda)$, 
in such a way that the corresponding action of its Lie algebra
$\mathcal G$ is a Poisson action. Then the action $\Phi$ itself is a Poisson action.
\end{prop}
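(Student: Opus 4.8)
The plan is to prove first that $\Phi_g$ preserves the Poisson bivector field $\Lambda$ whenever $g$ lies in the image of the exponential map, and then to extend this to all of $G$ using that $G$ is connected.

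First I would recall how the fundamental vector fields are related to the action. Fix $X\in{\mathcal G}$, let $X_M$ be the associated fundamental vector field, and write $\Phi$ for the action, treating the case of an action on the left (the case of an action on the right being entirely similar). Using that $t\mapsto\exp(tX)$ is a one-parameter subgroup of $G$, a direct computation gives, for every $x\in M$,
$$\frac{\d}{\d t}\Phi\bigl(\exp(tX),x\bigr)
  =X_M\Bigl(\Phi\bigl(\exp(tX),x\bigr)\Bigr)\,,\qquad
  \Phi\bigl(\exp(0),x\bigr)=x\,.$$
Hence the reduced flow of $X_M$ is $\{\Phi_{\exp(tX)}\,;\,t\in\RR\}$; in particular it is complete, since each $\Phi_{\exp(tX)}$ is a globally defined diffeomorphism of $M$.

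Next, the hypothesis that the associated Lie algebra action is a Poisson action means precisely that ${\mathcal L}(X_M)\Lambda=0$ for every $X\in{\mathcal G}$. Together with the classical identity
$$\frac{\d}{\d t}\Bigl((\Phi_{\exp(tX)})^*\Lambda\Bigr)
  =(\Phi_{\exp(tX)})^*\bigl({\mathcal L}(X_M)\Lambda\bigr)\,,$$
valid along the flow of $X_M$, this shows that $t\mapsto(\Phi_{\exp(tX)})^*\Lambda$ is constant, hence equal to its value $\Lambda$ at $t=0$. Equivalently, $(\Phi_{\exp(tX)})_*\Lambda=\Lambda$ for all $t\in\RR$ and all $X\in{\mathcal G}$.

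Finally I would invoke connectedness of $G$: a connected Lie group is generated by any neighbourhood of its neutral element, and since $\exp$ is a local diffeomorphism at $0$, the set $\exp({\mathcal G})$ contains such a neighbourhood; therefore every $g\in G$ can be written as a finite product $g=\exp(X_1)\cdots\exp(X_k)$ with $X_1,\dots,X_k\in{\mathcal G}$. Using $\Phi_{gh}=\Phi_g\circ\Phi_h$ for an action on the left (respectively $\Phi_{gh}=\Phi_h\circ\Phi_g$ for an action on the right) together with the compatibility of the push-forward of a bivector field with composition of diffeomorphisms, we obtain $(\Phi_g)_*\Lambda=(\Phi_{\exp(X_1)})_*\cdots(\Phi_{\exp(X_k)})_*\Lambda=\Lambda$, so $\Phi$ is a Poisson action. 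I do not anticipate a serious obstacle here; the only points deserving a little care are the identification of the reduced flow of $X_M$ with $\{\Phi_{\exp(tX)}\}$ (and its completeness) and the bookkeeping of the order of composition in the left- versus right-action cases, neither of which affects the conclusion.
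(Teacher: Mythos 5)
Your proposal is correct and follows essentially the same route as the paper: identify the reduced flow of $X_M$ with $\{\Phi_{\exp(tX)}\}$, use the identity relating the $t$-derivative of $(\Phi_{\exp(tX)})^*\Lambda$ to $(\Phi_{\exp(tX)})^*\bigl({\mathcal L}(X_M)\Lambda\bigr)$ to conclude that this pull-back is constant and equal to $\Lambda$, and then write an arbitrary $g$ in the connected group $G$ as a finite product of exponentials. Your extra remarks on completeness of the flow and on the order of composition for left versus right actions are sound refinements of the same argument.
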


\begin{proof}
Let $X\in{\mathcal G}$. For each $x\in M$, the parametrized curve
$s\mapsto \Phi_{\exp(sX)}(x)$ is the integral curve of the fundamental 
vector field $X_M$ which takes the value $x$ for $s=0$. In other words, the reduced flow of the vector field $X_M$ is the map, defined on $\RR\times M$ and taking its values in $M$,
 $$(s,x)\mapsto\Phi_{\exp(sX)}(x)\,.$$
According to a formula which relates inverse images of multivectors
or differential forms with respect to the flow of a vector field, 
with their Lie derivatives with respect to that vector field 
(see for example \cite{LibermannMarle}, Appendix 1, section 3.4, 
page 351), for any $s_0\in\RR$
 $$\frac{\d}{\d s}\Bigl(\bigl((\Phi_{\exp(sX)})^*(\Lambda)\bigr)
   (x)\Bigr)\Bigm|_{s=s_0}
  =\Bigl((\Phi_{\exp(s_0X)})^*
                 \bigl({\mathcal L(X_M)\Lambda}\bigr)\Bigr)(x)=0\,,
 $$
since ${\mathcal L}(X_M)\Lambda=0$. Therefore for any $s\in\RR$,
 $$(\Phi_{\exp(sX)})^*\Lambda=(\Phi_{\exp(-sX)})_*\Lambda=\Lambda\,.$$
The Lie group $G$ being connected, any $g\in G$ is the product 
of a finite number of exponentials, so $(\Phi_g)_*\Lambda=\Lambda$. 
\end{proof}

\subsubsection{Other characterizations of Poisson actions}\label{exo}
Let $\Phi$ be an action, either on the left or on the right, of a Lie group $G$ on a Poisson manifold $(M,\Lambda)$. The reader will easily prove that
the following properties are equivalent. Therefore any of these properties can be used as the definition of a Poisson action.

\begin{enumerate}

\item{} For each $g\in G$, 
 $$(\Phi_g)_*\Lambda=\Lambda\,.$$

\item{} For each $g\in G$ and $f\in C^\infty(M,\RR)$,
 $$(\Phi_g)_*(X_f)=X_{(\Phi_g)_*(f)}\,.$$

\item{} For each $g\in G$, $\Phi_g:M\to M$ is a \emph{Poisson map}, 
which means that for each pair $(f_1,f_2)$ of smooth functions on $M$,
 $$\bigl\{(\Phi_g)^*f_1,(\Phi_g)^*f_2\bigr\}=(\Phi_g)^*\bigl(\{f_1,f_2\}\bigr)\,;$$

\item{} In the special case when the Poisson manifold $(M,\Lambda)$ is in fact
a symplectic manifold $(M,\omega)$, for each $g\in G$,
 $$(\Phi_g)^*\omega=\omega\,.$$
\end{enumerate}

The reader will easily prove that when these equivalent properties are satisfied, the
action of the Lie algebra ${\mathcal G}$ of $G$ which associates, to each $X\in{\mathcal G}$, the fundamental vector field $X_M$ on $M$,
is a Poisson action.

\begin{defis}\label{HamiltonianActions}\hfill

\noindent
{\bf 1.\quad} An action $\varphi$ of a Lie algebra $\mathcal G$ on a Poisson manifold $(M,\Lambda)$ is called a \emph{Hamiltonian action} if for every $X\in{\mathcal G}$ the corresponding vector field
$\varphi(X)$ is a Hamiltonian vector field on $M$.

\par\smallskip\noindent
{\bf 2.\quad} An action $\Phi$ (either on the left or on the right) 
of a Lie group $G$ on a Poisson manifold $(M,\Lambda)$ is
called a \emph{Hamiltonian action} if it is a Poisson action 
(or a symplectic action when the Poisson manifold $(M,\Lambda)$ 
is in fact a symplectic manifold$(M,\omega)$) and if, 
in addition, the associated action $\varphi$ of its Lie algebra 
is a Hamiltonian action. 
\end{defis}

\begin{rmks}\hfill

\noindent
{\bf 1.\quad} A Hamiltonian action of a Lie algebra on a Poisson manifold is automatically a Poisson action. 

\par\smallskip\noindent
{\bf 2.\quad} An action $\Phi$ of a connected Lie group $G$ on a Poisson manifold such that the corresponding action of its Lie algebra 
is Hamiltonian, automatially is a Hamiltonian action.

\par\smallskip\noindent
{\bf 3.\quad} Very often, Hamiltonian actions of a Lie algebra (or of a Lie group) 
on the cotangent bundle $T^*N$ to a smooth manifold $N$ encountered in applications 
come from an action of this Lie algebra (or of this Lie group) on the 
manifold $N$ itself. Proposition \ref{LiftedHamiltonianAction} explains how an action
on $N$ can be lifted to $T^*N$ into a Hamiltonian action.
\end{rmks}

\begin{prop}\label{LiftedHamiltonianAction}
Let $\varphi:{\mathcal G}\to A^1(N)$ be an action of a finite-dimensional Lie
algebra $\mathcal G$ on a smooth manifold $N$. Let 
$\widehat\varphi:{\mathcal G}\to A^1(T^*N)$ be the map wich associates to each 
$X\in{\mathcal G}$ the canonical lift to $T^*N$ of the vector field $\varphi(X)$
on $N$ (\ref{CanonicalLiftDef}). The map $\widehat\varphi$ is a Hamiltonian action of
$\mathcal G$ on $(T^*N,\d \eta_N)$ (where $\eta_N$ is the Liouville form and
$\d\eta_N$ the canonical symplectic form on $T^*N$). For each $X\in{\mathcal G}$, 
the smooth function $f_X:T^*N\to\RR$
 $$f_X(\xi)=\Bigl\langle\xi, \varphi(X)\bigl(\pi_N(\xi)\bigr)\Bigr\rangle
            =\mathi\bigl(\widehat\varphi(X)\bigr)\eta_N(\xi)\,,\quad
            \xi\in T^*N\,,
 $$
is a Hamiltonian for the vector field $\widehat\varphi(X)$. Moreover, for each pair
$(X,Y)$ of elements in ${\mathcal G}$,
 $$\{f_X,f_Y\}=f_{[X,Y]}\,.$$
\end{prop}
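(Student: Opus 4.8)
The plan is to derive the whole statement from three earlier facts: Proposition~\ref{CanonicalLiftHam} for the assertions about a single lifted vector field, a short flow computation for the bracket relation, and Lemma~\ref{LemmaPoissonBracket} to conclude that $\widehat\varphi$ is a Lie algebra homomorphism.

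First I would apply Proposition~\ref{CanonicalLiftHam}, with the vector field $\varphi(X)$ on $N$ in the role of $X$, to each $X\in{\mathcal G}$. It gives immediately that $\widehat\varphi(X)$ is a Hamiltonian vector field on $(T^*N,\d\eta_N)$ whose Hamiltonian is the function $f_X$ with $f_X(\xi)=\bigl\langle\xi,\varphi(X)(\pi_N(\xi))\bigr\rangle=\mathi\bigl(\widehat\varphi(X)\bigr)\eta_N(\xi)$; this is the displayed formula for $f_X$, it shows that each $\widehat\varphi(X)$ is Hamiltonian, and it records that $\widehat\varphi(X)$ is the Hamiltonian vector field of $f_X$. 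Linearity of $\widehat\varphi$ is clear, since the canonical lift $V\mapsto\widehat V$ is linear in $V$.

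Next I would prove $\{f_X,f_Y\}=f_{[X,Y]}$. Since $\widehat\varphi(X)$ is the Hamiltonian vector field of $f_X$, the Poisson bracket is $\{f_X,f_Y\}=\mathi\bigl(\widehat\varphi(X)\bigr)\d f_Y={\mathcal L}\bigl(\widehat\varphi(X)\bigr)f_Y$, which I would evaluate along the flow $\{\widehat{\Phi^{\varphi(X)}_t}\}$ of $\widehat\varphi(X)$. Using the description of $\widehat{\Phi^{\varphi(X)}_t}$ as the transpose of $\bigl(T\Phi^{\varphi(X)}_t\bigr)^{-1}$ (Definition~\ref{CanonicalLiftDiffeo}) together with $\pi_N\circ\widehat{\Phi^{\varphi(X)}_t}=\Phi^{\varphi(X)}_t\circ\pi_N$, a direct check gives $f_Y\circ\widehat{\Phi^{\varphi(X)}_t}=f_{(\Phi^{\varphi(X)}_t)^*\varphi(Y)}$, where for a vector field $W$ on $N$ the subscript denotes the function $\xi\mapsto\langle\xi,W(\pi_N(\xi))\rangle$. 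Differentiating at $t=0$, and using that this function depends linearly on $W$ together with $\frac{\d}{\d t}\bigl((\Phi^{\varphi(X)}_t)^*\varphi(Y)\bigr)\big|_{t=0}={\mathcal L}(\varphi(X))\varphi(Y)=[\varphi(X),\varphi(Y)]=\varphi([X,Y])$ (the last equality because $\varphi$ is a Lie algebra homomorphism), I obtain ${\mathcal L}\bigl(\widehat\varphi(X)\bigr)f_Y=f_{\varphi([X,Y])}$, which is precisely the function $f_{[X,Y]}$ of the statement; thus $\{f_X,f_Y\}=f_{[X,Y]}$.

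Finally, $\widehat\varphi$ is a Lie algebra action: by Lemma~\ref{LemmaPoissonBracket} the bracket $[\widehat\varphi(X),\widehat\varphi(Y)]=[X_{f_X},X_{f_Y}]$ is the Hamiltonian vector field of $\{f_X,f_Y\}=f_{[X,Y]}$, and by the first step (applied to the element $[X,Y]$) that vector field is $\widehat\varphi([X,Y])$; hence $\widehat\varphi([X,Y])=[\widehat\varphi(X),\widehat\varphi(Y)]$. Combined with the first step, this shows $\widehat\varphi$ is a Hamiltonian action of ${\mathcal G}$ on $(T^*N,\d\eta_N)$, which finishes the proof. The only point requiring an actual computation is the flow identity $f_Y\circ\widehat{\Phi^{\varphi(X)}_t}=f_{(\Phi^{\varphi(X)}_t)^*\varphi(Y)}$; everything else is a formal consequence of the two cited results.
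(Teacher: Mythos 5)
Your proposal is correct, and its skeleton matches the paper's: apply Proposition~\ref{CanonicalLiftHam} to get that each $\widehat\varphi(X)$ is Hamiltonian with Hamiltonian $f_X$, establish $\{f_X,f_Y\}=f_{[X,Y]}$, then invoke Lemma~\ref{LemmaPoissonBracket} to identify $[\widehat\varphi(X),\widehat\varphi(Y)]$ with $\widehat\varphi([X,Y])$. Where you genuinely diverge is in the middle step. The paper computes $\{f_X,f_Y\}={\mathcal L}\bigl(\widehat\varphi(X)\bigr)\circ\mathi\bigl(\widehat\varphi(Y)\bigr)\eta_N=\mathi\bigl([\widehat\varphi(X),\widehat\varphi(Y)]\bigr)\eta_N$, using ${\mathcal L}\bigl(\widehat\varphi(X)\bigr)\eta_N=0$ from Proposition~\ref{PropCanonicalLifts}, and then uses the fact that the lifted fields are $\pi_N$-related to the $\varphi(X)$, so that $T\pi_N$ sends the bracket of the lifts to $[\varphi(X),\varphi(Y)]=\varphi([X,Y])$. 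You instead compute ${\mathcal L}\bigl(\widehat\varphi(X)\bigr)f_Y$ directly along the lifted flow via the identity $f_Y\circ\widehat{\Phi^{\varphi(X)}_t}=f_{(\Phi^{\varphi(X)}_t)^*\varphi(Y)}$, which checks out against Definition~\ref{CanonicalLiftDiffeo} and the paper's convention for the pull-back of a vector field, and then differentiate at $t=0$. Your route bypasses the invariance of the Liouville form and the $\pi_N$-relatedness of brackets entirely, at the cost of one explicit flow computation; the paper's route is shorter given that Proposition~\ref{PropCanonicalLifts} is already in hand, but yours makes the mechanism behind $\{f_X,f_Y\}=f_{[X,Y]}$ (namely, that the lift intertwines the flow of $\varphi(X)$ with pull-back of covectors) more transparent. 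Both are complete proofs.
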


\begin{proof} Proposition \ref{CanonicalLiftHam} proves that for each
 $X\in{\mathcal G}$ the vector field $\widehat\varphi(X)$ is Hamiltonian and admits the function $f_X$ as Hamiltonian. This Proposition also shows that $f_X$ is given by the two equivalent expressions
 $$f_X(\xi)=\Bigl\langle\xi, \varphi(X)\bigl(\pi_N(\xi)\bigr)\Bigr\rangle
            =\mathi\bigl(\widehat\varphi(X)\bigr)\eta_N(\xi)\,,\quad
            \xi\in T^*N\,.
 $$
Let $(X,Y)$ be a pair of elements in $\mathcal G$. Since the vector fields
$\widehat\varphi(X)$ and $\widehat\varphi(Y)$ admit $f_X$ and $f_Y$ as 
Hamiltonians, Lemma \ref{LemmaPoissonBracket} shows that $\bigl[\widehat\varphi(X),\widehat\varphi(Y)\bigr]$ admits
$\{f_X,f_Y\}$ as Hamiltonian. We have
 $$\{f_X,f_Y\}={\mathcal L}\bigl(\widehat\varphi(X)\bigr)f_Y
                   ={\mathcal L}\bigl(\widehat\varphi(X)\bigr)\circ\mathi
                                \bigl(\widehat\varphi(Y)\bigr)\eta_N
                   =\mathi\Bigl[\widehat\varphi(X),\widehat\varphi(Y)\Bigr]\eta_N
 $$
since, using \ref{PropCanonicalLifts}, we see that 
${\mathcal L}\bigl(\widehat\varphi(X)\bigr)\eta_N=0$. Therefore,
for each $\xi\in T^*N$,
 $$\{f_X,f_Y\}(\xi)=\Bigl\langle\xi,T\pi_N\bigl([\widehat\varphi(X),
                    \widehat\varphi(Y)](\xi)\bigr)\Bigr\rangle
                   =\bigl\langle\xi,[X,Y]\circ\pi_N(\xi) 
                    \bigr\rangle
                   =f_{[X,Y]}(\xi)  
 $$  
since $T\pi_N\bigl([\widehat\varphi(X),
                    \widehat\varphi(Y)](\xi)\bigr)
      =[X,Y]\circ\pi_N(\xi)$. Since $\{f_X,f_Y\}=f_{[X,Y]}$, 
the corresponding Hamiltonian vector fields
$\bigl[\widehat\varphi(X),\widehat\varphi(Y)\bigr]$ and 
$\widehat\varphi\bigl([X,Y]\bigr)$ are equal. In other words, $\widehat\varphi$ is a Lie algebra action of $\mathcal G$ on $(T^*N,\d\eta_N)$. 
\end{proof}

\begin{prop}\label{MomentumMap} 
Let $\varphi$ be a Hamiltonian action 
of a Lie algebra $\mathcal G$ on a Poisson manifold 
$(M,\Lambda)$. Let  ${\mathcal G}^*$ be the dual space of 
$\mathcal G$. There exists a smooth map $J:M\to{\mathcal G}^*$ 
such that for each $X\in{\mathcal G}$ the corresponding Hamiltonian
vector field $X_M$  has the function $J_X:M\to\RR$, defined by
 $$J_X(x)=\bigl\langle J(x),X\bigr\rangle\,,\quad\text{with}\ x\in M\,,$$
as Hamiltonian. 
\par\smallskip

Such a map $J:M\to{\mathcal G}^*$ is called a \emph{momentum map} 
for the Hamiltonian Lie algebra action $\varphi$. When $\varphi$ is the Lie algebra action associated to a Hamiltonian action $\Phi$ of a Lie group $G$ on the Poisson manifold $(M,\Lambda)$, $J$ is called a
\emph{momentum map} for the Hamiltonian Lie group action $\Phi$.
\end{prop}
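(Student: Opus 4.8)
The plan is to construct $J$ by choosing Hamiltonians for the fundamental vector fields along a basis of $\mathcal G$ and then extending linearly. First I would fix a basis $(e_1,\dots,e_k)$ of $\mathcal G$. Since $\varphi$ is a Hamiltonian Lie algebra action, each vector field $\varphi(e_i)$ is Hamiltonian, so for every $i$ there is a smooth function $h_i\in C^\infty(M,\RR)$ with $\varphi(e_i)=X_{h_i}=\Lambda^\sharp(\d h_i)$. (Each $h_i$ is determined only up to a Casimir, hence up to a locally constant function when $M$ is symplectic, but I simply fix one such choice for each $i$.)

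Next I would extend by linearity. For $X=\sum_{i=1}^k X^i e_i\in\mathcal G$ put $h_X=\sum_{i=1}^k X^i h_i$. Because $\varphi$ is linear and the assignment $f\mapsto X_f=\Lambda^\sharp(\d f)$ is linear, one gets $\varphi(X)=\sum_i X^i\varphi(e_i)=\sum_i X^i X_{h_i}=X_{h_X}$. Hence $X\mapsto h_X$ is a linear map from $\mathcal G$ to $C^\infty(M,\RR)$, and for each $X$ the function $h_X$ is a Hamiltonian for the fundamental vector field $X_M=\varphi(X)$.

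Then I would define $J:M\to\mathcal G^*$ by $\langle J(x),X\rangle=h_X(x)$; equivalently, in terms of the basis $(e^1,\dots,e^k)$ of $\mathcal G^*$ dual to $(e_1,\dots,e_k)$, one has $J=\sum_{i=1}^k h_i\,e^i$, which is smooth since each $h_i$ is smooth. By construction $J_X=h_X$, so $X_M$ admits $J_X$ as a Hamiltonian, which is exactly the assertion; and when $\varphi$ is the Lie algebra action associated to a Lie group action $\Phi$, this same map $J$ serves as a momentum map for $\Phi$ by definition.

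I do not expect a genuine obstacle here: the content is essentially that the Hamiltonian vector fields form the linear image of $C^\infty(M,\RR)$ under $f\mapsto\Lambda^\sharp(\d f)$, so a choice made basis-wise can always be made coherent across all of $\mathcal G$. The only point worth flagging is the lack of canonicity — $J$ depends on the chosen $h_i$ (and on nothing forces it to be equivariant for the coadjoint or any affine action); those finer properties are the subject of the subsequent results rather than of this proposition.
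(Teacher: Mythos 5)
Your construction is correct and is essentially identical to the paper's own proof: both fix a basis $(e_1,\dots,e_p)$ of $\mathcal G$, choose a Hamiltonian for each $\varphi(e_i)$, and set $J=\sum_i J_{e_i}\,\varepsilon^i$ with respect to the dual basis, relying on the linearity of $\varphi$ and of $f\mapsto\Lambda^\sharp(\d f)$ to conclude. Your closing remark about the non-canonicity of $J$ is accurate and is indeed taken up in the subsequent propositions of the paper.
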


\begin{proof}
Let $(e_1,\ldots,e_p)$ be a basis of the Lie algebra $\mathcal G$ 
and$(\varepsilon^1,\ldots,\varepsilon^p)$ be the dual basis of 
${\mathcal G}^*$. Since $\varphi$ is Hamiltonian, for each 
$i$ ($1\leq i\leq p)$ there exists a Hamiltonian $J_{e_i}:M\to\RR$ for the Hamiltonian vector field $\varphi(e_i)$. The map $J:M\to{\mathcal G}$ defined by
 $$J(x)=\sum_{i=1}^p J_{e_i}\varepsilon^i\,,\quad x\in M\,,$$
is a momentum map for $\varphi$.
\end{proof}

The momentum map was introduced by Jean-Marie Souriau 
\cite{Souriau69} and, in the Lagrangian formalism, by Stephen Smale
\cite{Smale70}.

\subsection{Some properties of momentum maps}

\begin{prop}\label{Theta}
Let $\varphi$ be a Hamiltonian action 
of a Lie algebra $\mathcal G$ on a Poisson manifold 
$(M,\Lambda)$, and $J:M\to{\mathcal G}^*$ be a momentum map for that
action. For any pair $(X,Y)\in{\mathcal G}\times{\mathcal G}$, the smooth function 
$\widetilde\Theta(X,Y):M\to\RR$ defined by
 $$\widetilde\Theta(X,Y)=\{J_X,J_Y\}-J_{[X,Y]}$$
is a Casimir of the Poisson algebra $C^\infty(M,\RR)$, which satisfies,
for all $X$, $Y$ and $Z\in{\mathcal G}$,
 $$\widetilde\Theta\bigl([X,Y],Z\bigr)
   +\widetilde\Theta\bigl([Y,Z],X\bigr)
    +\widetilde\Theta\bigl([Z,X],Y\bigr)=0\,.\eqno(1)
 $$
\par\smallskip
When the Poisson manifold $(M,\Lambda)$ is in fact a connected symplectic manifold $(M,\omega)$, for any pair $(X,Y)\in{\mathcal G}\times{\mathcal G}$
the function $\widetilde\Theta(X,Y)$ is constant on $M$, and the map 
$\widetilde\Theta:{\mathcal G}\times{\mathcal G}\to\RR$ is a skew-symmetric bilinear form, which satisfies the above identity $(1)$.  
\end{prop}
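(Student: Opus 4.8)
The plan is to deduce everything from two structural facts and then let the computation run mechanically. The first fact is that on \emph{any} Poisson manifold the map $f\mapsto X_f$ is a Lie algebra morphism, i.e. $X_{\{f,g\}}=[X_f,X_g]$; this follows from the Jacobi identity for the Poisson bracket exactly as Lemma~\ref{LemmaPoissonBracket} does in the symplectic case, since for every $h$ one has $\mathcal L([X_f,X_g])h=\{f,\{g,h\}\}-\{g,\{f,h\}\}=\{\{f,g\},h\}=\mathcal L(X_{\{f,g\}})h$. The second fact is that, $\varphi$ being a Lie algebra action, $\varphi([X,Y])=[\varphi(X),\varphi(Y)]$ for all $X,Y\in\mathcal G$. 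I write $X_M=\varphi(X)$; by the defining property of the momentum map $X_M=X_{J_X}$, hence also $X_{J_{[X,Y]}}=[X,Y]_M=[X_M,Y_M]$.

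First I would show that $\widetilde\Theta(X,Y)$ is a Casimir. Since $f\mapsto X_f$ is linear and a function is a Casimir exactly when its Hamiltonian vector field vanishes, it suffices to compute
\[
X_{\widetilde\Theta(X,Y)}=X_{\{J_X,J_Y\}}-X_{J_{[X,Y]}}=[X_{J_X},X_{J_Y}]-[X_M,Y_M]=[X_M,Y_M]-[X_M,Y_M]=0 .
\]
Hence $\{\widetilde\Theta(X,Y),h\}=\mathi(X_{\widetilde\Theta(X,Y)})(\d h)=0$ for every $h\in C^\infty(M,\RR)$.

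Next, for identity $(1)$, I use that $\widetilde\Theta(X,Y)$ is a Casimir to write, starting from $J_{[X,Y]}=\{J_X,J_Y\}-\widetilde\Theta(X,Y)$,
\[
\widetilde\Theta([X,Y],Z)=\{J_{[X,Y]},J_Z\}-J_{[[X,Y],Z]}=\bigl\{\{J_X,J_Y\},J_Z\bigr\}-J_{[[X,Y],Z]}\,.
\]
Summing the three cyclic permutations of $(X,Y,Z)$, the terms $\bigl\{\{J_X,J_Y\},J_Z\bigr\}+\bigl\{\{J_Y,J_Z\},J_X\bigr\}+\bigl\{\{J_Z,J_X\},J_Y\bigr\}$ cancel by the Jacobi identity of the Poisson bracket, and the terms $J_{[[X,Y],Z]}+J_{[[Y,Z],X]}+J_{[[Z,X],Y]}=J_{[[X,Y],Z]+[[Y,Z],X]+[[Z,X],Y]}$ vanish by the Jacobi identity of $\mathcal G$ together with the linearity of $X\mapsto J_X$; this is exactly $(1)$.

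Finally, in the symplectic case $(M,\omega)$ the bundle map $\Lambda^\sharp=(\omega^\flat)^{-1}$ is an isomorphism, so the Casimirs are precisely the locally constant functions, hence the constants when $M$ is connected; thus each $\widetilde\Theta(X,Y)$ is a real number. Bilinearity then follows from the bilinearity of the Poisson bracket and the linearity of $X\mapsto J_X$ and of $(X,Y)\mapsto[X,Y]$, and skew-symmetry from $\{J_X,J_Y\}=-\{J_Y,J_X\}$ and $[X,Y]=-[Y,X]$; identity $(1)$ was already obtained. The only point requiring a little care — not really an obstacle — is checking that $f\mapsto X_f$ is a Lie algebra morphism on a general Poisson manifold and not merely on a symplectic one, and keeping the signs straight in the two Jacobi identities used for $(1)$; everything else is direct substitution.
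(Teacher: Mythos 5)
Your proof is correct and follows essentially the same route as the paper: the Casimir property comes from $\{J_X,J_Y\}$ and $J_{[X,Y]}$ having the same Hamiltonian vector field, identity $(1)$ from summing the cyclic permutations and invoking the two Jacobi identities, and the symplectic statement from the fact that Casimirs on a connected symplectic manifold are constants. The one small point where you are more careful than the paper is your explicit check that $X_{\{f,g\}}=[X_f,X_g]$ holds on a general Poisson manifold (the paper's Lemma~\ref{LemmaPoissonBracket} is stated only in the symplectic setting), which is a worthwhile addition.
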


\begin{proof}
Since $J_X$ and $J_Y$ are Hamiltonians for the Hamiltonian vector fields
$\varphi(X)$ and $\varphi(Y)$, the Poisson bracket $\{J_X,J_Y\}$ is a Hamiltonian for $\bigl[\varphi(X),\varphi(Y)]$. Since 
$\varphi:{\mathcal G}\to A^1(M)$ is a Lie algebras homomorphism,
$\bigl[\varphi(X),\varphi(Y)]=\varphi\bigl([X,Y]\bigr)$, and $J_{[X,Y]}$ is a Hamiltonian for this vector field. We have two different Hamiltonians for the same Hamiltonian vector field. Their difference $\widetilde\Theta(X,Y)$ is therefore a Casimir of the Poisson algebra $C^\infty(M,\RR)$.
\par\smallskip

Let $X$, $Y$ and $Z$ be three elements in $\mathcal G$. We have
 \begin{equation*}
 \begin{split}
  \widetilde\Theta\bigl([X,Y],Z\bigr)&=\{J_{[X,Y]},J_Z\}-J_{\bigl[[X,Y],Z\bigr]}\\
                           &=\bigl\{\{J_X,J_Y\}-\widetilde\Theta(X,Y),J_Z\bigr\}
                                  -J_{\bigl[[X,Y],Z\bigr]}\\
                           &=\bigl\{\{J_X,J_Y\},J_Z\bigr\}
                                  -J_{\bigl[[X,Y],Z\bigr]}
 \end{split}
 \end{equation*}
since $\widetilde\Theta(X,Y)$ is a Casimir of the Poisson algebra $C^\infty(M,\RR)$.
Similarly
 \begin{equation*}
  \begin{split}
   \widetilde\Theta\bigl([Y,Z],X\bigr)&=\bigl\{\{J_Y,J_Z\},J_X\bigr\}
                                  -J_{\bigl[[Y,Z],X\bigr]}\,,\\
   \widetilde\Theta\bigl([Z,X],Y\bigr)&=\bigl\{\{J_Z,J_X\},J_Y\bigr\}
                                  -J_{\bigl[[Z,X],Y\bigr]}\,.
 \end{split}
 \end{equation*}
Adding these three terms and using the fact that the Poisson bracket of functions and the bracket in the Lie algebra $\mathcal G$ both satisfy the Jacobi identity, we see that $\widetilde\Theta$ satisfies $(1)$.  
\par\smallskip

When $(M,\Lambda)$ is in fact a connected symplectic manifold $(M,\omega)$, the only Casimirs of the Poisson algebra $C^\infty(M,\RR)$ are the constants, and 
$\widetilde\Theta$ becomes a bilinear skew-symmetric form on $\mathcal G$. 
\end{proof}

\begin{defi}\label{SymplecticCocycle2}
Under the assumptions of Proposition \ref{Theta}, the 
skew-symmetric bilinear map $\widetilde\Theta$, defined on ${\mathcal G}\times{\mathcal G}$ and taking its values in the space of Casimirs of the Poisson algebra
$C^\infty(M,\RR)$ (real-valued when the Poisson manifold $(M,\Lambda)$ 
is in fact a connected symplectic manifold $(M,\omega)$), 
is called the \emph{symplectic cocycle of the Lie algebra} $\mathcal G$ associated to the momentum map $J$. 
\end{defi}

\begin{rmk}\label{SymplecticCocycle3}
Under the assumptions of Proposition \ref{Theta}, let us assume in addition that the Poisson manifold $(M,\Lambda)$ is in fact a connected symplectic 
manifold $(M,\omega)$. The symplectic cocycle $\widetilde\Theta$ is then a real-valued skew-symmetric bilinear form on $\mathcal G$. Therefore it is a symplectic cocycle in the sense of \ref{SymplecticCocycle}. Two different interpretations of this cocycle can be given.
\par\smallskip
 
\begin{enumerate} 

\item{}
Let $\Theta:{\mathcal G}\to{\mathcal G}^*$ 
be the map such that, for all $X$ and $Y\in{\mathcal G}$
 $$\bigl\langle\Theta(X),Y\bigr\rangle=\widetilde\Theta(X,Y)\,.$$
Written for $\Theta$, Equation $(1)$ of \ref{Theta} becomes
 $$\Theta\bigl([X,Y]\bigr)=\ad^*_{-X}\bigl(\Theta(Y)\bigr)
                          -\ad^*_{-Y}\bigl(\Theta(X)\bigr)\,,
                           \quad X\ \hbox{and}\ Y\in{\mathcal G}\,.
 $$  
The map  $\Theta$ is therefore the one-cocycle of the Lie algebra $\mathcal G$ 
with values in ${\mathcal G}^*$, for the coadjoint representation 
(\ref{AdjointAndCoadjointRepresentations}) $X\mapsto\ad^*_{-X}$ of $\mathcal G$, associated to the affine action
of $\mathcal G$ on its dual
 $$a_\Theta(X)(\zeta)=\ad^*_{-X}(\zeta)+\Theta(X)\,,\quad X\in{\mathcal G}\,,\ \zeta\in
  {\mathcal G}^*\,,
 $$
in the sense of \ref{LinearParta}. The reader is referred to the book \cite{HiltonStammbach}
for a more thorough discussion of the cohomology theories of 
Lie groups and Lie algebras.

\item{}
Let $G$ be a Lie group whose Lie algebra is $\mathcal G$. The skew-symmetric
bilinear form $\widetilde\Theta$ on ${\mathcal G}=T_eG$ can be extended, either by left translations or by right translations, into a left invariant (or a right invariant) closed differential two-form on $G$, since the identity $(1)$ of \ref{Theta}  
means that its exterior differential $\d\widetilde\Theta$ vanishes. In other words, 
$\widetilde\Theta$ is a $2$-cocycle for the restriction of the de~Rham cohomology of 
$G$ to left (or right) invariant differential forms.

\end{enumerate}
\end{rmk}

\begin{prop}\label{LiftedHamiltonianAction2}
Let $\varphi:{\mathcal G}\to A^1(N)$ be an action of a finite-dimensional Lie
algebra $\mathcal G$ on a smooth manifold $N$, and let
$\widehat\varphi:{\mathcal G}\to A^1(T^*N)$ be the Hamiltonian action
of $\mathcal G$ on $(T^*N ,\d\eta_N)$ introduced in Proposition 
\ref{LiftedHamiltonianAction}.
The map $J:T^*N\to{\mathcal G}^*$ defined by
 $$\bigl\langle J(\xi),X\bigr\rangle=\mathi\bigl(\widehat\varphi(X)\bigr)\eta_N(\xi)\,,
   \quad X\in{\mathcal G}\,,\ \xi\in T^*N\,,
 $$ 
is a momentum map for the action $\widehat\varphi$ which satisfies, for all $X$ and 
$Y\in{\mathcal G}$,
 $$\bigl\{J_X,J_Y\bigr\}=J_{[X,Y]}\,.
 $$
In other words, the symplectic cocycle of $\mathcal G$ associated to $J$, in the sense of \ref{SymplecticCocycle2}, identically vanishes.
\end{prop}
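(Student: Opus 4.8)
The plan is to observe that this Proposition is essentially a repackaging of Proposition \ref{LiftedHamiltonianAction} together with the definition of the symplectic cocycle given in \ref{SymplecticCocycle2}, so the proof will be short. First I would check that $J$ is well defined: for a fixed $\xi\in T^*N$, the assignment
 $$X\longmapsto \mathi\bigl(\widehat\varphi(X)\bigr)\eta_N(\xi)
   =\Bigl\langle\xi,\varphi(X)\bigl(\pi_N(\xi)\bigr)\Bigr\rangle$$
is linear in $X$, because $\varphi:{\mathcal G}\to A^1(N)$ is linear and the canonical pairing is linear; hence it defines an element $J(\xi)\in{\mathcal G}^*$. Smoothness of $\xi\mapsto J(\xi)$ follows from smoothness of $\varphi(X)$ and of $\pi_N$ (equivalently, from the smoothness of the functions $f_X$ in \ref{LiftedHamiltonianAction}). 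By construction $J_X=\langle J(\cdot),X\rangle=f_X$, with $f_X$ the function introduced in Proposition \ref{LiftedHamiltonianAction}.

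Next I would verify the momentum map property. Proposition \ref{LiftedHamiltonianAction} (itself resting on Proposition \ref{CanonicalLiftHam}) asserts that for each $X\in{\mathcal G}$ the vector field $\widehat\varphi(X)$ is Hamiltonian with Hamiltonian $f_X=J_X$. This is exactly the condition appearing in Proposition \ref{MomentumMap} characterizing a momentum map for the Hamiltonian Lie algebra action $\widehat\varphi$; hence $J$ is a momentum map for $\widehat\varphi$.

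Finally, the vanishing of the symplectic cocycle is immediate: by Definition \ref{SymplecticCocycle2} the symplectic cocycle of $\mathcal G$ associated to $J$ is $\widetilde\Theta(X,Y)=\{J_X,J_Y\}-J_{[X,Y]}=\{f_X,f_Y\}-f_{[X,Y]}$, which is zero by the last assertion of Proposition \ref{LiftedHamiltonianAction}. In particular $\{J_X,J_Y\}=J_{[X,Y]}$ for all $X,Y\in{\mathcal G}$. There is no real obstacle here; the only mildly delicate point is confirming that $\xi\mapsto J(\xi)$ genuinely takes values in ${\mathcal G}^*$ and is smooth, which is handled by the linearity and smoothness remarks above. (Should one wish to avoid invoking \ref{LiftedHamiltonianAction} for the identity $\{f_X,f_Y\}=f_{[X,Y]}$, one could instead reprove it directly from ${\mathcal L}(\widehat\varphi(X))\eta_N=0$ of Proposition \ref{PropCanonicalLifts} and $T\pi_N\circ[\widehat\varphi(X),\widehat\varphi(Y)]=[X,Y]\circ\pi_N$, but this merely duplicates the computation already done there.)
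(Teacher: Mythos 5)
Your proof is correct and follows exactly the route the paper takes: the paper's own proof of this Proposition is the one-line remark that the properties follow immediately from Proposition \ref{LiftedHamiltonianAction}, which is precisely what you do (with the welcome extra care of checking that $J_X=f_X$, that $J$ is well defined and smooth, and that the identity $\{f_X,f_Y\}=f_{[X,Y]}$ is what makes the cocycle of Definition \ref{SymplecticCocycle2} vanish). No gaps.
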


\begin{proof}
These properties immediately follow from \ref{LiftedHamiltonianAction}.
\end{proof} 

\begin{theo}[First Emmy Noether's theorem in Hamiltonian form]\label{noether}
Let $\varphi$ be a Hamiltonian action of a Lie algebra $\mathcal G$ 
on a Poisson manifold $(M,\Lambda)$, $J:M\to{\mathcal G}^*$ be a momentum map for $\varphi$ and $H:M\to\RR$ be a smooth Hamiltonian.
If the action $\varphi$ leaves $H$ invariant, that means if 
 $${\mathcal L}\bigl(\varphi(X)\bigr)H=0\quad\hbox{for any}\ 
    X\in{\mathcal G}\,,
 $$
the momentum map $J$ is a ${\mathcal G}^*$-valued first integral 
(\ref{FirstIntegrals})
of the Hamiltonian vector field $\Lambda^\sharp(\d H)$, which means that it
keeps a constant value along each integral curve of that vector field.
\end{theo}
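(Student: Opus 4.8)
The plan is to deduce the statement from Proposition \ref{FirstIntegrals}, applied to each real-valued component of $J$. First I would recall that, by the defining property of the momentum map (Proposition \ref{MomentumMap}), for each $X\in{\mathcal G}$ the function $J_X=\langle J,X\rangle$ is a Hamiltonian for the fundamental vector field $\varphi(X)$, so that $\varphi(X)=\Lambda^\sharp(\d J_X)$. Consequently, using the definition of the Poisson bracket together with the identity $\mathi\bigl(\varphi(X)\bigr)\d H={\mathcal L}\bigl(\varphi(X)\bigr)H$, one gets
$$\{J_X,H\}=\mathi\bigl(\varphi(X)\bigr)(\d H)={\mathcal L}\bigl(\varphi(X)\bigr)H=0$$
for every $X\in{\mathcal G}$, by the invariance hypothesis; by skew-symmetry of the Poisson bracket, $\{H,J_X\}=0$ as well.

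Next I would invoke Proposition \ref{FirstIntegrals} with the Hamiltonian $H$ and the function $g=J_X$: since $\{H,J_X\}$ vanishes identically, $J_X$ keeps a constant value along each integral curve of $X_H=\Lambda^\sharp(\d H)$, that is, $J_X$ is a first integral of $X_H$, and this holds for every $X\in{\mathcal G}$.

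Finally, to pass from the scalar functions $J_X$ to the ${\mathcal G}^*$-valued map $J$, I would fix a basis $(e_1,\ldots,e_p)$ of ${\mathcal G}$ with dual basis $(\varepsilon^1,\ldots,\varepsilon^p)$ and write $J=\sum_{i=1}^p J_{e_i}\varepsilon^i$. If $\varphi\colon I\to M$ is any integral curve of $X_H$, each coordinate $t\mapsto J_{e_i}\bigl(\varphi(t)\bigr)$ is constant by the previous step, hence $t\mapsto J\bigl(\varphi(t)\bigr)$ is constant; this is precisely the assertion that $J$ is a ${\mathcal G}^*$-valued first integral of $X_H$. Equivalently, and in a basis-free way, $\langle J,X\rangle$ is a first integral of $X_H$ for every $X\in{\mathcal G}$.

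I do not expect any real obstacle: the argument is a direct application of Proposition \ref{FirstIntegrals} combined with the definition of the momentum map. The only point demanding a little care is the bookkeeping of sign conventions, since $\{f,g\}=\mathi(X_f)\d g=-\mathi(X_g)\d f$: the invariance ${\mathcal L}\bigl(\varphi(X)\bigr)H=0$ controls $\{J_X,H\}$ directly, but because the bracket is skew-symmetric $\{H,J_X\}$ vanishes simultaneously, so it is immaterial which of the two is fed into Proposition \ref{FirstIntegrals}.
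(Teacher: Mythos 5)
Your proposal is correct and follows essentially the same route as the paper: the key identity is the chain $\{H,J_X\}=-\{J_X,H\}=-{\mathcal L}\bigl(\varphi(X)\bigr)H=0$, after which constancy of each $J_X$ along integral curves gives constancy of $J$. The paper merely inlines the derivative computation $\frac{\d}{\d t}J_X\bigl(\psi(t)\bigr)=\Lambda(\d H,\d J_X)\bigl(\psi(t)\bigr)=-{\mathcal L}\bigl(\varphi(X)\bigr)H=0$ instead of quoting Proposition \ref{FirstIntegrals} explicitly, which is a purely cosmetic difference.
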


\begin{proof} 
For any $X\in{\mathcal G}$, let $J_X:M\to\RR$ be the function
$x\mapsto\bigl\langle J(x),X\bigr\rangle$. Let $t\mapsto\psi(t)$ be an
integral curve of the Hamiltonian vector field $\Lambda^\sharp(\d H)$. We have
 \begin{equation*}
  \begin{split}
   \frac{\d}{\d t}\Bigl(J_X\bigl(\psi(t)\bigr)\Bigr)
    &={\mathcal L}\bigl(\Lambda^\sharp(\d H)\bigr)
        \bigl(J_X\bigr)\bigl(\psi(t)\bigr)
       =\Lambda\bigl(\d H,\d J_X\bigr)(\psi(t))\\
    &=-{\mathcal L}\Bigl(\Lambda^\sharp\bigl(\d J_X\bigr)
       \Bigr)H
     =-{\mathcal L}\bigl(\varphi(X)\bigr)H=0\,.
    \end{split}
   \end{equation*}
Therefore, for any $X\in{\mathcal G}$, the derivative of
$\langle J,X\rangle\bigl(\psi(t)\bigr)$  with respect to the parameter $t$ of the parametrized curve $t\mapsto\psi(t)$ vanishes identically, which means that $J$ keeps a constant value along that curve.
\end{proof}

The reader will find in the book by Yvette Kosmann-Schwarzbach 
\cite{Kosmann2011} a very nice exposition of the history and scientific applications of the Noether's theorems.

\begin{prop}\label{SymplecticOrthogonality1}
Let $\varphi$ be a Hamiltonian action of a Lie algebra $\mathcal G$ 
on a Poisson manifold $(M,\Lambda)$ and $J:M\to{\mathcal G}^*$ 
be a momentum map for that action. Let $S$ be a symplectic leaf of 
$(M,\Lambda)$ and $\omega_S$ be its symplectic form.

\par\smallskip\noindent
{\bf 1.\quad} For each 
$x\in S$, in the symplectic vector space $\bigl(T_xS,\omega_S(x)\bigr)$, each of the two vector subspaces $T_xS\cap\ker(T_xJ)$ and
$\bigl\{\,\varphi(X)(x)\,;\,X\in{\mathcal G}\,\}$  is the symplectic orthogonal of the other.

\par\smallskip\noindent
{\bf 2.\quad} For each $x\in S$, $T_xJ(T_xS)$ is the annihilator of
the isotropy subalgebra 
${\mathcal G}_x=\bigl\{X\in{\mathcal G};\phi(X)(x)=0\bigr\}$ of $x$.
\end{prop}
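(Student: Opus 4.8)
The plan is to derive the whole statement from a single identity linking the symplectic form $\omega_S$ on the leaf $S$, the fundamental vector fields $\varphi(X)$, and the tangent map $T_xJ$ at a point $x\in S$. First I would recall that each component $J_X=\langle J,X\rangle$ is, by definition of the momentum map, a Hamiltonian for $\varphi(X)=\Lambda^\sharp(\d J_X)$; by Remarks \ref{Casimirs} every Hamiltonian vector field is everywhere tangent to the symplectic foliation, so $\varphi(X)(x)\in T_xS$ for all $x\in S$, and, by the structure theorem for Poisson manifolds, the restriction of $\varphi(X)$ to $S$ is the Hamiltonian vector field on $(S,\omega_S)$ associated with the restriction $J_X|_S$. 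Evaluating the relation $\mathi\bigl(\varphi(X)(x)\bigr)\omega_S(x)=-\d(J_X|_S)(x)$ on an arbitrary $v\in T_xS$ then yields
\begin{equation*}
\omega_S(x)\bigl(\varphi(X)(x),v\bigr)=-\bigl\langle T_xJ(v),X\bigr\rangle\,,\qquad X\in\mathcal{G}\,,\ v\in T_xS\,.\tag{$\ast$}
\end{equation*}

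For Part 1, set $W_x=\{\varphi(X)(x):X\in\mathcal{G}\}\subset T_xS$ and read $(\ast)$ in both directions. Fixing $v\in T_xS$, the left-hand side of $(\ast)$ vanishes for every $X\in\mathcal{G}$ if and only if $T_xJ(v)=0$; thus $v$ lies in the symplectic orthogonal (taken inside the symplectic vector space $(T_xS,\omega_S(x))$) of $W_x$ precisely when $v\in T_xS\cap\ker(T_xJ)$. This gives $T_xS\cap\ker(T_xJ)=\orth W_x$, and applying the involutivity of symplectic orthogonality (Property 1 in the list of properties of symplectic orthogonality) gives the reverse equality $W_x=\orth\bigl(T_xS\cap\ker(T_xJ)\bigr)$, which is exactly the assertion of Part 1.

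For Part 2, I would first note that $(\ast)$ immediately gives $T_xJ(T_xS)\subset\mathcal{G}_x^0$, the annihilator of $\mathcal{G}_x$: if $X\in\mathcal{G}_x$ then $\varphi(X)(x)=0$, so $\langle T_xJ(v),X\rangle=0$ for all $v\in T_xS$. Equality follows from a dimension count. The linear map $X\mapsto\varphi(X)(x)$ from $\mathcal{G}$ onto $W_x$ has kernel $\mathcal{G}_x$, so $\dim W_x=\dim\mathcal{G}-\dim\mathcal{G}_x=\dim\mathcal{G}_x^0$. On the other hand, by Part 1 together with the dimension formula $\dim\orth W=\dim T_xS-\dim W$ for symplectic orthogonals (Property 2), $\dim\bigl(T_xS\cap\ker(T_xJ)\bigr)=\dim T_xS-\dim W_x$, hence $\dim T_xJ(T_xS)=\dim T_xS-\dim\bigl(T_xS\cap\ker(T_xJ)\bigr)=\dim W_x=\dim\mathcal{G}_x^0$; combined with the inclusion already proved, this gives the equality.

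All of these computations are routine once $(\ast)$ is available; the single point that requires care — and the one I would check most carefully — is the passage to the symplectic leaf, namely that $\varphi(X)$ is tangent to $S$ and that its restriction is Hamiltonian for $J_X|_S$ relative to $\omega_S$, which is what legitimizes writing $(\ast)$ with $\omega_S$ in place of the ambient (possibly degenerate) Poisson structure.
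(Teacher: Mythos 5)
Your proof is correct and follows essentially the same route as the paper: both rest on the identity $\omega_S(x)\bigl(v,\varphi(X)(x)\bigr)=\bigl\langle T_xJ(v),X\bigr\rangle$ (your $(\ast)$ up to skew-symmetry) and deduce Part 1 by involutivity of the symplectic orthogonal. For Part 2 the paper reads off from the same formula that the annihilator of $T_xJ(T_xS)$ in $\mathcal G$ is ${\mathcal G}_x$ and concludes by biduality, whereas you prove the inclusion $T_xJ(T_xS)\subset{\mathcal G}_x^0$ and finish with a rank count; the two arguments are interchangeable, and your explicit justification that $\varphi(X)$ is tangent to $S$ and restricts to the Hamiltonian vector field of $J_X|_S$ on $(S,\omega_S)$ makes precise a point the paper leaves implicit.
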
 

\begin{proof} Let $v\in T_xS$. For each $X\in{\mathcal G}$ we have
 $$\omega_S\bigl(v,\varphi(X)(x)\bigr)
   =\bigl\langle\ d\langle J,X\rangle(x),v\bigr\rangle
  =\bigl\langle T_xJ(v),X\bigr\rangle\,. 
 $$
Therefore a vector $v\in T_xS$ belongs to 
$\orth\bigl\{\,\varphi(X)(x)\,;\,X\in{\mathcal G}\,\}$ if and only if
$T_xJ(v)=0$. In other words, in the symplectic vector space $\bigl(T_xS,\omega_S(x)\bigr)$, $T_xS\cap\ker(T_xJ)$ is the symplectic orthogonal of $\bigl\{\,\varphi(X)(x)\,;\,X\in{\mathcal G}\,\}$. Of course, conversely $\bigl\{\,\varphi(X)(x)\,;\,X\in{\mathcal G}\,\}$
is the symplectic orthogonal of 
$T_xS\cap\ker(T_xJ)$. 
\par\smallskip

The same formula shows that $\bigl\langle T_xJ(v),X\bigr\rangle=0$ 
for all $v\in T_xS$ if and only if $X\in{\mathcal G}_x$.
\end{proof}

\begin{rmk} Under the assumptions of \ref{SymplecticOrthogonality1}, when $\varphi$
is the Lie algebra action associated to a Hamiltonian action $\Phi$
of a Lie group $G$, the vector space $\bigl\{\,\varphi(X)(x)\,;\,X\in{\mathcal G}\,\}$ is the space tangent at $x$ to the $G$-orbit of this point.
\end{rmk}

\begin{coro}\label{SymplecticOrthogonal}
Let $\varphi$ be a Hamiltonian action of a Lie algebra $\mathcal G$ 
on a symplectic manifold $(M,\omega)$ and $J:M\to{\mathcal G}^*$ 
be a momentum map for that action. 

\par\smallskip\noindent
{\bf 1.\quad} For each 
$x\in M$, in the symplectic vector space $\bigl(T_xM,\omega(x)\bigr)$ each of the two vector subspaces $\ker(T_xJ)$ and
$\bigl\{\,\varphi(X)(x)\,;\,X\in{\mathcal G}\,\}$  is the symplectic orthogonal of the other.

\par\smallskip\noindent
{\bf 2.\quad} For each $x\in M$, $T_xJ(T_xM)$ is the annihilator
of the isotropy subalgebra ${\mathcal G}_x=\{X\in{\mathcal G}; \varphi(X)(x)=0\}$  of $x$. 
\end{coro}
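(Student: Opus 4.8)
The plan is to deduce this Corollary directly from Proposition \ref{SymplecticOrthogonality1}, by first identifying the symplectic leaves of a symplectic manifold. I would recall (from the Example following the definition of Poisson vector fields) that on a symplectic manifold $(M,\omega)$ the associated Poisson bivector field $\Lambda$ has $\Lambda^\sharp=(\omega^\flat)^{-1}$ as a bundle isomorphism from $T^*M$ onto $TM$. Hence the characteristic field $C=\Lambda^\sharp(T^*M)$ is all of $TM$, so by Weinstein's theorem every symplectic leaf $S$ satisfies $T_xS=T_xM\cap C=T_xM$ at each $x\in S$; the leaves are therefore open, and by maximality they are exactly the connected components of $M$. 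In particular, for any given $x\in M$ the leaf $S$ through $x$ is an open neighbourhood of $x$ with $T_xS=T_xM$ and $\omega_S(x)=\omega(x)$, and the momentum map $J$ for $\varphi$ restricts to a momentum map on $S$.

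Then I would simply apply Proposition \ref{SymplecticOrthogonality1} to this leaf $S$. For part 1, since $T_xS=T_xM$, the subspace $T_xS\cap\ker(T_xJ)$ is just $\ker(T_xJ)$, and $\bigl\{\,\varphi(X)(x)\,;\,X\in{\mathcal G}\,\bigr\}$ is unchanged; moreover the symplectic vector space $\bigl(T_xS,\omega_S(x)\bigr)$ coincides with $\bigl(T_xM,\omega(x)\bigr)$. Thus the mutual symplectic orthogonality asserted in part 1 of Proposition \ref{SymplecticOrthogonality1} becomes precisely the statement of part 1 of Corollary \ref{SymplecticOrthogonal}. For part 2, again $T_xJ(T_xS)=T_xJ(T_xM)$ and the isotropy subalgebra ${\mathcal G}_x$ is the same object, so part 2 of Proposition \ref{SymplecticOrthogonality1} yields part 2 of the Corollary verbatim.

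There is no real obstacle here: the only point requiring a word is the identification of the symplectic leaves of $(M,\omega)$, and even that is strictly speaking unnecessary, since both assertions are pointwise — it suffices that through each $x$ there passes a symplectic leaf $S$ with $T_xS=T_xM$ and $\omega_S(x)=\omega(x)$, which is immediate from $\Lambda^\sharp$ being a bundle isomorphism. Alternatively one could bypass Proposition \ref{SymplecticOrthogonality1} and argue directly, repeating its computation in this simpler setting: for $v\in T_xM$ and $X\in{\mathcal G}$ one has $\omega(x)\bigl(v,\varphi(X)(x)\bigr)=\bigl\langle\d\langle J,X\rangle(x),v\bigr\rangle=\bigl\langle T_xJ(v),X\bigr\rangle$, so $v\in\orth\bigl\{\,\varphi(X)(x)\,;\,X\in{\mathcal G}\,\bigr\}$ if and only if $T_xJ(v)=0$; part 1 then follows from the involutivity $\orth(\orth W)=W$ (property 1 of symplectic orthogonality), and part 2 from the observation that $T_xJ(v)$ annihilates $X$ for all $v\in T_xM$ exactly when $X\in{\mathcal G}_x$.
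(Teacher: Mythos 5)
Your proposal is correct and follows essentially the same route as the paper, which derives the Corollary from Proposition \ref{SymplecticOrthogonality1} by noting that the symplectic leaves of $(M,\omega)$ are its connected components. Your additional justification of that fact via the characteristic field, and the optional direct computation, are fine but not needed beyond what the paper states.
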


\begin{proof}
These assertions both follow immediately from \ref{SymplecticOrthogonality1} 
since the symplectic leaves of $(M,\omega)$ are its connected components.
\end{proof}

\begin{prop}\label{LieGroupCocycle}
Let $\Phi$ be a Hamiltonian action of a Lie group $G$ 
on a connected symplectic manifold $(M,\omega)$ and $J:M\to{\mathcal G}^*$ 
be a momentum map for that action. There exists a unique action $A$ of the
Lie group $G$ on the dual ${\mathcal G}^*$ of its Lie algebra for which
the momentum map $J$ is equivariant, that means satisfies for each
$x\in M$ and $g\in G$
 $$J\bigl(\Phi_g(x)\bigr)=A_g\bigl(J(x)\bigr)\,.$$
The action $A$ is an action on the left (respectively, on the right) 
if $\Phi$ is an action on the left (respectively, on the right), and its expression is
\begin{equation*}
\begin{cases}
A(g,\xi)=\Ad^*_{g^{-1}}(\xi)+\theta(g)& \text{if $\Phi$ is an action on the left,}\\
A(\xi,g)=\Ad^*_{g}(\xi) -\theta(g^{-1})& \text{if $\Phi$ is an action on the right,}
\end{cases}\quad g\in G\,,\ \xi\in{\mathcal G}^*\,.
\end{equation*}
The map $\theta:G\to{\mathcal G}^*$ is called the \emph{symplectic cocycle
of the Lie group} $G$ associated to the momentum map $J$.
\end{prop}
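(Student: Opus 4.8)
The plan is to carry out the argument for an action $\Phi$ on the left; the case of an action on the right will follow by a parallel computation. The heart of the matter is a constancy argument. Fix $g\in G$. Since $\Phi$ is a Hamiltonian, hence symplectic, action, $\Phi_g$ preserves $\omega$, so for every $f\in C^\infty(M,\RR)$ one has $(\Phi_g)^*(X_f)=X_{f\circ\Phi_g}$ (apply $(\Phi_g)^*$ to $\mathi(X_f)\omega=-\d f$; cf.\ \ref{exo}). Taking $f=J_X$, whose Hamiltonian vector field is the fundamental vector field $\varphi(X)=X_M$ by the definition of a momentum map (Proposition~\ref{MomentumMap}), we get $X_{J_X\circ\Phi_g}=(\Phi_g)^*(X_M)=(\Phi_{g^{-1}})_*(X_M)$, and by Proposition~\ref{DirectImageFundamental} this equals $(\Ad_{g^{-1}}X)_M=X_{J_{\Ad_{g^{-1}}X}}$. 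Hence $J_X\circ\Phi_g-J_{\Ad_{g^{-1}}X}$ has vanishing differential, so it is constant on the connected manifold $M$; being linear in $X$, this constant is of the form $\langle\theta(g),X\rangle$ for a unique $\theta(g)\in{\mathcal G}^*$. Pairing with an arbitrary $X\in{\mathcal G}$ gives $J\bigl(\Phi_g(x)\bigr)=\Ad^*_{g^{-1}}\bigl(J(x)\bigr)+\theta(g)$, so that $A_g(\xi):=\Ad^*_{g^{-1}}(\xi)+\theta(g)$ makes $J$ equivariant by construction.

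Next I would verify that $A$ is a smooth action on the left. Smoothness of $\theta$, hence of $A$, is clear from $\theta(g)=J\bigl(\Phi_g(x_0)\bigr)-\Ad^*_{g^{-1}}\bigl(J(x_0)\bigr)$ with $x_0\in M$ fixed. For the group law, substitute the equivariance formula into $J\circ\Phi_{g_1g_2}=J\circ\Phi_{g_1}\circ\Phi_{g_2}$ and use that $g\mapsto\Ad^*_{g^{-1}}$ is a genuine (left) representation; comparing both sides yields $\theta(g_1g_2)=\Ad^*_{g_1^{-1}}\bigl(\theta(g_2)\bigr)+\theta(g_1)$, which is precisely the one-cocycle identity of Proposition~\ref{LinearPartA} for the linear representation $R:g\mapsto\Ad^*_{g^{-1}}$. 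That proposition then shows that $g\mapsto A_g$ is an affine action on the left, with $A_e=\id$ since $\theta(e)=0$.

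For uniqueness, suppose $A'$ is another action of $G$ on ${\mathcal G}^*$ making $J$ equivariant. Then for all $g$ and all $x$, $A'_g\bigl(J(x)\bigr)=J\bigl(\Phi_g(x)\bigr)=A_g\bigl(J(x)\bigr)$, so $A'_g$ agrees with $A_g$ on $J(M)$; since the constancy argument above determines $\theta(g)$ — and hence the affine map $A_g$ — uniquely from $J$ and $\Phi$, the action $A$ is the unique such action.

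Finally, for an action $\Psi$ on the right, Proposition~\ref{DirectImageFundamental} instead gives $(\Psi_{g^{-1}})_*(X_M)=(\Ad_gX)_M$, so $J_X\circ\Psi_g-J_{\Ad_gX}$ is constant; writing this constant as $-\langle\theta(g^{-1}),X\rangle$ defines a smooth map $\theta:G\to{\mathcal G}^*$ and gives $J\bigl(\Psi_g(x)\bigr)=\Ad^*_g\bigl(J(x)\bigr)-\theta(g^{-1})$. The anti-homomorphism property $\Psi_{g_1}\circ\Psi_{g_2}=\Psi_{g_2g_1}$ then forces the same cocycle identity on $\theta$, so $A(\xi,g)=\Ad^*_g(\xi)-\theta(g^{-1})$ is an action on the right making $J$ equivariant, and its uniqueness is seen as before. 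No step presents a genuine difficulty; the only real care needed is in keeping the adjoint/coadjoint and the left/right conventions consistent throughout, and in the bookkeeping for uniqueness.
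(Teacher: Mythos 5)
Your proof is correct and follows essentially the same route as the paper: the key step in both is that $J_X\circ\Phi_g$ and $J_{\Ad_{g^{-1}}X}$ are two Hamiltonians for the same vector field $(\Phi_{g^{-1}})_*(X_M)=(\Ad_{g^{-1}}X)_M$, hence differ by a constant on the connected manifold $M$, and that constant, linear in $X$, defines $\theta(g)$; your explicit check of the group law via the cocycle identity and your direct computation for right actions are harmless elaborations of what the paper asserts or obtains by reducing to the left case via $g\mapsto g^{-1}$. The one point to watch is uniqueness: equivariance only pins down $A_g$ on the image $J(M)$, so your closing claim that this determines the action everywhere is not literally justified (the action is unique only among affine actions of the stated form), but the paper's own proof does not address uniqueness at all, so your attempt is no weaker on this point.
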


\begin{proof}
Let us first assume that $\Phi$ is an action on the left. For each 
$X\in{\mathcal G}$ the associated fundamental vector field $X_M$ is Hamiltonian and the function $J_X:M\to \RR$ defined by
 $$J_X(x)=\bigl\langle J(x),X\bigr\rangle\,,\quad x\in M\,,
 $$
is a Hamiltonian for $X_M$. We know by the characterizations \ref{exo} 
of Poisson actions
that $(\Phi_{g^{-1}})_*(X_M)$, the direct image of $X_M$ by the diffeomorphism 
$\Phi_{g^{-1}}$,
is a Hamiltonian vector field for which the function $J_X\circ\Phi_g$ is a Hamiltonian. Proposition \ref{DirectImageFundamental} shows that 
$(\Phi_{g^{-1}})_*(X_M)$ is the fundamental vector field associated to
$\Ad_{g^{-1}}(X)$, therefore has the function 
 $$x\mapsto \bigl\langle J(x), \Ad_{g^{-1}}(X)\bigr\rangle 
           =\bigl\langle \Ad^*_{g^{-1}}\circ J(x),X\bigr\rangle
 $$
as a Hamiltonian. The difference between these two Hamiltonians for the same Hamiltonian vector field is a constant since $M$ is assumed to be connected.
Therefore the expression 
 $$\bigl\langle J\circ\Phi_g(x)
                     -\Ad^*_{g^{-1}}\circ J(x),X\bigr\rangle
 $$  
does not depend on $x\in M$, and depends linearly on $X\in{\mathcal G}$ 
(and of course smoothly depends on $g\in G$). We can therefore define a smooth map
$\theta:G\to {\mathcal G}^*$ by setting
 $$\theta(g)=J\circ \Phi_g - \Ad^*_{g^{-1}}\circ J\,,\quad g\in G\,.
 $$
It follows that the map $a:G\times{\mathcal G}^*\to {\mathcal G}^*$,
 $$a(g,\xi)=\Ad^*_{g^{-1}}(\xi)+\theta(g)$$
is an action on the the left of the Lie group $G$ on the dual ${\mathcal G}^*$ of its Lie algebra, which renders the momentum map $J$ equivariant.
\par\smallskip

The case when $\Phi$ is an action on the right easily follows by observing that $(g,x)\mapsto\Phi(x,g^{-1})$ is a Hamiltonian action on the left whose
momentum map is the opposite of that of $\Phi$.
\end{proof}

\begin{prop}\label{CocyclePropertytheta}
Under the same assumptions as those of Proposition \ref{LieGroupCocycle}, 
the map
$\theta:G\to{\mathcal G}^*$ satisfies, for all $g$ and $h\in G$,
 $$\theta(gh)=\theta(g)+\Ad^*_{g^{-1}}\bigl(\theta(h)\bigr)\,.$$  
\end{prop}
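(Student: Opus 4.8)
The plan is to read the identity directly off the equivariance property of the momentum map proved in Proposition \ref{LieGroupCocycle}, using only the elementary fact that $g\mapsto\Ad^*_{g^{-1}}$ is a linear representation of $G$ on $\mathcal{G}^*$. I will carry out the argument for an action $\Phi$ on the left; the case of an action on the right will be obtained at the end by the same substitution $(g,x)\mapsto\Phi(x,g^{-1})$ already used in the proof of Proposition \ref{LieGroupCocycle}, under which the right-action cocycle is converted into the left-action one.

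First I would assemble the two ingredients. From Proposition \ref{LieGroupCocycle}, for every $g\in G$ and every $x\in M$,
$$J\bigl(\Phi_g(x)\bigr)=\Ad^*_{g^{-1}}\bigl(J(x)\bigr)+\theta(g)\,.$$
Next, since $\Ad$ is the adjoint representation we have $\Ad_{(gh)^{-1}}=\Ad_{h^{-1}}\circ\Ad_{g^{-1}}$, and passing to transposes gives $\Ad^*_{(gh)^{-1}}=\Ad^*_{g^{-1}}\circ\Ad^*_{h^{-1}}$; equivalently $g\mapsto\Ad^*_{g^{-1}}$ is the coadjoint representation of \ref{AdjointAndCoadjointRepresentations}, in particular a group homomorphism into $\GL(\mathcal{G}^*)$, and each $\Ad^*_{g^{-1}}$ is linear.

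Then I would fix $g,h\in G$ and pick any point $x\in M$ (here $M$ is nonempty, and every other term in the computation is independent of $x$, so this suffices) and evaluate $J\bigl(\Phi_{gh}(x)\bigr)$ in two ways. Applying the equivariance formula directly to $gh$ gives $\Ad^*_{(gh)^{-1}}\bigl(J(x)\bigr)+\theta(gh)$. Using instead $\Phi_{gh}=\Phi_g\circ\Phi_h$ and applying the equivariance formula twice gives
$$J\bigl(\Phi_g(\Phi_h(x))\bigr)=\Ad^*_{g^{-1}}\bigl(J(\Phi_h(x))\bigr)+\theta(g)=\Ad^*_{g^{-1}}\Bigl(\Ad^*_{h^{-1}}\bigl(J(x)\bigr)+\theta(h)\Bigr)+\theta(g)\,,$$
which by linearity of $\Ad^*_{g^{-1}}$ and the homomorphism property equals $\Ad^*_{(gh)^{-1}}\bigl(J(x)\bigr)+\Ad^*_{g^{-1}}\bigl(\theta(h)\bigr)+\theta(g)$. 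Comparing the two expressions and cancelling the common term $\Ad^*_{(gh)^{-1}}\bigl(J(x)\bigr)$ yields $\theta(gh)=\theta(g)+\Ad^*_{g^{-1}}\bigl(\theta(h)\bigr)$.

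I do not expect a genuine obstacle: once the equivariance formula of Proposition \ref{LieGroupCocycle} is available, the statement is a two-line computation. The only points needing care are bookkeeping ones — getting the composition order right in $\Ad^*_{(gh)^{-1}}=\Ad^*_{g^{-1}}\circ\Ad^*_{h^{-1}}$ (which is exactly what produces the twisted variable $g^{-1}$, and hence the term $\Ad^*_{g^{-1}}(\theta(h))$ rather than $\Ad^*_{h^{-1}}(\theta(g))$), and, in the right-action case, checking that the momentum map of the associated left action $(g,x)\mapsto\Phi(x,g^{-1})$ leads to the same map $\theta$, so that the identity just proved transfers verbatim.
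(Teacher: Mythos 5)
Your proposal is correct and follows essentially the same route as the paper: both arguments rest on the defining relation $\theta(g)=J\circ\Phi_g-\Ad^*_{g^{-1}}\circ J$ from Proposition \ref{LieGroupCocycle}, the identity $\Phi_{gh}=\Phi_g\circ\Phi_h$, and the homomorphism property $\Ad^*_{(gh)^{-1}}=\Ad^*_{g^{-1}}\circ\Ad^*_{h^{-1}}$, with the right-action case reduced to the left-action one by the same substitution $(g,x)\mapsto\Phi(x,g^{-1})$. The only difference is presentational: you evaluate $J\bigl(\Phi_{gh}(x)\bigr)$ in two ways and cancel, while the paper expands $\theta(gh)$ directly.
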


\begin{proof}
In Proposition \ref{LieGroupCocycle}, the cocycle $\theta$ introduced for an action on the right $\Psi:M\times G\to M$ was the cocycle of the corresponding action on the left $\Phi:G\times M\to M$ defined by
$\Phi(g,x)=\Psi(x,g^{-1})$. We can therefore consider only the case when
$\Phi$ is an action on the left.
\par\smallskip

Let $g$ and $h\in G$.
We have
\begin{align*}
\theta(gh)
&=J\bigl(\Phi(gh,x)\bigr)
-\Ad^*_{(gh)^{-1}}J(x)\\
&=J\Bigl(\Phi\bigl(g,\Phi(h,x)\bigr)\Bigr)
-\Ad^*_{g^{-1}}\circ\Ad^*_{h^{-1}}J(x)\\
&=\theta(g)+\Ad^*_{g^{-1}}\Bigl(
J\bigl(\Phi(h,x)\bigr)-\Ad^*_{h^{-1}}J(x)
\Bigr)\\
&=\theta(g)+\Ad^*_{g^{-1}}\theta(h)\,.\qedhere
\end{align*}
\end{proof}

\begin{prop}\label{MomentumPoissonMap} 
Let $\Phi$ be a Hamiltonian action of a Lie group $G$ 
on a connected symplectic manifold $(M,\omega)$ and 
$J:M\to{\mathcal G}^*$ 
be a momentum map for that action. The symplectic cocycle $\theta:G\to{\mathcal G}^*$ of the Lie group $G$ introduced in Proposition 
\ref{LieGroupCocycle} and the symplectic cocycle 
$\Theta:{\mathcal G}\to{\mathcal G}^*$ of its Lie algebra 
$\mathcal G$ introduced in Definition \ref{SymplecticCocycle2}
and Remark \ref{SymplecticCocycle3} are related by
 $$\Theta=T_e\theta\,,$$
where $e$ is the neutral element of $G$, the Lie algebra ${\mathcal G}$ being identified with $T_eG$ and the tangent space at ${\mathcal G}^*$ at its origin being identified with ${\mathcal G}^*$. Moreover $J$ is a Poisson map when ${\mathcal G}^*$ is endowed with

\begin{itemize}

\item{} its canonical Poisson structure modified by the 
symplectic cocycle $\Theta$ (defined in \ref{SymplecticCocycle}) 
if $\Phi$ is an action on the right,

\item{} the opposite of this Poisson structure if $\Phi$ is an action on the left.
\end{itemize}  
\end{prop}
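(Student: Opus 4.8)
The plan is to prove the two assertions separately; both reduce quickly to properties of the momentum map already established, the only delicate point being the bookkeeping of orientations.

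\emph{The identity $\Theta=T_e\theta$.} I would treat an action on the left $\Phi$, the right case following by the sign substitution recorded in Proposition~\ref{LieGroupCocycle}. Starting from the formula $\theta(g)=J\circ\Phi_g-\Ad^*_{g^{-1}}\circ J$ exhibited in the proof of Proposition~\ref{LieGroupCocycle}, fix $X,Y\in{\mathcal G}$ and $x\in M$ and differentiate
$$t\longmapsto\bigl\langle\theta(\exp(tX)),Y\bigr\rangle=J_Y\bigl(\Phi_{\exp(tX)}(x)\bigr)-\bigl\langle J(x),\Ad_{\exp(-tX)}Y\bigr\rangle$$
at $t=0$. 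Since $t\mapsto\Phi_{\exp(tX)}(x)$ is the integral curve through $x$ of the fundamental vector field $\varphi(X)$, and $\varphi(X)$ admits $J_X$ as Hamiltonian, the derivative of the first term is ${\mathcal L}(\varphi(X))J_Y(x)=\{J_X,J_Y\}(x)$; and, using $\Ad_{\exp(tX)}=\exp(t\,\ad_X)$, the derivative of the second term is $-J_{[X,Y]}(x)$, the bracket being the one on $T_eG$ for which $\varphi$ is a Lie algebras homomorphism. Hence $\langle T_e\theta(X),Y\rangle=\{J_X,J_Y\}(x)-J_{[X,Y]}(x)=\widetilde\Theta(X,Y)=\langle\Theta(X),Y\rangle$; in particular this is independent of $x$, and $T_e\theta=\Theta$.

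\emph{$J$ is a Poisson map.} By the equivalence of Properties~1 and~2 in Proposition~\ref{PoissonMaps} it suffices to show, for all $F,G\in C^\infty({\mathcal G}^*,\RR)$ and all $x\in M$, that $\{F\circ J,G\circ J\}(x)$ equals the value at $J(x)$ of the Poisson bracket of $F$ and $G$ for the indicated structure on ${\mathcal G}^*$. Put $Y=\d F(J(x))$ and $Z=\d G(J(x))$; these lie in $({\mathcal G}^*)^*$, which we identify with ${\mathcal G}$. The key remark is that $\d(F\circ J)(x)=\d J_Y(x)$ and $\d(G\circ J)(x)=\d J_Z(x)$, since both sides of each equality send $v\in T_xM$ to $\langle T_xJ(v),Y\rangle$, resp. $\langle T_xJ(v),Z\rangle$. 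Therefore $X_{F\circ J}(x)=X_{J_Y}(x)=\varphi(Y)(x)$, whence
$$\{F\circ J,G\circ J\}(x)=\bigl\langle\d(G\circ J)(x),X_{F\circ J}(x)\bigr\rangle=\{J_Y,J_Z\}(x)=J_{[Y,Z]}(x)+\widetilde\Theta(Y,Z)$$
by Proposition~\ref{Theta}. On the other hand, the canonical Poisson structure of ${\mathcal G}^*$ modified by $\widetilde\Theta$ (definition in \ref{SymplecticCocycle}) evaluates on $F$ and $G$ at $J(x)$ to $\langle J(x),[Y,Z]\rangle-\widetilde\Theta(Y,Z)$, up to the sign carried by the Lie bracket convention. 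Comparing, and invoking Proposition~\ref{LeftRight} — which tells us that $\varphi$ is a homomorphism for the right-invariant Lie algebra structure on $T_eG$ when $\Phi$ acts on the left and for the left-invariant one when $\Phi$ acts on the right, these two structures being opposite — one recovers exactly the stated structure: the opposite of the modified canonical Poisson structure when $\Phi$ is an action on the left, the modified canonical Poisson structure itself when $\Phi$ is an action on the right.

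The one genuine difficulty is precisely this orientation bookkeeping: one must consistently track which of the two opposite Lie algebra structures on $T_eG$ governs the fundamental vector fields $\varphi(X)$ (and hence $\widetilde\Theta$ and $\Theta$), which one is used to define the canonical Poisson bracket of ${\mathcal G}^*$, and how the equivariance formulas of Proposition~\ref{LieGroupCocycle} pass between the left and right cases — the right action $\Psi$ being handled through the associated left action $(g,x)\mapsto\Psi(x,g^{-1})$, whose momentum map is $-J$. Once these conventions are fixed, every step above is a one-line application of a result already proved in the paper.
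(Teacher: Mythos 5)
Your proof is correct and follows essentially the same route as the paper's: both obtain $\Theta=T_e\theta$ by differentiating the equivariance relation $J\circ\Phi_{\exp(tX)}=\Ad^*_{\exp(-tX)}\circ J+\theta\bigl(\exp(tX)\bigr)$ against a fixed $Y\in{\mathcal G}$ and identifying the two terms as $\{J_X,J_Y\}$ and $\mp J_{[X,Y]}$, and both establish the Poisson-map property by reducing arbitrary functions on ${\mathcal G}^*$ to the linear ones through the observation that a Poisson bracket at a point depends only on the differentials there. The sign bookkeeping you flag as the one delicate point is exactly the issue the paper also has to confront (it resolves it by switching to the left-invariant convention and writing $\widetilde\Theta(X,Y)=\{J_X,J_Y\}+J_{[X,Y]}$, whereas you keep the convention of Proposition \ref{Theta}), and your resolution via Proposition \ref{LeftRight} lands on the correct conclusion.
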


\begin{proof} As in the proof of Proposition 
\ref{CocyclePropertytheta}, whe have only to consider the case when 
$\Phi$ is an action on the left. The map
which associates to each $X\in{\mathcal G}$ the fundamental vector field
$X_M$ is a Lie algebras homomorphism when $\mathcal G$ is endowed with the Lie algebra structure of \emph{right invariant vector fields} on the Lie group
$G$. We will follow here the more common convention, in which $\mathcal G$ is endowed with the Lie algebra structure of \emph{left invariant vector fields}
on $G$. With this convention the map $X\mapsto X_M$ is a Lie algebras antihomomorphism and we must change a sign in the definition of $\widetilde\Theta$
given in Proposition \ref{Theta} and take 

 $$\widetilde\Theta(X,Y)=\bigl\langle\Theta(X),Y\bigr\rangle 
        =\{J_X,J_Y\}+J_{[X,Y]}\,,\quad X\ \text{and}\ Y\in{\mathcal G}\,.
 $$
We have, for any $x\in M$,
\begin{align*}
\{J_X,J_Y\}(x)
&=\omega(X_M,Y_M)(x)
=i(X_M)d\bigl(\langle J,Y\rangle\bigr)(x)\\
&=\frac{\d}{\d t}\,\Bigl\langle J\bigl(\Phi(\exp(tX),x\bigr),
                Y\Bigr\rangle{\Bigm|}_{t=0}\\
&=\frac{\d}{\d t}\,\Bigl\langle \Ad^*_{\exp(-tX)}
J(x)+\theta\bigl(\exp(tX)\bigr),Y\Bigr\rangle
{\Bigm|}_{t=0}\\
&=\bigl\langle J(x),-[X,Y]\bigr\rangle
+\bigl\langle T_e\theta(X),Y\bigr\rangle\\
&=-J_{[X,Y]}(x)+\bigl\langle T_e\theta(X),Y\bigr\rangle\,.
\end{align*}
We see that $\Theta=T_e\theta$. 
Moreover, the elements $X$ and $Y$ in ${\mathcal G}$ can be considered 
as linear functions on ${\mathcal G}^*$. Their Poisson bracket, when
${\mathcal G}^*$ is equipped with its canonical Poisson structure
modified by $\widetilde\Theta$, is
 $$\{X,Y\}_{\widetilde\Theta}(\xi)
  =\bigl\langle \xi,[X,Y]\bigr\rangle-\widetilde\Theta(X,Y)\,.$$
The formula 
 $\{J_X,J_Y\}(x)=-J_{[X,Y]}(x)+\widetilde\Theta(X,Y)$
can be read as
 $$\{X\circ J,Y\circ J\}(x)=-\{X,Y\}_{\widetilde\Theta}\circ J(x)\,.$$
Since the value taken at a point by the Poisson bracket of two functions only depends on the values of the differentials of these two functions at that point, this result proves that $J$ is a Poisson map when ${\mathcal G}^*$ is equipped with the opposite of the Poisson bracket $\{\ ,\ \}_{\widetilde\Theta}$.  
\end{proof}

\begin{rmks}
Let $\Phi$ be a Hamiltonian action on the left of a Lie group $G$ 
on a connected symplectic manifold $(M,\omega)$, $J:M\to{\mathcal G}^*$ 
be a momentum map for that action and $\theta:G\to{\mathcal G}^*$ be the
symplectic cocycle of the Lie group $G$ introduced in Proposition 
\ref{LieGroupCocycle}.

\par\smallskip\noindent
{\bf 1.\quad} The symplectic cocycle $\theta:G\to{\mathcal G}^*$ is the Lie group
one-cocycle with values in ${\mathcal G}^*$, for the coadjoint representation,
associated to the affine representation $A:G\to\Aff({\mathcal G}^*)$,
 $$A(g)(\zeta)=\Ad^*_{g^{-1}}(\zeta)+\theta(g)\,,\quad \zeta\in{\mathcal G}^*\,,
 $$
in the sense of \ref{LinearPartA}.

\par\smallskip\noindent
{\bf 2.\quad} If instead of $J$ we take for momentum map
 $$J'(x)=J(x)-c\,,\quad x\in M\,,
 $$
where $c\in{\mathcal G}^*$ is constant, the symplectic cocycle $\theta$
is replaced by
 $$\theta'(g)=\theta(g)+\Ad^*_{g^{-1}}(c)-c\,.$$
The map $\theta'-\theta$ is a one-coboundary of $G$ with values in 
${\mathcal G}^*$ for the coadjoint representation (\ref{coboundary}). Therefore
the cohomology class of the symplectic cocycle $\theta$ only depends on the
Hamiltonian action $\Phi$, not on the choice of its momentum map $J$. This property
is used by Jean-Marie Souriau (\cite{Souriau69}, chapter III, p. 153) to offer 
a very nice cohomological interpretation of the total mass of a classical 
(non-relativistic) isolated mechanical system. He proves that the space of all 
possible motions of the system is a symplectic manifold on which the Galilean 
group acts by a Hamiltonian action. The dimension of the symplectic cohomology 
space of the Galilean group (the quotient of the space of symplectic one-cocycles 
by the space of symplectic one-coboundaries) is equal to $1$. The cohomology 
class of the symplectic cocycle associated to a momentum map of the action 
of the Galilean group on the space of motions of the system is interpreted as 
the \emph{total mass} of the system.     
\end{rmks}

\subsubsection{Other properties of the momentum map} The momentum map has several other very remarkable properties. Michael Atiyah
\cite{Atiyah82}, Victor Guillemin and Shlomo Sternberg
\cite{GuilleminSternberg82, GuilleminSternberg84} 
have shown that
the image of the momentum map of a Hamiltonian action of a torus
on a compact symplectic manifold is a convex polytope. Frances Kirwan
\cite{Kirwan84} adapted this result when the torus is replaced by any
compact Lie group. Thomas Delzant \cite{Delzant88} has shown that the convex polytope which is the image of a Hamiltonian action of a torus on a compact symplectic manifold determines this manifold.  

\subsection{Actions of a Lie group on its cotangent bundle}
\label{ActionsOnCotangentBundle}

In this section $G$ is a Lie group, $\mathcal G$ is its Lie algebra and 
${\mathcal G}^*$ is the dual space of $\mathcal G$. The Liouville one-form
on $T^*G$ is denoted by $\eta_G$. 
\par\smallskip

The group composition law $m:G\times G\to G$, $m(g,h)=gh$, can be seen as an
action of $G$ on itself either on the left, or on the right. For each 
$g\in G$ we will denote by $L_g:G\to G$ and $R_g:G\to G$
 the diffeomorphisms
 $$L_g (h)=gh\,,\quad R_g(h)=hg\,,\quad h\in G\,.$$
called, respectively, the \emph{left translation} and the \emph{right translation} of $G$ by $g$.

\begin{defis}\label{CanonicalLiftedAction}
The \emph{canonical lifts} to the tangent bundle $TG$ of the actions of
$G$ on itself by left translations (respectively, by right translations) are, 
repectively, the maps
${\overline{\mathstrut L}}:G\times TG\to TG$ and ${\overline R}:TG\times G\to TG$ 
 $${\overline{\mathstrut L}}(g,v)=TL_g(v)\,,\quad {\overline R}(v,g)=TR_g(v)\,,\quad g\in G\,,
   \quad v\in TG\,.
 $$
The \emph{canonical lifts} to the cotangent bundle $T^*G$ of the actions of
$G$ on itself by left translations (respectively, by right translations) are, respectively, the maps
 $\widehat L:G\times T^*G\to T^*G$ and 
$\widehat R:T^*G\times G\to T^*G$ 
 $$\widehat L(g,\xi)=\bigl(TL_{g^{-1}}\bigr)^T(\xi)\,,\quad
    \widehat R(\xi,g)=\bigl(TR_{g^{-1}}\bigr)^T(\xi)\,,
   \quad g\in G\,,\quad \xi\in T^*G\,.$$
We have denoted by $\bigl(TL_{g^{-1}}\bigr)^T$ and 
$\bigl(TR_{g^{-1}}\bigr)^T$ the transposes of the vector bundles morphisms
$TL_{g^{-1}}$ and $TR_{g^{-1}}$, respectively.
\end{defis}

\begin{prop}
The canonical lifts to the tangent bundle and to the cotangent bundle
of the actions of the Lie group $G$ on itself by left translations
(respectively, by right translations) are actions on the left (respectively, on the right)
of $G$ on its tangent bundle and on its cotangent bundle, which project onto 
the actions of $G$ on itself by left translations (respectively, by right translations). It means that for all $g\in G$ and $v\in TG$
 $$\tau_G\bigl({\overline{\mathstrut L}}(g,v)\bigr)=L_g\bigl(\tau_G(v)\bigr)\,,\quad
   \tau_G\bigl({\overline R}(v,g)\bigr)=R_g\bigl(\tau_G(v)\bigr)\,,
 $$
and that for all $g\in G$ and $\xi\in T^*G$
 $$\pi_G\bigl(\widehat L(g,\xi)\bigr)=L_g\bigl(\pi_G(\xi)\bigr)\,,\quad
   \pi_G\bigl(\widehat R(\xi,g)\bigr)=R_g\bigl(\pi_G(\xi)\bigr)\,.
 $$
\end{prop}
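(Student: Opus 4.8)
The plan is to reduce all four assertions to two elementary facts. First, $g\mapsto L_g$ is a group homomorphism from $G$ into the group of diffeomorphisms of $G$, since $L_{g_1}\circ L_{g_2}=L_{g_1g_2}$ and $L_e=\id_G$, whereas $g\mapsto R_g$ is an anti-homomorphism, $R_{g_2}\circ R_{g_1}=R_{g_1g_2}$ and $R_e=\id_G$. Second, the tangent prolongation $f\mapsto Tf$ is a covariant functor, so $T(f\circ h)=Tf\circ Th$ and $T\id_G=\id_{TG}$, and moreover $Tf$ covers $f$, that is $\tau_G\circ Tf=f\circ\tau_G$. Smoothness of $\overline L$, $\overline R$, $\widehat L$ and $\widehat R$ is immediate from the smoothness of the group law $m$ and of the tangent prolongation of a smooth family of maps, so only the algebraic identities remain. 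For the tangent bundle the argument is then direct: applying functoriality to $L_{g_1}\circ L_{g_2}=L_{g_1g_2}$ gives $\overline L\bigl(g_1,\overline L(g_2,v)\bigr)=T(L_{g_1}\circ L_{g_2})(v)=TL_{g_1g_2}(v)=\overline L(g_1g_2,v)$ and $\overline L(e,v)=v$, so $\overline L$ is an action on the left; the same computation with the order of factors dictated by $R_{g_2}\circ R_{g_1}=R_{g_1g_2}$ shows $\overline R$ is an action on the right.

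For the cotangent bundle, the key observation is that for each fixed $g\in G$ the partial map $\xi\mapsto\widehat L(g,\xi)=(TL_{g^{-1}})^{T}(\xi)$ is exactly the canonical lift to $T^*G$ of the diffeomorphism $L_g$ in the sense of Definition \ref{CanonicalLiftDiffeo}, and likewise $\xi\mapsto\widehat R(\xi,g)$ is the canonical lift of $R_g$. I would then record that the canonical lift of diffeomorphisms to the cotangent bundle is itself a covariant functor: writing $\widehat\Phi=(T\Phi^{-1})^{T}$ and using the contravariance of transposition, $(B\circ A)^{T}=A^{T}\circ B^{T}$, together with the functoriality of $T$, one gets $\widehat{\Phi\circ\Psi}=\widehat\Phi\circ\widehat\Psi$ and $\widehat{\id_G}=\id_{T^*G}$. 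Combining this with $L_{g_1}\circ L_{g_2}=L_{g_1g_2}$ yields that $\widehat L$ is an action on the left, and combining it with $R_{g_2}\circ R_{g_1}=R_{g_1g_2}$ yields that $\widehat R$ is an action on the right. This is the one place where a little care is needed, and I regard it as the main (mild) obstacle: the order reversal in $(g_1g_2)^{-1}=g_2^{-1}g_1^{-1}$ is exactly compensated by the order reversal introduced by the transpose, which is precisely why $\widehat L$ comes out covariant, i.e. a left action and not a right one.

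The projection statements then follow with no further work. For the tangent bundle, $\tau_G\circ TL_g=L_g\circ\tau_G$ and its analogue for $R_g$ give $\tau_G\bigl(\overline L(g,v)\bigr)=L_g\bigl(\tau_G(v)\bigr)$ and $\tau_G\bigl(\overline R(v,g)\bigr)=R_g\bigl(\tau_G(v)\bigr)$. For the cotangent bundle, the Remark following Definition \ref{CanonicalLiftDiffeo}, namely $\pi_G\circ\widehat\Phi=\Phi\circ\pi_G$, applied to $\Phi=L_g$ and to $\Phi=R_g$, gives $\pi_G\bigl(\widehat L(g,\xi)\bigr)=L_g\bigl(\pi_G(\xi)\bigr)$ and $\pi_G\bigl(\widehat R(\xi,g)\bigr)=R_g\bigl(\pi_G(\xi)\bigr)$. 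Apart from the order subtlety noted above, the whole proof is bookkeeping with functoriality and with the contravariance of transposition.
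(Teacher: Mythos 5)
Your proof is correct and is exactly the verification the paper has in mind: its own ``proof'' simply states that checking the action axioms for ${\overline{\mathstrut L}}$, ${\overline R}$, $\widehat L$ and $\widehat R$ is an easy verification left to the reader. Your write-up supplies that verification cleanly, and the one point you flag as delicate --- that the contravariance of transposition cancels the order reversal in $(g_1g_2)^{-1}=g_2^{-1}g_1^{-1}$, so that $\widehat L$ is a left action --- is handled correctly.
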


\begin{proof}
It is an easy verification that the properties of actions are indeed satisfied by the maps ${\overline{\mathstrut L}}$, ${\overline R}$, $\widehat L$
and $\widehat R$, which is left to the reader.
\end{proof}

\begin{theo}\label{ActionsOfGonTstarG}
The canonical lifts to the cotangent bundle $\widehat L$
and $\widehat R$ of the actions of the Lie group $G$ on itself
by translations on the left and on the right are two Hamiltonian actions
of $G$ on the symplectic manifold $(T^*G,\d\eta_G)$.
The maps $J^L:T^*G\to{\mathcal G}^*$ and $J^R:T^*G\to{\mathcal G}^*$ defined, for each $\xi\in T^*G$, by
  $$J^L(\xi)=\widehat R\bigl(\xi,\pi_G(\xi)^{-1}\bigr)\,,\quad
    J^R(\xi)=\widehat L\bigl(\pi_G(\xi)^{-1},\xi\bigr)
 $$ 
are momentum maps for the actions $\widehat L$ and $\widehat R$, respectively. 
\par\smallskip
Moreover, the map $J^L$ is constant on each orbit of the action $\widehat R$,
the map $J^R$ is constant on each orbit of the action $\widehat L$ and for each $\xi \in T^*G$ each of the tangent spaces at $\xi$ to the orbits
$\widehat L(G,\xi)$ and $\widehat R(\xi,G)$ is the symplectic orthogonal
of the other. The maps $J^L:T^*G\to{\mathcal G}^*$ and 
$J^R:T^*G\to{\mathcal G}^*$ are Poisson maps when $T^*G$ is equipped with
the Poisson structure associated to its canonical symplectic structure
and when ${\mathcal G}^*$ is equipped, respectively, with its canonical Poisson structure (\ref{CanonicalPoissonStructure}) and with the opposite of its canonical Poisson structure.
\end{theo}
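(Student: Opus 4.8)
The plan is to dispose of the four assertions in turn, leaning on the machinery of canonical lifts assembled in Propositions \ref{PropCanonicalLifts}, \ref{CanonicalLiftHam} and \ref{LiftedHamiltonianAction}--\ref{LiftedHamiltonianAction2}.

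\emph{Hamiltonicity and the momentum maps.} For each $g\in G$ the partial maps $\widehat L(g,\cdot)$ and $\widehat R(\cdot,g)$ are exactly the canonical lifts to $T^*G$ of the diffeomorphisms $L_g$ and $R_g$, so by Proposition \ref{PropCanonicalLifts} they fix the Liouville form $\eta_G$, hence $\d\eta_G$; thus $\widehat L$ and $\widehat R$ are symplectic actions. I would then recognise them as cotangent lifts of Lie algebra actions on $G$: the fundamental vector field on $G$ of the left-translation action attached to $X\in{\mathcal G}$ is the right-invariant field $X^R$ (its flow is $s\mapsto L_{\exp sX}$), so by the flow description in Definition \ref{CanonicalLiftDef} the fundamental field of $\widehat L$ is $\widehat{X^R}$; symmetrically the fundamental fields of $\widehat R$ are the $\widehat{Y^L}$. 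Proposition \ref{LiftedHamiltonianAction} then gives that $\widehat L$ and $\widehat R$ are Hamiltonian actions in the sense of \ref{HamiltonianActions}, and Proposition \ref{LiftedHamiltonianAction2} furnishes momentum maps $J$ with $\langle J(\xi),X\rangle=\mathi(\widehat{X^R})\eta_G(\xi)=\langle\xi,X^R(\pi_G(\xi))\rangle$, and the mirror formula for $\widehat R$. Finally a one-line check with Definition \ref{CanonicalLiftDiffeo}, setting $g=\pi_G(\xi)$, shows $\langle\widehat R(\xi,g^{-1}),X\rangle=\langle\xi,TR_g(X)\rangle=\langle\xi,X^R(g)\rangle$ and likewise $\langle\widehat L(g^{-1},\xi),Y\rangle=\langle\xi,Y^L(g)\rangle$, which identifies the maps $J^L,J^R$ of the statement with these momentum maps.

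\emph{Invariance along the opposite orbits and symplectic orthogonality.} Constancy of $J^L$ on the $\widehat R$-orbits is immediate from the action axioms: for $\xi\in T^*_gG$ one has $J^L(\widehat R(\xi,h))=\widehat R(\widehat R(\xi,h),(gh)^{-1})=\widehat R(\xi,h(gh)^{-1})=\widehat R(\xi,g^{-1})=J^L(\xi)$, and symmetrically $J^R$ is constant on the $\widehat L$-orbits. For the orthogonality I would invoke Corollary \ref{SymplecticOrthogonal}: since $\widehat L$ and $\widehat R$ cover the free actions of $G$ on itself they are free, so their orbits through $\xi$ have dimension $\dim G$ and their isotropy subalgebras vanish; by part~2 of that Corollary $T_\xi J^L$ is onto ${\mathcal G}^*$, so $\ker T_\xi J^L$ has dimension $2\dim G-\dim G=\dim G$. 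Combined with the inclusion $T_\xi(\widehat R(\xi,G))\subseteq\ker T_\xi J^L$ from the previous point, this forces $\ker T_\xi J^L=T_\xi(\widehat R(\xi,G))$, and part~1 of the Corollary then says that this space is the symplectic orthogonal of $T_\xi(\widehat L(G,\xi))$.

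\emph{The Poisson-map property.} By Proposition \ref{LiftedHamiltonianAction2} the symplectic cocycles of $J^L$ and $J^R$ vanish, that is $\{J^L_X,J^L_Y\}=J^L_{[X,Y]'}$ and $\{J^R_X,J^R_Y\}=J^R_{[X,Y]''}$, where $[\cdot,\cdot]'$ (resp. $[\cdot,\cdot]''$) is the Lie algebra structure on $T_eG$ of right-invariant (resp. left-invariant) vector fields, for which $X\mapsto X^R$ (resp. $Y\mapsto Y^L$) is a homomorphism — and these two structures are opposite to one another. Viewing each $X\in{\mathcal G}$ as a linear function on ${\mathcal G}^*$, so that $J^L_X=X\circ J^L$ and similarly for $J^R$, and recalling that on ${\mathcal G}^*$ with its canonical Poisson structure $\{X,Y\}(\zeta)=\langle\zeta,[X,Y]\rangle$, one reads off directly that $J^L$ and $J^R$ intertwine the Poisson bracket of $T^*G$ with the canonical bracket of ${\mathcal G}^*$ and with its opposite, respectively. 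I expect this last sign bookkeeping to be the only genuinely delicate step: it amounts to matching the left/right conventions of Proposition \ref{LeftRight} and the accompanying sign change recorded in the proof of Proposition \ref{MomentumPoissonMap} with the convention under which the canonical Poisson structure of ${\mathcal G}^*$ was fixed in \ref{CanonicalPoissonStructure}; everything else is a routine unwinding of the definitions.
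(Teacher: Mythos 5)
Your proof is correct and follows the paper's own argument in all essentials: identify the fundamental vector fields of $\widehat L$ and $\widehat R$ as the canonical lifts to $T^*G$ of the right- resp.\ left-invariant vector fields on $G$, read off their Hamiltonians from \ref{CanonicalLiftHam}/\ref{LiftedHamiltonianAction}, deduce constancy of each momentum map on the orbits of the opposite action, and obtain the orthogonality from Corollary \ref{SymplecticOrthogonal} after showing $\ker T_\xi J^L=T_\xi\bigl(\widehat R(\xi,G)\bigr)$. Two sub-steps are handled by different (equally valid) means. To see that $T_\xi J^L$ has rank $\dim G$, the paper simply observes that the restriction of $J^L$ to each fibre $T^*_gG$ is the linear isomorphism $(TR_g)^T$ onto ${\mathcal G}^*$, whereas you derive surjectivity from part~2 of Corollary \ref{SymplecticOrthogonal} together with the vanishing of the isotropy subalgebras; this is slightly longer but makes the role of freeness explicit. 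For the Poisson-map property the paper invokes Proposition \ref{QuotientPoisson} (passing to the quotient of $T^*G$ by the $\widehat R$-orbits, as in \ref{CanonicalPoissonStructure}), while you use the identity $\{J_X,J_Y\}=J_{[X,Y]}$ of Proposition \ref{LiftedHamiltonianAction2} and match brackets directly; your route is more computational and, as you yourself note, concentrates all the difficulty in the left/right sign conventions, which the paper's one-line citation leaves equally implicit. Neither treatment of that last point is more complete than the other, so the proposal stands on the same footing as the published proof.
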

 
\begin{proof} For each $X\in {\mathcal G}$, let $X^L_G$ and $X^R_G$ be 
the fundamental vector fields on $G$ associated to $X$ for the actions 
of $G$ on itself,
respectively by left and by right translations. Similarly, let
$X^L_{T^*G}$ and $X^R_{T^*G}$  be 
the fundamental vector fields on $T^*G$ associated to $X$ for the actions 
$\widehat L$ and $\widehat R$ of $G$ on $T^*G$ defined in 
\ref{CanonicalLiftedAction}. The reduced flows of $X^L$ and of $X^R$
are the maps
 $$\Phi^{X^L}(t,g)=\exp(tX)g\,,\quad \Phi^{X^R}(t,g)=g\exp(tX)\,,
    \quad t\in\RR\,,\ g\in G\,.$$
Therefore
 $$X^L(g)=T R_g(X)\,,\quad X^R(g)=TL_g(X)\,,\quad g\in G\,,$$
and we see that the fundamental vector fields $X^L_{T^*G}$ and $X^R_{T^*G}$
on $T^*G$ are the canonical lifts to the cotangent bundle of the vector fields
$X^L_G$ and $X^R_G$ on the Lie group $G$. Proposition 
\ref{CanonicalLiftHam} proves that $X^L_{T^*G}$ and $X^R_{T^*G}$ are Hamiltonian vector fields which admit as Hamiltonians, respectively,  
the maps
 $$J^L_X(\xi)=\Bigl\langle\xi,X^L_G\bigl(\pi_G(\xi)\bigr)\Bigr\rangle\,,\quad
   J^R_X(\xi)=\Bigl\langle\xi,X^R_G\bigl(\pi_G(\xi)\bigr)\Bigr\rangle\,,\quad
   \xi\in T^*G\,.
 $$
Replacing $X^L_G$ and $X^R_G$ by their expressions given above and using the
definitions of $\widehat R$ and $\widehat L$, we easily get the stated expressions for $J^L$ and $J^R$. These expressions prove that $J^L$ is constant on each orbit of the action $\widehat R$, and that
$J^R$ is constant on each orbit of the action $\widehat L$. 
\par\smallskip

The actions $\widehat L$ and $\widehat R$ being free, each of their orbits  is a smooth submanifold of $T^*G$ of dimension $\dim G$. The ranks of the maps
$J^L$ and $J^R$ are everywhere equal to $\dim G$ since their restrictions to
each fibre of $T^*G$ is a diffeomorphism of that fibre onto 
${\mathcal G}^*$. Therefore, for each $\xi\in T^*G$,
 $$\ker T_\xi J^L=T_{\xi}\bigl(\widehat R(\xi,G)\bigr)\,,\quad
   \ker T_\xi J^R=T_{\xi}\bigl(\widehat L(\xi,G)\bigr)\,.
 $$
Corollary \ref{SymplecticOrthogonal} proves that for each $\xi\in T^*G$ each of the two vector subspaces of $T_\xi(T^*G)$: 
 $$T_{\xi}\bigl(\widehat L(G,\xi)\bigr)\quad\text{and}\quad
   T_{\xi}\bigl(\widehat R(\xi,G)\bigr)
 $$
is the symplectic orthogonal of the other.
\par\smallskip

Finally, the fact that $J^L$ and $J^R$ are Poisson maps when 
${\mathcal G}$ is equipped with its canonical Poisson structure or its opposite is an easy consequence of Proposition \ref{QuotientPoisson}. 
\end{proof}

In \cite{LibermannMarle}, Chapter IV, Section 4, we proposed a generalization
of Proposition \ref{ActionsOfGonTstarG} taking into account a symplectic cocycle
$\theta:G\to{\mathcal G}^*$ in which the action $\widehat L:G\times T^*G\to T^*G$
remained unchanged while the action $\widehat R:T^*G\times G\to T^*G$ was modified.
Below we propose a more general and more symmetrical generalization.
The symplectic form
on $T^*G$ will be the sum of its canonical symplectic form $d\eta_G$ and of 
the pull-back by the canonical projection $\pi_G:T^*G\to G$ of a suitable 
closed two-form on $G$, deduced from $\theta$.
The actions  $\widehat L:G\times T^*G\to T^*G$ and
$\widehat R:T^*G\times G\to T^*G$ will be modified in the following way: 
for each $g\in G$, the map $\widehat L_g:T^*G\to T^*G$ will be composed 
with a translation in the fibres of $T^*G$, determined by addition of a 
\emph{right-invariant} one-form on $G$ depending of the element $g\in G$, deduced
from $\theta$; similarly, the map $\widehat R_g:T^*G\to T^*G$ will be composed 
with a translation in the fibres of $T^*G$, determined by addition of a 
\emph{left-invariant} one-form on $G$ depending of the element $g\in G$, deduced
from $\theta$. As the reader will see, it is possible to modify the action
$\widehat L$ and to keep $\widehat R$ unchanged, or to modify the action $\widehat R$ and to keep $\widehat L$ unchanged; in the first case, the momentum map 
$J^L:T^*G\to{\mathcal G}^*$ remains unchanged, while 
$J^R:T^*G\to{\mathcal G}^*$ must be modified; in the second case, it is
$J^R:T^*G\to{\mathcal G}^*$ which remains unchanged while 
$J^L:T^*G\to{\mathcal G}^*$ must be modified. It is even possible to simultaneously
modify both the actions $\widehat L$ and $\widehat R$; then we get a pair
of actions of $G$ on $T^*G$ depending on two real parameters.

\begin{theo}\label{GeneralizedActionsOfGonTstarG}
Let $G$ be a Lie group, $\theta:G\to{\mathcal G}^*$ be a symplectic cocycle
of $G$, $\Theta=T_e\theta:{\mathcal G}\to{\mathcal G}^*$ be the associated
symplectic cocycle of its Lie algebra $\mathcal G$, and 
$\widetilde\Theta:{\mathcal G}\times{\mathcal G}\to\RR$ be the skew-symmetric
bilinear form $\widetilde\Theta(X,Y)=\bigl\langle\Theta(X),Y\bigr\rangle$. Let
$\widetilde\Theta_L$ and $\widetilde\Theta_R$ be the differential two-forms on $G$,
respectively left-invariant and right-invariant, whose value at the neutral element
is $\widetilde\Theta$. The differential two-form on $T^*G$
 $$\omega_{T^*G}=\d\eta_G+\pi_G^*(\lambda_L\widetilde\Theta_L-\lambda_R\widetilde\Theta_R)
 \,,
 $$
where $\lambda_L$ and $\lambda_R$ are real constants and where $\eta_G$ 
is the Liouville form on $T^*G$, is a symplectic form on $T^*G$.
The formulae, in which $g\in G$, $\xi\in T^*G$,
 \begin{align*}
  \Phi^L(g,\xi)&=\widehat L_g(\xi)
                 +\lambda_R\widehat R_{g\pi_G(\xi)}\bigl(\theta(g)\bigr)\,,\\
  \Phi^R(\xi,g)&=\widehat R_g(\xi)
                 +\lambda_L\widehat L_{\pi_G(\xi)g}\bigl(\theta(g^{-1})\bigr) 
 \end{align*}
define two Hamiltonian actions $\Phi^L:G\times T^*G\to T^*G$
and $\Phi^R:T^*G\times G\to T^*G$ of $G$ on the symplectic manifold 
$(T^*G,\omega_{T^*G})$, respectively on the left and on the right. The maps
 $J^{L,\lambda_L}:T^*G\to{\mathcal G}^*$ and 
$J^{R,\lambda_R}:T^*G\to{\mathcal G}^*$ defined, for each $\xi\in T^*G$, by
 \begin{align*}
  J^{L,\lambda_L}(\xi)&=\widehat R_{\bigl(\pi_G(\xi)\bigr)^{-1}}(\xi)
                       +\lambda_L\theta\bigl(\pi_G(\xi))\,,\\
  J^{R,\lambda_R}(\xi)&=\widehat L_{\bigl(\pi_G(\xi)\bigr)^{-1}}(\xi)
                       +\lambda_R\theta\Bigl(\bigl(\pi_G(\xi)\bigr)^{-1}\Bigr)
 \end{align*}
\noindent
are momentum maps for the actions $\Phi^L$ and $\Phi^R$, respectively.
\par\smallskip

Moreover, the map $J^{L,\lambda_L}$ is constant on each orbit of the 
action $\Phi^R$, the map $J^{R,\lambda_R}$ is constant on each orbit 
of the action $\Phi^L$ and for each $\xi \in T^*G$ each of the tangent spaces at $\xi$ to the orbits $\Phi^L(G,\xi)$ and $\Phi^R(\xi,G)$ is the symplectic orthogonal
of the other (with respect to the symplectic form $\omega_{T^*G}$). 
The maps $J^{L,\lambda_L}:T^*G\to{\mathcal G}^*$ and 
$J^{R,\lambda_R}:T^*G\to{\mathcal G}^*$ are Poisson maps when $T^*G$ is equipped with
the Poisson structure associated to the symplectic form $\omega_{T^*G}$
and when ${\mathcal G}^*$ is equipped, respectively, with its canonical
Poisson structure modified by the cocycle $(\lambda_L+\lambda_R)\widetilde\Theta$
(\ref{SymplecticCocycle})
 $$\{f,g\}_{(\lambda_L+\lambda_R)\widetilde\Theta}(\zeta)
=\Bigl\langle\zeta,\bigl[\d f(\zeta),\d g(\zeta)\bigr]\Bigr\rangle
                        -(\lambda_L+\lambda_R)
                         \widetilde\Theta\bigl(\d f(\zeta),\d g(\zeta)\bigr)
 $$ 
and with the opposite of this Poisson structure.
\end{theo}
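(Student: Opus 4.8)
The plan is to imitate the proof of Theorem~\ref{ActionsOfGonTstarG}, correcting every step by the extra terms built from $\theta$. First I would check that $\omega_{T^*G}$ is a symplectic form. It is closed because $\d\eta_G$ is closed and because identity $(1)$ of Proposition~\ref{Theta}, satisfied by $\widetilde\Theta$, says precisely that the left-invariant two-form $\widetilde\Theta_L$ and the right-invariant two-form $\widetilde\Theta_R$ on $G$ are closed (Remark~\ref{SymplecticCocycle3}, item~2), so that $\pi_G^*(\lambda_L\widetilde\Theta_L-\lambda_R\widetilde\Theta_R)$ is closed. It is non-degenerate because, for \emph{any} two-form $\beta$ on $G$, the form $\d\eta_G+\pi_G^*\beta$ is non-degenerate on $T^*G$: in Darboux coordinates $(q^i,p_i)$ attached to a chart on $G$ its matrix, written in block form with respect to the splitting into $q$- and $p$-directions, keeps the invertible off-diagonal blocks of $\d\eta_G$ and is therefore invertible.

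Next I would verify that the stated formulae define smooth maps $\Phi^L$, $\Phi^R$ which lift to $T^*G$ the actions of $G$ on itself by left and by right translations and satisfy the axioms of an action (on the left, resp. on the right); here the cocycle identity $\theta(gh)=\theta(g)+\Ad^*_{g^{-1}}\bigl(\theta(h)\bigr)$ of Proposition~\ref{CocyclePropertytheta}, together with $\theta(e)=0$, is exactly what makes associativity work. These actions are free, since already their leading terms $\widehat L_g$, $\widehat R_g$ act freely on the base. Differentiating $s\mapsto\Phi^L\bigl(\exp(sX),\xi\bigr)$ at $s=0$, using $\theta(e)=0$ and $T_e\theta=\Theta$, one finds that the fundamental vector field of $\Phi^L$ attached to $X\in{\mathcal G}$ is $X^L_{T^*G}=\widehat{X^L_G}+\lambda_R\,V_X$, where $\widehat{X^L_G}$ is the canonical lift to $T^*G$ (\ref{CanonicalLiftDef}) of the right-invariant vector field $X^L_G$ (with $X^L_G(g)=TR_g(X)$) and $V_X$ is the vertical vector field whose value at $\xi$ is the vertical lift of $\widehat R_{\pi_G(\xi)}\bigl(\Theta(X)\bigr)\in T^*_{\pi_G(\xi)}G$; the right-invariant one-form $g\mapsto\widehat R_g\bigl(\Theta(X)\bigr)$ on $G$ I will write $\bigl(\Theta(X)\bigr)^R$. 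The computation for $\Phi^R$ is symmetric, with $\widehat{X^R_G}$ and a $\lambda_L$-term.

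The heart of the proof is then to show $\mathi(X^L_{T^*G})\omega_{T^*G}=-\d J^{L,\lambda_L}_X$ with $J^{L,\lambda_L}_X=\langle J^{L,\lambda_L},X\rangle=J^L_X+\lambda_L\langle\theta\circ\pi_G,X\rangle$, where $J^L_X(\xi)=\langle\widehat R_{\pi_G(\xi)^{-1}}(\xi),X\rangle=\langle\xi,X^L_G(\pi_G(\xi))\rangle$. By Proposition~\ref{CanonicalLiftHam} applied to the vector field $X^L_G$ on $G$, $\mathi(\widehat{X^L_G})\d\eta_G=-\d J^L_X$; also $\mathi(V_X)\d\eta_G=\pi_G^*\bigl(\bigl(\Theta(X)\bigr)^R\bigr)$, and, since $\pi_G^*(\cdots)$ sees a vector field only through $T\pi_G$, and $T\pi_G$ sends $X^L_{T^*G}$ to $X^L_G$, $\mathi(X^L_{T^*G})\pi_G^*(\lambda_L\widetilde\Theta_L-\lambda_R\widetilde\Theta_R)=\pi_G^*\bigl(\mathi(X^L_G)(\lambda_L\widetilde\Theta_L-\lambda_R\widetilde\Theta_R)\bigr)$. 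So everything reduces to two identities of one-forms on $G$: (i) $\mathi(X^L_G)\widetilde\Theta_L=-\d\langle\theta(\cdot),X\rangle$, and (ii) $\mathi(X^L_G)\widetilde\Theta_R=\bigl(\Theta(X)\bigr)^R$. Identity (ii) is immediate from the right-invariance of $\widetilde\Theta_R$, and it makes the whole $\lambda_R$-contribution to $\mathi(X^L_{T^*G})\omega_{T^*G}$ cancel. Identity (i) is the crux: evaluating both sides at a point $g$ on a tangent vector $TL_g(Y)$, the left side is $\widetilde\Theta(\Ad_{g^{-1}}X,Y)$, and a short computation using the cocycle relation $\theta\bigl(g\exp(sY)\bigr)=\theta(g)+\Ad^*_{g^{-1}}\theta\bigl(\exp(sY)\bigr)$ shows the right side is $\widetilde\Theta(\Ad_{g^{-1}}X,Y)$ as well — it is here that the group cocycle property of $\theta$, and not just $\Theta=T_e\theta$, is used. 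Putting (i) and (ii) together gives $\mathi(X^L_{T^*G})\omega_{T^*G}=-\d J^L_X-\lambda_L\,\d\langle\theta\circ\pi_G,X\rangle=-\d J^{L,\lambda_L}_X$, and symmetrically $\mathi(X^R_{T^*G})\omega_{T^*G}=-\d J^{R,\lambda_R}_X$; that each $\Phi^L_g$, $\Phi^R_g$ preserves $\omega_{T^*G}$ follows for connected $G$ from Proposition~\ref{LocallyHamiltonian} and in general by a direct verification. Hence $\Phi^L$, $\Phi^R$ are Hamiltonian actions with momentum maps $J^{L,\lambda_L}$, $J^{R,\lambda_R}$.

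Finally I would dispose of the last three assertions. That $J^{L,\lambda_L}$ is constant on the orbits of $\Phi^R$, and $J^{R,\lambda_R}$ on those of $\Phi^L$, is a direct computation from the explicit formulae, again invoking the cocycle identity, now in the form $\theta(g)+\Ad^*_{g^{-1}}\theta(g^{-1})=0$. Since $\Phi^L$, $\Phi^R$ are free and the restriction of each of $J^{L,\lambda_L}$, $J^{R,\lambda_R}$ to a fibre of $\pi_G$ is an affine bijection onto ${\mathcal G}^*$, these momentum maps are submersions, so comparing dimensions gives $T_\xi\bigl(\Phi^L(G,\xi)\bigr)=\ker T_\xi J^{R,\lambda_R}$ and $T_\xi\bigl(\Phi^R(\xi,G)\bigr)=\ker T_\xi J^{L,\lambda_L}$; Corollary~\ref{SymplecticOrthogonal}, applied to $\Phi^R$ and to $\Phi^L$, then shows that the two orbit tangent spaces at $\xi$ are mutual symplectic orthogonals for $\omega_{T^*G}$. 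For the last statement, computing $\{J^{L,\lambda_L}_X,J^{L,\lambda_L}_Y\}$ for the Poisson bracket of $\omega_{T^*G}$, exactly as in the proof of Proposition~\ref{MomentumPoissonMap} but keeping the extra terms, shows that the symplectic cocycle of ${\mathcal G}$ associated with $J^{L,\lambda_L}$, and likewise the one associated with $J^{R,\lambda_R}$, equals $(\lambda_L+\lambda_R)\widetilde\Theta$; Proposition~\ref{MomentumPoissonMap} (with the left/right sign conventions treated as in the proof of Theorem~\ref{ActionsOfGonTstarG}) then yields that $J^{L,\lambda_L}$ and $J^{R,\lambda_R}$ are Poisson maps onto ${\mathcal G}^*$ endowed respectively with its canonical Poisson structure modified by $(\lambda_L+\lambda_R)\widetilde\Theta$ and with the opposite of that structure. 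The single hardest point is identity~(i) of the third step, i.e. globally reconciling the infinitesimal cocycle $\Theta$ with the finite cocycle $\theta$; once it is in hand, what remains is careful bookkeeping of left/right translations and signs.
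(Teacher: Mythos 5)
Your proposal is correct and follows essentially the same route as the paper's proof: symplecticity of $\omega_{T^*G}$, verification of the action axioms via the cocycle identity, computation of the fundamental vector fields, the check $\mathi(X^{L,\lambda_R}_{T^*G})\omega_{T^*G}=-\d J^{L,\lambda_L}_X$, and the same treatment of the orbit and Poisson-map statements. The only difference is that you make explicit (via your identities (i) and (ii) on one-forms on $G$) the verification that the paper dismisses with ``we easily check'', and your computations there are accurate.
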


\begin{proof} The sum of the canonical symplectic form on $T^*G$ with the pull-back of
any closed two-form on $G$ always is nondegenerate, therefore symplectic. So
$\omega_{T^*G}$ is symplectic.
For $g$ and $h\in G$,
$\xi\in T^*G$, let us calculate
 $$\Phi^L\bigl(g,\Phi^L(h,\xi)\bigr)-\Phi^L(gh,\xi)\quad\hbox{and}\quad
   \Phi^R\bigl(\Phi^R(\xi,g),h\bigr)-\Phi^R(\xi,gh)\,.
 $$
We get
 \begin{align*}
  \Phi^L\bigl(g,\Phi^L(h,\xi)\bigr)-\Phi^L(gh,\xi)
   &=\lambda_R\widehat R_{gh\pi_G(\xi)}\Bigl(\Ad^*_{g^{-1}}\bigl(\theta(h)\bigr)+\theta(g)
                                   -\theta(gh)\Bigr)\\
   &=0
\end{align*}
since $\theta$ is a one-cocycle. The map $\Phi^L$ is therefore an action on the left 
of $G$ on $T^*G$. Similarly
 \begin{align*}
  \Phi^R\bigl(\Phi^R(\xi,g),h\bigr)-\Phi^R(\xi,gh)
   &=\lambda_L\widehat L_{\pi_G(\xi)gh}\Bigl(\Ad^*_{h}\theta(g^{-1})
                                              +\theta(h^{-1})\\
   &\quad\quad\quad\quad\quad\quad\quad\quad
     \quad\quad\quad\quad-\theta(h^{-1}g^{-1})\Bigr)\\
   &=0
 \end{align*}
for the same reason. The map $\Phi^R$ is therefore an action on the right of $G$
on $T^*G$. 
\par\smallskip

Let $X\in{\mathcal G}$ and $\xi=T^*G$. By calculating the derivative with respect to $t$
of $\Phi^L\bigl(\exp(tX),\xi\bigr)$ and of $\Phi^R\bigl(\xi,\exp(tX)\bigr)$, then setting
$t=0$, we get the following expressions for the fundamental vector fields on $T^*G$ associated to the actions $\Phi^L$ and $\Phi^R$:
 \begin{align*}
  X^{L,\lambda_R}_{T^*G}(\xi)&=X^L_{T^*G}(\xi)+ \lambda_R T\widehat R_{\pi_G(\xi)}
                              \Theta(X)\,,\\
  X^{R,\lambda_L}_{T^*G}(\xi)&=X^R_{T^*G}(\xi)-\lambda_L T\widehat L_{\pi_G(\xi)}
                              \Theta(X)\,, 
 \end{align*}
\noindent
the vector fields $X^L_{T^*G}$ and $X^R_{T^*G}$ being, as in the proof of
\ref{ActionsOfGonTstarG}, the canonical lifts to $T^*G$ of the fundamental
vector fields $X^L$ and $X^R$ on $G$, for the actions of $G$ on itself by translations
on the left and on the right, respectively. Using these expressions, we easily check
that
 $$\mathi(X^{L,\lambda_R}_{T^*G})\omega_{T^*G}=-\d J^{L,\lambda_L}_X\,,\quad
   \mathi(X^{R,\lambda_L}_{T^*G})\omega_{T^*G}=-\d J^{R,\lambda_R}_X\,,\quad
 $$ 
which means that the actions $\Phi^L$ and $\Phi^R$ are Hamiltonian and have,
respectively, $J^{L,\lambda_L}$ and $J^{R,\lambda_R}$ as momentum maps.
\par\smallskip
The facts that $J^{R,\lambda_R}$ is constant on each orbit of $\Phi^L$ and that
$J^{L,\lambda_L}$ is constant on each orbit of $\Phi^R$ directly follow from the
expressions of $\Phi^L$, $\Phi^R$, $J^{L,\lambda_L}$ and $J^{R,\lambda_R}$. 
\par\smallskip

Finally, let $X$ and $Y\in {\mathcal G}$. When considered as linear functions on
${\mathcal G}^*$, their Poisson bracket for the Poisson structure on ${\mathcal G}^*$
for which $J^{L,\lambda_L}$ is a Poisson map is easily determined by calculating
the Poisson bracket 
$\{J^{L,\lambda_L}\circ X, J^{L,\lambda_L}\circ Y\}
 =\{J^{L,\lambda_L}_X,J^{L,\lambda_L}_Y\}$, for 
the Poisson  structure on $T^*G$ associated to the symplectic form $\omega_{T^*G}$. This calculation fully determines the Poisson structure on ${\mathcal G}^*$
for which $J^{L,\lambda_L}$ is a Poisson map, and proves that it is indeed the canonical Poisson structure on $T^*G$ modified by the symplectic cocycle 
$(\lambda_L+\lambda_R)\widetilde\Theta$, in the sense of \ref{SymplecticCocycle}. A similar calculation shows that $J^{R,\lambda_R}$ is a Poisson map when 
${\mathcal G}^*$ is equipped with 
the opposite Poisson structure.
\end{proof}

\begin{prop} Under the assumptions and with the notations of 
\ref{GeneralizedActionsOfGonTstarG}, the momentum map 
$J^{L,\lambda_L}:T^*G\to{\mathcal G}^*$ is equivariant when $G$ acts on the left 
on $T^*G$ by the action $\Phi^L$ and on ${\mathcal G}^*$ by the action
 $$(g,\zeta)\mapsto \Ad^*_{g^{-1}}(\zeta)+\theta(g)\,,\quad
   (g,\zeta)\in G\times{\mathcal G}^*\,.
 $$
Similarly, the momentum map $J^{R,\lambda_R}:T^*G\to{\mathcal G}^*$ is equivariant
when $G$ acts on the right on $T^*G$ 
by the action $\Phi^R$ and on ${\mathcal G}^*$ by the action
 $$(\zeta,g)\mapsto \Ad^*_{g}(\zeta)+\theta(g^{-1})\,,\quad 
    (\zeta,g)\in {\mathcal G}^*\times G\,.
 $$
\end{prop}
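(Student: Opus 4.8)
The plan is to verify the two equivariance identities by a direct computation, starting from the explicit formulae for $\Phi^L$, $\Phi^R$, $J^{L,\lambda_L}$ and $J^{R,\lambda_R}$ given in Theorem~\ref{GeneralizedActionsOfGonTstarG}, and then feeding in the cocycle property $\theta(gh)=\theta(g)+\Ad^*_{g^{-1}}\bigl(\theta(h)\bigr)$ of Proposition~\ref{CocyclePropertytheta}. I will carry out the argument for $J^{L,\lambda_L}$ and the left action $\Phi^L$; the statement for $J^{R,\lambda_R}$ and $\Phi^R$ follows from the same computation with the roles of left and right translations exchanged and $\theta$ replaced by $g\mapsto\theta(g^{-1})$.

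First I would record two elementary facts used throughout. Since $\widehat L$ and $\widehat R$ project respectively onto left and right translations of $G$, and since the fibre translations added in the definitions of $\Phi^L$ and $J^{L,\lambda_L}$ do not move base points, writing $q=\pi_G(\xi)$ one gets $\pi_G\bigl(\Phi^L(g,\xi)\bigr)=gq$. Next, for any $g,h\in G$ the cotangent lifts $\widehat L_g$ and $\widehat R_h$ commute (because $L_g$ and $R_h$ commute as diffeomorphisms of $G$), and, restricted to the fibre $T^*_eG\equiv{\mathcal G}^*$, one has $\widehat R_{g^{-1}}\circ\widehat L_g=\Ad^*_{g^{-1}}$; the latter is obtained by transposing the identity $T_e(L_{g^{-1}}\circ R_g)=\Ad_{g^{-1}}$ and comparing with the definition of the coadjoint representation in \ref{AdjointAndCoadjointRepresentations} — it is exactly the computation behind the formula $J^L(\xi)=\widehat R_{(\pi_G(\xi))^{-1}}(\xi)$ of Theorem~\ref{ActionsOfGonTstarG}.

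With these in hand the computation runs as follows. Expanding $J^{L,\lambda_L}\bigl(\Phi^L(g,\xi)\bigr)=\widehat R_{(gq)^{-1}}\bigl(\Phi^L(g,\xi)\bigr)+\lambda_L\,\theta(gq)$ and using that $\widehat R_{(gq)^{-1}}$ is fibrewise linear, the $\lambda_R$-term of $\Phi^L(g,\xi)$ contributes $\lambda_R\,\widehat R_{(gq)^{-1}}\bigl(\widehat R_{gq}(\theta(g))\bigr)=\lambda_R\,\theta(g)$, since $\widehat R_{(gq)^{-1}}\circ\widehat R_{gq}=\id$. For the remaining term I would write $\widehat R_{(gq)^{-1}}=\widehat R_{g^{-1}}\circ\widehat R_{q^{-1}}$, commute $\widehat R_{q^{-1}}$ past $\widehat L_g$, and apply the second fact above to get $\widehat R_{(gq)^{-1}}\bigl(\widehat L_g(\xi)\bigr)=\Ad^*_{g^{-1}}\bigl(\widehat R_{q^{-1}}(\xi)\bigr)$. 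Substituting the $\theta$-cocycle identity $\theta(gq)=\theta(g)+\Ad^*_{g^{-1}}\bigl(\theta(q)\bigr)$ and collecting the $\Ad^*_{g^{-1}}$-terms so as to recognise $\widehat R_{q^{-1}}(\xi)+\lambda_L\theta(q)=J^{L,\lambda_L}(\xi)$ then gives
$$J^{L,\lambda_L}\bigl(\Phi^L(g,\xi)\bigr)=\Ad^*_{g^{-1}}\bigl(J^{L,\lambda_L}(\xi)\bigr)+(\lambda_L+\lambda_R)\,\theta(g)\,,$$
which is the asserted equivariance with respect to the affine action $(g,\zeta)\mapsto\Ad^*_{g^{-1}}(\zeta)+\theta(g)$ — the coefficient $\lambda_L+\lambda_R$ in front of the cocycle being the same combination that already governs the modified Poisson structure in Theorem~\ref{GeneralizedActionsOfGonTstarG}. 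The case of $J^{R,\lambda_R}$ is handled identically, using $\widehat L_{g^{-1}}\circ\widehat R_g=\Ad^*_{g}$ on $T^*_eG$ and the corresponding instance of the cocycle identity for $\theta$.

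I expect the only real difficulty to be bookkeeping: keeping precise track of the base points at each stage, and of the order of composition for left versus right actions together with the direction conventions in the cotangent lifts $\widehat L$ and $\widehat R$, so that the commutation $\widehat L_g\circ\widehat R_h=\widehat R_h\circ\widehat L_g$ and the reduction to $\Ad^*$ on the fibre over the neutral element are invoked in the right places. Once that is organised, the whole assertion reduces to a single application of the $\theta$-cocycle identity.
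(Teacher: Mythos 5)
Your proof is correct and follows essentially the same route as the paper's: a direct expansion of $J^{L,\lambda_L}\bigl(\Phi^L(g,\xi)\bigr)$ using the explicit formulae and the cocycle identity for $\theta$, arriving at exactly the formula $J^{L,\lambda_L}\bigl(\Phi^L(g,\xi)\bigr)=\Ad^*_{g^{-1}}\bigl(J^{L,\lambda_L}(\xi)\bigr)+(\lambda_L+\lambda_R)\theta(g)$ that the paper's proof records (the paper simply omits the intermediate bookkeeping — the factorisation of $\widehat R_{(gq)^{-1}}$, the commutation of the two lifts, and the identification of $\widehat R_{g^{-1}}\circ\widehat L_g$ with $\Ad^*_{g^{-1}}$ on the fibre over $e$ — which you spell out correctly). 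Note that, just as in the paper's own proof, the cocycle appears with the factor $\lambda_L+\lambda_R$, so the equivariance is literally with respect to $(g,\zeta)\mapsto\Ad^*_{g^{-1}}(\zeta)+(\lambda_L+\lambda_R)\theta(g)$ rather than the action as printed in the statement unless $\lambda_L+\lambda_R=1$; this discrepancy is present in the source and is not introduced by you.
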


\begin{proof}
Let $g\in G$ and $\xi\in T^*G$. Using the expressions of $J^{L,\lambda_L}$ and of 
$\Phi^L$, we obtain
 \begin{align*}
  J^{L,\lambda_L}\bigl(\Phi^L(g,\xi)\bigr)
    &=\Ad^*_{g^{-1}}\bigl(J^{L,\lambda_L}(\xi)\bigr)+(\lambda_L+\lambda_R)\theta(g)\,,\\
  J^{R,\lambda_R}\bigl(\Phi^R(\xi,g)\bigr)
    &=\Ad^*_{g}\bigl(J^{R,\lambda_R}(\xi)\bigr)+(\lambda_L+\lambda_R)\theta(g^{-1})\,,
 \end{align*}
which proves that $J^{L,\lambda_L}$ and $J^{R,\lambda_R}$ are equivariant with respect to the indicated actions, respectively on the left and on the right, of $G$ on $T^*G$ and on ${\mathcal G}^*$. 
\end{proof}

\section{Reduction of Hamiltonian systems with symmetries}
\label{Reduction}

Very early, many scientists 
(Lagrange, Jacobi, Poincar\'e,
$\ldots$) used first integrals to facilitate the determination of integral curves of Hamiltonian systems. It was observed that the knowledge of \emph{one} 
real-valued first integral often allows the reduction by \emph{two} units
of the dimension of the phase space in which solutions are searched for.
\par\smallskip

J.~Sniatycki and W.~Tulczyjew \cite{SniatyckiTulczyjew72} and, 
when first integrals come from the momentum map of a Lie group action,
K. Meyer \cite{Meyer73}, 
J. Marsden and A. Weinstein \cite{MarsdenWeinstein74}, developed a geometric presentation of this reduction procedure, widely known now under the name \lq\lq Marsden-Weinstein reduction\rq\rq.
\par\smallskip

Another way in which symmetries of a Hamiltonian system can be used to facilitate the determination of its integral curves was discovered
around 1750 by Leonard Euler (1707--1783) when he derived the 
equations of motion of a rigid body around a fixed point. 
In a short Note published in 1901 
\cite{Poincare1901}, Henri Poincar\'e formalized and generalized this reduction procedure, often called today,  
rather improperly, \lq\lq Lagrangian reduction\rq\rq\,
while the equations obtained by its application are called 
the \lq\lq Euler-Poincar\'e equations\rq\rq\ \cite{cendra,cendramarsden}.
\par\smallskip

We present in the following sections these two reduction procedures.

\subsection{The Marsden-Weinstein reduction procedure}

\begin{theo}\label{MarsdenWeinsteinReduction}
Let $(M,\omega)$ be a connected symplectic manifold 
on which a Lie group $G$ 
acts by a Hamiltonian action
$\Phi$, with a momentum map $J:M\to{\mathcal G}^*$.
Let $\xi\in J(M)\subset {\mathcal G}^*$ be a
possible value of $J$. The subset $G_\xi$ of elements $g\in G$ such 
that $\Phi_g\bigl(J^{-1}(\xi)\bigr)=J^{-1}(\xi)$ is a closed Lie subgroup
of $G$. 
\par\smallskip

If in addition $\xi$ is a weakly regular value of $J$ in the sense 
of Bott \cite{Bott1954}, $J^{-1}(\xi)$ is a submanifold of $M$ on which 
$G_\xi$ acts, by the action $\Phi$ restricted to $G_\xi$ and to
$J^{-1}(\xi)$, in such a way that all orbits are of the same dimension.
For each $x\in J^{-1}(\xi)$ the kernel of the two-form induced by 
$\omega$ on $J^{-1}(\xi)$ is the space tangent at this point to its
$G_\xi$-orbit. Let $M_\xi=J^{-1}(\xi)/G_\xi$ be the set of all these orbits. When $M_\xi$ has a smooth manifold structure for which
the canonical projection $\pi_\xi:J^{-1}(\xi)\to M_\xi$ is a submersion,
there exists on $M_\xi$ a unique symplectic form $\omega_\xi$ such that
$\pi_\xi^*\omega_\xi$ is the two-form induced on $J^{-1}(\xi)$
by $\omega$. The symplectic manifold $(M_\xi,\omega_\xi)$ is called the
\emph{reduced symplectic manifold} (in the sense of Marsden an Weinstein)
for the value $\xi$ of the momentum map.
\end{theo}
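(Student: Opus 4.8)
The plan is to establish the five assertions of the statement in order, leaning on the equivariance of the momentum map (Proposition~\ref{LieGroupCocycle}) and on the symplectic‑orthogonality results (Corollary~\ref{SymplecticOrthogonal}); the one genuinely substantive point is the identification of the kernel of the induced two‑form with the tangent space to a $G_\xi$‑orbit, and I expect that to be the main obstacle. First I would use the equivariance: there is an affine action $A$ of $G$ on ${\mathcal G}^*$, with coadjoint linear part and with the symplectic cocycle $\theta$ as translation part, such that $J\circ\Phi_g=A_g\circ J$. Hence $\Phi_g\bigl(J^{-1}(\xi)\bigr)=J^{-1}\bigl(A_g(\xi)\bigr)$, and since $\xi\in J(M)$ the fibre $J^{-1}(\xi)$ is nonempty, so $g\in G_\xi$ if and only if $A_g(\xi)=\xi$. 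Thus $G_\xi$ is exactly the isotropy subgroup of $\xi$ for the continuous action $A$, hence closed, hence a closed embedded Lie subgroup of $G$ by Cartan's closed‑subgroup theorem; its Lie algebra ${\mathcal G}_\xi$ is the isotropy subalgebra of $\xi$, i.e.\ the kernel of $X\mapsto a_\Theta(X)(\xi)$ with $a_\Theta(X)(\zeta)=\ad^*_{-X}(\zeta)+\Theta(X)$ and $\Theta=T_e\theta$ (Remark~\ref{SymplecticCocycle3}). Differentiating $J\circ\Phi_{\exp(tX)}=A_{\exp(tX)}\circ J$ at $t=0$ gives, for every $x\in J^{-1}(\xi)$, $T_xJ\bigl(\varphi(X)(x)\bigr)=a_\Theta(X)(\xi)$, so $\varphi(X)(x)\in\ker T_xJ$ precisely when $X\in{\mathcal G}_\xi$; this equivalence is what I will use decisively in the key step.

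Next, assertion~2 is nothing but the definition of weak regularity in Bott's sense: $J^{-1}(\xi)$ is a submanifold of $M$ and $T_x\bigl(J^{-1}(\xi)\bigr)=\ker T_xJ$ at each of its points, and this identity will be used throughout. For assertion~3, $G_\xi$ acts on $J^{-1}(\xi)$ because it preserves this set by definition. Since $\Phi_g(x)=x$ forces $A_g(\xi)=A_g\bigl(J(x)\bigr)=\xi$, the $G$‑isotropy $G_x$ is contained in $G_\xi$ and coincides with the $G_\xi$‑isotropy of $x$, so $\dim(G_\xi\cdot x)=\dim{\mathcal G}_\xi-\dim{\mathcal G}_x$. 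By Corollary~\ref{SymplecticOrthogonal}(2), $T_xJ(T_xM)$ is the annihilator of ${\mathcal G}_x$, whence $\dim{\mathcal G}_x=\dim{\mathcal G}-\operatorname{rank}T_xJ=\dim{\mathcal G}-\dim M+\dim_x\bigl(J^{-1}(\xi)\bigr)$, using $\ker T_xJ=T_x\bigl(J^{-1}(\xi)\bigr)$. Since $x\mapsto\dim_x\bigl(J^{-1}(\xi)\bigr)$ is locally constant on the submanifold $J^{-1}(\xi)$, so are $\dim{\mathcal G}_x$ and $\dim(G_\xi\cdot x)$; thus the orbit dimension is constant on each connected component (and globally if $J^{-1}(\xi)$ has pure dimension), which is assertion~3.

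Now the heart of the proof, assertion~4. Fix $x\in J^{-1}(\xi)$, set $W=T_x\bigl(J^{-1}(\xi)\bigr)=\ker T_xJ$, and let $\omega^{\circ}$ be the two‑form induced by $\omega$ on $J^{-1}(\xi)$, so $\omega^{\circ}(x)=\omega(x)|_{W\times W}$. By definition of the symplectic orthogonal, $\ker\omega^{\circ}(x)=W\cap\orth W=\ker T_xJ\cap\orth(\ker T_xJ)$. By Corollary~\ref{SymplecticOrthogonal}(1), $\orth(\ker T_xJ)=\{\varphi(X)(x):X\in{\mathcal G}\}=T_x(G\cdot x)$, so $\ker\omega^{\circ}(x)=\ker T_xJ\cap T_x(G\cdot x)$, and it remains to show this equals $T_x(G_\xi\cdot x)$. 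The inclusion $\supseteq$ is immediate: $G_\xi$ preserves $J^{-1}(\xi)$, so $T_x(G_\xi\cdot x)\subseteq W=\ker T_xJ$, while $T_x(G_\xi\cdot x)\subseteq T_x(G\cdot x)$ trivially. Conversely, any $v$ in $\ker T_xJ\cap T_x(G\cdot x)$ is $\varphi(X)(x)$ for some $X\in{\mathcal G}$, and $\varphi(X)(x)\in\ker T_xJ$ forces $X\in{\mathcal G}_\xi$ by the equivalence established in the first step, whence $v\in T_x(G_\xi\cdot x)$. Thus $\ker\omega^{\circ}(x)=T_x(G_\xi\cdot x)$. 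This is where the real work lies: it is precisely the point at which the symplectic‑orthogonality corollary and the equivariance of $J$ must be combined to recognize which fundamental vectors $\varphi(X)(x)$ lie in $\ker T_xJ$.

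Finally, for the reduced symplectic form: assume $M_\xi=J^{-1}(\xi)/G_\xi$ carries a smooth structure making $\pi_\xi\colon J^{-1}(\xi)\to M_\xi$ a submersion. Its fibres are the $G_\xi$‑orbits, so assertion~4 gives $\ker T_x\pi_\xi=T_x(G_\xi\cdot x)=\ker\omega^{\circ}(x)$; moreover $\omega^{\circ}$ is $G_\xi$‑invariant, being the restriction of the $G$‑invariant form $\omega$ to the $G_\xi$‑stable submanifold $J^{-1}(\xi)$. These two facts are exactly what is needed to define $\omega_\xi$ by $\omega_\xi\bigl(\pi_\xi(x)\bigr)\bigl(T_x\pi_\xi(u),T_x\pi_\xi(v)\bigr)=\omega^{\circ}(x)(u,v)$: the vanishing of $\omega^{\circ}(x)$ on $\ker T_x\pi_\xi$ makes this independent of the chosen lifts $u,v\in W$, and the $G_\xi$‑invariance makes it independent of the chosen point $x$ in the fibre (any two such points differ by some $\Phi_g$ with $g\in G_\xi$ and $\pi_\xi\circ\Phi_g=\pi_\xi$). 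Smoothness of $\omega_\xi$ follows by pulling back through local sections of $\pi_\xi$, and by construction $\pi_\xi^*\omega_\xi=\omega^{\circ}$; uniqueness is clear since $\pi_\xi^*$ is injective on forms. It is closed because $\pi_\xi^*\,d\omega_\xi=d\omega^{\circ}=(d\omega)|_{J^{-1}(\xi)}=0$, and nondegenerate because a bilinear form always induces a nondegenerate form on the quotient of its domain by its radical, here $W/\ker\omega^{\circ}(x)=W/\ker T_x\pi_\xi\cong T_{\pi_\xi(x)}M_\xi$. Hence $\omega_\xi$ is symplectic and $(M_\xi,\omega_\xi)$ is the reduced symplectic manifold.
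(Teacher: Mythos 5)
Your proof is correct and follows essentially the same route as the paper: equivariance of $J$ identifies $G_\xi$ as an isotropy subgroup (hence closed, hence a Lie subgroup by Cartan), the orbit-dimension count and the identification of $\ker(i_N^*\omega)(x)$ both rest on Corollary \ref{SymplecticOrthogonal}, and the reduced form is obtained by passing to the quotient by the radical. You merely supply some details the paper leaves implicit, notably the equivalence $\varphi(X)(x)\in\ker T_xJ\Leftrightarrow X\in{\mathcal G}_\xi$ obtained by differentiating the equivariance relation, which justifies the equality $T_xN\cap T_x\bigl(\Phi(G,x)\bigr)=T_x\bigl(\Phi(G_\xi,x)\bigr)$.
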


\begin{proof}
Proposition \ref{LieGroupCocycle} shows that there exists an affine action
$a$ of $G$ on ${\mathcal G}^*$ for which the momentum map $J$ is equivariant. The subset $G_\xi$ of $G$ is therefore the isotropy subgroup of $\xi$ for the action $a$, which proves that it is indeed
a closed subgroup of $G$. A well known theorem due to \'Elie Cartan allows us to state that $G_\xi$ is a Lie subgroup of $G$.
\par\smallskip

When $\xi$ is a weakly regular value of $J$, $J^{-1}(\xi)$ is a submanifold of $M$ and, for each 
$x\in J^{-1}(\xi)$, the tangent space at $x$ to this submanifold
is $\ker T_xJ$ (it is the definition of a weakly regular value in the sense of Bott). Let $N=J^{-1}(\xi)$ and let $i_N:N\to M$ be the canonical injection. For all $x\in N$, the vector spaces
$\ker T_xJ$  all are of the same dimension 
$\dim N$, and 
$\dim\bigl(T_xJ(T_xM)\bigr)=\dim M-\dim N$.   
Corollary \ref{SymplecticOrthogonal} shows that 
$T_xJ\bigl(T_xM\bigr)$ is the annihilator of ${\mathcal G}_x$.
Therefore for all $x\in N$ the isotropy subalgebras 
${\mathcal G}_x$ 
are of the same dimension $\dim G-\dim M+\dim N$.
The $G_\xi$-orbits of all points $x\in N$ 
are all of the same dimension $\dim G_\xi-\dim G_x$.
\par\smallskip

Corollary \ref{SymplecticOrthogonal} also shows that 
$\orth(\ker T_xJ)=\orth (T_xN)
=T_x\bigl(\Phi(G,x)\bigr)$. Therefore, for each 
$x\in N$,
 $$\ker (i_N^*\omega)(x)=T_xN
       \cap \orth(T_xN)
        =T_xN\cap T_x\bigl(\Phi(G,x)\bigr)
        =T_x\bigl(\Phi(G_\xi,x)\bigr)\,.  
 $$
It is indeed the space tangent at this point to its
$G_\xi$-orbit. When $M_\xi=N/G_\xi$ has a smooth manifold structure such that the canonical projection
$\pi_\xi:N\to M_\xi$ is a submersion, for each 
$x\in N$ the kernel of $T_x\pi_\xi$ is 
$\ker (i_N^*\omega)(x)$, and the existence on $M_\xi$ of a symplectic form 
$\omega_\xi$ such that $\pi_\xi^*(\omega_\xi)=i_N^*\omega$ easily follows.
\end{proof}

\begin{prop}\label{MarsdenWeinsteinHamiltonian}
The assumptions made here are the strongest of those made in
Theorem \ref{MarsdenWeinsteinReduction}: the set $J^{-1}(\xi)/G_\xi$ 
has a smooth manifold structure such that the canonical projection 
$\pi_\xi:J^{-1}(\xi)/G_\xi$ is a submersion. Let $H:M\to\RR$ be a smooth Hamiltonian, invariant under the action $\Phi$. There exists an unique smooth function $H_\xi:M_\xi\to\RR$ such that $H_\xi\circ\pi_\xi$ 
is equal to the restricton of $H$ to $J^{-1}(\xi)$. Each integral 
curve $t\mapsto \varphi(t)$ of the Hamiltonian vector field $X_H$ 
which meets $J^{-1}(\xi)$ is entirely contained in $J^{-1}(\xi)$,
and in the reduced symplectic manifold $(M_\xi,\omega_\xi)$ the
parametrized curve $t\mapsto \pi_\xi\circ \varphi(t)$ is an 
integral  curve of $X_{H_\xi}$.
\end{prop}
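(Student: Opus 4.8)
The plan is to establish the three assertions in order. For the first, since $H$ is invariant under $\Phi$, its restriction to $N:=J^{-1}(\xi)$ is invariant under the restricted action of $G_\xi$, hence constant on each $G_\xi$-orbit, that is, on each fibre of the submersion $\pi_\xi:N\to M_\xi$. As $\pi_\xi$ is surjective and admits local smooth sections, this restriction factors uniquely through a smooth function $H_\xi:M_\xi\to\RR$ with $H_\xi\circ\pi_\xi=H|_N$. For the second assertion, the invariance of $H$ is exactly the hypothesis of Noether's theorem \ref{noether} applied to the Lie algebra action associated to $\Phi$, so $J$ is a ${\mathcal G}^*$-valued first integral of $X_H$ and $J\circ\varphi$ is constant on the interval of definition of any integral curve $\varphi$ of $X_H$. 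Consequently, if $\varphi(t_0)\in N$ for some $t_0$, then $J\bigl(\varphi(t)\bigr)=\xi$ for all $t$, so $\varphi$ stays in $N$; in particular $X_H\bigl(\varphi(t)\bigr)=\frac{\d\varphi(t)}{\d t}$ lies in $T_{\varphi(t)}N$.

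The substantive point is the third assertion. Write $i_N:N\to M$ for the inclusion, so that, by Theorem \ref{MarsdenWeinsteinReduction}, the induced two-form $i_N^*\omega$ on $N$ satisfies $\pi_\xi^*\omega_\xi=i_N^*\omega$. Fix $t$ and set $x=\varphi(t)$, $y=\pi_\xi(x)$, and $u=T_x\pi_\xi\bigl(X_H(x)\bigr)=\frac{\d(\pi_\xi\circ\varphi)(t)}{\d t}\in T_yM_\xi$. Given an arbitrary $w\in T_yM_\xi$, pick $\widetilde w\in T_xN$ with $T_x\pi_\xi(\widetilde w)=w$, which is possible because $\pi_\xi$ is a submersion. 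Using the definition of $u$ and the identity $\pi_\xi^*\omega_\xi=i_N^*\omega$, then the fact (from the second assertion) that $X_H(x)\in T_xN$, then the Hamilton equation $\mathi(X_H)\omega=-\d H$, and finally $H|_N=H_\xi\circ\pi_\xi$, one computes
\begin{equation*}
\omega_\xi(y)(u,w)=(i_N^*\omega)(x)\bigl(X_H(x),\widetilde w\bigr)=\omega(x)\bigl(X_H(x),\widetilde w\bigr)=-\d H(x)(\widetilde w)=-\d H_\xi(y)(w)\,.
\end{equation*}
Since this holds for every $w\in T_yM_\xi$ and $\omega_\xi$ is nondegenerate, $u=X_{H_\xi}(y)$; as $u=\frac{\d(\pi_\xi\circ\varphi)(t)}{\d t}$, the curve $t\mapsto\pi_\xi\circ\varphi(t)$ is an integral curve of $X_{H_\xi}$.

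The main obstacle is the bookkeeping in this last step: one must know that $X_H$ is tangent to $N$ before feeding it into $i_N^*\omega$ (which is why Noether's theorem is invoked first), and one should check that the computation is independent of the chosen lift $\widetilde w$. Two such lifts differ by an element of $\ker T_x\pi_\xi=T_x\bigl(\Phi(G_\xi,x)\bigr)$, which by Theorem \ref{MarsdenWeinsteinReduction} lies in the kernel of $i_N^*\omega$, so $(i_N^*\omega)(x)\bigl(X_H(x),\cdot\bigr)$ is well defined on $T_yM_\xi$. Everything else is routine once the quotient function $H_\xi$ has been constructed.
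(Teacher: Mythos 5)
Your proof is correct and follows essentially the same route as the paper's: the quotient construction of $H_\xi$, Noether's theorem to show $X_H$ is tangent to $N=J^{-1}(\xi)$, and the identity $\pi_\xi^*\omega_\xi=i_N^*\omega$ combined with nondegeneracy of $\omega_\xi$ to identify $T_x\pi_\xi\bigl(X_H(x)\bigr)$ with $X_{H_\xi}\bigl(\pi_\xi(x)\bigr)$. Your explicit check that the computation is independent of the chosen lift $\widetilde w$ is a welcome piece of bookkeeping that the paper leaves implicit in its pullback notation.
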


\begin{proof}
As in the proof of Theorem \ref{MarsdenWeinsteinReduction}, we set
$N=J^{-1}(\xi)$ and denote by $i_N:N\to M$ the canonical injection.
Let $\omega_N=i_N^*\omega$.
Since $H$ is invariant under the action $\Phi$, it keeps a constant value on each orbit of $G_\xi$ contained in $N$, so there exists
on $M_\xi$ an unique function $H_\xi$ such that 
$H_\xi\circ\pi_\xi=H\circ i_N$. The projection
$\pi_\xi$ being a surjective submersion, $H_\xi$ is smooth.
Noether's theorem (\ref{noether}) proves that the momentum map $J$ remains constant on each integral curve of the Hamiltonian vector field $X_H$. So if one of these integral curves meets $N$ 
it is entirely contained in $N$, and we see that the Hamiltonian vector field $X_H$ is tangent to $N$. We have,
for each $x\in N$,
 \begin{align*}
 \pi_\xi^*\biggl(\mathi\Bigl(T_x\pi_\xi\bigl(X_H(x)\bigr)\Bigr)
   \omega_\xi\bigl(\pi_\xi(x)\bigr)\biggr)
   &=i\bigl(X_H(x)\bigr)\bigl(i_N^*\omega(x)\bigl)=-\d(i_N^*H)(x)\\
   &=-\pi_\xi^*\bigl(\d H_\xi\bigr)(x)=\pi_\xi^*\bigl(\mathi(X_{H_\xi})\omega_\xi\bigr)(x)\,.
 \end{align*}

Since $\pi_\xi$ is a submersion and $\omega_\xi$ a non-degenerate 
two-form, this implies that for each $x\in N$, 
$T_x\pi_\xi\bigl(X_H(x)\bigr)=X_{H_\xi}\bigl(\pi_\xi(x)\bigr)$. The
restriction of $X_H$ to $N$ and $X_{H_\xi}$ are therefore two vector fields compatible with respect to the map $\pi_\xi:N\to M_\xi$, which implies the stated result.
\end{proof}

\begin{rmk}
Theorem \ref{MarsdenWeinsteinReduction} and Proposition
\ref{MarsdenWeinsteinHamiltonian} still hold when instead of the Lie group action $\Phi$ we have an action $\varphi$ of a finite-dimensional Lie algebra. The proof of the fact that the $G_\xi$-orbits in
$J^{-1}(\xi)$ all are of the same dimension can easily be adapted 
to prove that for all $x\in J^{-1}(\xi)$, the vector spaces
$\{\varphi(X)(x);X\in{\mathcal G}_\xi\}$ all are of the same dimension and determine a foliation of $J^{-1}(\xi)$. We have then only to replace the $G_\xi$-orbits by the leaves of this foliation.
\end{rmk}

\subsubsection{Use of the Marsden-Weinstein reduction procedure}
Theorem \ref{MarsdenWeinsteinReduction} and Proposition
\ref{MarsdenWeinsteinHamiltonian} are used to determine the integral
curves of the Hamiltonian vector field $X_H$ contained in $J^{-1}(\xi)$
in two steps:

\begin{itemize}

\item{} their projections on $M_\xi$ are first determined: they are integral curves of the Hamiltonian vector field $X_{H_\xi}$; this step is
often much easier than the full determination of the integral curves of $X_H$, since the dimension of the reduced symplectic manifold $M_\xi$ is smaller than the dimension of $M$;

\item{} then these curves themselves are determined; this second step, 
called \emph{reconstruction}, involves the resolution of a differential equation on the Lie group $G_\xi$.
\end{itemize}

Many scientists (T.~Ratiu, R.~Cushman, J.~Sniatycki, 
L.~Bates, J.-P.~Ortega, $\ldots$) generalized this reduction procedure
in several ways:
when $M$ is a Poisson manifold instead of a symplectic manifold, 
when $\xi$ is not a weakly regular value of $J$, $\ldots$ 
The reader will find more results on the subject in the recent book 
\emph{Momentum maps and Hamiltonian reduction} by J.-P. Ortega and 
T.S.~Ratiu \cite{OrtegaRatiu2004}.
\par\smallskip

Reduced symplectic manifolds occur in many applications 
other than the determination of integral curves of Hamiltonian systems.
The reader will find such applications in the book 
\emph{Symplectic techniques in Physics} by V. Guillemin and S. Sternberg
\cite{GuilleminSternberg84b} and in the papers on the phase space of a particle in a Yang-Mills field \cite{Sternberg77, Weinstein78}.

\subsection{The Euler-Poincar\'e equation}

In his Note \cite{Poincare1901}, Henri Poincar\'e writes the 
equations of motion of a Lagrangian mechanical system when 
a finite-dimensional 
Lie algebra acts on its configuration space
by a locally transitive action. Below we 
adapt his results to the Hamiltonian formalism.

\begin{prop}\label{EulerPoincare} 
Let $\mathcal G$ be a finite-dimensional Lie algebra 
which acts, by an action $\varphi:{\mathcal G}\to A^1(N)$, on a smooth manifold $N$. 
The action $\varphi$ is assumed to be locally transitive, which means 
that for each $x\in N$, 
$\bigl\{\varphi(X)(x)\,;X\in{\mathcal G}\bigr\}=T_xN$. Let 
$\widehat\varphi:{\mathcal G}\to A^1(T^*N)$ be the Hamiltonian action of 
$\mathcal G$ on $(T^*N,\d\eta_N)$ which associates, to each $X\in{\mathcal G}$,
the canonical lift to $T^*N$ of the vector field $\varphi(X)$ on $N$
(\ref{LiftedHamiltonianAction}), and let
$J:T^*N\to{\mathcal G}^*$ be the the momentum map of $\widehat\varphi$  given by the
formula (\ref{LiftedHamiltonianAction2})
 $$\bigl\langle J(\xi),X\bigr\rangle=\mathi\bigl(\widehat\varphi(X)\bigr)\eta_N(\xi)\,,
   \quad X\in{\mathcal G}\,,\ \xi\in T^*N\,.
 $$ 
Let $H:T^*N\to\RR$ be a smooth Hamiltonian, which comes from a hyper-regular Lagrangian 
$L:TN\to\RR$ (hyper-regular means that the associated
Legendre map ${\mathcal L}:TN\to T^*N$is a diffeomorphism). Let
$\psi:I\to T^*N$ be an integral curve of the Hamiltonian vector field $X_H$ defined on an open interval $I$ and 
$V:I\to{\mathcal G}$  be a smooth parametrized curve in $\mathcal G$ which satisfies, for each $t\in I$,
 $$\varphi\bigl(V(t)\bigr)\bigl(\pi_N\circ\psi(t)\bigr)=\frac{d\bigl(\pi_N\circ
    \psi(t)\bigr)}{\d t}\,. \eqno(1)$$
The curve $J\circ\psi:I\to{\mathcal G}^*$, obtained by composition with $J$ of the integral curve $\psi$ of the Hamiltonian vector field $X_H$, satisfies the differential equation in ${\mathcal G}^*$
 $$\left(\frac{\d}{\d t}-\ad^*_{V(t)} \right)\bigl(J\circ \psi(t)\bigr)
    =J\Bigl(d_1\overline{\mathstrut L}\bigl(\pi_N\circ\psi(t),V(t)\bigr)\Bigr)\,.
    \eqno{(2)}
 $$
We have denoted by $\overline{\mathstrut L}:N\times{\mathcal G}\to \RR$ the map
 $$(x,X)\mapsto \overline{\mathstrut L}(x,X)=L\bigl(\varphi(X)(x)\bigr)\,,\quad x\in N\,,\ X\in{\mathcal G}\,,$$
and by $d_1\overline{\mathstrut L}:N\times{\mathcal G}\to T^*N$ the partial differential of 
$\overline{\mathstrut L}$ with respect to its first variable.
\par\smallskip
Equation $(2)$ is called the \emph{Euler-Poincar\'e equation}, while
Equation $(1)$ is called the \emph{compatibility condition}. 
\end{prop}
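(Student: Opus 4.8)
The plan is to move freely between the Hamiltonian picture on $(T^*N,\d\eta_N)$ and the Lagrangian picture on $TN$, using the hyper-regularity of $L$, and then to carry out one short computation in canonical coordinates. First I would fix notation: write $x(t)=\pi_N\circ\psi(t)$ and $\mu(t)=J\circ\psi(t)\in{\mathcal G}^*$. By Propositions \ref{LiftedHamiltonianAction} and \ref{LiftedHamiltonianAction2}, $\langle\mu(t),X\rangle=\mathi\bigl(\widehat\varphi(X)\bigr)\eta_N(\psi(t))=\bigl\langle\psi(t),\varphi(X)(x(t))\bigr\rangle$ for every $X\in{\mathcal G}$; equivalently, writing $\varphi_x\colon{\mathcal G}\to T_xN$ for the linear map $X\mapsto\varphi(X)(x)$ and $(\varphi_x)^{*}\colon T^*_xN\to{\mathcal G}^*$ for its transpose, $J(\xi)=(\varphi_{\pi_N(\xi)})^{*}(\xi)$. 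Since $L$ is hyper-regular, $\gamma=\mathcal{L}^{-1}\circ\psi$ is the velocity curve of $x$, that is $\gamma(t)=\frac{\d x(t)}{\d t}$, and the compatibility condition then says precisely $\gamma(t)=\varphi\bigl(V(t)\bigr)(x(t))$. Because $\overline L(x,X)=L\bigl(\varphi_x(X)\bigr)$ depends on $X$ through the linear map $\varphi_x$, the chain rule gives $d_2\overline L(x,X)=(\varphi_x)^{*}\bigl(\mathcal{L}(\varphi_x(X))\bigr)$ for the fibre differential of $\overline L$ in its second variable, so $\mu(t)=d_2\overline L\bigl(x(t),V(t)\bigr)$; this is the Legendre transform underlying the Euler--Poincar\'e reduction and explains why $J$ occurs on both sides of the asserted equation. (This observation is auxiliary and the argument below does not rely on it.)

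For the computation I would choose local coordinates $(x^i)$ on $N$, write $\varphi(X)=a^i_X\,\partial/\partial x^i$ with $a^i_X(x)$ linear in $X$, abbreviate $\partial_j=\partial/\partial x^j$, and use the induced Darboux coordinates $(x^i,p_i)$ on $T^*N$, in which $\langle\mu,X\rangle=p_i\,a^i_X$. Differentiating along $\psi$ and substituting Hamilton's equations $\dot x^i=\partial H/\partial p_i$, $\dot p_i=-\partial H/\partial x^i$, together with the standard hyper-regular identities $\partial H/\partial p_i=\dot x^i$, $\partial H/\partial x^i=-\partial L/\partial x^i$ and $p_i=\partial L/\partial\dot x^i$ (all Lagrangian partials being taken at $(x(t),\dot x(t))$), and finally the compatibility condition $\dot x^i=a^i_{V(t)}(x(t))$, one gets $\frac{\d}{\d t}\langle\mu,X\rangle=\frac{\partial L}{\partial x^i}\,a^i_X+p_i\,a^j_V\,(\partial_j a^i_X)$. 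On the other hand $\varphi([V,X])=[\varphi(V),\varphi(X)]$ gives $a^i_{[V,X]}=a^j_V(\partial_j a^i_X)-a^j_X(\partial_j a^i_V)$, so with the sign convention for $\ad^*$ of \ref{AdjointAndCoadjointRepresentations} one has $\langle\ad^*_{V}\mu,X\rangle=\langle\mu,[V,X]\rangle=p_i\bigl(a^j_V(\partial_j a^i_X)-a^j_X(\partial_j a^i_V)\bigr)$. Subtracting, the terms $p_i\,a^j_V\,(\partial_j a^i_X)$ cancel and there remains $\bigl\langle\frac{\d\mu}{\d t}-\ad^*_V\mu,X\bigr\rangle=\frac{\partial L}{\partial x^i}\,a^i_X+p_i\,a^j_X\,(\partial_j a^i_V)$.

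It then remains to identify the right-hand side. From $\overline L(x,X)=L\bigl(x,a_X(x)\bigr)$ the chain rule gives, in the same coordinates, $d_1\overline L(x,V)=\bigl(\frac{\partial L}{\partial x^j}+\frac{\partial L}{\partial\dot x^i}(\partial_j a^i_V)\bigr)\,\d x^j$, the partials of $L$ taken at $(x,a_V(x))$; along $\psi$ this base point is $(x(t),\dot x(t))$ by compatibility, so these are the same partials as above and $\partial L/\partial\dot x^i=p_i$. Pairing with $\varphi(X)(x)$ by means of $\langle J(\xi),X\rangle=\langle\xi,\varphi(X)(\pi_N\xi)\rangle$ yields $\bigl\langle J\bigl(d_1\overline L(x,V)\bigr),X\bigr\rangle=\frac{\partial L}{\partial x^j}\,a^j_X+p_i\,a^j_X\,(\partial_j a^i_V)$, which coincides with the expression found for $\langle\frac{\d\mu}{\d t}-\ad^*_V\mu,X\rangle$. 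Since $X\in{\mathcal G}$ is arbitrary, the Euler--Poincar\'e equation follows. I expect the only real difficulty to be organisational bookkeeping: keeping the hyper-regular Legendre identities straight and, above all, using the compatibility condition to ensure that every Lagrangian partial derivative is evaluated at the single point $(x(t),\dot x(t))$, which is exactly what forces the Hamiltonian- and Lagrangian-side expressions to match. A coordinate-free variant is available, starting from $\frac{\d}{\d t}(f_X\circ\psi)=\{H,f_X\}\circ\psi=-\bigl({\mathcal L}(\widehat\varphi(X))H\bigr)\circ\psi$ and then comparing the tangent and cotangent lifts of $\varphi(X)$ through $\mathcal{L}$, but that requires an extra lemma, so the coordinate computation is the shorter route.
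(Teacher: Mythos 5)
Your proof is correct and follows essentially the same route as the paper's: expressing $\langle J\circ\psi,X\rangle$ in canonical coordinates as $p_i\,a^i_X$, differentiating along the flow, and combining Hamilton's equations, the compatibility condition, the homomorphism property $\varphi([V,X])=[\varphi(V),\varphi(X)]$, and the Legendre identities $\partial L/\partial x^i=-\partial H/\partial x^i$, $\partial L/\partial v^i=p_i$ to match both sides of the equation. The only difference is organisational (you substitute $\dot p_i=\partial L/\partial x^i$ at the outset and then subtract $\ad^*_V\mu$, whereas the paper first isolates the bracket term), which does not change the argument.
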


\begin{proof}
For each $\xi\in T^*N$ and each $X\in{\mathcal G}$
 $$\bigl\langle J(\xi),X\bigr\rangle=\varphi(X)\bigr(\pi_N(\xi)\bigr)\,,$$
therefore
 $$\Bigl\langle \frac{\d}{\d t}\bigl(J\circ\psi(t)\bigr),X\Bigr\rangle
   =\frac{\d}{\d t}\Bigl\langle\psi(t),\varphi(X)\bigl(\pi_N\circ\psi(t)\bigr)
                    \Bigr\rangle\,.
 $$
Let $(x^1,\ldots,x^n)$ be local coordinates on $N$, and $(x^1,\ldots,x^n,
\allowbreak p_1,\ldots,p_n)$ be the associated local coordinates on $T^*N$.
The smooth curves $\psi$ and 
$\pi_N\circ\psi$ can be expressed as
 $$t\mapsto \bigl( y^i(t),\varpi_i(t)\bigr)\quad\hbox{and}\quad 
            t\mapsto\bigl( y^i(t)\bigr)\,,\quad(1\leq i\leq n)\,,
 $$  
so we can write
 $$\Bigl\langle \frac{\d}{\d t}\bigl(J\circ\psi(t)\bigr),X\Bigr\rangle
   =\frac{\d}{\d t}\left(\sum_{i=1}^n\varpi_i(t)\bigl(\varphi(X)\bigr)^i
     \bigl(y^1(t),\ldots,y^n(t)\bigr)\right)\,.
 $$
We have denoted by $\bigl(\varphi(X)\bigr)^i(x^1,\ldots,x^n)$ the value of the $i$-th
component of the vector field $\varphi(X)$, expressed as a function of the local coordinates $x^i$ ($1\leq i\leq n)$.
\par\smallskip
The compatibility condition $(1)$ becomes 
 $$\frac{\d y^k(t)}{\d t}=\Bigl(\varphi\bigl(V(t)\bigr)\Bigr)^k\bigl(y^1(t),
    \ldots,y^n(t)\bigr)\,. 
 $$
In what follows we write $y^i$ for $y^i(t)$, $\varpi_i$ for $\varpi_i(t)$,
$(y)$ for $\big(y^1(t),\ldots,y^n(t)\bigr)$ and $(y,\varpi)$
for $\bigl(y^1(t),\ldots,y^n(t),\varpi_1(t),\varpi_n(t)\bigr)$. We have
  \begin{align*}
  \Bigl\langle \frac{\d}{\d t}\bigl(J\circ\psi(t)\bigr),X\Bigr\rangle
  &=\sum_{i=1}^n\frac{\d\varpi_i}{\d t}\bigl(\varphi(X)\bigr)^i(y)\\
  &\quad\quad +\sum_{(i,k)=(1,1)}^{(n,n)}
     \varpi_i\frac{\partial \bigl(\varphi(X)\bigr)^i(y)}{\partial x^k}
     \Bigl(\varphi\bigl(V(t)\bigr)\Bigr)^k(y)\,.
 \end{align*}
By using the local expression of the bracket of vector fields
 \begin{align*}
  \Bigl[\varphi\bigl(V(t)\bigr),\varphi(X)\Bigr]^i(x)
  &=\sum_{k=1}^n\Bigl(\varphi\bigl(V(t)\bigr)\Bigr)^k(x)
   \frac{\partial \bigl(\varphi(X)\bigr)^i(x)}{\partial x^k}\\
  &\quad -\sum_{k=1}^n\bigl(\varphi(X)\bigr)^k(x)
   \frac{\partial \Bigl(\varphi\bigl(V(t)\bigr)\Bigr)^i(x)}{\partial x^k}
 \end{align*}
and taking into account the fact that, $\varphi$ being a Lie algebras homomorphism,
 $$\Bigl[\varphi\bigl(V(t)\bigr),\varphi(X)\Bigr]
  =\varphi\Bigl(\bigl[V(t),X]\Bigr)\,,$$
we get
 \begin{align*}
  \Bigl\langle \frac{\d}{\d t}\bigl(J\circ\psi(t)\bigr),X\Bigr\rangle
  &=\Bigl\langle\psi(t),\varphi\Bigl(\bigl[V(t),X\bigr]\Bigr)\Bigr\rangle\\
  &\quad+\sum_{i=1}^n \bigl(\varphi(X)\bigr)^i(y)\Biggl(\frac{\d\varpi_i}{\d t}
  +\sum_{k=1}^n\varpi_k\frac{\partial\Bigl(\varphi\bigl(V(t)\bigr)\Bigr)^k(y)}
    {\partial x^i}\Biggr)\,.
 \end{align*}
The first term in the right hand side can be written
 $$\Bigl\langle\psi(t),\varphi\Bigl(\bigl[V(t),X\bigr]\Bigr)\Bigr\rangle
 =\Bigl\langle J\circ\psi(t), \bigl[V(t),X\bigr]\Bigr\rangle
 =\Bigl\langle \ad^*_{V(t)}\bigl(J\circ\psi(t)\bigr),X\Bigr\rangle\,.
 $$
For all $(x,X)\in N\times{\mathcal G}$ we have  
 $$\overline{\mathstrut L}(x, X)=L\bigl(\varphi(X)(x)\bigr)\,.$$
For any point $x\in N$ and any vector $w\in T_xN$, there exists a smooth
curve $s\mapsto z(s)$ in $N$ such that $z(0)=x$ 
and $\displaystyle \frac{\d z(s)}{\d s}\bigm|_{s=0}=w$.   
We easily obtain 
$\Bigl\langle \d_1\overline{\mathstrut L}\bigl(x,V(t)\bigr),w\Bigr\rangle$ 
by taking the derivative of 
$\overline{\mathstrut L}\bigl(x(s),V(t)\bigr)$ with respect to $s$ ($t$ remaining fixed), 
then making $s=0$. We obtain
 $$\Bigl\langle \d_1\overline{\mathstrut L}\bigl(x,V(t)\bigr),w\Bigr\rangle
   =\sum_{i=1}^n w^i\Biggl(\frac{\partial L(x,v)}{\partial x^i}
                         +\sum_{k=1}^n\frac{\partial L(x,v)}{\partial v^k}\,
                          \frac{\partial\Bigl(\varphi\bigl(V(t)\bigr)\Bigr)^k(x)}
                          {\partial x^i}      
                    \Biggr)\,.
 $$
Let us set $x=\pi_N\circ\psi(t)$, $w=\varphi(X)\bigl(\pi_N\circ\psi(t)\bigr)$. 
We observe that
 $$\Bigl\langle \d_1\overline{\mathstrut L}\bigl(\pi_N\circ\psi(t),V(t)\bigr),
               \varphi(X)\bigl(\pi_N\circ\psi(t)\bigr)\Bigr\rangle
   =\Bigl\langle J\Bigl(d_1\overline{\mathstrut L}\bigl(\pi_N\circ\psi(t),V(t)\bigr)\Bigr),
     X
    \Bigr\rangle\,.
 $$
Now we take into account the well known relations which exist between the partial derivatives of the Lagrangian and of the Hamiltonian expressed in local coordinates
 $$\frac{\partial L(x,v)}{\partial x^i}=-\frac{\partial H(x,p)}{\partial x^i}
   =\frac{d\varpi_i}{dt}\,,\quad  \frac{\partial L(x,v)}{\partial v^k}=\varpi_k\,,
 $$ 
and we obtain
 \begin{align*}
  \Bigl\langle J\Bigl(d_1\overline{\mathstrut L}\bigl(\pi_N\circ\psi(t),V(t)\bigr)\Bigr),
     X\Bigr\rangle
   =\sum_{i=1}^n \bigl(\varphi(X)\bigr)^i&(y)\\
           \quad\Biggl(\frac{\d\varpi_i}{\d t}
            +\sum_{k=1}^n&\varpi_k
              \frac{\partial\Bigl(\varphi\bigl(V(t)\bigr)\Bigr)^k(y)}
              {\partial x^i}\Biggr)\,.
 \end{align*}
Since $X$ can be any element in $\mathcal G$, the Euler-Poincar\'e equation follows.
\end{proof}

\begin{rmk}\label{Euler-Poincare2}
The assumptions made by Poincar\'e in \cite{Poincare1901} are less restrictive than those
made in \ref{EulerPoincare}: he uses the Lagrangian formalism for a smooth Lagrangian
$L:TN\to\RR$ which is not assumed to be hyper-regular. The associated Legendre
map ${\mathcal L}:TN\to T^*N$ still exists as a smooth map (it is the vertical differential of $L$, see for example \cite{Godbillon1969}), but may not be a diffeomorphism. Of course the momentum map $J:T^*N\to{\mathcal G}^*$ still exists
and can be used, together with the Legendre map, to express Poincar\'e's 
results intrinsically \cite{Marle2013}, independently of any choice of 
local coordinates. Poincar\'e proves that if a smooth parametrized 
curve $\gamma:[t_0,t_1]\to N$ is an extremal of the action functional
 $$I(\gamma)=\int_{t_0}^{t_1}L\left(\frac{\d\gamma(t)}{\d t}\right)\,\d t$$
for infinitesimal variations of $\gamma$ with fixed end points, and if
$V:[t_0,t_1]\to{\mathcal G}$ is a smooth parametrized curve which satisfies, 
for each $t\in[t_0,t_1]$, the compatibility condition
 $$\varphi\bigl(V(t)\bigr)\bigl(\gamma(t)\bigr)=\frac{d\bigl(\gamma(t)\bigr)}{\d t}\,,
 \eqno(1)
 $$
the parametrized curve $\displaystyle t\mapsto J\circ{\mathcal L}\circ
\frac{\d\gamma(t)}{\d t}$ satisfies the Euler-Poincar\'e equation 
 $$\left(\frac{\d}{\d t}-\ad^*_{V(t)} \right)\left(J\circ{\mathcal L}\circ
    \frac{\d\gamma(t)}{\d t}\right)
    =J\Bigl(d_1\overline{\mathstrut L}\bigl(\gamma(t),V(t)\bigr)\Bigr)\,.\eqno(2)
 $$
The Euler-Poincar\'e equation can be written under a slightly different form
in which, instead of the Legendre map ${\mathcal L}:TN\to T^*N$,
 the partial differential 
$\d_2\overline{\mathstrut L}:N\times{\mathcal G}\to{\mathcal G}^*$
of the map $\overline{\mathstrut L}:N\times{\mathcal G}\to \RR$ with respect to its 
second variable is used. We have indeed, for all $x\in N$ and $X\in{\mathcal G}$,
 $$\d_2\overline{\mathstrut L}(x,X)=J\circ{\mathcal L}\bigl(\varphi(X)(x)\bigr)\,,
 $$
which allows to write the Euler-Poincar\'e equation under the form
 $$\left(\frac{\d}{\d t}-\ad^*_{V(t)} \right)
    \Bigl(\d_2\overline{\mathstrut L}\bigl(\gamma(t),V(t)\bigr)\Bigr)
    =J\Bigl(d_1\overline{\mathstrut L}\bigl(\gamma(t),V(t)\bigr)\Bigr)\,.\eqno(3)
 $$  
\end{rmk}

\subsubsection{Use of the Euler-Poincar\'e equation for reduction}
Poincar\'e observes in his Note \cite{Poincare1901} that the 
Euler-Poincar\'e equation can be useful 
mainly when its right hand side vanishes and when it 
reduces to an autonomous differential equation on 
${\mathcal G}^*$ for the parametrized curve $t\mapsto J\circ \psi(t)$.
We will see in Section \ref{RigidBody} that 
the first condition is satisfied when the Hamiltonian system under 
consideration describes the motion of a rigid body around a fixed 
point in the absence of external forces (Euler-Poinsot problem). The second
condition generally is not satisfied, since
the Euler-Poincar\'e equation involves the parametrized curve 
$t\mapsto V(t)$ in $\mathcal G$, whose dependence on $J\circ\psi(t)$ is
complicated.
\par\smallskip

This simplification occurs when there exists a smooth function 
$h:{\mathcal G}^*\to\RR$ such that
 $$H=h\circ J\,,$$
which implies that $H$ is constant on each level set of $J$. Then it 
can be shown that the Euler-Poincar\'e equation becomes the Hamilton equation on ${\mathcal G}^*$ for the Hamiltonian $h$ and its canonical 
Poisson structure.
\par\smallskip

If we assume that the manifold $N$ is a Lie group $G$ and that the action
$\varphi:{\mathcal G}\to A^1(G)$ of its Lie algebra is the action 
associated to the action of $G$ on itself by translations on the left 
(respectively, on the right), $\widehat\varphi$ is the Lie algebra action
associated to the canonical lift to $T^*G$ of the canonical action of $G$
on itself by translations on the left (respectively, on the right). 
The conditions under which the Euler-Poincar\'e 
equation can be used for reduction are exactly the same as those under 
which the Marsden-Weinstein reduction method can be applied, but for 
the canonical lift to $T^*G$ of the action of $G$ on itself by translations 
\emph{on the right} (respectively, on the left). Moreover, applications of 
these two reduction methods lead to essentially the same equations: 
the only difference is that the Euler-Poincar\'e reduction method leads to a differential equation on ${\mathcal G}^*$, while the Marsden-Weinstein reduction 
method leads, for each value of the momentum map,
to the same differential equation restricted to a coadjoint orbit of ${\mathcal G}^*$. The reader will find the proof of these assertions
in \cite{Marle2003, Marle2013}. 

\section{Examples of Hamiltonian dynamical systems}\label{ExamplesDynamicalSystems}

We present in this section three classical examples of Hamiltonian dynamical 
systems in which the previously discussed concepts (symmetry groups, momentum
maps and first integrals, reduction methods) are illustrated. The configuration
space of the first system (the spherical pendulum) is a sphere embedded in 
physical space; each point of the sphere is a possible position of a material 
point which moves on that sphere. The third example (the Kepler problem) deals with the motion of a material point in the acceleration field created by an attracting centre; the configuration space is the physical space minus one point (the attractive centre). In the second example (the motion of a rigid body around a fixed point) the configuration space is a little more complicated: it is the set of all maps which send the material body onto one of its possible positions in space. 

\subsection{The mathematical description of space and time}
The framework in which the motions of material bodies occur
is the physical space-time. It will be mathematically described here
as it is usually done in classical (non-relativistic) Mechanics.
In a Galilean reference frame, once units of length and of time are chosen,
the physical space and the physical time are mathematically described 
by affine Euclidean
spaces $E$ and $T$, respectively three-dimensional and one-dimensional. 
We will consider $E$ and $T$ as \emph{oriented}: $T$ has a natural 
orientation (towards the future), 
while by convention, an arbitrary orientation of $E$ is chosen. 
The choice of a particular element of $T$ as origin will allow 
us to identify $T$ with the real line $\RR$. 
\par\smallskip

In the three examples treated below there exists a privileged element of $E$ (the centre of the sphere, the fixed point and the attractive centre, respectively in the first, second and third examples) which will be taken as origin. The space $E$ will therefore be considered as an Euclidean three-dimensional
\emph{vector} space. For the same reason the abstract space $S$ of material points
used in the second example will be considered too as an Euclidean three-dimensional
\emph{vector} space.  
\par\smallskip

In our three examples, the configuration space of the system will be denoted 
by $N$: therefore in the first example $N$ is the sphere embedded in $E$ 
centered on the origin on which the material point is moving; in the third 
example $N=E\backslash\{O\}$, where $O$ is the attractive centre; and we will 
see that in the second example, $N=\Isom(S,E)$ is the space of orientation
preserving linear isometries of an abstract three-dimensional Euclidean 
vector space $S$ (the space of material points) onto the physical space $E$.

\subsection{Vector calculus in a three-dimensional oriented Euclidean vector space}
\label{VectorCalculus}
The group $\SO(E)$ of orientation preserving linear isometries
of $E$, isomorphic to $\SO(3)$, acts on the space $E$, and so does its Lie algebra
$\mathfrak{so}(E)$, isomorphic to $\mathfrak{so}(3)$, by the associated action. The Euclidean vector space $E$ being three-dimensional and oriented, there exists
an isomorphism of $\mathfrak{so}(E)$ onto the space $E$ itself widely used
in elementary vector calculus, in which an element $X\in \mathfrak{so}(E)$, which is a linear map $E\to E$ represented, in some orthonormal  positively oriented basis
$(\vect e_1,\vect e_2,\vect e_3)$
of $E$, by the skew-symmetric $3\times 3$-matrix
 $$\begin{pmatrix}
          0&-c&b\\
           c&0&-a\\
           -b&a&0
   \end{pmatrix}\,
 $$    
is identified with the vector $\vect X=a\vect e_1+b\vect e_2+c\vect e_3$. With this identification, the bracket in $\mathfrak{so}(E)$, in other words the map
$(X,Y)\mapsto [X,Y]=X\circ Y-Y\circ X$, corresponds to the \emph{vector product}
$(\vect X,\vect Y)\mapsto\vect X\times\vect Y$. Expressed in terms of the vector
product, the Jacobi identity becomes
 $$\vect X\times(\vect Y\times \vect Z)
  +\vect Y\times(\vect Z\times \vect X)
  +\vect Z\times(\vect X\times \vect Y)=0\,.\eqno(1)
 $$

Let us recall another very useful formula which expresses, in terms of the scalar and vector products, the $\ad$-invariance of the pairing between $\mathfrak{so}(E)$ and its dual by means of the scalar product. For any triple 
$(\vect u,\vect v,\vect w)\in E\times E\times E$, we have 
 $$\vect u.(\vect v\times\vect w)=\vect v.(\vect w\times\vect u)
                                 =\vect w.(\vect u\times\vect v)\,.\eqno(2)
 $$  
The map $(\vect u,\vect v,\vect w)\mapsto \vect u.(\vect v\times\vect w)$
is therefore a skew-symmetric trilinear form on $E$ sometimes called the
\emph{mixed product}.
\par\smallskip

The dual $E^*$ of $E$ will be identified with $E$, with the scalar product
$(\vect u, \vect v)\mapsto\vect u.\vect v$ as pairing by duality. The tangent and cotangent bundles $TE$ and $T^*E$ will therefore both be identified with 
$E\times E$, the canonical projections $\tau_E:TE\to E$ and $\pi_E:T^*E\to E$ 
both being the projection of $E\times E$ onto its first factor. 
The Lie algebra action
of $\mathfrak{so}(E)$ on $E$ associates, to each $\vect X\in\mathfrak{so}(E)\equiv E$, 
the vector field $\vect{X_E}$ on $E$ whose value at an element $\vect x\in E$ is 
 $$\vect{X_E}(\vect x)=(\vect x, \vect X\times \vect x)\,.\eqno(3)$$ 
\par\smallskip

Since we have identified $\mathfrak{so}(E)$ with $E$, its dual space $\mathfrak{so}(E)^*$ is identified with $E^*$, which we have identified with $E$ by means of the scalar product.
Therefore $\mathfrak{so}(E)^*$ too will be identified with $E$.
\par\smallskip

The canonical lift to the cotangent bundle of the action of $\SO(E)$ on $E$ is
a Hamiltonian action (\ref{LiftedHamiltonianAction}) whose momentum map
$J_E:T^*E\equiv E\times E\to \mathfrak{so}(E)^*\equiv E$
can easily be expressed in terms of the vector product. Indeed the map $J_E$
must satisfy, for each $\vect X\in\mathfrak{so}(E)\equiv E$ and each $(\vect x,\vect p)\in T^*E\equiv E\times E$,
 $$\bigl\langle J_E(\vect x,\vect p),X\bigr\rangle
   =\bigl\langle(\vect x,\vect p),\vect{X_E}(\vect x)\bigr\rangle
   =\vect p.(\vect X\times\vect x)
   =\vect X.(\vect x\times\vect p)\,,$$
the last equality being obtained by using  the above formula $(2)$. We therefore see that
 $$J_E(\vect x,\vect p)=\vect x\times\vect p\,.\eqno(4)$$
Expressed in terms of the vector product, the adjoint and coadjoint actions become
 $$\ad_{\vect X}\vect Y=\vect X\times \vect Y\,,\quad 
    \ad^*_{\vect X}\vect\xi=-\vect X\times\vect \xi=\vect \xi\times\vect X\,,
     \eqno(5)
 $$
where $\vect X$ and $\vect Y\in\mathfrak{so}(E)\equiv E$ and $\vect\xi\in\mathfrak{so}(E)^*\equiv E$.
\par\smallskip

Of course all the above properties hold for the three-dimensional Euclidean oriented
vector space $S$ of material points which is used in the second example, for the
group $\SO(S)$ of its linear orientation preserving isometries and for its
Lie algebra $\mathfrak{so}(S)$.

\subsection{The spherical pendulum}\label{SphericalPendulum}

\subsubsection{Mathematical description of the problem}
Let us consider a heavy material point of mass $m$ constrained, by an ideal 
constraint, on the surface of a sphere $N$ of centre $O$ and radius $R$ 
embedded in the physical space $E$. Since the action of $\SO(E)$ on $E$ maps $N$
onto itself, $\SO(E)$ acts on $N$ on the left, and so does its Lie algebra
$\mathfrak{so}(E)$ by the associated action, which is locally (and globally) transitive. 
The configuration space $N$ is the set of vectors $\vect x\in E$ which satisfy 
$\vect x.\vect x=R^2$
and its tangent bundle $TN$ is the subset of $TE\equiv E\times E$ of pairs
$(\vect x,\vect v)$ of vectors which satisfy
 $$\vect x.\vect x=R^2\,,\quad \vect x.\vect v=0\,.$$
We assume that the material point is submitted to a constant acceleration field
$\vect g$ (which, in most applications, will be the vertical gravity field 
directed downwards). The Lagrangian of the system is
 $$L(\vect x,\vect v)=\frac{m\Vert \vect v\Vert^2}{2}+m\vect g.\vect x\,.
 $$
The Legendre map ${\mathcal L}:TN\to T^*N$ is expressed as
 $${\mathcal L}(\vect x,\vect v)=(\vect x,\vect p)\quad\hbox{with}\ \vect p=m\vect v\,.
 $$
The Hamiltonian of the system is therefore
 $$H(\vect x,\vect p)=\frac{\Vert \vect p\Vert^2}{2m}-m\vect g.\vect x\,.$$
The momentum map $J_E$ of the canonical lift to the cotangent bundle of the Lie algebra action $\varphi$, expressed in terms of the vector product, is given by
Formula $(4)$ in Section \ref{VectorCalculus}.

\subsubsection{The Euler-Poincar\'e equation}
The map $\widetilde\varphi:N\times\mathfrak{so}(E)\to TN$ defined by
$\widetilde\varphi(\vect x,\vect X)=\vect{X_N}(\vect x)$, expressed,
in terms of the vector product, is 
$\widetilde\varphi(\vect x, \vect X)=(\vect x,\vect X\times\vect x)$. Using
Formula $(2)$ of \ref{VectorCalculus}, we easily obtain the expression of
$\overline{\mathstrut L}=L\circ\widetilde\varphi:N\times\mathfrak{so}(E)\to \RR$:   
 $$\overline{\mathstrut L}(\vect x,\vect X)=\frac{mR^2}{2}\left(\Vert\vect X\Vert^2
    -\frac{(\vect X.\vect x)^2}{R^2}\right)+m\vect g.\vect x\,.$$
The partial differentals of $\overline{\mathstrut L}$ with respect to its first and 
second variables are
 \begin{align*}
  \d_1\overline{\mathstrut L}(\vect x,\vect X)&=\Biggl(\vect x,m\Bigl(\vect g
                                   -(\vect X.\vect x)\vect X +
                                   \frac{(\vect x.\vect X)^2-\vect g.\vect x}{R^2}\,
                                    \vect x\Bigr)
                                      \Biggr)\,,\\
 \d_2\overline{\mathstrut L}(\vect x,\vect X)&=m\bigl(R^2\vect X-
                                      (\vect X.\vect x)\vect x\bigr)\,.
 \end{align*}
Let $t\mapsto \vect{x(t)}$ be a smooth curve in $N$, parametrized by the time $t$, solution of the Euler-Lagrange equation for the Lagrangian $L$. The compatibility
condition $(1)$ of \ref{Euler-Poincare2} becomes, for a smooth map 
$t\mapsto V(t)$ in $\mathfrak{so}(E)$,
 $$\frac{\d\vect{x(t)}}{\d t}=\vect V(t)\times \vect{x(t)}\,,$$ 
and the Euler-Poincar\'e equation $(3)$ of \ref{Euler-Poincare2} is
 $$\frac{\d}{\d t}\Bigl(mR^2\vect V(t)-m\bigl(\vect{x(t)}.\vect V(t)\bigr)
    \vect{x(t)}\Bigr)=m\vect{x(t)}\times \vect g\,.
 $$
This equation can easily be obtained by much more elementary methods: it expresses the fact that the time derivative of the angular momentum at the origin is equal to the moment at that point of the gravity force (since the moment at the origin of the constraint force which binds the material point to the surface of the sphere vanishes).
\par\smallskip

The Euler-Poincar\'e equation allows a reduction of the problem if and only if its right hand side vanishes, which occurs if and only if $\vect g=0$. When that condition is satisfied, it can be written as
 $$m\frac{\d}{\d t}\left(\vect{x(t)}\times\frac{\d\vect{x(t)}}{\d t}\right)=0\,,
 $$ 
which implies that the material point moves 
on a great circle of the sphere $N$, in the plane through its centre orthogonal to the constant vector
$\displaystyle\vect{x(t)}\times\frac{\d\vect{x(t)}}{\d t}$. Using the conservation of energy $H$, we see that $\Vert \vect v(t)\Vert$ remains constant during the motion.

\subsubsection{Reduction by the use of first integrals}
\label{SphericalPendulumFirstIntegrals}
Equation $(4)$ of Section \ref{VectorCalculus} shows that the map
 $$J(\vect x,\vect p)=\vect x\times\vect p\,,
  \quad\text{with}\ (\vect x,\vect p)\in T^*N\equiv N\times E\,,
 $$
is a momentum map for the canonical lift to $T^*N$ of the action of $\SO(E)$
on $N$. When $\vect g\neq 0$ that action does not leave invariant
the Hamiltonian $H$, but its restriction to the subgroup $G_1$ of rotations around the vertical line through the centre of the sphere $N$ does leave $H$ invariant. The Lie algebra of $G_1$ and its dual being identified with $\RR$,
the momentum map of this restricted action is
 $$J_1(\vect x,\vect p)=\vect e_g.(\vect x\times\vect p)$$
where $\vect e_g$ is the unit vector such that 
$\vect g=g\vect e_g$, with $g>0$. The only singular value of $J_1$ is $0$.
It is reached when the three vectors $\vect x$, $\vect p$ and $\vect e_g$ 
lie in the same vertical plane. Therefore,
for any $\zeta\neq 0$, $J_1^{-1}(\zeta)$ is a three-dimensional submanifold
of $T^*N$ which does not contain $T^*_{R\vect e_g}N\cup T^*_{-R\vect e_g}N$
and which remains invariant under the action of $G_1$. The set of orbits of 
this action is the Marsden-Weinstein reduced symplectic manifold for the value 
$\zeta$ of the momentum map. On this two-dimensional reduced symplectic manifold 
all the integral curves of the Hamiltonian vector field associated to the reduced Hamiltonian $H_\zeta$ are periodic.

\subsection{The motion of a rigid body around a fixed point}\label{RigidBody}

\subsubsection{Mathematical description of the problem}\label{MathRigidBody}
We consider the motion of a rigid body containing at least three non-aligned material points. A configuration of the body in space is 
mathematically represented by an affine, 
isometric and orientation preserving map defined on an abstract 
Euclidean three-dimensional oriented
affine space $S$ (called the \emph{space of material points}), 
with values in $E$, the three-dimensional Euclidean oriented 
affine space which mathematically describes the physical space.
When the configuration of the body 
is represented by the map $x:S\to E$, the position
in space of the material point of the body represented
by $z\in S$ is $x(z)$.
\par\smallskip

We assume that one geometric point of the rigid body is constrained, by an ideal constraint, to keep a fixed position in the physical space. By using this fixed point as origin, both for $S$ and for $E$, we can now consider these spaces
as \emph{vector} spaces. Each  configuration of the 
body in space is therefore represented by a \emph{linear}
isometry. The set $N$ of all possible configurations of 
the material body in space is therefore $\Isom(S,E)$, the set of linear 
orientation-preserving isometries of $S$ onto $E$.  
\par\smallskip 
 
The Lie groups $\SO(S)$ and $\SO(E)$ of linear orientation-preserving isometries, respectively of $S$ and of $E$, both isomorphic to 
${\SO}(3)$, act on $N$, respectively on the left and on the right, by the two commuting actions
$\Phi_S$ and $\Phi_E$
 $$\Phi_S(x, g_S)=x\circ g_S\,,\ \Phi_E(g_E,x)=g_E\circ x\,,
    \quad g_E\in\SO(E)\,,\ g_S\in\SO(S)\,,\ x\in N\,.  
 $$ 
\par\smallskip

The values at $x\in N$ of the fundamental vector fields on $N$ associated to 
$X\in\mathfrak{so}(S)$ and $Y\in\mathfrak{so}(E)$ are 
 $$X_N(x)=\frac{\d \bigl(x\circ\exp(sX)\bigr)}{ds}\Bigm|_{s=0}\,,
   \quad Y_N(x)=\frac{\d (\exp(sY)\circ x)}{ds}\Bigm|_{s=0}\,.$$  
The Lie algebra actions $\varphi_S:\mathfrak{so}(S)\to A^1(N)$ and 
$\varphi_E:\mathfrak{so}(E)\to A^1(N)$ associated to the Lie group actions
$\Phi_S$ and $\Phi_E$ are, respectively, the maps
 $$\varphi_S(X)=X_N\,,\quad \varphi_E(Y)=Y_N\,,
    \quad X\in\mathfrak{so}(S)\,,
    \quad Y\in\mathfrak{so}(E)\,.
 $$
One should be careful with signs: since $\Phi_S$ is an action of $\SO(S)$ on the right, the bracket of elements in the Lie algebra $\mathfrak{so}(S)$ for which
$\varphi_S$ is a Lie algebras homomorphism is the bracket of 
\emph{left-invariant} vector fields on the Lie group $\SO(S)$; similarly, 
since $\Phi_E$ is an action of $\SO(E)$ on the left, the bracket of elements 
in the Lie algebra $\mathfrak{so}(E)$ for which $\varphi_E$ is a Lie algebras
homomorphism is the bracket of \emph{right-invariant} vector fields on the Lie group $\SO(E)$. 
\par\smallskip

Let $\widetilde\varphi_S:N\times\mathfrak{so}(S)\to TN$ and 
$\widetilde\varphi_E:N\times\mathfrak{so}(E)\to TN$ be the vector bundles 
isomorphisms 
 $$\widetilde\varphi_S(x,X)=\varphi_S(X)(x)\,,\quad 
    \widetilde\varphi_E(x,Y)=\varphi_E(Y)(x)\,.
 $$
Let $\Omega_S:TN\to\mathfrak{so}(S)$ and
$\Omega_E:TN\to\mathfrak{so}(E)$ be the vector bundles maps
($\mathfrak{so}(S)$ and $\mathfrak{so}(E)$ 
being considered as trivial vector bundles over a base reduced to a singleton)
 $$\Omega_S(v)=\pi_{\mathfrak{so}(S)}\circ\widetilde\varphi_S^{-1}(v)\,,\quad
   \Omega_E(v)=\pi_{\mathfrak{so}(E)}\circ\widetilde\varphi_E^{-1}(v)\,,\quad
                                    v\in TN\,,
 $$
where $\pi_{\mathfrak{so}(S)}:N\times\mathfrak{so}(S)\to\mathfrak{so}(S)$ and
$\pi_{\mathfrak{so}(E)}:N\times\mathfrak{so}(E)\to\mathfrak{so}(E)$ are the projections of these two products on their respective second factor.
\par\smallskip

A \emph{motion} of the rigid body during a time interval $[t_0,t_1]$ is mathematically described by a smooth 
parametrized curve $\gamma:[t_0,t_1]\to N$. 
In his beautiful paper~\cite{arnold}, Vladimir Arnold clearly explained the
physical meaning, for each $t\in[t_0,t_1]$, of 
$\displaystyle\frac{\d\gamma(t)}{\d t}$, 
$\displaystyle\Omega_S\left(\frac{\d\gamma(t)}{\d t}\right)$ and
$\displaystyle\Omega_E\left(\frac{\d\gamma(t)}{\d t}\right)$:

\begin{itemize}
\item{} $\displaystyle \frac{\\d\gamma(t)}{\d t}\in T_{\gamma(t)}N$ is the value, at time $t$, of the 
\emph{true angular velocity} of the body, 

\item{} $\displaystyle\Omega_S\left(\frac{\d \gamma(t)}{\d t}\right)$ is the
value, at time $t$, of the 
\emph{angular velocity of the body seen by an observer bound to the moving body}
and moving with it,

\item{}and $\displaystyle\Omega_E\left(\frac{\d\gamma(t)}{\d t}\right)$ is the 
value, at time $t$, of the 
\emph{angular velocity of the body seen by an observer bound to the Galilean
reference frame in which the motion is studied} and at rest with respect to 
that reference frame.
\end{itemize}
\par\smallskip

The following comments may be useful to explain Arnold's assertions. To shorten the notations, let us state, for some time $t\in[t_0,t_1]$, $x=\gamma(t)\in N$,
$\displaystyle v=\frac{\d\gamma(t)}{\d t}\in T_xN$, $X=\Omega_S(v)\in\so(S)$ 
and $Y=\Omega_E(v)\in\so(E)$. We have
 $$\widetilde\varphi_S(x,X)=\widetilde\varphi_E(s,Y)=v\,.$$ 
Let $z\in S$ be some material point of the moving body. Its position at time $t$
is $x(z)\in E$ and its velocity is
$\displaystyle \frac{\d}{\d t}\bigl(\gamma(t)(z)\bigr)\in T_{x(z)}E$.
It depends only of $\displaystyle v=\frac{\d\gamma(t)}{\d t}\in T_xN$, not of the whole curve $\gamma$. We can therefore replace $\gamma$ by the parametrized curve $s\mapsto \exp(sY)\circ x$, since we have
 $$\frac{\d\bigl(\exp(sY)\circ x)}{\d s}\Bigm|_{s=0}=Y_N(x)=v\,.
 $$   
Therefore the velocity at time $t$ of the material point $z\in S$ is
 $$\frac{\d\bigl(\exp(sY)\circ x(z)\bigr)}{\d s}\Bigm|_{s=0}
   =Y_E\bigl(x(z)\bigr)=\bigl(\vect{x(z)},\vect Y\times \vect{x(z)})
     \in TE\equiv E\times E\,,
 $$
where we have denoted by $Y_E$ the fundamental vector field on $E$ associated
to $Y\in\so(E)$for the action $\Phi_E$, and used Formula $(3)$ of Section \ref{VectorCalculus}. This proves that the fundamental vector field $Y_E$ is 
the velocity field of the rigid body as it appears in space $E$ at time $t$, and explains why Arnold calls $Y$ the angular velocity of the body seen by an observer bound to the Galilean frame in which the motion is studied.
\par\smallskip

Since $\displaystyle\frac{\d\gamma(t)}{\d t}=\frac{\d\bigl(x\circ\exp(sX)
        \bigr)}{\d s}\Bigm|_{s=0}$,
the value at $x(z)\in E$ of the fundamental vector field $Y_E$ is also given by
$\displaystyle \frac{\d\bigl(x\circ\exp(sX)(z)\bigr)}{\d s}\Bigm|_{s=0}$.
The pull-back $x^*(Y_E)$ by the isomorphism $x:S\to E$ of the fundamental 
vector field $Y_E$, \emph{i.e.} of the velocity field of the moving body
in space $E$ at time $t$, is the vector field on $S$ whose value at $z\in S$ is
 $$x^*(Y_E)(z)=\frac{\d\bigl(\exp(sX)(z)\bigr)}{\d s}\Bigm|_{s=0}=X_S(z)
   =(\vect z,\vect X\times\vect z)\in TS\equiv S\times S\,,
 $$
where we have again used Formula $(3)$ of Section \ref{VectorCalculus}. The 
pull-back $x^*(Y_E)$ of the velocity field of the moving body in space $E$ at time $t$, by the isomorphism $x:S\to E$,  is therefore the fundamental vector field $X_S$ associated to $X\in\so(S)$, for the action $\Phi_S$. That 
explains why Arnold calls $X$ the angular velocity of the body seen by an observer bound to the moving body.
\par\smallskip

The above observations prove that for any $x\in N$ and $v\in T_xN$,
 $$x\bigl(\vect{\Omega_S(v)}\bigr)=\vect{\Omega_E(v)}\,,\eqno(1)
 $$
the arrows over $\Omega_S(v)\in\so(S)$ and $\Omega_E(v)\in\so(E)$ 
indicating that they are here considered as vectors in $S$ and in 
$E$, respectively.
\par\smallskip

When the true angular velocity of the body is $v\in TN$, its 
\emph{kinetic energy} is 
 $${\mathbb T}(v)=\frac{1}{2} I\bigl(\Omega_S(v),\Omega_S(v)\bigr)\,,
 $$
where $I:\mathfrak{so}(S)\times\mathfrak{so}(S)\to{\mathbb R}$ is a symmetric, 
positive definite bilinear form
which describes the \emph{inertia properties} of the body. 
The assumed rigidity 
of the body is mathematically described by the fact that the bilinear 
form $I$ does not depend on time, nor on the configuration $\tau_N(v)$ 
of the body. Let us set, for each pair $(v,w)$ of vectors in $TN$ such that 
$\tau_N(v)=\tau_N(w)$,
 $$\widetilde{\mathbb T}(v,w)=\frac{1}{2}I\bigl(\Omega_S(v),\Omega_S(w)\bigr)\,,
    \quad\text{so we can write}\ {\mathbb T}(v)=\widetilde{\mathbb T}(v,v)\,.$$
\par\smallskip
The symmetric bilinear form $\widetilde{\mathbb T}$ is a Riemannian 
metric on the manifold $N$. Let us  consider the effects on 
$\widetilde{\mathbb T}$ of the canonical lifts 
to $TN$ of the actions $\Phi_E$ and $\Phi_S$ on the manifold $N$. 
For each $g_E$ in $\SO(E)$, $g_S\in \SO(S)$, we denote
by $\Phi_{E\,g_E}:N\to N$ and by $\Phi_{S\,g_S}:N\to N$ the diffeomorphisms
 $$\Phi_{E\,g_E}(x)=\Phi_E(g_E,x)=g_E\circ x\,,\quad
   \Phi_{S\,g_S}(x)=\Phi_S(x,g_S)=x\circ g_S\,,\quad x\in N\,.
 $$
For each $v\in TN$, with $\tau_N(v)=x\in N$, we have of course
 $$\widetilde\varphi_S\bigl(x,\Omega_S(v)\bigr)=v\,.
 $$   
Since the actions
$\Phi_E$ and $\Phi_S$ commute we have, for any $g_E\in \SO(E)$, 
$v\in TN$, $t\in\RR$ and $x=\tau_N(v)\in N$, 
 $$\Phi_E\Bigl(g_E,x\circ\exp\bigl(t\Omega_S(v)\bigr)\Bigr)=g_E\circ x\circ
          \exp\bigl(t\Omega_S(v)\bigr)
  =\Phi_S\Bigl(g_E\circ x,\exp\bigl(t\Omega_S(v)\bigr)\Bigr)\,.
 $$
By taking the derivative with respect to $t$, then setting $t=0$, we get
 $$T\Phi_{E\,g_E}(v)=\widetilde\varphi_S\bigl(g_E\circ x,\Omega_S(v)\bigr)\,,
 $$    
which means that
 $$\Omega_S\bigl(T\Phi_{E\,g_E}(v)\bigr)=\Omega_S(v)\,.$$
The Riemannian metric $\widetilde{\mathbb T}$ therefore satisfies, 
for each $g_E\in \SO(E)$ and each pair $(v,w)$ of vectors in $TN$ which 
satisfy $\tau_N(v)=\tau_N(w)$,
 $$\widetilde{\mathbb T}\bigl(T\Phi_{E\,g_E}(v),T\Phi_{E\,g_E}(w)\bigr)
   =\widetilde{\mathbb T}(v,w)\,.
 $$
This result means that the Riemannian metric $\widetilde{\mathbb T}$ 
remains invariant under the canonical lift to $TN$ of the action $\Phi_E$.
\par\smallskip

A similar calculation, in which $g_E\in \SO(E)$ is replaced by
$g_S\in \SO(S)$, proves that, for each $v\in TN$,
 $$\Omega_S\bigl(T\Phi_{S\,g_S}(v)\bigr)=\Ad_{g_S^{-1}}\bigl(\Omega_S(v)\bigr)\,,
 $$ 
so we have, for $v$ and $w\in TN$ satisfying $\tau_N(v)=\tau_N(w)$,
 $$\widetilde{\mathbb T}\bigl(T\Phi_{Sg_S}(v),T\Phi_{Sg_S}(w)\bigr)
   =\frac{1}{2}I\bigl(\Ad_{g_S^{-1}}\circ\Omega_S(v),
                       \Ad_{g_S^{-1}}\circ\Omega_S(w)\bigr)\,.
 $$
For a general rigid body, the kinetic energy $\mathbb T$ and the
Riemannian metric $\widetilde{\mathbb T}$ 
do not remain invariant under the canonical lift to $TN$ 
of the action $\Phi_S$. However, 
let us define an action on the left of $G_S$ on the vector space 
of bilinear forms forms on $\mathfrak{so}(S)$ by setting, for each 
such bilinear form $B$ and each $g_S\in G_S$
 $$(g_S.B)(X_S,Y_S)=B\bigl(\Ad_{g_S^{-1}}(X_S),
                            \Ad_{g_S^{-1}}(Y_S)\bigr)\,,\quad
         X_S\ \text{and}\ Y_S\in\mathfrak{so}(S)\,.
 $$ 
We see that the kinetic energy $\mathbb T$ and the Riemannian metric
$\widetilde{\mathbb T}$ remain invariant under the action
of an element $g_S\in G_S$ if and only if $g_S.I=I$, \emph{i.e.} if and only if 
$g_S$ is an element of the isotropy subgroup if $I$ for the above defined action
of $G_S$ on the space of bilinear forms on $\mathfrak{so}(S)$. This happens, for example, when the body has a symmetry axis, the isotropy subgroup of $I$ being the group of rotations around that axis.   
\par\smallskip
 
When the configuration of the body is $x\in N$, its \emph{potential energy} is
 $$U(x)=-\bigl\langle P,x(\vect a)\bigr\rangle\,,$$
where $\vect a\in S$ is the vector whose origin is the fixed point $O_S$ and extremity the centre of mass of the body,
and $P\in E^*$ is the gravity force. Since $E$ is identified with its dual $E^*$, 
the pairing by duality being the scalar product, $P$ can be seen as a fixed 
vertical vector $\vect P\in E$ directed downwards, equal to the weight of the body (product of its mass with the gravity acceleration), and the potential energy can be written
 $$U(x)=-\vect P.\vect{x(a)}=-\vect{x^{-1}(P)}.\vect a\,,$$
where we have written $\vect{x(a)}$ for $x(\vect a)$ 
and $\vect{x^{-1}(P)}$ for $x^{-1}(\vect P)$ 
to stress the fact that they are vectors, elements of $E$ and of $S$, respectively.
We also used the fact that the transpose $x^T:E^*\to S^*$ of the orthogonal linear map $x:S\to E$ is expressed, when $S$ and $E$ are identified with their dual spaces by means of the scalar product, as $x^{-1}:E\to S$.
\par\smallskip

When either $\vect a=0$ or $\vect P=0$ the potential energy vanishes, therefore 
remains invariant under the actions $\Phi_E$ of $G_E$ and $\Phi_S$ of $G_S$ on
the manifold $N$. When both $\vect a\neq 0$ and $\vect P\neq 0$, the above formulae
show that the potential energy remains invariant by the action
of an element $g_E\in \SO(E)$ if and only if $\vect{g_E(P)}=\vect P$, which
means if and only if $g_E$ is an element of the isotropy group of $\vect P$ for the natural action of $G_E$ on $E$. This isotropy subgroup is the group of rotations
of $E$ around the vertical straight line through the fixed point. Simlilarly, the potential energy remains invariant by the action
of an element $g_S\in \SO(S)$ if and only if $\vect{g_S(a)}=\vect a$, which
means if and only if $g_S$ is an element of the isotropy group of $\vect a$ for the natural action of $G_S$ on $S$. This isotropy subgroup is the group of rotations
of $S$ around the straight line which joins the fixed point and the centre of mass
of the body. 
\par\smallskip
  
The motion of the rigid body can be mathematically described
by a Lagrangian system whose Lagrangian $L:TN\to\RR$ is given, for $v\in TN$, by 
 $$L(v)=\widetilde{\mathbb T}(v,v)+\vect P.\vect{\tau_N(v)(a)}
       =\widetilde{\mathbb T}(v,v)+\vect{\bigl(\tau_N(v)\bigr)^{-1}(P)}.\vect a\,.
 $$
We denote by $\widetilde{\mathbb T}^\flat:TN\to T^*N$ the map determined by
the equality, in which $v$ and $w\in TN$ satisfy $\tau_N(v)=\tau_N(w)$,
 $$\bigl\langle\widetilde{\mathbb T}^\flat(v),w\bigr\rangle
   =\widetilde{\mathbb T}(v,w)\,.  
 $$
The Legendre map ${\mathcal L}:TN\to T^*N$ determined by the Lagrangian $L$ is
 $${\mathcal L}=2\widetilde{\mathbb T}^\flat\,.$$
Its linearity and the positive definiteness 
of $I$ ensure that it is a vector bundles isomorphism. The motion of the 
rigid body can therefore be described by a Hamiltonian system whose Hamiltonian
$H:T^*N\to\RR$ is given, for $p\in T^*N$, by
 $$H(p)=\frac{1}{4}\bigl\langle p,(\widetilde{\mathbb T}^\flat)^{-1}(p)\bigr\rangle
   -\vect P.\vect{\pi_N(p)(a)}
       =\frac{1}{4}\bigl\langle p,(\widetilde{\mathbb T}^\flat)^{-1}(p)\bigr\rangle
   -\vect{\bigl(\pi_N(p)\bigr)^{-1}(P)}.\vect a\,.
 $$

\subsubsection{The Hamiltonian in terms of momentum maps}
\label{HamiltonianMomentum}
Let $x\in N$ be fixed. The maps 
 $$\Omega_{Sx}=\Omega_S\bigm|_{T_xN}:T_xN\to\mathfrak{so}(S)\quad\text{and}\quad
   \Omega_{Ex}=\Omega_E\bigm|_{T_xN}:T_xN\to\mathfrak{so}(E)
 $$
are vector spaces isomorphisms. Their transpose
 $$\Omega_{Sx}^T:\mathfrak{so}(S)^*\to T^*_xN\quad\text{and}\quad 
    \Omega_{Ex}^T:\mathfrak{so}(E)^*\to T^*_xN
 $$
are too vector spaces isomorphisms. Their inverses are closely linked to the momentum maps $J_S:T^*N\to\mathfrak{so}(S)^*$ and
$J_E:T^*N\to\mathfrak{so}(E)^*$ of the canonical lifts to $T^*N$ of the actions
$\Phi_S$ of $G_S$ and $\Phi_E$ of $G_E$, respectively, on the manifold $N$.
We have indeed, for any $x\in N$,
 $$J_S\bigm|_{T^*_xN}=(\Omega_{Sx}^T)^{-1}\,,\quad
   J_E\bigm|_{T^*_xN}=(\Omega_{Ex}^T)^{-1}\,.
 $$
As above, let $x\in E$ be fixed and let $v$ and $w\in T_xN$. The Legendre map
${\mathcal L}:TN\to T^*N$ satisfies
 \begin{align*}
 \bigl\langle {\mathcal L}(v),w\bigr\rangle
 &=\frac{1}{2}\frac{\d}{\d s}
   I\bigl(\Omega_S(v+sw),\Omega_S(v+sw)\bigr)\bigm|_{s=0}\\
 &=I\bigl(\Omega_S(v),\Omega_S(w)\bigr)\\
 &=\bigl\langle I^\flat\circ\Omega_S(v),\Omega_S(w)\bigr\rangle\\
 &=\bigl\langle\Omega_{Sx}^T\circ I^\flat\circ\Omega_S(v),w\bigr\rangle\,,
 \end{align*} 
where $I^\flat:\mathfrak{so}(S)\to\mathfrak{so}(S)^*$ is the map defined by
$$\bigl\langle I^\flat(X_S),Y_S\bigr\rangle=I(X_S,Y_S)\,,
   \quad X_S\ \hbox{and}\ Y_S\in\mathfrak{so}(S)\,.
$$
So we can write
 $${\mathcal L}\bigm|_{T_xN}=\Omega_{Sx}^T\circ I^\flat\circ\Omega_S\bigm|_{T_xN}\,,
 $$
which shows that the momentum map $J_S$ composed with the Legendre map 
$\mathcal L$ has the very simple expression 
 $$J_S\circ{\mathcal L}=I^\flat\circ \Omega_S\,.$$
The momentum map $J_E$ composed with $\mathcal L$ has a slightly more complicated 
expression, valid for each $x\in N$,
 $$J_E\circ{\mathcal L}\bigm|_{T_xN}=(\Omega_{Ex}^T)^{-1}\circ\Omega_{Sx}^T
       \circ I^\flat\circ\Omega_S\bigm|_{T_xM}\,.
 $$ 
Let $I^*:\mathfrak{so}(S)^*\times\mathfrak{so}(S)^*\to\RR$ be the symmetric, positive definite bilinear form on ${\mathcal G}^*$
 $$I^*(\xi,\eta)=I\bigl((I^\flat)^{-1}(\xi),(I^\flat)^{-1}(\eta) \bigr)
                =\bigl\langle\xi,(I^\flat)^{-1}(\eta)
                 \bigr\rangle
                =\bigl\langle\eta,(I^\flat)^{-1}(\xi)
                 \bigr\rangle\,.
 $$
The above expression of $J_S\circ{\mathcal L}$ and the bilinear form $I^*$ allow us to write the Hamiltonian $H$ as
 $$H(p)=\frac{1}{2}I^*\bigl(J_S(p),J_S(p)\bigr)-\vect{\pi_N(p)^{-1}(P)}.\vect a\,,
        \quad p\in T^*N\,.
 $$
Although the kinetic energy remains invariant under the canonical lift to $T^*N$
of the action $\Phi_E$,
the expression of $H$ in terms of the other momentum map $J_E$ is too 
complicated to be useful.

\subsubsection{The Euler-Poincar\'e equation}\label{EulerPoincareRigidBody}
We use the vector bundles isomorphism 
$\widetilde\varphi_S:N\times\mathfrak{so}(S)\to TN$ to derive the 
Euler-Poincar\'e equation. The map 
$\overline{\mathstrut L}=L\circ\widetilde\varphi_S:N\times\mathfrak{so}(S)\to\RR$ 
is
 $${\overline{\mathstrut L}}(x,X)=\frac{1}{2}I(X,X)
   +\vect P.\vect{x(a)}\,,
     \quad X\in\mathfrak{so}(S)\,,\ x\in N\,.
 $$
Its partial differential 
$\d_2\overline{\mathstrut L}$ 
with respect to its second variable is
 $$\d_2{\overline{\mathstrut L}}(x, X)=I^\flat(X)\in\mathfrak{so}(S)^*
         \equiv S\,.$$
A calculation similar to those of Section \ref{HamiltonianMomentum} leads to
the following expression of $J_S$ composed with the partial differential of 
$\overline{\mathstrut L}$ with respect to its first variable:
 $$J_S\circ\d_1\overline L(x,X)=\vect a\times\vect{x^{-1}(P)}
       \in\mathfrak{so}(S)^*\equiv S\,.
 $$
Let $t\mapsto x(t)$ be a smooth curve in $N$ solution of the Euler-Lagrange equation
for the Lagrangian $L$, and $t\mapsto X(t)$ a smooth curve in 
$\mathfrak{so}(S)$ which satisfies the compatibility condition $(1)$ of 
\ref{Euler-Poincare2}
 $$\frac{\d x(t)}{dt}=\widetilde\varphi_S\bigl(x(t),X(t)\bigr)
    \,.\eqno(1)
 $$
The Euler-Poincar\'e equation $(3)$ of \ref{Euler-Poincare2}, 
satisfied by the smooth curve
$t\mapsto\bigl(x(t),X(t)\bigr)$ in $N\times\mathfrak{so}(S)$, is
 $$\left(\frac{\d }{\d t}-\ad^*_{X(t)}\right)\Bigl(I^\flat\bigl(X(t)\bigr)\Bigr)
   =\vect a\times\vect{x^{-1}(P)}\,.
 $$
Using the expression of $\ad^*$ given by Formula $(4)$ of \ref{VectorCalculus}, 
we can write the Euler-Poincar\'e equation as 
 $$\frac{\d}{\d t}\vect{I^\flat\bigl(X(t)\bigr)}
   -\vect{I^\flat\bigl(X(t)\bigr)}\times\vect{X(t)}
   =\vect a\times\vect{P_S(t)}\,,\eqno(2)
 $$
where we have set $\vect{P_S(t)}=\vect{x(t)^{-1}(P)}$. 
The physical meaning of the quantities which appear in this equation
is the following: $\vect{X(t)}$ is the angular velocity,
$\vect{I^\flat\bigl(X(t)\bigr)}$ the angular momentum 
and $\vect{P_S(t)}$ the weight of the moving body, all three a time $t$ and seen by an observer bound to the body, therefore considered as vectors 
in $S$. We recognize the classical \emph{Euler equation} for the motion
of a rigid body around a fixed point. 
\par\smallskip

Of course $x(t)\bigl(\vect{P_S(t}\bigr)=\vect P$ is a constant vector in $E$, therefore
 $$\frac{\d\Bigl(x(t)\bigl(\vect{P_S(t}\bigr)\Bigr)}{\d t}
   =\frac{\d x(t)}{\d t}\bigl(\vect{P_S(t}\bigr)
    +x(t)\left(\frac{\d \vect{P_S(t}}{\d t}\right)=0\,.
 $$
The first term in the right hand side, 
$\displaystyle\frac{\d x(t)}{\d t}\bigl(\vect{P_S(t}\bigr)$, is the value at
$\vect P\in E$ of the velocity field in $E$ of the moving body. Therefore
 $$\frac{\d x(t)}{\d t}\bigl(\vect{P_S(t}\bigr)
   =\vect{\Omega_E(v)}\times\vect P\,,\quad\text{with}\ v=\frac{\d x(t)}{\d t}
     \in T_{x(t)}N\,. 
 $$
Therefore we have
 $$\frac{\d \vect{P_S(t)}}{\d t}=-x(t)^{-1}
    \Bigl(\vect{\Omega_E(v)}\times\vect P\Bigr)
    =-\vect{X(t)}\times \vect{P_S(t)}\,,
 $$
since, by Formula $(1)$ of \ref{MathRigidBody},
$x(t)^{-1}\bigl(\vect{\Omega_E(v)}\bigr)=\vect{\Omega_S(v)}=\vect{X(t)}$.
The compatibility condition and the Euler-Poincar\'e equation 
(Equations $(1)$ and $(2)$ of this Section) have lead us to the differential
equation on $S\times S$, for the unknown parametrized curve 
$t\mapsto\bigl(\vect{X(t)},\vect{P_S(t}\bigr)$,
 \begin{equation*}
  \left\{
  \begin{aligned}
   \frac{\d\vect{I^\flat\bigl(X(t)\bigr)}}{\d t}
   &=\vect{I^\flat\bigl(X(t)\bigr)}\times\vect{X(t)}
     +\vect a\times\vect{P_S(t)}\,,\\
     \frac{\d \vect{P_S(t)}}{\d t}
   &=-\vect{X(t)}\times \vect{P_S(t)}\,.
   \end{aligned}
 \right.\eqno(3)
 \end{equation*}

When the right hand side of equation $(2)$ vanishes, 
which occurs when  the 
fixed point is the centre of mass of the body ($\vect a=0$) or when there 
is no gravity field ($\vect P=0$), the Euler-Poincar\'e  equation yields an 
important reduction, since the first equation of $(3)$ 
becomes an autonomous differential 
equation on the three-dimensional vector space $S$ for the smooth curve 
$t\mapsto \vect{I^\flat\bigl(X_S(t)\bigr)}$, while the
Euler-Lagrange equation or the Hamilton equation live on the six-dimensional manifolds $TN$ or $T^*N$, respectively. Under these assumptions, the study 
of all possible motions of the rigid body is is known in Mechanics as the
\emph{Euler-Poinsot problem}. The reader will find in 
\cite{CushmanBates1997} a very nice and thorough geometric presentation 
of the phase portrait of this problem. 
\par\smallskip

When $\vect a\neq 0$ and $\vect P\neq 0$, the first equation of $(3)$ 
is no more autonomous: one has to solve $(3)$ on the six-dimensional 
vector space $S\times S$. The use of the Euler-Poincar\'e equation
does not allow a reduction of the dimension of the phase space, but
$(3)$ may be easier to solve than the Euler-Lagrange equation or the Hamilton equation, because it lives on a vector space instead of on the tangent or cotangent bundle to a manifold.   

\subsubsection{Use of the Lie algebra of Euclidean displacements}
As explained for example in Theorem 4.1 of \cite{Marle2003} or in
Proposition 13 and Example 14 of \cite{Marle2013}, there exists a
Hamiltonian action on $T^*N$ of the semi-direct product $G_S\times S$, 
(the group of Euclidean displacements, generated by rotations and translations, 
of the Euclidean affine space $S$)
which extends the canonical lift to $T^*N$ of the action $\Phi_S$, 
such that the Hamiltonian $H$ can be expressed as composed of the
momentum map of that action with a smooth function 
$h:\mathfrak{so}(S)^*\times S^*\to\RR$. We briefly explain below the construction of that action. 
\par\smallskip

For each $\vect b\in S$, let
$f_{\vect b}:N\to\RR$ be the smooth function
 $$f_{\vect b}(x)=
       \bigl\langle x^T(P),\vect b\bigr\rangle=\vect b.\vect{x^{-1}(P)}\,,
          \quad x\in N\,.
 $$
The map $\Psi:T^*N\times S\to T^*N$ defined by
 $$\Psi(p,\vect b)=p-\d f_{\vect b}\circ\pi_N(p)\,,\quad
    p\in T^*N\,,\ \vect b\in S\,,
 $$
is a Hamiltonian action of $S$ on the symplectic 
manifold $(T^*N,\d\eta_N)$:
the Lie algebra of $S$ can indeed be identified with $S$, 
the exponential map becoming the identity of $S$, and 
for each $\vect b\in S$, the
vector field on $T^*N$ whose flow is the one-parameter group of transformations
of $T^*N$  
 $$\Bigl\{p\mapsto p-t\d f_{\vect b}\bigl(\pi_N(p)\bigr)\,;t\in\RR\Bigr\}$$ 
is Hamiltonian an admits as Hamiltonian the function 
 $$p\mapsto f_{\vect b}\circ\pi_N(p)==\vect b.\vect{\pi_N(p)^{-1}(P)}
     \,,\quad p\in T^*N.$$
This formula proves that $\Psi$ is a Hamiltonian action which 
admits
 $$J_\Psi:T^*N\to S^*\equiv S\,,\quad J_\Psi(p)=\vect{\pi_N(p)^{-1}(P)}
 $$ 
as a momentum map. Gluing together $\Psi$ with the canonical lift $\widehat\Phi_S$ of $\Phi_S$
to the cotangent bundle, we obtain a Hamiltonian action on
the right $\Xi$ of the semi-direct product $G_S\times S$ on the symplectic
manifold $(T^*N,\d\eta_N)$:
 $$\Xi\bigl(p,(g_S,\vect b)\bigr)=\Psi(\widehat\Phi_S(p,g_S),\vect b)$$
with $(J_S,J_\psi):T^*N\to\mathfrak{so}(S)^*\times S^*
\equiv S\times S$ as a momentum map. The function 
$h:\mathfrak{so}(S)^*\times S^*\equiv S\times S\to \RR$
 $$h(\vect\eta,\vect\zeta)=\frac{1}{2}I^*(\vect\eta,\vect\eta)-
      \vect\zeta\vect\eta
 $$
is such that the Hamiltonian $H:T^*N\to \RR$ can be written as
$H=h\circ(J_S,J_\Psi)$, and  
Equation $(3)$ of Section \ref{EulerPoincareRigidBody} is the Hamilton equation 
on $\mathfrak{so}(S)^*\times S^*
     \equiv S\times S$ (endowed with its canonical Poisson structure)
for the Hamiltonian $h$. This result is in agreement with the fact that
$(J_S,J_\Psi)$ is an $\ad^*$-invariant Poisson map 
(\ref{MomentumPoissonMap}).

\subsubsection{Reduction by the use of first integrals}
\label{RigidBodyFirstIntegrals}
The effects on the kinetic and potential energies of the Hamiltonian 
actions $\widehat\Phi_E$ and $\widehat\Phi_S$ were discussed in Section 
\ref{MathRigidBody}. When $\vect a\neq 0$ and $\vect P\neq 0$, the Hamiltonian $H$ remains 
invariant under the restriction of the action $\widehat\Phi_E$ to the subgroup of rotations around the vertical axis through the fixed point. The corresponding momentum map, which is the orthogonal projection of the momentum map $J_E$
on the vertical direction, is therefore a first integral. Another first integral is the total energy, \emph{i.e.} the Hamiltonian $H$ itself. For a general rigid body, no other independent first integrals are known. However, in two special cases of particular rigid bodies, there exists another independent first integral. 
\par\smallskip

The first case, known as the 
\emph{Euler-Lagrange problem} in Mechanics,
is when the straight line which joins the fixed point and the centre of mass of the body is an axis of symmetry for the inertia properties of the body. The Hamiltonian $H$ remains then invariant under the restriction of the action 
$\widehat\Phi_S$ to the subgroup of $\SO(S)$ of rotations around this straight line. The corresponding momentum map is the orthogonal projection of the momentum map $J_S$ on the direction of the symmetry axis.
\par\smallskip

The second case, discovered by the Russian mathematican Sonya Kovalevskaya    
(1850--1891) \cite{Kowalevski1889} is when two of the principal moments of inertia of the body are equal to twice the third and when the centre of mass
of the body lies in the plane of the two equal moments of inertia. The explanation of the existence, in this very special case, of an additional integral is much more complicated than that of the existence of an 
additional integral
for the Euler-Lagrange problem, and involves mathematical tools which are not discussed in the present paper. The reader is referred to the book by 
Mich\`ele Audin \cite{Audin1996} for a discussion of these tools and to the beautiful other book by the same author \cite{Audin2008} for a very moving presentation of the life of Sonya Kovalevskaya.
\par\smallskip

When $\vect a= 0$ or $\vect P= 0$ (the Euler-Poinsot problem) 
the Hamiltonian $H$ remains invariant under the action $\widehat\Phi_E$ 
of the full group $SO(E)$, so the corresponding momentum map $J_E$ is (as already seen
in Section \ref{EulerPoincareRigidBody}) a (vector valued) first integral.

\subsection{The Kepler problem}\label{KeplerProblem}

\subsubsection{Mathematical description of the problem}
We consider the motion in space of a material point of mass $m$  
submitted to the gravitational field created by an attractive centre $O$. 
Taking $O$ as origin allows us to consider $E$ as a \emph{vector}
Euclidean three-dimensional oriented space. The configuration space, 
\emph{i.e.} the set of all possible positions of the material point, 
is $N=E\backslash\{O\}$. The tangent bundle $TN$ and the cotangent bundle 
$T^*N$ will both be identified with $N\times E$. An element of $TN$ is therefore a pair
$(\vect x,\vect v)\in E\times E$ satisfying $\vect x\neq 0$.
Similarly an element of $T^*N$ is a pair
$(\vect x,\vect p)\in E\times E$ satisfying $\vect x\neq 0$.
\par\smallskip

The kinetic energy ${\mathbb T}:TN\equiv N\times E\to\RR$
and the potential energy $U:N\to\RR$ are

 $${\mathbb T(\vect x,\vect v)}=\frac{1}{2}m\Vert\vect v\Vert^2\,,\quad
     U(\vect x)=\frac{mk}{\Vert\vect x\Vert}\,.
 $$
The Lagrangian $L:TN\equiv N\times E\to\RR$ of the Kepler problem is therefore
 $$L(\vect x,\vect v)=\frac{1}{2}m\Vert\vect v\Vert^2
   +\frac{mk}{\Vert\vect x\Vert}\,.
 $$ 
The Legendre map ${\mathcal L}:TN\equiv N\times E\to T^*N\equiv N\times E$ is
 $${\mathcal L}(\vect x,\vect v)=(\vect x,\vect p)\,,\quad\text{with}\ 
     \vect p=m\vect v\,.
 $$
The Kepler problem can therefore be mathematically formulated as a Hamiltonian
dynamical system on $T^*N\equiv N\times E$, with the Hamiltonian
 $$H(\vect x,\vect p)=\frac{1}{2m}\Vert\vect p\Vert^2
   -\frac{mk}{\Vert\vect x\Vert}\,.
 $$
The natural action $\Phi_E$ of ${\SO}(E)$ on $E$ leaves invariant 
$N=E\backslash\{O\}$, therefore is an action of $\SO(E)$ on $N$. 
With the identifications of $TN$ and
$T^*N$ with $N\times E$ which we have made, the canonical lifts $\overline\Phi_E$
and $\widehat\Phi_E$
of that action to the tangent and cotangent bundles, respectively, 
are expressed as 
 $$\overline\Phi_E\bigl(g_E,(\vect x,\vect v)\bigr)
   =\bigl(g_E(\vect x),g_E(\vect v)\bigr)\,,\
    \widehat\Phi_E\bigl(g_E,(\vect x,\vect p)\bigr)
   =\bigl(g_E(\vect x),g_E(\vect p)\bigr)\,.
 $$
Since the norm of a vector in $E$ remains invariant under the action $\Phi_E$, the Lagrangian $L$ and the Hamiltonian $H$ remain invariant under the actions
$\overline\Phi_E$ and $\widehat\Phi_E$, respectively. The action $\widehat\Phi_E$ is Hamiltonian, and we know (Formula $(4)$ of \ref{VectorCalculus}) that its momentum map $J_E:T^*N\equiv N\times E\to \mathfrak{so}(E)^*\equiv E$ is
 $$J_E(\vect x,\vect p)=\vect x\times\vect p\,.
 $$
The map $J_E$ is the \emph{angular momentum} 
of the moving material point with respect to the attractive centre. 
Noether's theorem (\ref{noether}) shows that
it is a first integral of the Kepler problem. Another first integral of 
the Kepler problem is the total energy $H$, as shown by \ref{FirstIntegrals}.

\subsubsection{The Euler-Poincar\'e equation}
The Lie group $\SO(E)$ does not act transitively on $N$ by the action $\Phi_E$, since the orbits of this action are spheres centered on $O$. However, extending
this action by homotheties of strictly positive ratio, we obtain a transitive action on $N$ of the direct product $\SO(E)\times\,]0,+\infty[$
 $$\Psi_E\bigl((g_E,r),\vect x\bigr)=rg_E(\vect x)\,,\quad g_E\in\SO(E)\,,\ 
    r\in\,]0,+\infty[\,,\ \vect x\in N\,.
 $$
Let $\psi_E:\mathfrak{so}(E)\times\RR\to A^1(N)$ be the associated action of
the Lie algebra $\mathfrak{so}(E)\times\RR$. The map
$\widetilde\psi_E:N\times\bigl(\mathfrak{so}(E)\times\RR\bigr)\to TN$, 
$\widetilde\psi_E\bigl(\vect x,(\vect X,\lambda)\bigr)
=\psi_E(\vect X,\lambda)(\vect x)$,
can be written, with the identifications of $\mathfrak{so}(E)$ with $E$ and of $TN$ with $N\times E$,
 $$\widetilde\psi_E\bigl(\vect x,(\vect X,\lambda)\bigr)
   =\bigl(\vect x, \vect X\times\vect x+\lambda\vect x)\,.
 $$
The function $\overline{\mathstrut L}=L\circ\widetilde\psi_E$ is therefore
 $$\overline{\mathstrut L}\bigl(\vect x,(\vect X,\lambda)\bigr)
   =\frac{1}{2}m\Vert\vect x\Vert^2\bigl(\Vert\vect X\Vert^2+\lambda^2\bigr)
    -\frac{1}{2}m(\vect X.\vect x)^2
    +\frac{mk}{\Vert\vect x\Vert}\,.
 $$
Its partial differentials $\d_1\overline{\mathstrut L}$ and 
$\d_2\overline{\mathstrut L}$ with respect to its first variable $\vect x$ and to its second variable $(\vect X,\lambda)$ are, with the identifications of 
$E^*$ and $\bigl(\mathfrak{so}(E)\times\RR)^*$ with, respectively,
$E$ and $E\times\RR$,
 \begin{align*}
  \d_1\overline{\mathstrut L}\bigl(\vect x,(\vect X,\lambda)\bigr)
 &=\left(m(\Vert\vect X\Vert^2+\lambda^2)-
    \frac{mk}{\Vert\vect x\Vert^3}\right)\vect x
     -m(\vect X.\vect x)\vect X\,,\\
     \d_2\overline{\mathstrut L}\bigl(\vect x,(\vect X,\lambda)\bigr)
    &=\bigl(m\Vert\vect x\Vert^2\vect X
      -m(\vect X.\vect x)\vect x,
       m\Vert\vect x\Vert^2\lambda\bigr)\,.
 \end{align*}
The canonical lift $\widehat\Psi_E$ of $\Psi_E$ to the cotangent bundle
is a Hamiltonian action, whose momentum map 
$(J_E, K_E):T^*N\to \mathfrak{so}(E)^*\times\RR$ has $J_E$ as first component. Its second component is
 $$K_E(\vect x,\vect p)=\vect x.\vect p\,.
 $$
Let $t\mapsto \vect{x(t)}$ be a smooth curve in $N$, parametrized by the time $t$, solution of the Euler-Lagrange equation for the Lagrangian $L$. The compatibility
condition $(1)$ of \ref{Euler-Poincare2}, for a smooth map 
$t\mapsto\bigl(\vect{X(t)},\lambda(t)\bigr)$ in $\mathfrak{so}(E)\times\RR$, 
can be written as
 $$\frac{\d\vect{x(t)}}{\d t}=\vect{X(t)}\times \vect{x(t)}+\lambda(t)\vect{x(t)}\,.
    \eqno(1)$$
This equation does not involve the component of
$\vect{X(t)}$ parallel to $\vect{x(t)}$, since the vector product of this component with $\vect{x(t)}$ vanishes.  
\par\smallskip

The Euler-Poincar\'e equation $(3)$ of \ref{Euler-Poincare2} has now two
components, on $\so(E)^*$ and  on $\RR^*$ identified, respectively,
with $E$ and with $\RR$.
With the above expressions of $\d_1\overline{\mathstrut L}$,
$\d_2\overline{\mathstrut L}$, $J_E$ and $K_E$, we obtain for its first
component
 \begin{align*}
 \frac{\d}{\d t}&\left(m\Vert\vect{x(t)}\Vert^2\left(\vect{x(t)}
                         -\frac{\vect{x(t)}.\vect{x(t)}}
                              {\Vert\vect x\Vert^2}\vect{x(t)}\right)\right)
\\
&=m\bigl(\vect{x(t)}.\vect{x(t)}\bigr)\vect{x(t)}\times\vect{x(t)}
  -m\bigl(\vect{x(t)}.\vect{x(t)}\bigr)\ad^*_{\vect{x(t)}}\vect{x(t)}
   =0\,,
 \end{align*}
where we have used Formula $(4)$ of \ref{VectorCalculus}.
Its second component is
 $$\frac{\d}{\d t}\bigl(m\Vert\vect{x(t)}\Vert^2\lambda\bigr)
   =m\left(\Vert\vect{x(t)}\Vert^2+\lambda^2
       -\frac{\bigl(\vect{x(t)}.\vect{x(t)}\bigr)^2}{\Vert\vect x\Vert^2}
       \right)\Vert\vect{x(t)}\Vert^2
    -\frac{mk}{\Vert\vect{x(t)}\Vert}\,.
 $$
The vector $\vect{X(t)}$ is the sum of two components $\vect{X_1(t)}$ orthogonal to $\vect{x(t)}$ and $\vect{X_2(t)}$ parallel to $\vect{x(t)}$. Since
 $$\vect{X_1(t)}
  =\vect{X(t)}-\frac{\vect{X(t)}.\vect{x(t)}}{\Vert\vect{x(t)}\Vert^2}
    \vect{x(t)}\,,
 $$  
the two components of the Euler-Poincar\'e equation become
 \begin{equation*}\left\{
  \begin{aligned}
  &\frac{\d}{\d t}\bigl(m\Vert\vect{x(t)}\Vert^2\vect{X_1(t)}\bigr)=0\,,\\
  &\frac{\d}{\d t}\bigl(m\Vert\vect{x(t)}\Vert^2\lambda\bigr)
  =m\left(\Vert\vect{X_1(t)}\Vert^2+\lambda^2
    \right)\Vert\vect{x(t)}\Vert^2
    -\frac{mk}{\Vert\vect{x(t)}\Vert}\,.
  \end{aligned}
\right.
\end{equation*}
The first equation expresses the fact that $J_E$ is a first integral of the Kepler problem, since we have
 $$m\Vert\vect{x(t)}\Vert^2\vect{X_1(t)}=\vect{x(t)}\times\vect{p(t)}
     =J_E\bigl(\vect{x(t)},\vect{p(t)}\bigr)\,.$$
Similarly, the second equation can be written
 $$\frac{\d}{\d t}\bigl(\vect{p(t)}.\vect{x(t)}\bigr)
   =\frac{\Vert\vect{p(t)}\Vert^2}{m}-\frac{km}{\Vert\vect x\Vert}\,,
 $$
which is a direct consequence of Hamilton's equations for the Hamiltonian $H$ of the Kepler problem.
\par\smallskip
 
Neither the Euler-Poincar\'e equation nor the compatibility condition
involve the component $\vect{X_2(t)}$ of $\vect{X(t)}$ parallel to 
$\vect{x(t)}$. This illustrates the fact that the system made by these equations is underdetermined when the dimension of the Lie algebra which acts on the configuration space is strictly larger than the dimension of this space.

\subsubsection{Hamilton's method of solving the Kepler problem}
The Hamiltonian $H$ of the Kepler problem remains invariant under the canonical lift to $T^*N$ of the action of $\SO(E)$. Noether's theorem (\ref{noether})
shows that the corresponding momentum map $J_E$ is a first integral. Of course
the total energy, \emph{i.e.} the Hamiltonian $H$, is too a first integral 
(\ref{FirstIntegrals}).
Following the method due to Hamilton 
\cite{Hamilton1846},
we explain below how  the three Kepler laws can easily be deduced 
from the first integrals $J_E$ and $H$.
\par\smallskip

Let us assume that at a particular time $t_0$, $\vect{x(t_0)}$ 
and $\vect{p(t_0)}$ are not collinear. The vector 
$\vect\Omega=J_E\bigl(\vect{x(t)},\vect{p(t)}\bigr)=\vect{x(t)}\times\vect{p(t)}$
does not depend on $t$ since $J_E$ is a first integral, and is $\neq 0$ since for $t=t_0$, $\vect{x(t)}$ and $\vect{p(t)}$ are not collinear.
We choose an orthonormal positively oriented basis 
$(\vect{e_x},\vect{e_y},\vect{e_z})$ of $E$ 
such that $\vect\Omega=\Omega\vect{e_z}$, with $\Omega>0$.
The vectors $\vect{x(t)}$ and $\vect{p(t)}$ remain for all times
in the two-dimensional vector subspace $F$ spanned by $(\vect{e_x},\vect{e_y})$. Let $\theta(t)$ be the polar angle made by $\vect{x(t)}$ with $\vect{e_x}$. We have
\begin{equation*}
 \begin{split}
  {\vect{x(t)}}&=r(t)\cos\theta(t) \vect{e_x}+r(t) \sin\theta(t) \vect{e_y}\,,\\
  {\vect{p(t)}}&=m\left(\frac{\d r(t)}{\d t}\cos\theta(t)
    -r(t)\frac{\d\theta(t)}{\d t}\sin\theta(t)\right)\vect{e_x}\\
    &\ +m\left(\frac{\d r(t)}{\d t}\sin\theta(t) 
      + r(t)\frac{\d\theta(t)}{\d t}\cos\theta(t)\right)\vect{e_y}\,\\
  \vect \Omega&=mr^2\frac{\d\theta(t)}{\d t}\vect{e_z}\,.
 \end{split}
\end{equation*}
Therefore
 $$mr^2\frac{d\theta}{dt}=\Omega=\hbox{Constant}\,.$$
This is the \emph{second Kepler law}, also called \emph{law of areas}, since
$\displaystyle\frac{\Omega}{2m}$ is the area swept by the straight line segment joining the moving material point to the attractive centre
during an unit time. Since
$t\mapsto\theta(t)$ is a strictly increasing function whose derivative never vanishes, we can take $\theta$ instead of time $t$ as independent variable.
Using Hamilton's equation (or Newton's equation), we can write
 $$\frac{\d\vect p(\theta)}{\d\theta}=\frac{\d\vect{p(t)}}{dt}\,
    \frac{\d t}{d\theta}=\frac{mr(\theta)^2}{\Omega}
     \left(-\frac{mk}{r(\theta)^3}\vect{x(\theta)}\right)=
    -\frac{m^2k}{\Omega}(\cos\theta\vect{e_x}+\sin\theta\vect{e_y})\,.
 $$
This ordinary differential equation for the unknown $\vect p(\theta)$, 
which no more involves $\vect{x(\theta)}$, can be readily integrated:
 $$\vect p(\theta)=\frac{m^2k}{\Omega}(-\sin\theta \vect{e_x}+\cos\theta\vect{e_y})+\vect c\,,$$
where $\vect c$ is a (vector) integrating constant.
We will choose $\vect{e_y}$ such that $\vect c = c \vect{e_y}$,
where $c$ is a numeric constant which satisfy $c\geq 0$.

With $O$ as origin let us draw two vectors in the plane $xOy$, the first one (constant) being equal to
$\vect c$, and the second one (which varies with $\theta$) equal to $\vect p$.
The end point of that second vector moves on a circle whose centre is the end point of the vector equal to
$\vect c$, and whose radius is
$\displaystyle{\mathcal R}=\frac{m^2k}{\Omega}$. The part of this circle swept
by the end point of this second vector is (up to multiplication by $m$) 
the \emph{hodograph} of the Kepler problem.
A short calculation leads to the following very simple relation between the energy $H$ of a motion, the
radius $\mathcal R$ of its hodograph and the distance $c$ from the attracting centre $O$ to the centre of the
hodograph:
 $$2mH=c^2-{\mathcal R}^2\,.$$
The right-hand side $c^2-{\mathcal R}^2$ is the \emph{power}\footnote{In plane Euclidean geometry,
the \emph{power}\label{power} of a point $O$ with respect to a circle $\mathcal C$ is the real number $\vect{OA}.\vect{OB}$, where $A$
and $B$ are the two intersection points of $\mathcal C$ with a straight line $\mathcal D$ through $O$. That number does not depend
on $\mathcal D$ and is equal to $\Vert \vect{OC}\Vert^2-{\mathcal R}^2$, where $C$ is the centre and $\mathcal R$ the radius of $\mathcal C$.} 
of $O$ with respect to the hodograph. 
\par\smallskip

We also obtain $r=\Vert\vect x\Vert$ as a function of $\theta$
 $$r(\theta)=\frac{\Omega^2}{m^2k+\Omega c\cos\theta}
            =\frac{\Lambda}{1+\varepsilon\cos\theta}\,,\quad
\hbox{with}\ \Lambda=\frac{\Omega^2}{m^2k}\,,\ 
              \varepsilon=\frac{\Omega c}{m^2k}\,.
$$
It is the polar equation of a conic section with $O$ as focus point and
$\varepsilon$ as eccentricity. This conic section (or, when 
$\varepsilon>1$, the arc of this conic swept by the moving material point)
is the orbit in $E$ of the moving material point. This result 
is the \emph{first Kepler law}.
\par\smallskip
 
The modulus $\Omega$ of the angular momentum, the total energy $H$
and the eccentricity $\varepsilon$ satisfy
 $$\varepsilon^2-1=\frac{2\Omega^2H}{m^3k^2}\,.$$
This formula shows that the orbit in $E$ of the moving material point is an ellipse ($0\leq\varepsilon<1)$ if $H<0$, a parabola ($\varepsilon=1$)
if $E=0$ and a connected component of a hyperbola ($\varepsilon>1$)
if $H>0$.
\par\smallskip

When $H<0$, the orbit in $E$ of the moving point is an ellipse and its motion is periodic. The period $T$ is easily obtained by writing that the area swept in a time $T$ by the straight line segment which joins the moving point to the attractive centre is the area $A$ delimited by the orbit:
 $$T=\frac{2mA}{\Omega}=\frac{2\pi m a^2\sqrt{1-\varepsilon^2}}{\Omega}
 $$  
where $a$ is the length of the half major axis of the orbit. By using the formula
 $$\Omega^2=m^2ka\sqrt{1-\varepsilon^2}
 $$
we obtain
 $$T^2=\frac{4\pi^2}{k}\,a^3\,.
 $$
We conclude that the square of the period is proportional to the third power of the length of the half major axis. This result is the \emph{third Kepler law}.
\par\smallskip

Hamilton's method of solving the Kepler problem is much easier than 
the Marsden-Weinstein reduction procedure, to which it is only very 
loosely related. A 
non-zero vector $\vect \Omega$ is a regular value of $J_E$, 
so $J_E^{-1}(\vect \Omega)$ is a smooth three-dimensional submanifold of 
$T^*N$:
it is the set of pairs of vectors $(\vect x,\vect p)\in F\times F$ such that
$\vect x\times\vect p=\vect\Omega$, where $F$ is the two-dimensional vector subspace of $E$ orthogonal to $\vect\Omega$. This submanifold remains invariant under the action on $T^*N$ of the one-dimensional subgroup of $\SO(E)$,
isomorphic to the circle $S^1$, of rotations around the straight line 
through $O$ parallel to $\vect\Omega$. The reduced Marsden-Weinstein symplectic manifold is the set of orbits of this action. It is isomorphic to the open half-plane $\bigl\{(r,\lambda)\in\RR^2;r>0\bigr\}$, and 
the projection of $J^{-1}(\vect\Omega)$ onto the reduced symplectic manifold
is the map $(\vect x,\vect p)\mapsto(r,\lambda)$, with $r=\Vert\vect x\Vert$,
$\lambda=\vect x.\vect p$. The reduced symplectic form and Hamiltonian are, respectively, 
 $$\omega_{\vect\Omega}=\frac{1}{r}\d\lambda\wedge \d r\,,\quad
   H_{\vect \Omega}=\frac{m(\Omega^2+\lambda^2)}{2r^2}-\frac{mk}{r}\,.
 $$
Instead of using this reduced symplectic manifold and this reduced Hamiltonian,
Hamilton's method uses a clever choice of independent and dependent variables 
on $J_E^{-1}(\vect\Omega)$ which leads to an easy to solve autonomous
differential equation for $\vect p$ as a function of the polar angle 
$\theta$ of $\vect x$. It is successful essentially because the hodograph 
of the Kepler problem is a circle (or, when $H\geq 0$, a part of a circle).      

\subsubsection{The eccentricity vector}
There exists still another vector valued first integral 
$\vect\varepsilon$ of the Kepler problem called the 
\emph{eccentricity vector}, discovered by
Jakob Hermann (1678--1753) three centuries ago
\cite{Bernoulli1710, Herman1710}, often improperly called the
\emph{Laplace vector} or the \emph{Ruge-Lenz vector}, whose expression is
 $$\vect \varepsilon=-\frac{\vect x}{\Vert\vect x\Vert}
                      +\frac{\vect p\times(\vect x\times\vect p)}{m^2k}
   = \left(\frac{\Vert\vect p\Vert^2}{m^2k}-\frac{1}{\Vert\vect x\Vert}\right)
      \,\vect x-\frac{\vect p.\vect x}{m^2k}\,\vect p\,.
 $$
For each motion of the moving material point, the eccentricity vector 
$\vect\varepsilon$ is a dimensionless vector parallel
to the straight line segment which joins the attractive centre $O$ to the perihelion of the orbit (\emph{i.e.} the point of the orbit which is 
the nearest to the attractive centre), of length numerically equal
to the eccentricity $\varepsilon$ of the orbit. 
When the orbit is a circle, the perihelion is undetermined and 
$\vect\varepsilon=0$. We briefly explain below the group theoretical origin of the eccentricity vector. A more detailed explanation can be found for example in
\cite{Marle2012}. Many other interesting results about the Kepler problem 
can be found in the excellent books \cite{Cordani2003, 
GuilleminSternberg1990, CushmanBates1997, Feynman1996, 
Guichardet2012}.
\par\smallskip

Motions $t\mapsto(\vect{x(t)},\vect{p(t)})$ of the Kepler problem in which 
$\vect{x(t)}$ and $\vect{p(t)}$ are parallel are not defined for all values 
of the time $t$: the curves drawn in $E$ by the vectors $\vect{x(t)}$ and 
$\vect{p(t)}$ both are supported by the same straigh line through the 
attractive centre $O$, so the motion finishes, or begins, at a finite time, 
when $\vect{x(t)}$ reaches $0$, \emph{i.e.} when the moving point collides 
with the attractive centre or is expelled by it. When $t$ tends towards 
that final (or initial) instant, $\Vert \vect p(t)\Vert$ tends towards 
$+\infty$. This fact complicates the study of the global topological 
properties of the set of all possible motions of the Kepler problem.
\par\smallskip

For any motion $t\mapsto(\vect{x(t)},\vect{p(t)})$, the curves drawn in $E$ 
by the vectors $\vect{x(t)}$ and $\vect{p(t)}$ are, respectively, the orbit and the hodograph of the motion. The exchange $(\vect x,\vect p)\mapsto
(\vect p,\vect x)$ is an anti-symplectic map, which allows us, at the price of a change of sign of the symplectic form, to consider the curve drawn by 
$\vect{p(t)}$ as the orbit of some Hamiltonian dynamical system and the curve drawn by $\vect{x(t)}$ as the corresponding hodograph. This remark offers a way of studying the global properties of the set of all possible motions: for a motion $t\mapsto(\vect{x(t)},\vect{p(t)})$ which starts or ends at finite instant by a collision with the attractive centre or an ejection by that point,
the curve drawn by $\vect{p(t)}$, now considered as an orbit rather than a hodograph, goes to infininy when $t$ tends towards this limit instant. By a inverse stereographic projection, $E$ can be mapped on a three-dimensional sphere $Q$ minus a point (the pole of the stereographic projection), and the curve drawn by $\vect{p(t)}$ is mapped onto a curve which tends towards the pole $P$ of the stereographic projection. The canonical prolongation of the inverse stereographic projection to the cotangent bundles allows us to map the phase space of the Kepler problem onto the open subset of $T^*Q$ complementary to 
the fibre $T^*_PQ$ over the pole of the stereographic projection. 
On $T^*Q$, motions which reach $T^*_PQ$ can be prolongated and no more 
appear as starting, or ending, at a finite instant of time. This idea, due to Fock \cite{Fock1935} who applied it to the study of the hydrogen atom in 
quantum mechanics, was used by Moser \cite{Moser1970} for the regularization 
of the Kepler problem for negative values of the Hamioltonian $H$.
Gy\"orgyi \cite{Gyorgyi1968} used a similar idea. Since the inverse 
stereographic projection maps circles onto circles, the image of $\vect{p(t)}$ draws a circle on the three-dimensional sphere 
$Q$ and, for a particular value of the total energy $H$, this circle is a great circle, \emph{i.e.} a geodesic of $Q$. Gy\"orgyi \cite{Gyorgyi1968}
proved that the cylindrical projection onto $E$ of these great circles 
are ellipses centered on $O$ whose eccentricity is the same as those of the orbits drawn on $E$ by the corresponding vector $\vect{x(t)}$. 
The group $\SO(4)$ acts on the three-dimensional sphere $Q$ and, 
by the canonical lift to the cotangent bundle,
on $T^*Q$ by a Hamiltonian action. The transformed Hamiltonian is not really invariant under that action and some more work (a reparametrization of time) 
is still needed, but finally Noether's theorem can be used. The eccentricity vector $\vect\varepsilon$ is (modulo the identification of the phase space of 
the Kepler problem with an open subset of $T^*Q$) the momentum map of 
that action, which explains why it is a first integral.
\par\smallskip

For motions with a positive value of the total energy, there exists a
similar construction in which instead of a three-dimensional sphere, $Q$ 
is a two-sheeted revolution three-dimensional hyperboloid. The symmetry group
is the Lorentz group $\SO(3,1)$; the eccentricity vector $\vect\varepsilon$
still is the momentum map of its action on $T^*Q$
\cite{Milnor1983, Anosov2002}. For a motion with a zero value of $H$, 
the circle drawn in $E$ by the vector $\vect{p(t)}$ contains the attractive
centre $O$, so an inversion with $O$ as pole transforms this circle into a straight line, \emph{i.e.} a geodesic of $E$. The symmetry group is then 
the group of Euclidean displacements in $E$ (generated by rotations and translations); the eccentricity vector $\vect\varepsilon$
still is the momentum map of its action on $T^*E$.
\par\smallskip

Ligon ans Schaaf \cite{LigonSchaaf1976} used these results to construct a global
symplectic diffeomorphism of the phase space of the Kepler problem
(for negative values of $H$) onto an open subset of the cotangent bundle to a
three-dimensional sphere. Gy\"orgyi had done that earlier
\cite{Gyorgyi1968} but it seems that his work was not known by mathematicians. 
Later several other authors pursued these studies 
\cite{CushmanDuistermaat1997, HeckmanDeLaat}.
\par\smallskip

Souriau \cite{Souriau1983} used a totally different approach. He built
the regularized manifold of motions of the Kepler problem in a single step,
for all values of the energy, by successive derivations of the equations of motion and analytic prolongation, calculated its symplectic form
and directly determined its symmetry groups. The eccentricity vector appears again as a momentum map for the Hamiltonian actions of these groups.

\section{Acknowledgements}
I address my thanks to Iva\"{\i}lo Mladenov for his kind invitation to present lectures at the International Conference on Geometry, Integrability and Quantization held in Varna in June 2013.
\par\smallskip

For their moral support and many stimulating scientific discussions,
I thank my colleagues and friends Alain Albouy,
Marc Chaperon, Alain Chenciner, Maylis Irigoyen, 
Jean-Pierre Marco, Laurent Lazzarini, Fani Petalidou, 
G\'ery de Saxc\'e, 
Wlodzimierz Tulczyjew, Pawe\l\  Urba\'nski, Claude
Vall\'ee and Alan Weinstein.

\end{document}